\newcommand{\N}{{\mathbb N}}
\newcommand{\Q}{{\mathbb Q}}
\newcommand{\Z}{{\mathbb Z}}
\newcommand{\R}{{\mathbb R}}
\newcommand{\acts}{\curvearrowright}
\newcommand{\Span}{{\rm span}}
\DeclareMathOperator{\supp}{supp}
\DeclareMathOperator{\Norm}{Norm}
\DeclareMathOperator{\Tushev}{Krull}
\DeclareMathOperator{\HAKquotient}{K}
\DeclareMathOperator{\HNMsubgroup}{H}
\DeclareMathOperator{\A'}{A_0}
\newtheorem{thm}{Theorem}[section]
\newtheorem{prop}[thm]{Proposition}
\newtheorem{notation}[thm]{Notation}
\newtheorem{claim}[thm]{Claim}
\newtheorem{cor}[thm]{Corollary}
\newtheorem{lem}[thm]{Lemma}
\theoremstyle{definition}
\newtheorem{defn}[thm]{Definition}
\newtheorem{question}{Question}
\newtheorem{rem}[thm]{Remark}
\newtheorem{exa}[thm]{Example}
\newtheorem*{thm*}{Theorem}
\newtheorem*{cor*}{Corollary}
\newcommand{\thistheoremname}{}
\newtheorem*{genericthm*}{\thistheoremname}
\newenvironment{namedthm*}[1]
  {\renewcommand{\thistheoremname}{#1}%
   \begin{genericthm*}}
  {\end{genericthm*}}
\newtheorem*{genericcor*}{\thistheoremname}
\newenvironment{namedcor*}[1]
  {\renewcommand{\thistheoremname}{#1}%
   \begin{genericthm*}}
  {\end{genericthm*}}
\title[]{Poisson boundary of nilpotent-by-abelian groups}
\author{Anna Erschler}
\address{A.E.: C.N.R.S., \'{E}cole Normale Superieur, PSL Research University, 45 rue d'Ulm, 75005, Paris,  France}
\email{anna.erschler@ens.fr}
\author{Josh Frisch}
\address{J.F.: UC San Diego S9500 Gilman Drive 
La Jolla CA 92093, USA}
\email{joshfrisch@gmail.com}
\keywords{Poisson boundary, entropy function, linear groups,  wreath products, solvable groups,  metabelian groups, Liouville property}
\date{\today}
\begin{document}

\begin{abstract}
Given a finitely generated group, the well-known Stability Problem
asks whether the non-triviality of the Poisson-Furstenberg boundary (which is equivalent to the existence of non-constant bounded harmonic functions) depends on the choice of 
simple random walk on the group.  This question was far from being understood even in the class of
linear groups. Given an amenable group, e.g. a solvable group, there is no  known characterisation, even a conjectural one, of when it admits a simple random walk with non-trivial boundary.
We provide a characterisation of groups with  non-trivial boundary for  finitely generated linear groups of positive characteristic. We prove in particular that the Stability Problem has a positive answer in this class
of groups. For linear groups of characteristic $0$, we prove a sufficient condition for the triviality of the boundary which does not depend on the choice of a simple random walk. We conjecture that our sufficient condition is also necessary.
Our arguments are based on a new comparison criterion for group extensions, on new $\Delta$-restriction entropy estimates and a criterion for boundary non-triviality,  and on a new "cautiousness" criterion for triviality of the boundary.

\end{abstract}

\maketitle


\section{Introduction and statement of main results}


Given a group of matrices over a field, Tits alternative \cite{tits1972} implies  that the group is non-amenable if and only if it contains a free subgroup. An argument  of Milnor \cite{milnor68} and Wolf \cite{wolf68} about the growth of solvable groups, and its refinement in \cite{rosenblatt74} implies that the growth of such a finitely generated group is exponential if and only if the group contains a free sub-semigroup. A well-known property weaker than subexponential growth and stronger than amenability is the Liouville property of simple random walks on a group, stating that all bounded harmonic functions are constant (for this and other basic definitions related to random walks see Section \ref{sec:background}). For fields of positive characteristic, we characterize non-Liouville solvable groups as those that contain the three-dimensional lamplighter group as a subgroup of one of its blocks (naturally defined metabelian groups associated to our group in a manner which we describe below). We show that the characteristic zero case is essentially different. For this case, we give a sufficient criterion for triviality of the boundary and conjecture that this criterion is also necessary.

Our arguments are based on a new comparison
criterion for 
the Liouville property of group extensions.
A probability measure on a group $G$ is said to be {\it non-degenerate}
if its support generates $G$ as a semi-group.

\begin{namedthm*}{Theorem A} (= Thm \ref{thm:compcrit}, Comparison criterion)
Let $1\to F\to G \to H\to 1$
and $1\to F\to G_2\to H\to 1$
be short exact sequences where the group $F$ is abelian, and suppose that the induced action of $H$ on $F$ is the same for both extensions above.  Let $\mu_1$ and $\mu_2$ be finite entropy  measures on $G,G_2$ which
have the same  projections to $H$. Let $f\in F$, assume that  $f$ acts non-trivially on the Poisson boundary of $(G,\mu_1)$. Then, if $\mu_2$ is non-degenerate, $f$ acts non-trivially on the Poisson boundary of $(G_2,\mu_2)$.
\end{namedthm*}
The assumption of finite entropy in the formulation of the theorem is essential, see Remark \ref{rem:finiteentropyimportant}. The assumption that $\mu_2$ is non-degenerate cannot be replaced by the weaker condition of {\it irreducibility} (that the support of $\mu_2$
generates $G_2$ as a group),
see Remark \ref{rem:nondegenerateimportant}.

We now explain our application of the  comparison criterion to linear groups.
Since the Poisson boundary
is non-trivial for any non-degenerate measure on a non-amenable  group,
and since
any finitely generated amenable linear group is virtually solvable and in particular virtually nilpotent-by-abelian (Malcev -- Kolchin theorem, \cite{malcev}), the question of which linear groups have the Liouville property is essentially about virtually nilpotent-by-abelian groups.


For linear groups, the number of the afore-mentioned metabelian blocks is finite, 
and they can be described explicitly.
Throughout the paper $\theta$ denotes the following matrix
$$
\theta=
\left( \begin{array}{ccc}

1 & 1  \\

0 &  1
 \end{array} \right),
$$ 

Given a group $G$ of  $n\times n$ upper triangular matrices and
$(i,j)$, $1 \le i < j \le n$, we consider all
 matrices of the form
$$
G_{i,j}=
\left( \begin{array}{ccc}
g_{i,i} & 0  \\
0 &  g_{j,j}
 \end{array} \right), 
$$
defined when there exists $g\in G$ with the entries 
$g_{i,i}$ and $g_{j,j}$ at $(i,i)$ and $(j,j)$.

We say that  $(i,j)$, $1 \le i < j \le n$, is {\it a validating pair} of indexes if 
there exists an element $g$ in $G$ such that its matrix entries have the three following properties
\begin{itemize}
\item $g_{i,i}=1$ for all $i: 1 \le i \le n$.
\item $g_{i,j} \ne 0$.
 \item $g_{i',j'}=0$  for all $(i', j')\ne (i,j)$ such that $i \le i' < j' \le j$
\end{itemize}

We also say that $g$ is an $(i,j)$-{\it validating element}.
In this case
we consider the group generated
by $\theta$ and the matrices $G_{i,j}$.
We call this group the $(i,j)$-basic block of $G$
and denote it by  $B_{i,j}$.  If $(i,j)$ is a validating pair of indexes, we say that this $(i,j)$-basic block is valid.
Otherwise, if  $(i,j)$ is not a validating pair of indexes,
 we say  the 
$(i,j)$-basic block is trivial.
See Section \ref{sec:moreonreductiontoblocks} and Definition \ref{def:blocks} for details.

An example  of the matrix below is an illustration of the fact that 
for a linear group, containing this matrix,  the pair of indexes $(i,j)$ is validating (for $i=2$ and $j=5$). The symbol
$\ast$ as a matrix entry on the picture
denotes a non-zero element of our field.

\[ 
    \bordermatrix{ &  &  &    &  & j &          \cr
       & 1 & ? & ?   & ? & ? & ?      \cr
      i &  & 1 & 0  & 0 & * &  ?       \cr
       &  &  & 1 &  0 & 0 & ?        \cr
       &  &  &   & 1 & 0 & ?       \cr
       &  &  &  &   & 1 & ?        \cr
       &  &  &   &  &  & 1         } \qquad
\]

Observe that a group of upper-triangular matrices admits a natural homomorphism to the (abelian) group of diagonal matrices. We restrict this homomorphism to $G$. We have $G \to k^n$,  $g \to (g_{1,1}, g_{2,2}, \dots, g_{n,n})$.
For given $i,j: 1 \le i,j \le n$ consider a mapping $G \to k^2$, where
$G \to (g_{1,1}, g_{2,2}, \dots, g_{n,n}) \to (g_{i,i}, g_{j,j})$.
We show that  our random walk $(G,\mu)$ is not Liouville, if and only if  for some
$(i, j)$ the random walk on the basic metabelian block
$B_{i,j}$ is not Liouville, for some probability measure $\mu_{i,j}$ on $B_{i,j}$  having the same projection  as $\mu$ to $k^2$.
The measures on $B_{i,j}$ with this property will be called {\it associated} measures.

More precisely, we prove


\begin{namedthm*}{Theorem B} ( (1) and (2) of Thm  \ref{thm:lineargroups})

Let $G$ be a group of upper-triangular $n\times n$ matrices over a field.

\begin{enumerate}

\item The Poisson boundary of $(G,\mu)$
is non-trivial if and only if there exist $i,j: 1 \le i<j \le n$ such that
  for any associated measures $\mu_{i,j}$ on the $(i,j)$-block $B_{i,j}$ 
the Poisson boundary of $(B_{i,j}, \mu_{i,j)}$ is non-trivial.

\item 
An element $g\in G$ acts non-trivially on the
boundary $(G,\mu)$ if and only if there exists an unipotent
$h \in \Norm_G (g)$  and $i,j: 1 \le i<j \le n$ such
that any associated measure on the block $B_{i,j}$ has  non-trivial boundary and
 $h$ is $(i,j)$-validating element.


\end{enumerate}
\end{namedthm*}

Both claims of the theorem also hold if we replace "any associated measure" by  "some  associated measure".

In Theorem \ref{thm:nilpotentbyAbelian}
we explain an analogous reduction statement 
in a general context of nilpotent-by-abelian groups.
We recall that there are uncountably many nilpotent-by-abelian groups (even if we consider these groups up to quasi-isometry, in particular they have a continuum of distinct F{\o}lner functions, see \cite{erschlerzheng}). In contrast with the linear case studied in Theorem B, the number of blocks of a nilpotent-by-abelian group is not necessarily finite.

\subsection{Metabelian groups and metabelian blocks}

After applying Theorem B, we are left to determine which random walks on metabelian blocks are Liouville. To address this question, we prove a new criterion for triviality of the boundary and a new criterion for non-triviality of the boundary. 

Our first new criterion is a 
 {\it cautiousness criterion} for the Liouville property, see Theorem \ref{prop:cautious}. The argument uses in an essential
way the Shannon -- McMillan -- Breiman type theorem (\cites{kaimanovichvershik, derriennic}), and its  
  idea is sketched in the beginning of Section \ref{sec:cautious}. As one of the applications, 
we obtain Corollary \ref{cor:rank2rank1}, a version of which we formulate below.
\begin{namedcor*}{ Corollary C}
Let $G$ be an (amenable) linear group over $k$ which has a finite index upper-triangular subgroup $G_2$.
Suppose that none of  the basic blocks of $G_2$ contains a three-dimensional wreath product as a subgroup.
If the characteristic of $k$ is zero, 
we assume additionally  
that any basic block of $G_2$ which contains $\mathbb{Z}^2\wr \mathbb{Z}$ as a subgroup is isomorphic to this wreath product.
Then for any simple random walk (and any symmetric finite second moment random walk) on $G$ the boundary is trivial. 
\end{namedcor*}

One of the corollaries of Corollary $C$  is discussed in Corollary \ref{cor:variablesfield}.
If $F$ is either a function field over at most $2$ variables in positive characteristic or $F$ is a function field over at most $1$ variables in characteristic $0$
then we prove that any virtually solvable group which is linear over F is Liouville: 
any finite second moment centered measure on this group has trivial
boundary. Observe that the case of transcendence degree $0$ recovers the previously known case of linear  groups over $\mathbb{Q}$ (see \cite{schapira}; see also \cite{brofferioschapira} for the description of the boundary for  random walks on $GL(n, \mathbb{Q})$).

More generally, using a notion of Krull dimension for metabelian groups (see the beginning of Section \ref{sec:section7} for a discussion of this notion),
 we give a complete characterization of linear amenable  linear groups over a field of positive characteristic such that
 the boundary of simple random walks is non-trivial (see Corollary \ref{cor:1234charp} in 
Section \ref{sec:nontrivial}, one of its claims is
 Corollary D  below).  We prove that,  passing to a finite index subgroup, these groups correspond to upper-triangular linear groups such that at least one of the basic blocks 
 has
 dimension $\ge 3$.
 We mention that characteristic $p$ groups of dimension $0$ are virtually abelian. 
Triviality of the boundary for simple random walks on linear groups, positive characteristics  of dimension 1 follows by combining  Thm 1.1 in
\cite{jacoboni} with \cite{saloffzheng}. 
The main positive characteristic application of our triviality of the boundary criterion is 
in dimension  $2$. Here we get new examples of metabelian groups with trivial boundary for simple random walks (such as the Baumslag group for $d=2)$. As before, any linear group with such blocks also provides an example.
In dimension $3$
we get many new examples of groups with non-trivial boundary.

Now in characteristic $0$ the situation was known for simple random walks in  dimension $1$ (this follows from the already mentioned result of Brofferio -- Schapira \cite{brofferioschapira} for linear groups over $\mathbb{Q}$. For linear groups over $\mathbb{Z}$ this result was known earlier due to Kaimanovich \cites{kaimanovichsolvable, kaimanovichpolycyclic}).
Our result shows that in dimension $2$ the boundary of simple (or centered finite second moment) random walks is trivial, and we get some new examples.
In dimension $\ge 4$,  we also get many new examples of non-trivial boundary, in particular among metabelian groups containing a three-dimensional wreath product as a subgroup. The only case not covered by our results is therefore dimension $3$ and characteristic $0$.

We give examples (see Proposition \ref{prop:Galpha}) of groups of dimension $3$ in characteristic $0$ having non-trivial boundary of random walks. Furthermore, we believe (see Question \ref{question:rank2lamplighter}  below) that except for the well-known examples of two-dimensional wreath products, and groups that can be reduced to these wreath products as applications of our Theorem B, all simple (or finite entropy non-degenerate) random walks on such groups of dimension $3$ have non-trivial boundary.
If the answer to Question \ref{question:rank2lamplighter} is positive, one would get a complete characterization of metabelian groups with the Liouville property and in view of our theorem, thus for all linear groups of characteristic $0$.

More precisely, we conjecture that the assumption of  Corollary C  above is not only sufficient, but necessary. First we ask this question in characteristic $0$. Note that every finitely generated linear group over a field of characteristic $0$ is virtually torsion-free by Selberg's lemma, so it suffices to analyze torsion-free groups. 

\begin{question}\label{question:rank2lamplighter}
Let $k$ be a field of characteristic $0$ and let $G$ be a 
linear group over $k$.
Assume that every  finite index upper-triangular subgroup $G_2$ of $G$ admits  a basic block containing a two-dimensional wreath product but not isomorphic to such a wreath product. Is it true that every non-degenerate finite entropy random walk on $G$ has  non-trivial boundary?
\end{question}

In the case where the field $k$ has positive characteristic, we prove that the sufficient conditions of Corollary C are also necessary.

\begin{namedcor*}{Corollary D} Let $G$ be an amenable linear group over a field of  characteristic $p$. Then a simple random walk on $G$ has non-trivial boundary if and only if one of the basic blocks of $G$ contains $  \mathbb{Z}/p \mathbb{Z}  \wr \mathbb{Z}^3$ as a subgroup.
\end{namedcor*}

For a more general result, we refer once more to Corollary \ref{cor:1234charp}. 

We also prove a partial result in this direction in characteristic 0.
If some basic block of $G$ (or of $G_2$) contains a three-dimensional wreath product, then the boundary of any non-degenerate finite entropy random walk is non-trivial (this is a part of the claim of Corollary \ref{cor:metablockrank3}). 
We also provide further examples in support of our conjectured positive answer to Question
\ref{question:rank2lamplighter} 
in Proposition \ref{prop:Galpha}.

In view of our results, we believe that the Liouville property of finitely generated linear groups is determined by the asymptotics of the F{\o}lner function of their blocks.
For general finitely generated groups, it is known that
a group with arbitrarily quick growing
F{\o}lner  function can have the Liouville property. Arbitrarily quick growing F{\o}lner functions can even be found among groups of
intermediate growth \cite{erschlerpiecewise}. For a class of elementary amenable groups with prescribed
F{\o}lner  function and trivial boundary see \cite{brieusselzheng}.
We conjecture that for linear groups of $2\times 2$ matrices
simple random walks are not Liouville if and only if the F{\o}lner  function is $\ge \exp(Cn^3)$
for some $C>0$.

We prove our results on the non-triviality of the above mentioned boundaries by developing a $\Delta$-restriction entropy criterion, Theorem \ref{prop:rank3} (and combining this with our comparison criterion). While transience of some associated random walks appears in many examples (including non-degenerate random walks on wreath products) as a sufficient condition for boundary non-triviality, Lemma \ref{lem:mainlemma}
in the proof of Theorem \ref{prop:rank3} uses in an essential way a notion of uniform strong transience (see Definition \ref{def:uniformlysr}), and the claim is false under 
the weaker assumption of transience (see Remark \ref{rem:notsimplytrans}).

We remind the reader that a well-known open problem asks whether the non-triviality of the Poisson boundary depends on the choice of measure for simple random walks on a finitely generated group $G$. See for example the survey \cite{saloffsurvey}, where Saloff-Coste attributes this question to Varopoulos
in the early '80s.
It is well-known that a closely related question about quasi-isometric graphs has a negative answer  (see T.Lyons \cite{lyons} for examples of quasi-isometric graphs, one of which is Liouville, and the other is not. Moreover, such examples can even be found among graphs of polynomial growth, see  \cite{Benjamini}). 
For the moment, this stability was known only for some  
classes where all  groups have trivial boundary (i.e. subexponential growth, as a well-known corollary of the entropy criterion \cite{avez2}; polycyclic groups or solvable groups or finite Pruefer rank,
groups with (quite) slow decay of return probability (\cite{saloffzheng}, \cite{pereszheng})
and
in very particular examples (such as wreath products).
Our results in Corollary C and its partial converse in Corollary D show stability for any linear groups of positive characteristics (as well for a large class of groups of zero characteristics). Our conjecture, if proven, would show stability for all linear groups.  We also mention that Kaimanovich  has expressed recently an opinion that, despite the lack of known candidates for a counterexample, the  answer for the Stability Problem might be negative in the class of all finitely generated groups (\cite{thompson}, 1.D).

{\bf Acknowledgements.} 
We are grateful to Vadim Kaimanovich and Olga Kharlampovich for helpful discussions.
This project has received funding from the European Research Council (ERC) under
the European Union's Horizon 2020 
research and innovation program (grant agreement
No.725773). The work of the second named author was also supported by NSF grant 2102838. 

\section{Definitions, notations  and basic facts}\label{sec:background}

First we recall terminology about group extensions.
Given a short exact sequence
$$
1 \to \HNMsubgroup \to G \to \HAKquotient \to 1,
$$
one says that $G$ is  an extension of $\HNMsubgroup$ by a group $\HAKquotient$.
However, if $\HNMsubgroup$ and $\HAKquotient$ have some properties, for example if $\HNMsubgroup$ is nilpotent and $\HAKquotient$ is abelian, it is said  that $G$ is "nilpotent-by-abelian" (and not "abelian-by-nilpotent"). See \cite{tao}, Definition 2.4.2 and a footnote for this definition, 
for  reflections about possible origins of this inconsistent terminology.

Now we recall definitions and background related to random walks.
Given a probability measure $\mu$ on a group $G$, a (right) {\it  random walk} $(G,
\mu)$ is a Markov chain with state space $G$
and with transition probabilities $p(g|gh)=
\mu(h)$, for all $g,h \in G$. We will assume that the random walk starts at the identity $e_G \in G$.
We have already mentioned that a measure $\mu$ on $G$ is said to be irreducible
(also called adapted) if its support generates $G$
as a group.
A measure
$\mu$ on a group $G$ is  {\it non-degenerate} if the support of $\mu$ generates $G$ as a semi-group. More intuitively, non-degeneracy means that the random walk, starting at the identity,  visits any point $g\in G$ with positive probability.
We now recall the definition of the Poisson boundary.


\begin{defn}[Poisson boundary]

Consider the space of one-sided infinite trajectories $G^\infty$.
For two  trajectories
$X$ and $Y$ in this space,  we say  that they are equivalent if they coincide after some instant, possibly up to the
time shift. This means
there exist $N,k\ge 0$ such that $X_i=Y_{i+k}$ for all $i>N$. Consider the measurable
hull of this equivalence relation in $G^\infty$. The quotient
by the obtained equivalence relation is called the {\it Poisson boundary} (also called the {\it Poisson-Furstenberg boundary}).
\end{defn}
Given an element $g\in G$, we say that $g$ acts on the space of trajectories by
$$g(x_1,x_2,x_3,\dots) = (gx_1,gx_2,gx_3,\dots).$$ This action preserves the equivalence relation in the definition of the Poisson boundary and thus induces an action on the boundary.

The Poisson boundary can also be defined for any Markov chain. 
In the case of random walks on groups, there are several equivalent definitions. In particular, if we do not allow the shift by $k$ in the definition above, and say that $X$ and $Y$ are equivalent if $X_i=Y_{i}$ for all $i>N$, then we obtain in this way the definition of the {\it tail boundary}.
It is not difficult to see that for a general Markov chain this boundary can be larger than the Poisson boundary, and a random walk can have a trivial Poisson boundary and a non-trivial tail boundary. In the case of random walks on groups, this cannot happen and the two definitions are equivalent. For these and other definitions and basic facts about boundaries see  \cites{kaimanovichvershik,furman, erschlerkaimanovich2019}.

We also mention that in the case when the Markov chain is a random walk on a group,  there are structural results where the group structure of the space is essential.

A function $F:G \to \R$ is called $\mu$-{\it harmonic}, if for all $g\in G$ it holds
$F(g) = \sum_{h\in G} F(gh) \mu(h)$. The Poisson boundary 
 can be equivalently defined in terms
of bounded harmonic functions
on the subgroup of $G$,
generated by the support of $\mu$ and with values in $\mathbb{R}$. In particular 
the non-triviality of the Poisson boundary is equivalent to the  existence of  non-constant bounded harmonic functions.

Given a probability 
measure $\mu$,
its {\it entropy} $H(\mu)$
is defined as

$$H(\mu)=-\sum_{g\in G} \mu(g) \log(\mu(g)).$$

\begin{defn}
We recall that the {\it entropy} of a random walk $(G,\mu)$, also called the {\it asymptotic entropy},
is defined as  the limit, as $n$ tends to infinity,  of $H(n)/n$, where $H(n)=H(\mu^{*n})$. $H(n)$ is called the entropy function of $(G,\mu)$.
\end{defn}

In the definition above $\mu^{*n}$ denotes the $n$-th convolution power  of $\mu$, and the above-mentioned limit always exists
rem:dimsew of the sub-additivity  $H(\mu^{*(n+k)})\le H(\mu^{*n})+H(\mu^{*k})$. The
latter is a consequence of the fact that $H(\nu_1 * \nu_2) \le H(\nu_1)+ H(\nu_2)$, for any probability measures $\nu_1$ and $\nu_2$ on a group $G$.
Moreover, a Shannon-McMillan-Breiman type theorem
holds which states that with probability
$1$ the limit $\lim_{n\to \infty} - \frac{1}{n}\ln(\mu^{*(n)}(g_n))$ exists and is equal to the asymptotic entropy 
(Thm $2.1$  in \cite{kaimanovichvershik}, \cite{derriennic}).

The notion of entropy of random walks was introduced by Avez \cite{avez}, who showed that for finitely supported measures $\mu$
if the asymptotic entropy is zero then the Poisson boundary is trivial \cite{avez2}. We know now that the statement is if and only if and, moreover, holds true for any finite entropy measure, regardless of the size of the support, because of the following Entropy criterion of Derriennic and Kaimanovich -- Vershik.

\begin{thm}[Entropy criterion, \cites{kaimanovichvershik1, kaimanovichvershik, derriennic }] 
Let $\mu$ be a probability measure on a group $G$.
If the measure $\mu$ has  finite entropy, then the Poisson boundary of the random walk  $(G,\mu)$ is trivial
if and only if the entropy of the random walk is $0$.
\end{thm}

The Entropy criterion, in particular, implies that if the growth of the group $G$ is subexponential, then
any finitely supported random walk (and more generally any random walk of finite first moment) has a trivial Poisson boundary.

Let $S$ be a generating set of $G$.
The word length $l_S(g)$ is the shortest number of terms needed to write $g$
as a product of elements of $S$.
The growth function  $v_{G,S}(n)$ counts the number of group elements of word length at most $n$. 
\begin{defn}
Suppose that a probability measure $\mu$ on $G$ has finite first moment, that is, $\sum_g \mu_g l_S(g)<\infty$. (It is clear that the finiteness of the first moment does not depend on the choice of finite generating set $S$ of $G$).
The drift function $L(n)$ is the expected length of the position at time instant $n$
$$
L(n) = E[l(x_n)].
$$
It is clear again that $L(n+m) \le L(n)+L(m)$, and thus there exists
a limit $l=\lim_{n\to \infty}L(n)/n$.
This limit is called the {\it drift} (or {\it rate of escape}) of the random walk $(G,\mu)$.
\end{defn}

It is not difficult to see that $l=0$
implies $h=0$ (see \cite{guivarch}).
A random walk is said to be {\it symmetric} if the defining measure $\mu$ satisfies $\mu(g)=\mu(g^{-1})$,
for all $g$.
If the random walk is symmetric, the converse statement is true: if $h=0$ then $l=0$ (This was shown by Varopoulos \cite{varopoulos} under the additional assumption that the support of $\mu$ is finite and by  Karlsson -- Ledrappier for the general finite first moment case
\cite{karlssonledrappier}).

Among groups of exponential growth 
there are many known examples with trivial boundary of simple random walks, and there are many known examples with non-trivial boundary.
For many groups, it is a challenging problem to understand whether the boundary is non-trivial.

\begin{defn}
Given groups $A$ and $B$, 
{\it a wreath product} $B\wr A$ is a semi-direct product of $A$ and $\sum_A B$,
where $A$ acts on $\sum_A B$ by shifting the index set.
\end{defn}

\begin{rem}[Notations for wreath products]
Some papers use $A\wr B$ notation for what we denote now $B\wr A$. The convention $A\wr B$,
for the acting group $A$, goes back to works of P\'{o}lya (see \cite{polya}, definition
of "kranz" on p. 178 and examples throughout that paper
, who used the notation $A[B]$, $A$ is the acting  group, for the wreath product of $A$
and $B$; of Kaloujnine \cite{kaloujnine45},  Kaloujnine and Krasner \cite{kaloujninekrasner48}
, the notation $\circ$ for the wreath product sign in their first papers on the subject, and the notation $A {\rm WR } B$ and  $A\wr B$, in this order, in later works of Kaloujnine and his collaborators and students (see e.g. \cite{KaluzninBeleckijFejnberg}). That notation was used in many
papers of the first named author.
\end{rem}

If $B=\mathbb{Z}/2\mathbb{Z}$, wreath products  $B\wr A$ are also called {\it lamplighter} groups.
The  elements of lamplighter groups are pairs $(a,f)$,
$f:A \to \mathbb{Z}/2\mathbb{Z}$ is a finitely supported function, which we can view as "the lamp configuration". In the sequel, we denote by $\theta$ the element $(e_A,\chi_{e_A})$, "the lamp" at the identity of $A$.

One-dimensional and two-dimensional lamplighter groups
$\mathbb{Z}/2\mathbb Z \wr \mathbb{Z} $ and $ \mathbb{Z}/2 \wr \mathbb{Z}^2$
provide examples of groups of exponential growth with trivial boundary for simple random walks \cite{kaimanovichvershik}.
On the other hand
$\mathbb{Z}/2\mathbb{Z}  \wr \mathbb{Z}^d$ where $d>2$ provides examples of amenable groups where simple random walks with non-trivial boundary \cite{kaimanovichvershik}.
 Observe that $\mathbb{Z}/2\mathbb{Z} \wr \mathbb{Z}^d$
 are examples of (torsion-abelian)-by-abelian
 groups. For some groups in this class (e.g. Baumslag groups which will be discussed in Example \ref{ex:Baumslag}) the question was open and we give a complete characterization of groups in this class.

The results of \cite{kaimanovichvershik} are also useful to determine which elements act trivially or not trivially on the Poisson boundary.
A consequence of Theorem 3.2 (together with Proposition 3.5) of \cite{kaimanovichvershik} is that the differential entropy of any non-trivial quotient of the Poisson boundary is strictly smaller than that of the Poisson boundary itself. 
We apply it to the quotient of the Poisson boundary by the action of the normal subgroup generated by an element $h$, and taking into account that differential entropy coincides with the asymptotic entropy (see \cite{kaimanovichvershik}, Thm 3.1) we get the following claim.
Given $f\in G$, 
$\Norm_G(f)$ denotes the normal subgroup of $G$, generated  as a normal subgroup by $f$. If $G$ is clear from context, then we omit $G$ in the notation and write $\Norm(f)$.

\begin{claim}  \label{lem:strictinequality}
Let $G$ be a countable group, and 
 $\mu$ be a probability measure on $G$. Then $f \in G$ acts trivially on the Poisson 
boundary of $(G,\mu)$ if and only if
$$
h(G,\mu) = h (G/\Norm_G(f) , \mu),
$$
where we use the same notation $\mu$ for its image in the quotient group $G/\Norm_G(f)$.
\end{claim}

Now we mention that equivariant quotients of Poisson-Furstenberg boundaries (in the category of measure spaces) are called {\it $\mu$-boundaries}
of the random walk $(G,
\mu)$.

We also recall the definition of conditional entropy.
Let $\bf P$ be 
the space of one-sided infinite
trajectories $G^\infty$. 
Consider a $\mu$-boundary $B$.
Since $B$ is a quotient of this space $\bf  P$,
the points of $B$
define conditional measures on $\bf P$.
Let us say that a probability measure in the space of one-sided infinite trajectories $G^\infty$ has asymptotic entropy $h$ if its one-dimensional distributions $\lambda_n$ ($n$ step distribution of the corresponding conditional process) satisfy 
$$
- \ln \lambda_n(x_n)/n \to h
$$
for almost all $x_i \in G^\infty$.

We recall that the Entropy criterion  of Kaimanovich \cite{kaimanovichhyperbolic}, Theorem 4.6 states the following.
 
\begin{thm}[Conditional entropy criterion]\label{kaimanovich:conditional}
Let $\mu$ be a probability measure on a group $G$.
If the measure $\mu$ has a finite entropy, then a $\mu$-boundary $B $
is maximal (equal to the Poisson boundary) if and only if the asymptotic entropy is $0$ for almost all $b\in B$.
\end{thm}

\section{Examples}\label{sec:examples}
In this section, we describe several examples illustrating the application
of Theorem B and the notion of metabelian blocks.

Throughout the paper $F_d$ denotes the free group on $d$ generators.
In many examples below we consider matrices 
over $\mathbb{Z}[X_1^{\pm 1}, \dots,  X_d^{\pm 1}]$. Alternatively, by choosing $X_i$ to be mutually transcendental numbers in $\mathbb{C}$, we can  consider these matrices to be over $\mathbb{C}$. 

A group is said to be metabelian if it is solvable of step $2$, that is, if it is an extension of an abelian group by an abelian group.
We recall that any metabelian group is linear over a commutative ring
\cite{remeslennikov68}.
If the commutator subgroup of a finitely generated metabelian group is torsion-free then it can furthermore be represented by triangular matrices over a field of characteristic zero; if the commutator subgroup is of exponent $p$, for $p$ prime, 
then the group can be represented by triangular matrices over a field of characteristic $p$ 
\cite{remeslennikov69}; the statement moreover holds for metabelian  (and virtually metabelian) groups
with the commutator being a $p$-group,
see \cite{wehrfritz75}, Thm 1.1.


\begin{exa} [Lamplighter on $\mathbb{Z}^d$] \label{exa:deflamplighter}
Consider the wreath product $G_d = \mathbb{Z} \wr \mathbb{Z}^d$.
This group is isomorphic to the group generated by the matrix $\theta$ and  the following  matrices over $\mathbb{Z}[X_1^{\pm 1}, \dots,  X_d^{\pm 1}]$  
$$
%
%
 M_{x_i}=
\left( \begin{array}{ccc}
1 & 0  \\
0 & X_i\\
 \end{array} \right)
$$
Let $G_{d,p}= \mathbb{Z}/p \mathbb{Z} \wr \mathbb{Z}^d$.
This group is generated by  the above-mentioned matrices when we consider them over $\mathbb{Z}/p\mathbb{Z}[X_1^{\pm 1}, \dots X_d^{\pm 1}]$.
It is straightforward that for this matrix realization the block of each among these wreath products 
is equal to the group itself.
\end{exa}

The representation of free metabelian groups in the example below was studied in Lemma 2 of  \cite{wehrfritz69}.


\begin{exa} [Free metabelian and free $p$-metabelian groups]
Let $\rm{Met}_d$ denote the  free metabelian group  on $d\ge 2$ generators,
that is,
$F_d/[[F_d, F_d],[F_d,F_d]]$.
This group is generated by $d$ matrices of the form
$$
\rm{FM}_{x_i}=
\left( \begin{array}{ccc}
X_i^{-1} & S_i  \\
0 & X_i \\
 \end{array} \right)
$$ 
over $\mathbb{Z}[X_1^{\pm1}, \dots X_d^{\pm 1}, S_1, \dots, S_d]$.
Observe that each non-trivial element of the commutator group satisfies the  condition in the definition of a valid block, hence the $(1,2)$ block is valid and $\theta$ is one of its generators. 
Other generators are by definition
$$
\left( \begin{array}{ccc}
X_i^{-1} & 0  \\
0 & X_i \\
\end{array} \right).
$$
It is clear that the block is isomorphic to $\mathbb{Z} \wr \mathbb{Z}^d $.

Let $\rm{Met}_d(p)$ be the free $p$-metabelian group; this group is by definition
\newline
$\rm{Met}_d/([\rm{Met}_d,\rm{Met}_d])^p$.
It is generated by the same matrices as $\rm{Met}_d$ if we consider them
over 
$\mathbb{Z}/p\mathbb{Z}[X_1^{\pm1}, \dots X_d^{\pm 1}]$.
With a similar argument as for the free 
metabelian group, we can conclude that the only block of this matrix is
$\mathbb{Z}/p\mathbb{Z} \wr \mathbb{Z}^d$.
\end{exa}

It is known that for some aspects free metabelian groups are analogous to 
wreath products, and in particular the Poisson boundary for simple random
walks on a free-metabelian group is non-trivial if and only if when $d\ge 
3$ \cite{Vershik2000}. This can now be deduced as one of the simplest manifestations of our general result: wreath products with the base group $\mathbb{Z}^d$ have the same blocks as free metabelian groups
on $d$-generators. Hence the boundary is trivial on a wreath product if and only if it is trivial for the corresponding free metabelian group.

In the example below the
dimension is at most two and in the next example the dimension is  three, see \ref{lem:rankdrops}.

\begin{exa}[Liouville quotients of Lamplighter groups]
Any proper quotients of the $3$-dimensional Lamplighter over $\mathbb{Z}/p\mathbb{Z}$, (with $p$ prime), provide examples of groups with trivial Poisson boundary for simple random walks (or any symmetric measure of finite second moment).
\end{exa}

\begin{exa}[Non-Liouville quotients of Lamplighter groups]
Quotients of the $d$-dimensional lamplighter group, $d\ge 4$, over a single relation are examples of groups 
where any non-degenerate finite entropy measure has a non-trivial boundary.
\end{exa}

A result of Baumslag \cite{Baumslag} states that any finitely generated metabelian group can be embedded into a finitely presented metabelian group.
A basic example of his construction is the following embedding of Lamplighter
groups $  \mathbb{Z}/ p \mathbb{Z} \wr \mathbb{Z}^d$.

\begin{exa}[Baumslag groups] (= Example \ref{ex:Baumslag})
Consider 
$B_d(\mathbb{Z}/p\mathbb{Z})$
to be a subgroup of $GL_{2}(R)$,
$R=\Z/p\Z[X_1^{\pm 1},\dots, X_d^{\pm 1}]$ ,
generated by the matrix $\theta$ and   $2d$ matrices of the form 
$$
%
%
 M_{x_i} =
\left( \begin{array}{ccc}
1 & 0  \\
0 & X_i\\
 \end{array} \right), 
\medspace \medspace
M_{x_i+1} =
 \left( \begin{array}{ccc}
1 & 0  \\
0 & X_i+1\\
 \end{array} \right).
$$

For $D=2$ all finite second moment centered random walks on these groups have a trivial boundary.
\end{exa}

We mention below two two-dimensional examples in characteristic $0$. By our results, these groups have  trivial boundary for centered finite second moment random walks. 
\begin{exa}(= Example \ref{exa:g23x})
The group $G_{2,3,X}$ is the group generated by the matrix $\theta$ and
matrices $M_2$, $M_3$, $M_x$ defined below
$$
 M_{2} =
\left( \begin{array}{ccc}
1 & 0  \\
0 & 2\\
 \end{array} \right),
 \medspace \medspace
 M_{3} =
\left( \begin{array}{ccc}
1 & 0  \\
0 & 3\\
 \end{array} \right),
  \medspace \medspace
 M_{x} =
\left( \begin{array}{ccc}
1 & 0  \\
0 & x\\
 \end{array} \right).
\medspace \medspace
$$
\end{exa}

Another one-dimensional example:
\begin{exa}
The group $G_{X,X+1,X+2}$ is generated by the matrix $\theta$ and
the following matrices over $\mathbb{Q}[X]$:
$$
 M_{x} =
\left( \begin{array}{ccc}
1 & 0  \\
0 & X\\
 \end{array} \right),
 \medspace \medspace
 M_{x+1} =
\left( \begin{array}{ccc}
1 & 0  \\
0 & X+1\\
 \end{array} \right),
  \medspace \medspace
 M_{x+2} =
\left( \begin{array}{ccc}
1 & 0  \\
0 & X+2\\
 \end{array} \right).
\medspace \medspace
$$
\end{exa}

If we consider the matrices in the example above over  $\mathbb{Z}/p \mathbb{Z}[X]$, the group we obtain is linear in characteristic $p$. 
But even in characteristic $0$ case, as in the example above, it provides an example of the triviality of the boundary.

The examples below for $d=2$
 provide (torsion-free) metabelian examples without three-dimensional wreath products as a subgroup and
with non-trivial boundary for simple random walks; as well as examples of  symmetric finite first moment
measures with non-trivial boundary on  groups without two-dimensional wreath products as subgroups.

\begin{exa}[Lamplighter-Baumslag-Solitar  group]\label{exa:defLampBS}
We consider the group $\rm{LBS}_d \subset GL(2,R)$ , $R=\Z/p\Z[X_1^{\pm 1},\dots, X_d^{\pm 1}]$,
generated by  $M_{X_1}, \dots M_{X_d}$, $\theta$ and 
$$
M_2 =
\left( \begin{array}{ccc}
1 & 0  \\
0 & 2\\
 \end{array} \right) 
$$
We call this  group "Lamplighter Baumslag-Solitar" since $\theta$
and $M_{X_1}, \dots M_{X_d}$ generate $\mathbb{Z}\wr \mathbb{Z}^d$, while $M_2$ and $\theta$ generate the solvable Baumslag-Solitar group since
$M_2^{-1} \theta M_{2} = \theta^2$.
A more general example of this type can also be considered if instead of $2$ we consider an algebraic number $\alpha$ which is not a root of unity (see Proposition \ref{prop:Galpha}). These groups have nontrivial boundary for finite entropy non-degenerate random walks whenever $d\geq2$. 
\end{exa}

Below we mention an  example of a metabelian group, containing the $3$-dimensional wreath product as a subgroup. By our results, all such groups have non-trivial boundary for simple random walks.
\begin{exa}\label{exa:defx1+x2+x3} Consider the group generated by $2 \times 2$ matrices $\theta$, $M_{x_1}$, $M_{x_2}$, $M_{x_3}$ and
$$
 M_{x_1+x_2+x_3} =
\left( \begin{array}{ccc}
1 & 0  \\
0 & X_1+X_2+X_3\\
 \end{array} \right).
$$
\end{exa}

We can also consider the
the following group (and many other examples) which also has nontrivial boundary for the same reason.
\begin{exa}
The group, generated by $\theta$, $M_{X_1}$, $M_{X_2}$, $M_{X_3}$ and 
$$
\left( \begin{array}{ccc}
1 & 0  \\
0 &  \sqrt{X_1+X_2+X_3+1}  -\sqrt{X_1+X_2+X_3} \\
 \end{array} \right).
$$

\end{exa}

Another general class of $2 \times 2 $ matrices containing three-dimensional wreath products (and having thus non-trivial Poisson boundary) is described in the following example.


The examples below will follow from Lemma \ref{exa:twobytworank} and Corollary \ref{cor:metablockrank3}.

\begin{exa}\label{exa:a/btrans3}
Let $G$ be a non-abelian upper triangular linear group over a field $k$. Consider the homomorphism $\phi$ from $G \to k$  which sends a matrix 
$$
\left( \begin{array}{ccc}
a & c  \\
0 & b \\
 \end{array} \right).
$$
to $a/b$. Let $k'$ be the minimal field which contains the image of $\phi(G)$. Then if the transcendence degree  of  $k'$ is $\ge  3$, then the boundary is nontrivial for any finite entropy random walk.
\end{exa}

We have mentioned in the introduction (Corollary \ref{cor:variablesfield}) that
if the transcendence degree of  $k'$ is $\leq 1$, then the boundary is trivial for centered finite second moment random walks. Furthermore, by the same corollary, if the transcendence degree of $k'$ is $2$ and $k'$ is of positive characteristic, the boundary is also trivial for centered finite second moment random walks.

In the example below, the group is defined over a field of transcendence degree $3$, while its blocks are two-dimensional lamplighters. This is among many examples where the criterion about the transcendence degree of the blocks implies the triviality of the boundary of the random walk. 

\begin{exa}[$X$ - $Y$ -$Z$ group]
\label{exa:defXYZ}
Consider the group generated by  matrices 
$$
\bar{M}_x=
\left( \begin{array}{ccc}
X & 0 & 0   \\
0 & 1 & 0  \\
0 & 0 & 1  \\
 \end{array} \right), 
\medspace \medspace
\bar{M}_y=
\left( \begin{array}{ccc}
1 & 0 & 0   \\
0 & Y & 0  \\
0 & 0 & 1  \\
\end{array} \right),
\medspace \medspace
\bar{M}_z=
\left( \begin{array}{ccc}
1 & 0 & 0   \\
0 & 1 & 0  \\
0 & 0 & Z  \\
\end{array} \right)
$$
and
$$
\theta_{1,1,1}=
\left( \begin{array}{ccc}
1 & 1 & 1   \\
0 & 1 & 1  \\
0 & 0 & 1  \\
\end{array} \right)
$$

Then this group has a trivial Poisson boundary for any simple random walk (or more generally any centered finite second moment random walk). 

\end{exa}

\section{Comparison Criterion for group extensions}
The main goal of this section is to prove the following theorem. 
\begin{thm}[Comparison Criterion: abelian case]\label{thm:compcrit}
Let $1\to F\to G \to H\to 1$
and $1\to F\to G_2\to H\to 1$
be short exact sequences where the group $F$ is abelian, and the induced action of $H$ on $F$ is the same for both extensions.  Let $\mu_1$ and $\mu_2$ be finite entropy  measures on $G$,$G_2$ which
have the same  projections to $H$. Let $f\in F$, assume that  $f$ acts non-trivially on the Poisson boundary of $(G,\mu_1)$. Then under the assumption that $\mu_2$ is non-degenerate,  $f$ acts non-trivially on the Poisson boundary of $(G_2,\mu_2)$. 
\end{thm}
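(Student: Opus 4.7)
The plan is to reduce non-triviality of the $f$-action on the Poisson boundary to a total-variation condition on the fiber coordinate --- a condition that turns out to be insensitive to the particular cocycle defining the extension --- and then transfer this condition from $\mu_1$ to $\mu_2$ using non-degeneracy together with finite entropy.

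I begin by fixing set-theoretic sections $\sigma : H \to G$ and $\sigma' : H \to G'$, which give bijections $G \cong F \times H \cong G'$ with multiplications governed by $2$-cocycles $c, c' : H \times H \to F$:
\[
(a, h)(a', h') = (a + h\cdot a' + c(h, h'),\ hh'),
\]
and similarly for $G'$. By hypothesis the $H$-action on $F$ is common to both extensions; only the class in $H^2(H, F)$ distinguishes $G$ from $G'$. Coupling the two walks so that they share the $H$-projection $H_n = h_1 \cdots h_n$ and writing $X_n^{(i)} = (S_n^{(i)}, H_n)$, iteration of the multiplication formula gives $S_n^{(i)} = \sum_{k=1}^n H_{k-1}\cdot \phi_k^{(i)} + C_i(h_1,\ldots,h_n)$, where $\phi_k^{(i)}$ are the fiber increments of $\mu_i$ and $C_i$ is a deterministic function of the $H$-path. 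By the standard reformulation (a consequence of the $0$--$2$ law of Kaimanovich--Vershik and Derriennic), $f$ acts trivially on the Poisson boundary of $(G,\mu_i)$ iff $\|\mu_i^{\ast n} - \delta_f \ast \mu_i^{\ast n}\|_{TV} \to 0$. Conditioning on the $H$-walk $\omega$ and noting that $C_i$ contributes only a deterministic fiber shift (which cancels under a shift-by-$f$ total variation), this distance equals $\mathbb{E}_\omega \bigl\|\tilde\rho_n^{(i),\omega} - \tilde\rho_n^{(i),\omega}\ast \delta_f\bigr\|_{TV}$, where $\tilde\rho_n^{(i),\omega}$ is the conditional law of $\sum_k H_{k-1}\cdot \phi_k^{(i)}$ given $\omega$. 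Crucially, this right-hand side depends only on the shared $H$-walk and on the fiber distribution of $\mu_i$, not on the cocycle --- so the comparison between the two extensions is honest.

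The heart of the argument is then transferring non-triviality from $\mu_1$ to $\mu_2$. If $f$ acts non-trivially on $(G,\mu_1)$, the $\mu_1$-expectation above stays bounded away from $0$. Using non-degeneracy of $\mu_2$ to produce, for every step of the $\mu_1$-walk, a positive-probability finite word in $\mu_2$ that realizes the same fiber effect along the shared $H$-walk, I would embed the $\mu_1$-fiber process into the $\mu_2$-fiber process at a rescaled time. Finite entropy of $\mu_2$, invoked through the Shannon--McMillan--Breiman theorem of Kaimanovich--Vershik, would then prevent the extra $\mu_2$-randomness from convolving away the shift-by-$f$ discrepancy below the inherited lower bound, yielding non-triviality of $f$ on $(G',\mu_2)$.

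The main obstacle is exactly this transfer step: rigorously realizing the $\mu_1$-fiber increments inside the $\mu_2$-walk while controlling the entropy introduced by the simulation. Non-degeneracy (as opposed to mere irreducibility) is essential here because we need the semigroup generated by $\supp(\mu_2)$ --- not merely the group --- to reach every fiber element; this aligns with the remark immediately following the statement flagging that irreducibility alone does not suffice. The finite entropy hypothesis should enter through a conditional entropy estimate in the style of Kaimanovich's conditional entropy criterion, guaranteeing that the auxiliary randomness absorbed during the embedding does not drown out the shift-by-$f$ discrepancy inherited from the $\mu_1$-walk.
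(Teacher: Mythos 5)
Your reduction to a total-variation criterion is fine as far as it goes (the fact that $f$ acts trivially iff $\|\mu_i^{*n}-\delta_f*\mu_i^{*n}\|_{TV}\to 0$, and the observation that, conditionally on the shared $H$-path, the cocycle contributes only a deterministic fiber shift that cancels, so the quantity depends only on the $H$-action and the fiber increments). But the proof stops exactly where the theorem begins: the transfer from $\mu_1$ to $\mu_2$, which you yourself flag as ``the main obstacle,'' is never carried out, and the sketch you give for it does not work as stated. Replacing each $\mu_1$-increment by a positive-probability finite $\mu_2$-word ``at a rescaled time'' destroys the step-by-step coupling of the $H$-projections on which your conditional TV identity rests, and the phrase ``finite entropy prevents the extra randomness from convolving away the discrepancy'' is precisely the assertion that needs proof: convolution is a contraction in total variation, $\|(\nu-\delta_f*\nu)*\rho\|_{TV}\le\|\nu-\delta_f*\nu\|_{TV}$, so extra randomness can only decrease the discrepancy, and no lower bound is ``inherited'' automatically. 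Since the conditional fiber distributions of $\mu_1$ and $\mu_2$ given an $H$-increment can be completely unrelated (only the $H$-marginals agree), there is no quantitative mechanism in your argument keeping the $\mu_2$-quantity bounded away from zero.

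For comparison, the paper's proof supplies exactly this missing mechanism through the $\Delta$-restriction entropy. Non-triviality of the $f$-action is first converted (via the conditional entropy criterion, Lemma \ref{lem:strictinequality} and Corollary \ref{cor:technicalentropycriterion}) into the strict inequality $h(G,\mu_1)>H(G/\Norm(f),\mu_1)$ after passing to a convolution power; Lemma \ref{lem:efentropy}, using the decomposition of Lemma \ref{lem:Deltadecomposition} and subadditivity, then produces a \emph{finite} set $\Delta=g\Omega$ inside a single $F$-coset of $\supp\mu_1$ whose restriction entropy grows linearly. The transfer is done by the ``changing charges'' coloring argument (Lemma \ref{lem:changingcharges} and Corollary \ref{cor:changingchargesandgroups}), which shows that linear $\Delta$-entropy is an invariant of the $H$-projection and of the conjugation action alone, hence passes to any measure with the same projection whose support contains a translated copy $\Delta'=g'\Omega$; non-degeneracy of $\mu_2$ enters precisely here, by replacing $\mu_2$ with a convex combination of its convolution powers having full support so that such a $\Delta'$ exists in $\supp\mu_2$. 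Finally Lemma \ref{lem:easydirection} converts linear $\Delta'$-entropy back into non-trivial action of $f$ on the boundary of $(G',\mu_2)$. Some argument of this quantitative kind (or an equivalent substitute) is indispensable; without it your proposal is a plan, not a proof.
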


\begin{rem}\label{rem:finiteentropyimportant}
The assumption that the entropy of the measures is finite is essential in the theorem, as well as in its Corollary \ref{cor:compcrit} below.
Indeed, consider $G=G_2=\mathbb{Z}/2\mathbb{Z} \wr \mathbb{Z}^3 $.
We have
$$
1 \to \sum_{\mathbb{Z}^3}\mathbb{Z}/2\mathbb{Z} \to G \to \mathbb{Z}^3 \to 1.
$$
The wreath product $G$ is solvable, and in particular amenable. As we have mentioned,  by a theorem of Rosenblatt 
\cite{rosenblatt1981} and Kaimanovich-Vershik \cite{kaimanovichvershik} we know that $G$ admits non-degenerate measures $\mu$ (which can be chosen to have full support $\supp \mu =G$), such that the boundary  $(G,\mu)$ is trivial. Consider a measure $\mu_2$, which has the same projection to
$G$ as $\mu$, with the support of $\mu_2$ being equal to $\mathbb{Z}^3 \cup \theta$.
The projection of $\mu_2$ (and $\mu$) to $\mathbb{Z}^3$ is a non-degenerate random walk on $\mathbb{Z}^3$, hence this random walk is transient. A well-known argument from \cite{kaimanovichvershik} implies that the value of the lamp at $0$ stabilizes to a non-trivial limit along the trajectories of the random walk $(G,\mu_2)$,  that the boundary of $(G,\mu_2)$ is thus non-trivial and also that $f=\theta$ acts non-trivially on this boundary.
\end{rem}

\begin{rem}\label{rem:nondegenerateimportant}
 The assumption of Theorem \ref{thm:compcrit}
 that the support of $\mu_2$ generates $G$ as a semi-group (the
 measure is non-degenerate) cannot be weakened to ask that it generates $G$ as a group. Consider two central extensions, one of which is a direct product:
$$
1 \to C \to G \to H \to 1,
$$
$$
1 \to C \to C\times H \to H \to 1.
$$
For any measure  $\mu_2$ on $C \times H$ its Poisson boundary is equal to the boundary of its projection to $H$, and any element of $C$ acts trivially on this boundary. However, there exist central extensions and (not non-degenerate) measures on them where elements of the center act non-trivially. Such examples can be chosen among Hall's group in such a way that the center is infinite \cite{erschlerkaimanovich2019}[Cor 5.14]. For earlier non-discrete examples with finite center see \cite{furstenberg63}, $SL(2, \mathbb{R})$ example discussed after Thm 5.3  and remarks after  \cite{erschlerkaimanovich2019}[Cor 5.14].
\end{rem}

As a consequence of the above comparison criterion for actions of an element, we get the following corollary relating the triviality of the boundary between two different measures:

\begin{cor}[Comparison criterion: triviality of the boundary]\label{cor:compcrit}
Let $1\to F\to G \to H\to 1$
and $1\to F\to G_2\to H\to 1$
be short exact sequences with $F$ abelian, and suppose that the actions of $H$ on $F$ for both sequences are the same.  Let $\mu$ and $\mu_2$ be finite entropy measures on $G$, $G_2$ which
have the same  projections to $H$,
and where $\mu_2$ is non-degenerate.  Then if the Poisson boundary of $(G,\mu)$ is non-trivial,  the boundary of  $(G,\mu_2)$ is also non-trivial. 
\end{cor}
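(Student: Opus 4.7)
The plan is to deduce Corollary~\ref{cor:compcrit} from Theorem~\ref{thm:compcrit} by a dichotomy on whether the common quotient $H$ already has non-trivial Poisson boundary. Write $\bar\mu$ for the common projection of $\mu_1$ and $\mu_2$ to $H$. Applying the quotient map $G\to H$ coordinate-wise to trajectories respects the tail equivalence relation, so the Poisson boundary of $(H,\bar\mu)$ is a measurable quotient of the Poisson boundary of $(G,\mu_1)$, and likewise of the Poisson boundary of $(G',\mu_2)$.

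First I would handle the easy case: suppose the Poisson boundary of $(H,\bar\mu)$ is non-trivial. Then since it is a quotient of the Poisson boundary of $(G',\mu_2)$, the latter is itself non-trivial, and we are done; neither Theorem~\ref{thm:compcrit} nor the hypothesis on $(G,\mu_1)$ is used.

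It remains to treat the case in which the Poisson boundary of $(H,\bar\mu)$ is trivial. Here the plan is to produce an element $f\in F$ that acts non-trivially on the Poisson boundary $B$ of $(G,\mu_1)$, and then apply Theorem~\ref{thm:compcrit} to this $f$: the conclusion is that $f$ acts non-trivially on the Poisson boundary of $(G',\mu_2)$, which is therefore non-trivial. To produce such an $f$, I argue by contradiction. If every element of $F$ acted trivially on $B$, then via the Poisson representation $\phi(g)=\int_B \hat\phi(b)\,d(g\nu)(b)$, every bounded $\mu_1$-harmonic function $\phi$ on $G$ would satisfy $\phi(fg)=\phi(g)$ for all $f\in F$, using that $F$ acts trivially $\nu$-almost everywhere on $B$ and that each harmonic measure $g\nu$ lies in the $G$-quasi-invariant measure class of $\nu$. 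Such a $\phi$ would be constant on left cosets of $F$ and descend to a bounded $\bar\mu$-harmonic function on $H$, which by the case hypothesis is constant. This contradicts the non-triviality of the Poisson boundary of $(G,\mu_1)$, so some $f\in F$ must act non-trivially, as required.

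The only delicate step is the descent argument in Case 2: turning a trivial $F$-action on $B$ into $F$-invariance of every bounded $\mu_1$-harmonic function, and then into a well-defined $\bar\mu$-harmonic function on $H$. The latter half uses only the convolution identity $\phi=\phi*\mu_1$ together with the normality of $F$ in $G$ and the fact that $\bar\mu$ is the push-forward of $\mu_1$. Once this descent is in hand, the corollary reduces to a single application of Theorem~\ref{thm:compcrit} to a well-chosen $f\in F$, and the non-degeneracy hypothesis on $\mu_2$ is precisely what that theorem requires.
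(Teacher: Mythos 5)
Your proposal is correct and follows essentially the same route as the paper: split on whether the common projection to $H$ has non-trivial boundary, and in the trivial case observe that if every element of $F$ acted trivially then bounded $\mu_1$-harmonic functions would be $F$-invariant and descend to constant functions on $H$, so some $f\in F$ acts non-trivially and Theorem~\ref{thm:compcrit} applies. Your spelled-out descent via the Poisson representation and quasi-invariance of harmonic measures is just a more detailed version of the paper's one-line parenthetical remark.
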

\begin{proof}
To see that Corollary \ref{cor:compcrit} follows from Theorem \ref{thm:compcrit}, observe that if the projection of $\mu$ to $H$ has a non-trivial Poisson boundary, then the same is true for the projection of $\mu_2$ (this is the same projection), and then clearly the boundary of $(G,\mu_2)$ is non-trivial.
It is sufficient therefore to consider the case when  the random walk on $H$, defined by the projection of $\mu$ (and of $\mu_2$) has a trivial boundary. In this case note that the Poisson boundary of $\mu$ or $\mu_2$ is non-trivial if and only if there exists an element of $F$ that acts non-trivially on the boundary. (Indeed, if all elements of $F$ act trivially, observe
that a non-constant harmonic function on $(G,\mu)$ is $F$-invariant and induces a non-constant harmonic function on $H$).
Hence the claim of the corollary follows from the claim of the Theorem.
\end{proof}

It seems natural to ask whether
one can replace the assumption that the group $F$
is abelian by the assumption that this group is hyper-FC central.

\begin{rem}
 
Assume that $F$ is not hyper-FC-central and that $F$ admits finite entropy non-degenerate measures with trivial boundary.
Then the statement of Corollary \ref{cor:compcrit} is not true 
for the following extension:
$$
1\to F \to F \to 1 \to 1.
$$
In this case the assumption that
$\mu$ and $\mu_2$ have the same projection to the trivial group is obviously verified. But the claim of the Corollary is not true.
\end{rem}

Indeed, by a result  of \cite{4authors} we know  that any not hyper-FC-central group admits a finite entropy measure with non-trivial boundary.

We also mention that 
any amenable group admits
a non-degenerate measure with trivial boundary (\cites{rosenblatt1981, kaimanovichvershik}). 
This measure can not, however, necessarily be chosen to have finite entropy
(see \cite{erschlerliouv} and many new examples in this paper).




\begin{rem} Let $\mu$ be a measure on a countable group $G$. Consider $\mu'
= \alpha \mu + \beta \chi_e$, $\alpha+\beta=1$,
$\alpha>0$. Then the Poisson boundary of $(G,\mu)$ is trivial if and   only if the Poisson boundary of  $(G,\mu')$ is trivial.
Moreover, $g\in G$ acts trivially on the boundary  of $(G,\mu)$ if and only if $g$ acts trivially on 
the boundary of $(G,\mu')$
\end{rem}
This follows by noting that the space of harmonic functions for the two measures is the same. 

\begin{rem}\label{rem:comparemeasuresinZ3}
We consider a lamplighter group $G= \mathbb{Z}/2\mathbb{Z} \wr \mathbb{Z}^d $. Consider a finite entropy non-degenerate measure $\mu$ on $G$, $\mu(e) \ne 0$. 
Then there exists $\mu_2$ on $G$, such that  $\mu_2$  is supported on the union of $\mathbb{Z}^d$ and $\theta$ and which has the same projection to $\mathbb{Z}^d$ as $\mu$ and  such that
the following holds. For any such $\mu_2$ the action of $\theta$ on the boundary of $(G,\mu)$ is non-trivial if and only if the action of $\theta$ on the boundary of $(G,\mu_2)$ is non-trivial. In particular, $\theta$ acts non-trivially if and only if the projection to the base group is transient.
\end{rem}
This provides us in particular a new way to see why all finite entropy non-degenerate random walks on $G= \mathbb{Z}/2\mathbb{Z} \wr \mathbb{Z}^d$, $d\ge 3$
have non-trivial boundary (\cite{erschlerliouv}).
\begin{proof} 
We consider $\mu_2$ with the same projection to $\mathbb{Z}^d$ and with the support as we claim. By comparison criterion, the action of $\theta$ is either trivial in both cases or non-trivial. Finally, observe that for $\mu_2$ the original argument of Kaimanovich and Vershik can be used to show that the action of $\theta$
is non-trivial whenever the projected random walk is transient, and another argument from \cite{kaimanovichvershik} shows the triviality of the boundary in the case of recurrent projection.
\end{proof}

\subsection{ Proof of  Theorem
\ref{thm:compcrit}} 





Now we start  proving  the theorem.  We assume that  $f\in F$ acts non-trivially on the boundary of  $(G,\mu)$. We want to prove that it acts non-trivially
on the boundary of $(G_2, \mu_2)$.

First observe that Claim \ref{lem:strictinequality} implies

\begin{claim}\label{cor:technicalentropycriterion}
Suppose $f$ acts non-trivially on the Poisson boundary of $(G,\mu)$, where $\mu$ has finite entropy. Then for large enough $N$ it holds $$
H(\mu^{*N}, G/\Norm(f)) < N  h(G,\mu)$$ \label{basicentropy}
\end{claim}
\begin{proof}
If $N$ is large enough, then 
$H(\mu^{*N}, G/\Norm(f))/N$ is close to
$h (G/\Norm_G(f) , \mu)$, and by
Claim \ref{lem:strictinequality} we know that $h (G/\Norm_G(f) , \mu)< h(G,\mu)$.

\end{proof}

To prove the Theorem, we will apply the corollary above for $\mu$ and we need to show that the same inequality holds for some $N$ for our second measure $\mu_2$. This will be done by studying the $\Delta$-restriction entropy, defined below.





\begin{defn}[$\Delta$-restriction  entropy]\label{def:deltarestrictionentropy}
Given a group $G$, a probability measure $\mu$ on $G$ and
a finite set $\Delta \subset \supp (\mu)$,
we define the $\Delta$-restriction  entropy
$H_{\Delta}(n)$
as follows.
We consider an $n$-step trajectory of $(G,\mu)$.
Then we take the conditional entropy of $X_n$, conditioned on prescribing all increments except those that are in $\Delta$.
\end{defn}


For brevity, we will often say $\Delta$-entropy instead of  $\Delta$-restriction entropy. 
We note that a special case of  $\Delta$-entropy 
was used
in \cite{erschlerliouv} to provide lower estimates of the asymptotic entropy for wreath products and $3$-dimensional Baumslag groups.

For our applications, we will be interested in $\Delta$ such that its elements  belong to the same coset  $G/F$.

If $\mu$ is  some measure on a countable space, not necessarily a probability measure, we can also speak about the entropy of $\mu$ defined by
$$
H(\mu) = \sum - \mu(x) \ln \mu(x).
$$

\begin{lem}\label{lem:subadditivity}
For any group $G$, any  probability measure $\mu$, and any finite $\Delta \subset \supp (\mu)$
the function
$H_{\Delta}(n)$ is subadditive: 
$$
H_{\Delta}(n+m) \le H_{\Delta}(n)+ H_{\Delta}(m).
$$
In particular, there is a limit $H_{\Delta}(n)/n$ and if this function $\ge Cn$
for some positive $C$ and an infinite subsequence of $n$, then it holds for all $n$.
\end{lem}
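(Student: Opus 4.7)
The plan is to reduce the claim to the standard independence-plus-data-processing arguments for Shannon entropy, followed by Fekete's lemma.

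First I would set up notation. Let $g_1,g_2,\dots,g_{n+m}$ be i.i.d.\ samples from $\mu$, so that the $(n+m)$-step walk is $X_{n+m}=g_1g_2\cdots g_{n+m}$. For a step $i$, write $\xi_i=g_i$ if $g_i\notin\Delta$ and $\xi_i=*$ otherwise; the conditioning in the definition of $H_\Delta$ amounts to revealing the entire sequence $\xi_1,\dots,\xi_{n+m}$ (which tells us both which indices fall in $\Delta$ and the exact values of the other increments). Let $C_1=(\xi_1,\dots,\xi_n)$ and $C_2=(\xi_{n+1},\dots,\xi_{n+m})$. Then by definition
\begin{equation*}
H_\Delta(n)=H(X_n\mid C_1),\qquad H_\Delta(m)=H(g_{n+1}\cdots g_{n+m}\mid C_2),\qquad H_\Delta(n+m)=H(X_{n+m}\mid C_1,C_2).
\end{equation*}

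Next I would carry out the main inequality. Set $Y=g_1\cdots g_n$ and $Z=g_{n+1}\cdots g_{n+m}$, so that $X_{n+m}=YZ$ is a (deterministic) function of the pair $(Y,Z)$. Using the data-processing property of Shannon conditional entropy and then the chain rule,
\begin{equation*}
H(X_{n+m}\mid C_1,C_2)\;\le\;H(Y,Z\mid C_1,C_2)\;=\;H(Y\mid C_1,C_2)+H(Z\mid Y,C_1,C_2).
\end{equation*}
Because $(g_1,\dots,g_n)$ and $(g_{n+1},\dots,g_{n+m})$ are independent, the pair $(Y,C_1)$ is independent of $(Z,C_2)$; hence $H(Y\mid C_1,C_2)=H(Y\mid C_1)=H_\Delta(n)$ and $H(Z\mid Y,C_1,C_2)=H(Z\mid C_2)=H_\Delta(m)$. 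Combining these identities gives $H_\Delta(n+m)\le H_\Delta(n)+H_\Delta(m)$, which is the desired subadditivity. The only conceptual point to verify carefully is that the conditioning data in $H_\Delta(n+m)$ really does decompose cleanly into $C_1$ and $C_2$; this is immediate from the coordinatewise definition of $\xi_i$.

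Finally, the second assertion follows from Fekete's lemma: for any subadditive nonnegative sequence $a_n$, the limit $\lim_{n\to\infty} a_n/n$ exists and equals $\inf_{n\ge1} a_n/n$. Applying this to $a_n=H_\Delta(n)$ gives the existence of $\lim_n H_\Delta(n)/n$. If $H_\Delta(n)\ge Cn$ along some infinite subsequence, the limit is $\ge C$, hence the infimum is $\ge C$, and therefore $H_\Delta(n)\ge Cn$ for every $n$. I do not foresee a genuine obstacle here; the only thing to be pedantic about is the exact meaning of ``conditioning on all increments except those in $\Delta$'', but once one agrees that the random variable revealed is $(\xi_i)_{i\le n+m}$ the argument above is formal.
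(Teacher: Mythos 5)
Your proof is correct and follows essentially the same route as the paper: the conditioning for $n+m$ steps decomposes into the conditioning on the first $n$ and last $m$ increments, the conditional entropy of the product is bounded by the sum of the two conditional entropies (you spell this out via data processing, the chain rule and independence, which the paper leaves implicit), and the final claim follows from Fekete's lemma exactly as you argue.
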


\begin{proof}

Indeed, conditioning for $n$ first increments and for $m$ last increments is the same as our conditioning for $n+m$ increments.
Observe also that the conditional entropy of the convolution of two measures is not greater than the sum of the conditional entropies (all conditioned on the same event). The last two claims then follow from Fekete's lemma.

\end{proof}

\begin{claim} \label{lem:easyinequality}
Let $\mu$ be a finite entropy probability measure on a group $G$,
and $\Delta$ is a subset of the support of $\mu$.
Then the $\Delta$-entropy of the random walk $(G,\mu)$
is at most the entropy of the restriction $\nu$ of $\mu$ to $\Delta$. 
\end{claim}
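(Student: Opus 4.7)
The plan is to bootstrap from the one-step case $n=1$ using the subadditivity of $H_\Delta$ already established in Lemma \ref{lem:subadditivity}. Iterating $H_\Delta(n+m) \le H_\Delta(n) + H_\Delta(m)$ gives $H_\Delta(n) \le n \, H_\Delta(1)$ for every $n$, so it is enough to show $H_\Delta(1) \le H(\nu)$; dividing by $n$ and sending $n \to \infty$ then bounds the asymptotic $\Delta$-entropy $\lim_n H_\Delta(n)/n$ by $H(\nu)$, which is what the statement asks for.

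For the one-step bound, set $p = \mu(\Delta) = \nu(G) > 0$. Conditioning on the ``non-$\Delta$ increments'' of a one-step trajectory reveals $g_1$ exactly when $g_1 \notin \Delta$ (event of probability $1-p$, contributing zero to the conditional entropy) and otherwise reveals only the event $\{g_1 \in \Delta\}$, under which the conditional law of $g_1$ is the renormalized probability measure $\nu/p$ on $\Delta$. Consequently
\[
H_\Delta(1) \;=\; p \cdot H(\nu/p).
\]
Expanding the right side and comparing to the definition $H(\nu) = -\sum_{g \in \Delta} \mu(g) \ln \mu(g)$ yields the algebraic identity $p \, H(\nu/p) = H(\nu) + p \ln p$. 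Since $0 < p \le 1$ the correction term $p \ln p$ is non-positive, so $H_\Delta(1) \le H(\nu)$.

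Combining the two steps gives $H_\Delta(n) \le n H_\Delta(1) \le n H(\nu)$ for every $n$, which yields the claim about the asymptotic $\Delta$-entropy. I do not anticipate any real technical obstacle: the only mild point that needs care is distinguishing the sub-probability measure $\nu$ on $G$ from its normalization $\nu/p$ as a probability measure on $\Delta$, but the identity $p \, H(\nu/p) = H(\nu) + p \ln p$ bridges the two definitions of entropy (for sub-probability and probability measures respectively) cleanly, so the argument goes through with no further input.
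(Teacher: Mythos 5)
Your proof is correct and follows essentially the same route as the paper: a one-step bound combined with the subadditivity of $H_\Delta$ from Lemma \ref{lem:subadditivity}. Your one-step computation $H_\Delta(1)=\mu(\Delta)\,H(\nu/\mu(\Delta))=H(\nu)+\mu(\Delta)\ln\mu(\Delta)\le H(\nu)$ is in fact the precise version of the paper's assertion that $H_\Delta(1)$ equals $H(\nu)$ (exact equality holds only when $\mu(\Delta)=1$), and the inequality is all that is needed.
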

\begin{proof}
Observe that 
$H_\Delta(1)$ is equal to $H(\nu)$.
By subadditivity of $\Delta$-entropy the claim follows.
\end{proof}

\begin{lem}\label{lem:easydirection}
Consider a probability measure $\mu$ on a group $G$, and 
a finite set $\Delta= \{g_1, \dots, g_k \} \subset  \supp (\mu)$.
Consider $f\in G$ and assume that the normal subgroup  $\Norm(f)$
generated by $f$ is abelian.
We also assume that all the $g_i$ have the same projection to $G/ Norm(f)$.
If the $\Delta$-restriction entropy is linear, then $f$ acts  non trivially
on the Poisson boundary of $(G,\mu)$.
\end{lem}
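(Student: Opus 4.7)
The plan is to exhibit the $\Delta$-restriction entropy as a lower bound for the entropy gap $h(G,\mu) - h(G/\Norm(f),\mu)$, and then invoke Lemma \ref{lem:strictinequality}. Let $\pi\colon G \to G/\Norm(f)$ denote the quotient map and $\bar\mu = \pi_*\mu$ its pushforward.

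The first step is a simple observation on information. Once all non-$\Delta$ increments of the $n$-step trajectory are prescribed, one reads off automatically the positions at which $\Delta$-steps occur, and since every $g_i\in\Delta$ has the same image $d\in G/\Norm(f)$ under $\pi$, the projected endpoint $\pi(X_n)$ is completely determined by the prescription. In particular, the conditioning used in Definition \ref{def:deltarestrictionentropy} is a refinement of the partition $\{\pi(X_n) = \text{const}\}$.

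The second step is a standard entropy manipulation. Since $\pi(X_n)$ is a function of $X_n$, the chain rule gives $H(\mu^{*n}) = H(\bar\mu^{*n}) + H(X_n\mid \pi(X_n))$; and because conditioning on a finer $\sigma$-algebra can only decrease entropy,
$$ H_\Delta(n) \;=\; H\bigl(X_n\,\big|\,\text{non-}\Delta\text{ increments}\bigr) \;\le\; H(X_n\mid \pi(X_n)) \;=\; H(\mu^{*n}) - H(\bar\mu^{*n}). $$
Dividing by $n$ and taking the limit, which exists by the subadditivity established in Lemma \ref{lem:subadditivity}, the linear-growth hypothesis $H_\Delta(n)\ge cn$ forces
$$ h(G,\mu) - h(G/\Norm(f),\mu) \;\ge\; c \;>\; 0. $$
Strict inequality of the two asymptotic entropies is precisely the condition in Lemma \ref{lem:strictinequality} for $f$ to act nontrivially on the Poisson boundary of $(G,\mu)$, completing the proof.

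The argument is essentially careful bookkeeping of conditional entropies, and the only place requiring vigilance is the first step: one must confirm that prescribing all non-$\Delta$ increments genuinely pins down $\pi(X_n)$. This is exactly what the hypothesis ``all $g_i$ share a coset of $\Norm(f)$'' delivers. The Abelian hypothesis on $\Norm(f)$ itself plays no direct role in this implication; it is included to keep the lemma aligned with the setup of Theorem \ref{thm:compcrit}, where the converse direction will be the substantive one.
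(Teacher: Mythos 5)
Your proof is correct and follows essentially the same route as the paper: observe that the conditioning in the definition of $\Delta$-restriction entropy determines the projection to $G/\Norm(f)$ (because all elements of $\Delta$ share a coset), deduce that $H_\Delta(n)$ bounds the gap $H(\mu^{*n})-H(\bar\mu^{*n})$ from below, and conclude via Lemma \ref{lem:strictinequality}. Your writeup is just a slightly more explicit bookkeeping (chain rule plus monotonicity of conditional entropy) of the same argument, and your remark that the Abelian hypothesis is not used in this direction matches the paper.
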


\begin{proof}
To prove this lemma, observe that the $\Delta$-restriction entropy
gives a lower bound for the conditional entropy of $(G,\mu)$, conditioned on the quotient in $G/\Norm(f)$.
Indeed, the conditional information of our $n$ step position from the definition of $\Delta$-restriction
entropy already entirely determines the projections to $G/\Norm(g)$.
In other words we are conditioning on more information in the definition of $\Delta$-restriction entropy than on the full $n$-step trajectory on the quotient group  $G/\Norm(f)$ which is in turn more information than the position at step $n$ of the induced random walk on  $G/\Norm(f)$.  
We can conclude that the asymptotic  entropy of $(G,\mu)$ is strictly greater than the asymptotic
entropy of the induced random walk $G/\Norm(f)$. Hence we can use
Claim \ref{lem:strictinequality}  and claim that $f$ acts non-trivially on the boundary of $(G,\mu)$.
\end{proof}

Lemma \ref{lem:efentropy} below shows that, under appropriate assumptions, the converse of the statement  in Lemma \ref{lem:easydirection} is true.

First we make two straightforward remarks about abelian normal subgroups, which we will use several times in the sequel.
\begin{claim} \label{claim:firstobvious}
If $F$ is an abelian normal subgroup of $G$, $f_i\in F$, then
$$
g_1f_1g_2f_2 \dots g_k f_k=
g_1 g_2\dots  g_k f'_1 \dots f'_k
$$
where $f'_i$ is the conjugate  of $f_i$ by $g_{i+1}g_{i+2}\dots  g_k$. That is $f_i^{'}=g_k^{-1}g_{k-1}^{-1}\dots g_{i+1}^{-1}f_ig_{i+1}\dots g_k$
\end{claim}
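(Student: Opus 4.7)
The plan is to argue by induction on $k$, simply pushing each $f_i$ to the right of all the $g_j$ with $j>i$ and using normality of $F$ to absorb the conjugations into new elements of $F$. The key identity is the rewriting $fg = g(g^{-1}fg)$, valid for any $f \in F$ and $g \in G$; because $F$ is normal, $g^{-1}fg$ again lies in $F$.

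For the inductive step, assume the identity holds for products of length $k-1$ and consider $g_1 f_1 \cdots g_k f_k$. Apply the induction hypothesis to the length-$(k-1)$ prefix to obtain
\[
g_1 f_1 \cdots g_{k-1} f_{k-1} \;=\; g_1 g_2 \cdots g_{k-1} \, \tilde f_1 \tilde f_2 \cdots \tilde f_{k-1},
\]
where each $\tilde f_i$ is the conjugate of $f_i$ by $g_{i+1}\cdots g_{k-1}$. Right-multiplying by $g_k f_k$ and then moving each $\tilde f_i$ past $g_k$ via $\tilde f_i g_k = g_k\bigl(g_k^{-1} \tilde f_i g_k\bigr)$, the product becomes $g_1 \cdots g_k \cdot f'_1 \cdots f'_{k-1} \cdot f_k$, with $f'_i = g_k^{-1} g_{k-1}^{-1}\cdots g_{i+1}^{-1} f_i g_{i+1}\cdots g_k$ as desired; the remaining factor $f_k$ is precisely $f'_k$ (the ``empty'' conjugate).

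Since every $f'_i$ lies in the abelian group $F$, their order in the product $f'_1 \cdots f'_k$ is irrelevant, so no ambiguity arises in the final expression. The base case $k=1$ is trivial, and the induction closes.

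There is no real obstacle here — the claim is a bookkeeping lemma, and the two hypotheses (normality and abelianness of $F$) enter in exactly the two places where they are needed: normality to keep each rewritten factor inside $F$, and abelianness to freely permute the resulting $f'_i$ on the right.
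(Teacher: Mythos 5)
Your induction is correct and is exactly the routine rewriting ($fg=g(g^{-1}fg)$, pushed through by induction on $k$) that the paper leaves implicit, since it states this claim as a "straightforward remark" with no proof. One minor point: abelianness of $F$ is never actually used in your argument -- the induction already produces the factors in the order $f'_1\cdots f'_k$, so normality alone suffices for this claim; commutativity of $F$ is what the subsequent Claim~\ref{claim:secondobvious} (regrouping the conjugates by an arbitrary partition) relies on.
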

This also implies the following.
\begin{claim} \label{claim:secondobvious}
Consider a partition of $S_1$, \dots, $S_l$ of the numbers $1$, $2$, \dots , $k$.
Then 
$$
g_1f_1g_2f_2 \dots g_k f_k=g_1 g_2 \dots  g_k f''_1  f''_2 \dots f''_l
$$
$f''_i$  is a product of conjugates of $f_j$, $j \in S_i$.
\end{claim}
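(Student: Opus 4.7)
The plan is to derive Claim \ref{claim:secondobvious} directly from Claim \ref{claim:firstobvious} together with the fact that $F$ is Abelian. First, I would apply Claim \ref{claim:firstobvious} to rewrite
\[
g_1 f_1 g_2 f_2 \cdots g_k f_k = g_1 g_2 \cdots g_k \, f'_1 f'_2 \cdots f'_k,
\]
where each $f'_i = g_k^{-1} g_{k-1}^{-1} \cdots g_{i+1}^{-1} f_i g_{i+1} \cdots g_k$ is a conjugate of $f_i$ lying in $F$ (since $F$ is normal).

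Next, because $F$ is Abelian and all the $f'_i$ lie in $F$, the product $f'_1 f'_2 \cdots f'_k$ is insensitive to the order of its factors. Given the partition $S_1, \dots, S_l$ of $\{1, 2, \dots, k\}$, I would regroup the factors accordingly and set
\[
f''_i := \prod_{j \in S_i} f'_j,
\]
taken in any order. Each $f''_i$ is then by construction a product of conjugates of $f_j$ for $j \in S_i$, and
\[
f'_1 f'_2 \cdots f'_k = f''_1 f''_2 \cdots f''_l.
\]
Substituting back yields exactly the claimed identity.

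There is essentially no obstacle: the whole argument is a two-line bookkeeping exercise that combines the rewriting from the previous claim with commutativity in $F$. The only thing worth stating carefully is that the $f'_i$ do genuinely belong to $F$, which follows from normality of $F$ in $G$; once this is noted, commutativity in $F$ permits arbitrary regrouping.
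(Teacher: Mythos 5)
Your proof is correct and matches the paper's intended argument: the paper derives Claim \ref{claim:secondobvious} from Claim \ref{claim:firstobvious} in exactly this way, pushing the $f_i$ to the right as conjugates and then regrouping them according to the partition using commutativity of $F$. Nothing is missing.
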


We now give a formula evaluating the $\Delta$-restriction entropy as the entropy of a random variable taking values in $F$.

\begin{lem}\label{lem:efentropy}[Main Lemma for comparison criterion]

Consider an extension 
$$
1 \to F \to G \to K \to 1.
$$
Let $f\in F$, $F$ be abelian. Let $\mu$ be a finite entropy probability measure on $G$ satisfying
$h(\mu) > H(G/\Norm(f), \mu)$.
(In other words, the asymptotic entropy of
$(G,\mu)$ is strictly larger than the 
entropy of the projection of $\mu$ to the normal subgroup generated by $f$.)
Then there exists a finite set $\Delta=\{g_1, \dots, g_k\}$,
such that $g_i$, $1 \le i \le k$ have the same projection to $G/\Norm(f)$
and such that $\Delta$-entropy of $(G,\mu)\ge Cn$, for some positive constant $C$.
\end{lem}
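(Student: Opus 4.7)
The plan is to turn the hypothesis $h(G,\mu) > H(\bar\mu)$, where $\bar\mu$ is the projection of $\mu$ to $K:=G/\Norm(f)$, into a linear lower bound on $H_\Delta(n)$ for a finite $\Delta$ in one coset, through a clean entropy inequality, an Abelian coset decomposition, and finally a finite-set truncation.

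The key inequality is immediate: subadditivity of $H(\mu^{*n})/n$ gives $H(X_n)\ge nh$ for all $n$ (where $h=h(G,\mu)$), while $H(\bar X_1,\dots,\bar X_n) = nH(\bar\mu)$, so
\begin{equation*}
H(X_n\mid\bar X_1,\dots,\bar X_n)\;\ge\; H(X_n)-H(\bar X_1,\dots,\bar X_n)\;\ge\; n\epsilon,\qquad \epsilon:=h-H(\bar\mu)>0.
\end{equation*}
Fixing lifts $g^0_{\bar g}\in\pi^{-1}(\bar g)$ for $\bar g\in\supp\bar\mu$ and writing $h_j = g^0_{\bar h_j}n_j$ with $n_j\in\Norm(f)$, iterating Claim \ref{claim:firstobvious} gives, additively in $\Norm(f)$,
\begin{equation*}
X_n\;=\;\tilde X_n+\sum_{\bar g\in\supp\bar\mu}S_{\bar g},\qquad S_{\bar g}:=\sum_{j\in J_{\bar g}}\phi_j(n_j),
\end{equation*}
where $J_{\bar g}=\{j:\bar h_j=\bar g\}$ and $\tilde X_n$ together with the automorphisms $\phi_j$ of $\Norm(f)$ depend only on the projected trajectory. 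Conditional on that trajectory the $n_j$, and hence the $S_{\bar g}$, are independent; subadditivity of entropy for independent group-valued random variables then yields
\begin{equation*}
n\epsilon\;\le\; H(X_n\mid\bar X_1,\dots,\bar X_n)\;=\; H\bigl(\textstyle\sum_{\bar g}S_{\bar g}\mid\bar X\bigr)\;\le\;\sum_{\bar g}H(S_{\bar g}\mid\bar X).
\end{equation*}

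Next, finite entropy of $\mu$ provides a finite $T\subseteq\supp\bar\mu$ with $\sum_{\bar g\notin T}\bar\mu(\bar g)H(\mu|_{\pi^{-1}(\bar g)}/\bar\mu(\bar g))<\epsilon/2$. The trivial bound $E[H(S_{\bar g}\mid\bar X)]\le n\bar\mu(\bar g)H(\mu|_{\pi^{-1}(\bar g)}/\bar\mu(\bar g))$ shows the cosets outside $T$ contribute less than $n\epsilon/2$ in expectation, so pigeonholing on the finite $T$ produces some $\bar g_0\in T$ with $E[H(S_{\bar g_0}\mid\bar X)]\ge cn$ for infinitely many $n$, where $c:=\epsilon/(2|T|)$. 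Lemma \ref{lem:subadditivity} upgrades this to all $n$. Unpacking the $\Delta$-entropy definition, $H_{\Delta_0}(n)=E[H(S_{\bar g_0}\mid\bar X)]$ for $\Delta_0:=\pi^{-1}(\bar g_0)\cap\supp\mu$, and the lemma follows when $\Delta_0$ is finite.

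If $\Delta_0$ is infinite, I truncate to a finite $\Delta\subseteq\Delta_0$: setting $\chi_j:=\mathbf{1}\{h_j\in\Delta\}$ and decomposing $S_{\bar g_0}=S_\Delta+S_{\Delta_0\setminus\Delta}$ (conditionally independent given $(\bar X,\chi)$), entropy subadditivity yields
\begin{equation*}
H_\Delta(n)=E[H(S_\Delta\mid\bar X,\chi)]\;\ge\; E[H(S_{\bar g_0}\mid\bar X,\chi)] - E[H(S_{\Delta_0\setminus\Delta}\mid\bar X,\chi)].
\end{equation*}
The gap $E[H(S_{\bar g_0}\mid\bar X)]-E[H(S_{\bar g_0}\mid\bar X,\chi)]$ equals the mutual information $E[I(S_{\bar g_0};\chi\mid\bar X)]\le n\bar\mu(\bar g_0)\cdot h_2(\mu(\Delta_0\setminus\Delta)/\bar\mu(\bar g_0))$, where $h_2(q):=-q\log q-(1-q)\log(1-q)$, while the second term is bounded by $n\mu(\Delta_0\setminus\Delta)\cdot H(\mu|_{\Delta_0\setminus\Delta}/\mu(\Delta_0\setminus\Delta))$. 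Both vanish as $\mu(\Delta_0\setminus\Delta)\to 0$ using finite entropy of $\mu|_{\Delta_0}$, so a large enough finite $\Delta$ makes both less than $cn/4$ and yields $H_\Delta(n)\ge (c/2)n$ for all $n$. The main obstacle is this truncation step: verifying the two auxiliary estimates and then choosing a single finite $\Delta$ that works uniformly in $n$, which succeeds precisely because both error bounds scale linearly in $n$ with coefficients driven to zero by the finite-entropy tail of $\mu$.
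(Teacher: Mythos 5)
Your proof is correct and takes essentially the same route as the paper's: the same coset decomposition of $X_n$ using the Abelian normal subgroup and conjugation determined by the projected trajectory (Claims \ref{claim:firstobvious}--\ref{claim:secondobvious} and Lemma \ref{lem:Deltadecomposition}), the same finite-entropy tail bound plus pigeonhole over the finitely many significant cosets, the same subadditivity upgrade via Lemma \ref{lem:subadditivity}, and the same final truncation to a finite $\Delta$ inside the chosen coset. The only cosmetic difference is in the truncation step, where you bound the loss by a conditional mutual information involving the indicator $\chi$, while the paper gets the same conclusion more directly from the inequality $H_\Delta(X_n)\le H_{\Delta'}(X_n)+nH(\mu'')$ based on Claim \ref{lem:easyinequality}.
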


For the proof of Lemma \ref{lem:efentropy} above, we start with  the following observation:

\begin{lem} \label{lem:Deltadecomposition}
Let $\mu$ be a probability measure of $G$
of finite entropy.
Let $F$ be  an abelian normal subgroup of $G$. Then
$$
H(X_n) = H(\mu^{*n}) \le n H_{\mu_{G/F}} + \sum_{\alpha} H_{\Delta_\alpha}(n),
$$
where $\Delta_\alpha$ are subsets in the partition of elements in the support
of $\mu$ to their projection to $G/F$.
\end{lem}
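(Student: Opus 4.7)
The plan is to split $H(X_n) = H(\mu^{*n})$ into a contribution coming from the projection to $G/F$ and a conditional contribution living inside $F$, bounding the former by $n H(\mu_{G/F})$ using the i.i.d.\ structure of the projected increments, and the latter by $\sum_\alpha H_{\Delta_\alpha}(n)$ using the Abelian structure of $F$ together with Claims \ref{claim:firstobvious} and \ref{claim:secondobvious}.

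Let $\bar g_i$ denote the projection of $g_i$ to $G/F$. From the standard inequality $H(X_n) \le H(X_n, \bar g_1, \ldots, \bar g_n) = H(\bar g_1, \ldots, \bar g_n) + H(X_n \mid \bar g_1, \ldots, \bar g_n)$, together with the fact that the $\bar g_i$ are i.i.d.\ with law $\mu_{G/F}$, I immediately get $H(\bar g_1, \ldots, \bar g_n) = n H(\mu_{G/F})$, and it remains to estimate the conditional term. For each coset $\alpha$ in the image of $\supp\mu$ in $G/F$, fix a representative $h_\alpha \in G$ and write $g_i = h_{\bar g_i} f_i$ with $f_i \in F$. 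Claim \ref{claim:firstobvious} gives $X_n = H_n \prod_{i=1}^n f_i'$, where $H_n := h_{\bar g_1}\cdots h_{\bar g_n}$ is determined by $\bar g$ and each $f_i'$ is the conjugate of $f_i$ by an element of $G$, hence still in $F$ by normality. Since $F$ is Abelian, Claim \ref{claim:secondobvious} applied to the partition $S_\alpha := \{i : \bar g_i = \alpha\}$ lets me reorder: $X_n = H_n \prod_\alpha F_n^{(\alpha)}$ with $F_n^{(\alpha)} := \prod_{i \in S_\alpha} f_i' \in F$. Given $\bar g$, both $H_n$ and the partition $(S_\alpha)$ are known, so subadditivity of entropy yields $H(X_n \mid \bar g) \le \sum_\alpha H(F_n^{(\alpha)} \mid \bar g)$.

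The final step is to identify $H(F_n^{(\alpha)} \mid \bar g) = H_{\Delta_\alpha}(n)$. The conditioning defining the $\Delta_\alpha$-restriction entropy reveals $g_i$ whenever $\bar g_i \ne \alpha$ and reveals the event $\bar g_i = \alpha$ otherwise, so in total it reveals $\bar g$ together with $(f_i)_{i \notin S_\alpha}$; under this conditioning $X_n$ differs from $F_n^{(\alpha)}$ only by a group element that has been prescribed, hence $H_{\Delta_\alpha}(n) = H(F_n^{(\alpha)} \mid \bar g, (f_i)_{i \notin S_\alpha})$. Conditionally on $\bar g$, the $f_i$'s are independent, and $F_n^{(\alpha)}$ depends only on $\bar g$ and $(f_i)_{i \in S_\alpha}$, so the extra conditioning is irrelevant and this equals $H(F_n^{(\alpha)} \mid \bar g)$. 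Assembling the three bounds proves the lemma. The only moderately delicate point is this last identification — verifying that the auxiliary information revealed by the $\Delta_\alpha$-conditioning beyond $\bar g$ lies in the ``independent direction'' invisible to $F_n^{(\alpha)}$; everything else is routine subadditivity together with Claims \ref{claim:firstobvious}--\ref{claim:secondobvious}.
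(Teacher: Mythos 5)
Your argument is correct and follows essentially the same route as the paper: decompose each increment as a fixed coset representative times an element of $F$, use Claims \ref{claim:firstobvious}--\ref{claim:secondobvious} to pull the $F$-parts to the right grouped by coset, bound the entropy by $nH(\mu_{G/F})$ plus the conditional entropies of the grouped products, and identify each of those with $H_{\Delta_\alpha}(n)$. Your explicit verification that the extra increments revealed by the $\Delta_\alpha$-conditioning are conditionally independent of $F_n^{(\alpha)}$ given the projected trajectory is exactly the (implicit) content of the paper's identification $H(W^n_\alpha\mid U_1,\dots,U_n)=H_{\Delta_\alpha}(n)$, and it is needed since the inequality must go the right way.
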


\begin{proof}

For each $\alpha$, fix an element $g_\alpha \in \Delta_{\alpha}$.
Consider the elements $f_{\alpha,j} \subset F$, where $j$ takes values from $1$ to $|\Delta_{\alpha}|$, such that
the elements of $\Delta_\alpha$ are $g_\alpha f_{\alpha,j}$.  
When we write $X_n$ as a product of increments of the form $g_\alpha f_{\alpha,j}$, we see 
a product as in the left-hand side in the  
Claim \ref{claim:secondobvious}. Thus we can use this Claim, and write
$$
X_n = U_n \Pi_{j=1}^m   W^n_{\alpha}
$$
where $U_n$ is the product of the corresponding $g_i$. Here instead
of increments in $\Delta_\alpha$ we write $g_\alpha$, and $W^n_{\alpha}$ is a product of conjugates
of $f_{\alpha,j}$'s with the same value of $\alpha$. 

We denote by $Y_i$ increments of the trajectory  $X_n$.

For fixed $U_n$, we have that
\begin{equation}{\label{eq:sumwi}}
H(X_n|U_1, \dots, U_n)\leq \Sigma_\alpha H(W^n_\alpha|U_1,\dots ,U_n)
\end{equation}

Note that $H(W^n_{\alpha}|U_1,\dots,U_n)= H_{\Delta_\alpha}(X_n)$.

If we fixed  the conditioning from the definition of
$\Delta_\alpha$-entropy, then we have fixed $U_1, U_2, \dots, U_n$, and the distribution we get  is isomorphic to the distribution of $W_{\alpha,n}$, conditioned on $U_1,U_2,\dots,U_n$. Indeed, the conjugates of $f_{\alpha,j}$ are defined by $f_{\alpha,j}$ and $U_1,U_2,\dots,U_n$ (the conjugate
$yxy^{-1}$ of $x\in F$ only depends on the coset of $y$ in
$G/F$ and the value of $x$).

Now note that the entropy of the sequence ${U_1,U_2,\dots,U_n}$ is equal to $nH(\mu,G/F)$. Thus, adding this equality to  $\ref{eq:sumwi}$ and substituting the second half with $H(W_\alpha^n|U_1,\dots,U_n)= H_{\Delta_\alpha}(X_n)$ 
yields the desired inequality. 
\end{proof}

Now we prove Lemma \ref{lem:efentropy}.
\begin{proof}
We observe that Lemma  \ref{lem:Deltadecomposition} above and the assumption that $h(\mu,G)>H(\mu,G/F)$ imply that $\sum_\alpha H_{\Delta_\alpha}(n)$ grows linearly in $n$.
Therefore, if the index set is finite (and this is the case when the projection of $\mu$ to $G/F$ is finitely supported), then 
there exists $\alpha$ such that, for some subsequence of
$n$,
$\Sigma_\alpha$ grows linearly. Here we use the notation  $\Sigma_\alpha$ for the $\Delta_\alpha$-entropy, for the set $\Delta_\alpha$
described in  Lemma \ref{lem:Deltadecomposition}.
So we see that $\Delta_\alpha$ entropy is linear on some subsequence. By Lemma \ref{lem:subadditivity}
$\Delta_\alpha$ entropy is linear.

Now, even if the projection is infinitely supported,
we argue as follows. We write $\mu= \mu'+ \mu''$, where $\mu''$ has small entropy.
By Lemma \ref{lem:Deltadecomposition},
$$
\sum_\alpha H_{\Delta_\alpha}(X_n)
$$
is linear.

And, using Lemma \ref{lem:easyinequality}, we claim that  given $\varepsilon$, we  can choose a co-finite set $A_\varepsilon$, such that the sum
$$
\sum_{\alpha\in A_{\varepsilon}} H_{\Delta_\alpha}(X_n) \le \varepsilon n.
$$
Therefore, we can observe that
$\sum_{\alpha \notin  A_{\varepsilon}} H_{\Delta_\alpha}(X_n)
$ is linear. And thus for some $\alpha \notin A_\varepsilon$.
$\Delta_\alpha(X_n)$ entropy is linear on some subsequence.
Hence, again by Lemma \ref{lem:subadditivity}
$\Delta_\alpha$ entropy is linear.

Now we have found $\Delta \subset  \supp \mu$ such that the $\Delta$-entropy is linear. Observe that we can moreover claim the existence of a finite set $\Delta$ with this property. Indeed, we write $\Delta= \Delta' \cup \Delta''$, such that $\Delta'$ is finite, and the entropy of the non-normalized restriction $\mu"$ of $\mu$ on $\Delta''$
is smaller than $\varepsilon$.
Using again Lemma \ref{lem:easyinequality}, we observe that 
$$
H_\Delta (X_n) \le H_{\Delta'}(X_n)+n*H(\mu''). 
$$
Therefore, if $\varepsilon$ is small enough, we conclude that the $\Delta'$ entropy grows linearly.
\end{proof}

Before we continue the proof of the theorem,   we discuss the concept of colored increments. 


\begin{defn}[Colored $\Delta$ restriction entropy]
Given a subset $S$ of $G$ and $p$,  $0<p<1$ we say that the colored  $\Delta, p$ restriction entropy is the conditional entropy of $X_n$ conditioned on the following event. For each $n\in \mathbb{N}$ we choose independently and identically at random a Bernoulli random variable which is $0$ with probability $1-p$ and $1$ with probability p. We condition on all increments where the Bernoulli random variable is $0$ and on all increments which are not in $\Delta$. 
\end{defn}


We are specifically interested in the case when we have several elements $\Delta= \left( g_1, \dots, g_m \right)$, which have the same
projection to $G/F$, where $F$ is in an abelian normal subgroup of $G$, and we will usually take $p=1/k$, $k \in \mathbb{Z}$. In this case we can informally think about this as conditioning on all increments except those of one of $k$ possible colors (as well as all increments not in $\Delta$). 
Our main interest is to understand whether the $\Delta,p$-entropy is linear. 

\begin{rem}
The definition of colored $\Delta$-entropy and the question of its linearity can be reduced to the question about the linearity of the usual, not colored,  $\Delta$-entropy for a random walk on the product 
of $G$ with $\mathbb{Z}$, or some other groups with the Liouville property.
\end{rem}

\begin{lem}[Changing charges for $\Delta$-entropy]\label{lem:changingcharges}
Let $F$ be an abelian normal subgroup of $G$, $f\in F$. Let $\mu$ and $\mu_2$ be probability measures on $G$, and  let 
a finite subset $\Delta$ of $G$ belong to the support of both of them.
We also assume that $\Delta$ belongs to the same coset $G/F$, and that $\mu$ and $\mu_2$ have the same projection to
$G/F$.

If the $\Delta$-entropy of $\mu$ is linear, then the $\Delta$-entropy of $\mu_2$ is also linear.
\end{lem}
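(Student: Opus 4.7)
The plan is to represent both $\Delta$-restriction entropies as expected entropies of certain random sums in $F$ (via the decomposition used in the proof of Lemma~\ref{lem:Deltadecomposition}) and then transfer linearity from $\mu$ to $\mu'$ by identifying a common ``core'' sub-process of $\Delta$-steps in a joint coupling of the two walks. By Claim~\ref{claim:secondobvious}, conditional on the non-$\Delta$ increments $c$, one has $X_n = U_n(c)\cdot W_n$ with
$$W_n \ =\ \sum_{i\in S_n(c)} \tau_i(c)\cdot \xi_i \ \in\ F,$$
where $\Delta=\{g_0 f_1,\dots,g_0 f_k\}$ for some fixed $g_0\in G$ and $f_j\in F$, the $\tau_i\in G/F$ act on $F$ via the shared action (which factors through $G/F$ since $F$ is Abelian), and the $\xi_i \in \{f_1,\dots,f_k\}$ are, conditionally on $c$, independent with law $\pi_\mu=\mu|_\Delta/\mu(\Delta)$ for the $\mu$-walk and $\pi_{\mu'}=\mu'|_\Delta/\mu'(\Delta)$ for the $\mu'$-walk. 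Hence $H_\Delta^\mu(n)=\mathbb E_c\,H(W_n^\mu\mid c)$, and the analogous formula holds for $\mu'$.

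Since $\bar\mu=\bar{\mu'}$, I would couple the two walks so that their $G/F$-projection trajectories coincide step by step; then the sequences $(\tau_i)$ and the positions $T_n\subseteq\{1,\dots,n\}$ at which the projection hits $\bar g$ are common. Within $T_n$, I use the optimal transport coupling of the within-coset distributions of $\mu$ and $\mu'$: at each such step, with positive probability $r':=\sum_j\min(\mu(g_j),\mu'(g_j))$, both walks take the identical $\Delta$-element, drawn from a common distribution of full support on $\{f_1,\dots,f_k\}$. Letting $C_i$ be the indicator of this ``core'' event and
$$W_n^{\mathrm{core}}\ :=\ \sum_{i:\,C_i=1}\tau_i\cdot\xi_i^{\mathrm{core}},$$
one sees that $W_n^{\mathrm{core}}$ is simultaneously a summand of $W_n^\mu$ and of $W_n^{\mu'}$, with the respective residual contributions independent of $W_n^{\mathrm{core}}$ conditional on $(c,C_\bullet)$. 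The elementary inequality $H(A+B)\ge H(A)$ for independent $A,B$ (obtained by conditioning on $B$) then directly gives
$$H(W_n^{\mu'}\mid c)\ \ge\ H(W_n^{\mathrm{core}}\mid c).$$

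The remaining task is to show that $\mathbb E_c\,H(W_n^{\mathrm{core}}\mid c)$ grows linearly in $n$. Applying the subadditivity of entropy to $W_n^\mu=W_n^{\mathrm{core}}+W_n^{\mu,\mathrm{res}}$, and using Claim~\ref{lem:easyinequality} to bound the entropy of the residual charges by $(\mu(\Delta)-r')\cdot n\cdot \log|\Delta|$, together with a linear bound on the entropy of the latent variables $C_\bullet$, one obtains
$$\mathbb E_c\,H(W_n^{\mathrm{core}}\mid c)\ \ge\ H_\Delta^\mu(n)\ -\ n\cdot \mathrm{loss}(\mu,\mu'),$$
where $\mathrm{loss}(\mu,\mu')$ is a constant depending only on $\mu$, $\mu'$, and $|\Delta|$. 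The main obstacle is to guarantee that this loss is strictly smaller than the linear rate of $H_\Delta^\mu(n)$. I handle this by choosing the coupling to maximise $r'$, and, if still needed, by a bootstrapping argument invoking the subadditivity of $\Delta$-restriction entropy from Lemma~\ref{lem:subadditivity} to amplify the linear growth across larger blocks beyond any fixed additive loss. Combining the two inequalities then yields $H_\Delta^{\mu'}(n)\ge C'n$, completing the proof.
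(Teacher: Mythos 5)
Your setup (coupling the two walks through their common $G/F$-projection, extracting a common sub-process of $\Delta$-steps, and using that conditioning on more information or adding an independent group-valued summand only helps) is sound and parallels the paper's proof. The genuine gap is in the last step, and you have correctly identified it yourself: the bound $\mathbb{E}\,H(W_n^{\mathrm{core}}\mid c,C_\bullet)\ \ge\ H_\Delta^{\mu}(n)-n\cdot\mathrm{loss}(\mu,\mu')$ is vacuous unless $\mathrm{loss}(\mu,\mu')$ is smaller than the linear rate $C$ of $H_\Delta^{\mu}(n)$, and neither of your proposed repairs can ensure this. The loss contains the entropy of the Bernoulli thinning $C_\bullet$ and of the residual $\Delta$-charges, each of order one per $\Delta$-step, while $C$ can be arbitrarily small; maximizing $r'$ via the maximal coupling does not remove these terms. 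The bootstrap through Lemma \ref{lem:subadditivity} also cannot work: the loss is linear in $n$, not a fixed additive constant, and by Fekete the asymptotic rate is an infimum of $H_\Delta(n)/n$, so passing to longer blocks scales the entropy and the loss by the same factor and amplifies nothing. Worse, lower-bounding the entropy of your core process (whose step distribution is proportional to $\min(\mu,\mu')|_\Delta$, not to $\mu|_\Delta$) from the linearity of the $\mu$-$\Delta$-entropy is essentially an instance of the very lemma you are proving, so the argument is circular at this point.

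The missing idea, which is how the paper closes exactly this gap, is a symmetric splitting rather than a subtraction. Choose $k$ with $\mu(g)/k<\mu'(g)$ for every $g\in\Delta$ and color the $\Delta$-increments of the $\mu$-walk in $k$ colors, each color carrying the charge $\mu|_\Delta/k$. Writing the $\Delta$-contribution as a product of color-wise pieces $\Sigma^n_{i,j}$ and using that the $k$ colors are exchangeable (identically distributed), one gets $H_\Delta^{\mu}(n)\le k\sum_j H(\Sigma^n_{1,j})$, hence the "white" sub-process carries at least a $1/k$ fraction of the $\Delta$-entropy --- a direct linear lower bound with no loss term to subtract. Because the white charge $\mu(g)/k$ is dominated pointwise by $\mu'(g)$, this white sub-process embeds into the $\mu'$-walk under your coupling, and monotonicity of conditional entropy then gives linearity of $H_\Delta^{\mu'}(n)$. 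If you replace your $\min(\mu,\mu')$-core by this proportional $1/k$-thinning and argue by exchangeability instead of "total minus loss," your proof goes through.
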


\begin{proof}

Choose a large enough $k$ such that for 
 each $g\in \Delta$
$$
\mu(g)/k < \mu_2(g).
$$
Consider the coloring of $\Delta$ increments for the random walk $(G,\mu)$ in $k$ colors.
Fix some element $g\in \Delta$ and
write elements of $\Delta$ as $gf_j$. We know that $f_j \in F$.
 
Use Claim \ref{claim:firstobvious}  to rewrite
the product of (first $n$) increments, putting all $f_j$'s on the right.
We get a product  $Z_n$ multiplied by conjugations of $f_j$.
More precisely we use Claim \ref{claim:secondobvious} to group the conjugations of $f_j$'s "of a given color" together (for each fixed $j$). For a color $i$ we denote by $\Sigma^n_{i,j}$
the product of conjugates of $f_j$ of color $i$.
Then the product of $n$ increments  is equal to
$$
Z_n \Pi_{i,j; 1\le i \le k} \Sigma^n_{i,j}
$$
It follows from the definition that the $\Delta$-entropy of $\mu$ is the entropy
of $\Pi_{1\le i \le k,j} \Sigma^n_{i,j} $. This is at most
$$
\Sigma_{i,j:1 \le i \le k} H(\Sigma^n_{i,j})
$$
Observe that the distribution of $\Sigma^n_{j,i}$ does not depend on $i$ (since all colors have the same role), and in particular the entropy 
of $\Sigma^n_{j,i}$ is equal to that of $\Sigma^n_{j,l}$ for all $i,l:  1 \le i, l \le k$.
Thus $\Delta$-entropy is at most
$$
k \Sigma_j   H (\Sigma_{j,1}^n).
$$

Since the  $\Delta$-entropy of $\mu$ is linear in $n$, we conclude that
there exists $j$ such that 
$H (\Sigma_{j,1}^b) \ge C/k n$, for $C$ not depending on $n$  (for some
subsequence of $n$'s and hence by subadditivity of colored $\Delta$-entropy,
for all $n$). 

Let us call the first color (among $k$ colors) "white". So we know that the $\Delta$-entropy of the white color is linear.

Now we look at our random walk $(G,\mu_2)$ and we want to claim that the
$\Delta$-entropy of the random walk $(G,\mu_2)$ is bounded from below by the white $\Delta$-entropy of $(G,\mu)$.

We color increments of the random walk $(G,\mu_2)$  in two colors, white and black. Each increment  $g \in \Delta \subset G_2$ we color in white with probability
$\mu(g)/k$ and in black with probability $ - \mu_2(g  - \mu(g)/k)$. 

We consider any coupling between trajectories $(G,\mu)$ and $(G,\mu_2)$,
where corresponding increments have the same projection on $G/F$ and white increments for $(G,\mu)$ are sent to white increments for $(G,\mu_2)$ with the same value in $\Delta$. 

Observe that the  $\Delta$-entropy after $n$ steps  of $(G,\mu_2)$ is greater than or equal to the white entropy of this random walk. (Since the white entropy is conditioned on strictly more information). 

Finally, observe that the white $\Delta$ entropy of $(G,\mu)$  is equal to
the white $\Delta$-entropy of $(G,\mu_2)$. By our assumption, the projections onto $G/F$ are equal for coupled trajectories. Hence, we use the fact that for our coupling both increments at some time $i$ (for $(G,\mu)$ and $(G,\mu_2)$ trajectories), are the same ${\rm mod} F$, the fact that $F$ is abelian and we use Claim  \ref{claim:secondobvious}. Indeed, when we bring  $\Delta$-increments of white color on the right using this remark, observe that
the product on the right, corresponding to
conjugations of these white elements,  depends only on the value of these increments and projection  to $G/F$ of the corresponding position of the random walk. 
\end{proof}

\begin{cor} \label{cor:changingchargesandgroups}
If $G$ and $G_2$ are extensions of an abelian group $F$ where $G/F$, $G_2/F'$ are isomorphic and they induce isomorphic conjugation actions on $F$ and $F'$.
Then if measures $\mu$ on $G$ and $\mu_2$ on $G_2$ have the same projection on
$K=G/F$, $\Delta \in \supp \mu$ and $\Delta' \in \supp \mu_2$, $\Delta = g \Omega$, $\Delta'\subset g' \Omega$ where
$g$ has the same projection to $K$ as $g'$, and $\Omega$
is some subset of $F$.
Then the $\Delta$-entropy of $(G,\mu)$ is linear if and only if the $\Delta'$ entropy
of $(G_2, \mu_2)$ is linear.
\end{cor}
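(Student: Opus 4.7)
The plan is to reduce Corollary \ref{cor:changingchargesandgroups} to Lemma \ref{lem:changingcharges} by interposing a ``change of group'' step that exploits the isomorphism between the two extensions. First, using the given data, I would identify $K := G/F \cong G'/F'$ and identify $F$ with $F'$ equivariantly with respect to the conjugation $K$-action; after this, both $G$ and $G'$ become extensions of the same group $K$ by the same Abelian group $F$ with the same action. They may still differ by a $2$-cocycle, but this will turn out to be irrelevant to the $\Delta$-entropy.

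Next, I would revisit the decomposition used in the proof of Lemma \ref{lem:changingcharges}. After conditioning on the $K$-trajectory and on the positions of the $\Delta$-increments, the $n$-step position takes the form $U_n \cdot \prod_j \sigma_j(f_j)$, where $f_j \in \Omega$ is the $F$-part of the $j$-th $\Delta$-increment (obtained by left-translating by $g^{-1}$) and $\sigma_j$ is conjugation by the $K$-suffix from step $j+1$ onwards. The maps $\sigma_j$ depend only on the $K$-trajectory and on the $K$-action on $F$, both of which are common to $(G,\mu)$ and $(G',\mu')$ after the identifications above. Hence the $\Delta$-restriction entropy is an intrinsic function of the $K$-projection of the walk together with the $F$-part distribution on $\Omega$, and can be computed identically in either extension.

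To turn this observation into a proof, I would introduce an auxiliary measure $\mu^*$ on $G'$ with the same $K$-projection as $\mu$ and whose restriction to $g'\Omega$ is the pushforward of the restriction of $\mu$ to $g\Omega = \Delta$ under the map $gf \mapsto g'f$. The structural observation above then yields that the $\Delta$-entropy of $(G,\mu)$ equals the $g'\Omega$-entropy of $(G',\mu^*)$. Applying Lemma \ref{lem:changingcharges} to the two measures $\mu^*$ and $\mu'$ on the single group $G'$, with the common set $\Delta' \subset g'\Omega \cap \supp \mu' \cap \supp \mu^*$, will then relate the $\Delta'$-entropies of these two measures; a short monotonicity comparison between $\Delta'$-entropy and $g'\Omega$-entropy for $\mu^*$ (using $\Delta' \subset g'\Omega$, so that the former conditions on strictly more information) closes the chain in both directions.

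The main obstacle I expect is the intrinsicality claim of the second paragraph: one must verify that the cocycle defining the extension genuinely drops out of the $F$-valued random variable whose entropy is being computed. This amounts to checking that the ``residue'' $U_n$ is measurable with respect to the $K$-trajectory once a set-theoretic section $K \to G$ is fixed, so that conditioning on the $K$-projection removes it from the entropy calculation, while the product $\prod_j \sigma_j(f_j)$ lives in $F$ and can be transported verbatim between $G$ and $G'$ using only the action isomorphism. A secondary technical point is normalising $\mu^*$ so that the weights on $g'\Omega$ match those of $\mu$ on $g\Omega$ without disturbing the common $K$-projection, which can be arranged by transferring mass from the remaining fibres of $\mu'$.
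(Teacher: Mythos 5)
Your proposal is correct and follows essentially the same route as the paper: the paper likewise observes (via Claim \ref{claim:firstobvious}) that once the charges on $g\Omega$ and $g'\Omega$ are matched the two restriction entropies coincide—your auxiliary measure $\mu^*$ simply makes that intermediate measure explicit—and then invokes Lemma \ref{lem:changingcharges} to adjust the charges, exactly as you do. The only caveat is your closing monotonicity remark: for $\Delta'\subsetneq g'\Omega$ the inequality $H_{\Delta'}(n)\le H_{g'\Omega}(n)$ yields only one of the two implications, but this is immaterial here since the corollary is effectively stated and applied with $\Delta'=g'\Omega$, which is also what the paper's own argument uses.
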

\begin{proof}

Indeed, this follows from Lemma \ref{lem:changingcharges} in view of the following observation.
Under the assumption of our corollary, assume additionally
that $\mu(gw)= \mu(g'w)$
for any $w \in \Omega$. 

Observe that in this case the $\Delta$-entropy of $(G,\mu)$ is equal to
the $\Delta'$ entropy
of $(G_2,\mu_2)$
as follows from  \ref{claim:firstobvious}.
Knowing this, we can then change the values of $\mu(h)$,
 $h \in \Delta$ using 
 Lemma \ref{lem:changingcharges}.
\end{proof}

Now we are ready to prove Theorem \ref{thm:compcrit}.
Since $\mu_2$ is non-degenerate by  the 
assumption 
of the Theorem,
replacing the measures if necessary by affine combinations of their convolution powers, $\mu$  by $\sum_{i=1}^\infty a_i \mu^{*i}$
and analogously $\mu_2$ by
$\sum_{i=1}^\infty a_i \mu_2^{*i}$,
we can assume that 
$\supp \mu_2 =G_2$.
(Here we used the easy and well-known fact that the Poisson boundary is the same for $\mu$ and for a convex combination of its convolution powers \cite{kaimanovich83}.)

We assume that $f$ acts non-trivially
on the boundary of $(G,\mu)$. If necessary, we choose some $m$,
replace $\mu$ by $\mu^{*m}$ and $\mu_2$ by $\mu_2^{*m}$ so that we can assume 
\begin{equation} \label{eq:assumptionlemma}
h(G,\mu) > H (G/\Norm(f), \mu).
\end{equation}

In other words, we can assume that $\mu$ satisfies the technical  assumption of Lemma \ref{lem:efentropy}
(See Claim \ref{cor:technicalentropycriterion})
By Lemma \ref{lem:efentropy} we know therefore that for some $\Delta \subset G$,
$\Delta$ belonging to a coset of $G/F$, the $\Delta$-entropy is linear.
Write $\Delta = g \Omega$, $\Omega \subset F$.
Consider  $\Delta' = g' \Omega$,
$g$ has the  same projection on $K= G_2/F'$ as the projection of 
$g'$ to $K=G/F$.

Since the support of $\mu_2$ is  $G_2$, we know that $\Delta'$ belongs to the support of  $\mu_2$. We now use Corollary \ref{cor:changingchargesandgroups} to claim that $\Delta'$ entropy of $(G_2,\mu_2)$ is linear. 
Then we can use  Lemma \ref{lem:easydirection}
to claim that $f'$ acts non-trivially on
the boundary of $(G_2,\mu_2)$. Since $f'$ belongs to the normal subgroup generated by $f$,  we can conclude that $f$ acts non-trivially on the boundary of $(G_2,\mu_2)$,
and this concludes the proof of the comparison criterion.

\section{Nilpotent-by-Abelian groups}\label{section:nilpotentbyabelian}

In this section we study  nilpotent-by-abelian groups.  Given a nilpotent-by-abelian group $G$, we associate certain metabelian groups, which we call
metabelian components of $G$.
We will first check that triviality of the boundary is equivalent to triviality of the boundaries (for 
appropriate measures) on all metabelian components of $G$. Then for each metabelian component we define $p$-primary metabelian components, and reduce the problem to that of the corresponding measures
on (torsion $p$)-by-abelian   components as well as on the (torsion free)-by-abelian component. Torsion components 
can be in turn reduced to the case when the group is an extension of $p$-primary group by Abelian ones). Finally,  both the order $p$ extensions and the torsion free extension  case are reduced to associated blocks (some groups we associate to each such extension).

\begin{claim} \label{cl:products}
Given $G=A \times B$, and a measure $\mu$ on $G$ of finite entropy. 
The mapping from the boundary of $G$ to the product of boundaries of $A$ and $B$ is injective.
\end{claim}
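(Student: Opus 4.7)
The plan is to prove the stronger statement that the canonical equivariant map from the Poisson boundary of $(G,\mu)$ to $B_A\times B_B$ is an \emph{isomorphism} of measure $G$-spaces, where $B_A,B_B$ denote the Poisson boundaries of the projected random walks $(A,\mu_A)$ and $(B,\mu_B)$; injectivity follows immediately. First I would observe that, since $A\times B\to A$ and $A\times B\to B$ are group homomorphisms, the projections of any $(G,\mu)$-trajectory $(X_n,Y_n)$ are random walks driven by $\mu_A$ and $\mu_B$; the joint limits $X_n\to a\in B_A$ and $Y_n\to b\in B_B$ exhibit $B_A\times B_B$ as a $\mu$-boundary of $(G,\mu)$. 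By the Conditional entropy criterion recalled in Section~\ref{sec:background}, maximality of this $\mu$-boundary is equivalent to the almost sure vanishing
$$\lim_{n\to\infty}\tfrac{1}{n}H\bigl((X_n,Y_n)\,\bigm|\,a,b\bigr)=0.$$

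To verify this I would decompose the joint conditional entropy and then weaken the conditioning, using that additional conditioning cannot increase entropy:
$$H\bigl((X_n,Y_n)\,\bigm|\,a,b\bigr)\leq H(X_n\mid a,b)+H(Y_n\mid a,b)\leq H(X_n\mid a)+H(Y_n\mid b).$$
Now I would apply the Conditional entropy criterion a second time, this time to each of the marginal walks: since $B_A$ and $B_B$ are by construction the full Poisson boundaries of $(A,\mu_A)$ and $(B,\mu_B)$ (both of which inherit finite entropy from $\mu$), the criterion yields $\tfrac{1}{n}H(X_n\mid a)\to 0$ for $B_A$-almost every $a$ and $\tfrac{1}{n}H(Y_n\mid b)\to 0$ for $B_B$-almost every $b$; by Fubini both vanish simultaneously on a set of full measure in $B_A\times B_B$. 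Dividing the displayed inequality by $n$ and passing to the limit gives the desired conditional vanishing.

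The only substantive verification is that the conditional entropy $H(X_n\mid a)$ computed inside the joint $G$-process coincides with the corresponding conditional entropy in the marginal process $(A,\mu_A)$, and likewise for $Y_n$; this is immediate because the $A$-marginal of the $G$-trajectory is precisely the random walk with law $\mu_A$, and $a$ is measurable with respect to this marginal. Apart from this bookkeeping point, the argument is essentially a direct double application of the entropy criterion, with the finite entropy hypothesis used exactly to legitimize its invocation on each of the three walks $(G,\mu)$, $(A,\mu_A)$ and $(B,\mu_B)$. I do not foresee a genuine obstacle here; the subtlety typical of extension-type statements (where the fibre action matters) is absent precisely because the extension is a direct product.
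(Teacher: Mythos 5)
Your argument is correct and follows essentially the same route as the paper's own proof: both exhibit $B_A\times B_B$ as a $\mu$-boundary and show its conditional asymptotic entropy vanishes by combining the conditional entropy criterion applied to the marginal walks $(A,\mu_A)$, $(B,\mu_B)$ with subadditivity of (conditional) entropy over the product, which forces maximality. One cosmetic remark: the appeal to Fubini at the end is unnecessary (and slightly misleading, since the joint harmonic measure on $B_A\times B_B$ need not be a product measure); it suffices to note that each exceptional set is null for the corresponding marginal of the joint law of $(a,b)$, hence its preimage is null for the joint law, and a union of two null sets is null.
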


\begin{proof}
By our assumption $H(\mu)$ is finite. Let $P(A)$ be the Poisson boundary of the projected random to $A$ ($(A,\mu)$) and $P(B)$ the Poisson boundary of the projection to $B$. By the conditional entropy criterion of Kaimanovich \ref{kaimanovich:conditional}, we know that the conditional entropy of $(A,\mu)$ conditioned on its boundary value along the infinite trajectory, is zero. And the same about $(B,\mu)$ and $P(B)$.
Now consider the conditional entropy of $(A \times B, \mu)$ conditioned on the limiting values in $P(A)$ and $P(B)$. We recall that the entropy of the measure on a product of two spaces $X$ and $Y$ is at most the sum of the entropies of the projection to $X$ and to $Y$. Applying this to  spaces with probability measures: the space $X$ of  trajectories of $(A,\mu)$ and the space $Y$ of trajectories of $(B,\mu)$,
we conclude 
that the conditional entropy is at most $0+0=0$.

\end{proof}

\begin{cor} \label{cor:products}
Let $G=A \times B$, $\mu$ on $G$ be a probability measure. 
Let $g=(g_a,g_b)$, $g\in G$, $g_a \in A$,
$g_b\in B$.
Then $g$ acts non-trivially on the Poisson boundary of $(G,\mu)$ if and only if  at least one of $g_a$ or $g_b$ acts non-trivially on the Poisson boundary of $(A,\mu)$ or $(B,\mu)$ respectively.
\end{cor}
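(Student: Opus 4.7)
The plan is to derive Corollary \ref{cor:products} directly from Claim \ref{cl:products} together with the basic functoriality of the Poisson boundary under quotient maps. Throughout, I treat the Poisson boundary as a measure space and ``trivial action of $g$'' to mean that $g$ acts as the identity almost everywhere.

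For the \emph{if} direction, assume without loss of generality that $g_a$ acts non-trivially on the Poisson boundary $P(A)$ of $(A,\mu)$. The projection $G = A \times B \to A$ is a group homomorphism whose kernel is $\{e_A\} \times B$, and it pushes $\mu$ forward to its projection onto $A$; therefore $P(A)$ is a $\mu$-boundary of $(G,\mu)$ on which the $G$-action factors through its projection to $A$. Consequently, there is a $G$-equivariant quotient map from $P(G)$ onto $P(A)$ under which $g = (g_a, g_b)$ acts as $g_a$ on $P(A)$. If $g$ acted trivially on $P(G)$, it would act trivially on every equivariant quotient, in particular on $P(A)$, contradicting the non-triviality of $g_a$ on $P(A)$. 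The same argument applies if instead $g_b$ acts non-trivially on $P(B)$.

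For the \emph{only if} direction, suppose both $g_a$ and $g_b$ act trivially on $P(A)$ and $P(B)$ respectively. Consider the joint map $P(G) \to P(A) \times P(B)$ induced by the two projections of $G$. This map is $G$-equivariant, and by Claim \ref{cl:products} it is injective (modulo null sets). On the target, $g = (g_a, g_b)$ acts coordinate-wise: by $g_a$ on the first factor and by $g_b$ on the second, and by assumption both coordinate actions are trivial almost everywhere. By equivariance, $g$ preserves the image and acts trivially on it; by injectivity, $g$ acts trivially on $P(G)$.

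The two directions are essentially complementary and there is no serious obstacle: the only subtle point is to verify that the injection supplied by Claim \ref{cl:products} is genuinely $G$-equivariant (which follows from the fact that it is induced by the two coordinate projections, both of which are $G$-equivariant with respect to the natural $G$-actions on $A$ and $B$ via the coordinate projections $G \to A$, $G \to B$), and to interpret ``trivial action'' in the measure-theoretic almost-everywhere sense consistent with the rest of the paper.
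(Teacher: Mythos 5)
Your proof is correct and follows essentially the same route as the paper: the ``if'' direction via the fact that $P(A)$ (resp.\ $P(B)$) is an equivariant quotient of $P(G)$, and the ``only if'' direction via the equivariant injection $P(G)\hookrightarrow P(A)\times P(B)$ from Claim \ref{cl:products}. The only cosmetic difference is that you argue the converse contrapositively (both coordinate actions trivial implies trivial on the product, hence on $P(G)$), whereas the paper factors $g=(g_a,e)(e,g_b)$ and treats the two factors separately; the two arrangements are logically equivalent and rest on the same facts.
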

\begin{proof}
First observe that if $g_a$ acts non-trivially on the boundary of $(A,\mu)$, then $g$ has the same action on this boundary, which is a quotient of the Poisson boundary of $G$.

Now we prove the other direction.
We know that  the Poisson boundary of $(G,\mu)$ injects into the Poisson boundary of $(A,\mu)$ times the Poisson boundary of $(B,\mu)$. Observe that  $g = (g_a, e) (e, g_b)$ acts non trivially on the boundary of $(G,\mu)$, then either $(g_a,e)$ or $(g_b, e)$ acts non-trivially on the boundary of $(G,\mu)$. Since $P(G)$ injects into $P(A)\times P(B)$, if for example $(g_a,e)$ acts non-trivially on the boundary, then it acts non trivially on $P(A) \times P(B)$, and it is clear that the action of such elements on $P(B)$ is trivial. Therefore, under this assumption $(g_a,e)$ acts non-trivially on $P(A) \times \{e\}$, and thus $g_a$ acts non-trivially on $P(A)$.
\end{proof}

In our reduction procedure
in this section
we will use several times a corollary from the Comparison Criterion (Thm \ref{thm:compcrit}), formulated in Lemma \ref{lem:mainforreduction} below.

\begin{lem}\label{lem:mainforreduction}[Reduction Lemma]
Let $G$ be a group, $H$ be a normal subgroup in $G$ and $A$  be a normal subgroup of $G$ in the center of $H$. Consider a short exact sequence
$1\to H\to G\to K\to 1$.
Let $C$ be the semi-direct product of $K$ with $A$, the action being the conjugation action of $G$ on $A$.
Consider a non-degenerate finite entropy probability measure  $\mu$
on $G$.
Let $\nu$ be a non-degenerate finite entropy   measure on $C$ with the same projection to $K$ as $\mu$. Then $a$ acts non-trivially on the boundary of $(C,\nu)$ if and only if it acts non-trivially on the boundary of $(G,\mu)$. 
\end{lem}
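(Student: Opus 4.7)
The plan is to deduce the Reduction Lemma from the Comparison Criterion (Theorem~\ref{thm:compcrit}) by introducing an auxiliary group that interpolates between $G$ and $C$. The key structural observation is that since $A \subset Z(H)$, the conjugation action of $G$ on $A$ factors through $K = G/H$---exactly the $K$-action that defines the semidirect product $C = K \ltimes A$. So $G$ and $C$ induce identical actions on $A$ once we pass to $K$, and this is what enables a comparison.

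Concretely, I would take the auxiliary group to be the fiber product $\tilde{G} := G/A \times_K C$, consisting of pairs $(\bar{g}, c)$ with matching image in $K$. Viewed as an extension $1 \to A \to \tilde{G} \to G/A \to 1$ with $A$ embedded as the pairs $(\bar{e}, (e_K, a))$, its induced $G/A$-action on $A$ factors through $K$ and hence agrees with the $G/A$-action on $A$ in the extension $1 \to A \to G \to G/A \to 1$. Equip $\tilde{G}$ with the coupling measure $\tilde{\mu}(\bar{g}, c) := \bar{\mu}(\bar{g})\,\nu(c)/\mu_K(k)$ for $k$ the common $K$-image, where $\bar{\mu}$ and $\mu_K$ denote the pushforwards of $\mu$ to $G/A$ and $K$; the resulting $\tilde{\mu}$ has finite entropy and projects to $\bar{\mu}$ on $G/A$ and to $\nu$ on $C$. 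Replacing $\mu$ and $\nu$ by suitable convex combinations of their convolution powers preserves both Poisson boundaries and non-degeneracy while arranging $\supp \bar{\mu} = G/A$ and $\supp \nu = C$, after which $\supp \tilde{\mu} = \tilde{G}$, so $\tilde{\mu}$ is non-degenerate. Two applications of Theorem~\ref{thm:compcrit}, one in each direction, to the pair of extensions $G$ and $\tilde{G}$ of $G/A$ by $A$ with measures sharing the projection $\bar{\mu}$, then yield that $a$ acts non-trivially on the boundary of $(G, \mu)$ if and only if $a$ acts non-trivially on the boundary of $(\tilde{G}, \tilde{\mu})$.

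What remains is to identify the action of $a$ on the boundary of $(\tilde{G}, \tilde{\mu})$ with its action on the boundary of $(C, \nu)$. The natural surjection $\tilde{G} \to C$ with kernel $H/A$ pushes $\tilde{\mu}$ forward to $\nu$, so the boundary of $(C, \nu)$ is an equivariant quotient of that of $(\tilde{G}, \tilde{\mu})$; pulling back a separating bounded harmonic function along this surjection gives one implication. For the converse, observe that $a = (\bar{e}, (e_K, a)) \in \tilde{G}$ acts by left multiplication as $(\bar{g}, c) \mapsto (\bar{g}, ac)$, fixing the $G/A$-coordinate of every trajectory and shifting only the $C$-coordinate by $a$. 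Since the extra boundary information of $(\tilde{G}, \tilde{\mu})$ over $(C, \nu)$ arises entirely from the tail of the $G/A$-trajectory conditional on the $C$-trajectory, the $a$-action is trivial on the fibers of the boundary projection, so a non-trivial $a$-action on $(\tilde{G}, \tilde{\mu})$ forces a non-trivial $a$-action on $(C, \nu)$.

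The main obstacle is making this final fiber-wise triviality argument fully rigorous, since it implicitly appeals to a structural description of the Poisson boundary as a skew product over the quotient boundary. A more direct alternative, closer in spirit to the proof of Theorem~\ref{thm:compcrit} itself, is to bypass $\tilde{G}$ and establish the equivalence of boundary actions directly between $(G, \mu)$ and $(C, \nu)$ via a cross-group version of Corollary~\ref{cor:changingchargesandgroups}: apply Lemma~\ref{lem:efentropy} and the Claim~\ref{claim:secondobvious} decomposition to a $\Delta$ of $A$-cosets produced by Lemma~\ref{lem:efentropy} to obtain an $A$-valued random factor whose distribution depends only on the $K$-projected trajectory (because $A \subset Z(H)$ forces conjugation on $A$ to factor through $K$ in both groups) and on the $A$-charges, both of which are shared data between $(G, \mu)$ and $(C, \nu)$; linearity of $\Delta$-entropy then transfers across groups, and Lemma~\ref{lem:easydirection} closes the argument.
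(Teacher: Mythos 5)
Your construction is essentially the paper's: the auxiliary group $\tilde G = G/A \times_K C$ is exactly the fiber product $W \subset C \times G/A$ that the paper uses, and your first step --- comparing $(G,\mu)$ and $(\tilde G,\tilde\mu)$ via Theorem \ref{thm:compcrit} applied to the two extensions $1\to A\to G\to G/A\to 1$ and $1\to A\to \tilde G\to G/A\to 1$, which have the same induced action precisely because $A\subset Z(H)$ forces the conjugation action on $A$ to factor through $K$ --- is the paper's first step verbatim. Your explicit coupling $\tilde\mu$ is a genuine (small) streamlining: since its pushforward to $C$ is exactly $\nu$, it makes the paper's second application of the comparison criterion, between $(C,\rho_C)$ and $(C,\nu)$ for an arbitrary $\rho$ on $W$, unnecessary. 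The bookkeeping you do around it checks out: replacing $\mu$ and $\nu$ by convex combinations of convolution powers with the same coefficients preserves the equality of $K$-projections, preserves the boundaries and the $a$-actions on them, and gives $\supp\tilde\mu=\tilde G$; and $H(\tilde\mu)\le H(\bar\mu)+H(\nu)<\infty$.

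The one step you have not proved is the converse passage from a non-trivial $a$-action on the boundary of $(\tilde G,\tilde\mu)$ to a non-trivial $a$-action on the boundary of $(C,\nu)$: the ``fiber-wise triviality'' argument rests on a skew-product description of the Poisson boundary over its quotient that you acknowledge you cannot justify, and the fallback sketch via a direct $\Delta$-entropy transfer between $(G,\mu)$ and $(C,\nu)$ is too vague to substitute for it. The missing ingredient is, however, already in the paper and is exactly what it invokes at this point: since $\tilde G \le C\times G/A$, Claim \ref{cl:products} and Corollary \ref{cor:products} (proved by the conditional entropy criterion, with no structural description of the boundary needed) show that the boundary of $(\tilde G,\tilde\mu)$ injects equivariantly into the product of the boundaries of the projected walks on $C$ and on $G/A$; because the image of $a$ in $G/A$ is trivial, $a$ acts trivially on the second factor, so a non-trivial action upstairs forces a non-trivial action on the boundary of $(C,\nu)$. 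With that single citation in place of your heuristic, your proof closes and coincides with the paper's argument, modulo the simplification noted above.
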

\begin{proof}
Let $\phi_1$ be the natural map from $C\to K$ and $\phi_2$ be the natural map from $G/A\to G/H=K$.
Consider the subgroup $W$ of $C\times G/A$ 
of elements $(c,x)$ such that $\phi_1(c)=\phi_2(x)$. 
Note that there is a natural injection from $A\to W$ coming from the natural injection from $A\to C$ on the first coordinate and trivial on the second coordinate.

$$
1 \to A \to W \to U \subset K \times G/A \to 1, 
$$
where $U = G/A$ consists of $(k, y)$, $y \in G/A$ such that the projection of $y$ to $K$ is $k$.
and we have
\begin{equation}\label{eq:exactse1}
1 \to A \to G \to G/A \to 1
\end{equation}
and
\begin{equation}\label{eq:exactse2}
1 \to A \to W \to G/A \to 1
\end{equation}

Consider a non-degenerate measure $\rho$ of finite entropy on $W$
which has the same projection to
$G/A$ (coming from the projection to the first coordinate of $U$) as $\mu$ (and thus as $\nu$).

We apply the comparison criterion
(Thm \ref{thm:compcrit}) to $(W, \rho)$
and $(G,\mu)$ (with respect to the short exact sequences in Equation \ref{eq:exactse1} and \ref{eq:exactse2}.
We conclude that an action of $a \in A$
on the boundary of $(G,\mu)$ is non-trivial if and only if its action on the boundary of $(W, \rho)$ is non-trivial.
We know that $W$ is a subgroup of $C \times G/A$, and we can consider measure $\rho$ as a measure on $C \times G/A$.
Now we apply Corollary \ref{cor:products} to $(C \times G/A,\rho)$, and claim that $a$
acts non-trivially on the boundary of 
$(C \times G/A,\rho)$ if and only if either $a$ acts non-trivially on the boundary of the  projected random walk on $C$ or on the boundary of the projected random walk on $G/A$. Clearly the second option cannot happen, so we know that $a$ acts non-trivially on the boundary of the projection of $\rho$  to $C$.
Now we apply once more the comparison criterion, this time for $(C, \rho)$
and $(C,\nu)$. We conclude that $a$  
acts non-trivially on the boundary of
$(C,\rho)$ if $a$ acts non-trivially on the boundary $(C,\nu)$.
\end{proof}

For our reduction procedure, we will also use several times the following not difficult property of actions on Poisson boundary:

\begin{lem}\label{lem:obsforreduction}
Let $G$ be a group. Let $\mu$ be a finite entropy measure on $G$. Let $g\in G$. Let $M$ be a normal subgroup of $G$. Then $g$ acts non-trivially on the boundary of  $(G,\mu)$ if and only if at least one of the following two conditions holds 
\begin{enumerate}
\item
 Some element of $M\cap \Norm_G (g)$ acts non-trivially on the boundary of $(G,\mu)$.
\item $g$ acts non-trivially on the boundary of $(G/M,\mu)$.
\end{enumerate}
\end{lem}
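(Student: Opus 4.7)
The plan is to prove both directions using Lemma \ref{lem:strictinequality} (extended from single elements to arbitrary normal subgroups) together with Claim \ref{cl:products}. Throughout I write $N := \Norm_G(g)$ and use the same symbol $\mu$ for the induced measure on any quotient.

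For the easy direction ($\Leftarrow$), both conditions readily force non-triviality. If (1) holds with $m \in M \cap N$ acting non-trivially, then since $m \in N$ one has $\Norm_G(m) \subseteq N$, so combining Lemma \ref{lem:strictinequality} (which gives $h(G,\mu) > h(G/\Norm_G(m),\mu)$) with the monotonicity $h(G/\Norm_G(m),\mu) \geq h(G/N,\mu)$ yields $h(G,\mu) > h(G/N,\mu)$, and Lemma \ref{lem:strictinequality} applied now to $g$ itself gives the desired non-triviality. If (2) holds, then equivariance of the surjective quotient from the boundary of $(G,\mu)$ to the boundary of $(G/M,\mu)$ immediately rules out $g$ acting trivially on the source.

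The harder direction ($\Rightarrow$) is the main content. I would argue by contradiction: suppose $g$ acts non-trivially on the boundary of $(G,\mu)$ while both (1) and (2) fail. Condition (not 1) says every element of the normal subgroup $K := M \cap N$ acts trivially on the boundary, and since the boundary-trivial elements form a subgroup, $K$ itself acts trivially. A direct extension of Lemma \ref{lem:strictinequality} from single elements to arbitrary normal subgroups --- proven by exactly the same conditional entropy criterion argument, since $K$ acting trivially is equivalent to the projection of the Poisson boundary to that of $G/K$ being an isomorphism of $\mu$-boundaries --- then gives $h(G,\mu) = h(G/K,\mu)$, so the boundaries of $(G,\mu)$ and of $(\tilde G, \mu)$ coincide, where $\tilde G := G/K$. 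In $\tilde G$ the images $\tilde M := M/K$ and $\tilde N := N/K$ intersect trivially, so the diagonal map $\tilde G \to \tilde G/\tilde M \times \tilde G/\tilde N \cong G/M \times G/N$ is an injective homomorphism realizing $\tilde G$ as a subgroup of the direct product.

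The crux is then to apply Claim \ref{cl:products} to this direct product equipped with the pushforward of $\mu$ along the embedding: this yields an equivariant injection of the boundary of $(\tilde G, \mu)$ into the Cartesian product of the boundaries of $(G/M, \mu)$ and $(G/N, \mu)$. The element $\tilde g$ acts trivially on each factor --- on the first by (not 2), and on the second because $g \in N$ maps to the identity in $G/N$ --- so by injectivity $\tilde g$ acts trivially on the boundary of $(\tilde G, \mu)$, hence $g$ acts trivially on the boundary of $(G,\mu)$, contradicting the hypothesis. The main obstacle is spotting this reduction: the hypotheses (not 1) and (not 2) are precisely tailored to let one first quotient by $M \cap N$ so that $M$ and $N$ become independent, then embed into the direct product and apply Claim \ref{cl:products} to leverage the trivial action on both factors.
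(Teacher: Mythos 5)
Your proof is correct and follows essentially the same route as the paper: after quotienting by the trivially-acting normal subgroup $M\cap \Norm_G(g)$, you embed the quotient into $G/M\times G/\Norm_G(g)$ and invoke Claim \ref{cl:products} (the paper uses its Corollary \ref{cor:products}) to force non-trivial action on the $G/M$ factor, exactly as in the paper's argument. The only differences are cosmetic: you run the hard direction by contradiction and justify the easy case (1) via Lemma \ref{lem:strictinequality} and entropy monotonicity, where the paper simply notes that the trivially-acting elements form a normal subgroup, and you spell out the descent-to-the-quotient step that the paper leaves implicit.
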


\begin{proof}

First observe that if (1) or (2) holds, then $g$ acts non-trivially on the boundary of $(G,\mu)$. Indeed, if (2) holds, then if an element acts non-trivially on the boundary of the quotient group, it also acts non-trivially on the boundary of our group.
It is also straightforward that if (1) holds, then there exists some element in $\Norm_G (g)$ that acts non-trivially, hence $g$
acts non-trivially.

Consider the map from $G\to G/M\times G/Norm_G (g)$. If (1) does not hold, then all elements of
$M \cup Norm_G (g)$ act trivially, and since $M \cup Norm_G (g)$ is the kernel of the map above, we can conclude, using Corollary \ref{cor:products}, that
if $g$ acts non-trivially on the boundary of $(G,\mu)$ then $g$ either acts non-trivially on the projected r.w. on $G/M$ or on
$G/\Norm_G (g)$. The last possibility clearly cannot happen, so in this case $g$ acts non-trivially on the boundary of $G/M$, and thus (2) holds.
\end{proof}

\subsection{Reduction to abelian-by-$K$ components}

Consider a nilpotent-by-Abelian group $G$ and a short exact sequence
$$
1 \to N \to G \to A \to 1,
$$
where $N$ is nilpotent of degree $d$. We will canonically associate to this short exact sequence 
certain metabelian groups. 

\begin{defn}[abelian-by-$K$ components]
Consider a short exact sequence
$$
1 \to N \to G \to K \to 1
$$
Assume that $N$ is hyper-central (for example nilpotent).
We define (upper) abelian-by-$K$ components of $G$ in the following way.
Consider the canonical central series of $N$: $C_{\alpha+1}$ is the center of $N/C_\alpha$.
Observe that $C_\alpha$ is a characteristic subgroup,
for all $\alpha$. We consider the  quotient group
$C_{\alpha+1}/C_\alpha$ and a short exact sequence
$$
1 \to N/C_\alpha \to G/C_{\alpha} \to K \to 1.
$$
Note that this sequence  induces an action of $K$ by conjugation on
$C_{\alpha+1}/C_\alpha$. We consider the  corresponding semi-direct  product $M_\alpha$. We call this group the
abelian-by-$K$ component of $G$.
\end{defn}

When we have a probability measure
$\mu$ on $G$, we associate to it (any) measure on $M_\alpha$ with the same projection to $K$. If $\mu$ is irreducible (or non-degenerate), then the projection to $K$ is clearly irreducible (respectively non-degenerate), and we can therefore choose an associated measure to be irreducible (or non-degenerate).

Note that if $K$ is abelian, then abelian-by-$K$ components are
metabelian. Observe also that if $H$ is nilpotent, we have associated in this case finitely many metabelian components.

\begin{lem}[Reduction to abelian-by-$K$ components] \label{lem:reductionabelianbyH}
Let $G$ be such that
$$
1 \to H \to G \to K \to 1,
$$
$H$ is hypercentral
and let $M^\alpha$ be  components
of $G$ corresponding to this short exact sequence.
Let $\mu$ be a finite entropy irreducible  measure on $G$. Choose some finite entropy irreducible  measure  $\mu^\alpha$ on
$M^\alpha$  which has the same projection to $K$ as $\mu$.
Consider $g\in H$.
Let $\alpha$ be the smallest ordinal such that $g \in M^\alpha$.
$g$ acts trivially on the Poisson boundary of $(G,\mu)$
if and only if the two following conditions hold.
\begin{enumerate}
\item every element $h$ of the normal subgroups in $G$ (generated by $g$) $h\in \Norm(f)$ which is also contained in $N_\beta$ where $\beta<\alpha$ acts trivially on the Poisson boundary of $(M^\beta, \mu^\beta)$. 

\item $g$ acts trivially on the Poisson boundary of $(M^\alpha, \mu^\alpha)$.
\end{enumerate}
\end{lem}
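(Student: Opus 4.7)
The plan is to proceed by transfinite induction on the ordinal $\alpha=\alpha(g)$, the smallest ordinal such that $g\in C_{\alpha+1}$; for this $\alpha$ one automatically has $g\in C_{\alpha+1}\setminus C_\alpha$, since the canonical central series is increasing. The two key tools are the Reduction Lemma (Lemma~\ref{lem:mainforreduction}) and the splitting Lemma~\ref{lem:obsforreduction}.

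For the base case $\alpha=0$, the element $g$ lies in $C_1=Z(H)$, a characteristic subgroup of $H$ and hence normal in $G$. Applying the Reduction Lemma to $1\to H\to G\to K\to 1$ with $A=C_1$ and associated group $C=M^0=K\ltimes C_1$ yields that $g$ acts trivially on $(G,\mu)$ if and only if it acts trivially on $(M^0,\mu^0)$. Condition (1) is vacuous at $\alpha=0$, so this is precisely condition (2).

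For the inductive step at ordinal $\alpha>0$ (successor or limit), assume the lemma for all elements of level strictly less than $\alpha$, and apply Lemma~\ref{lem:obsforreduction} with $M=C_\alpha$: the element $g$ acts trivially on $(G,\mu)$ if and only if (a) every $h\in\Norm_G(g)\cap C_\alpha$ acts trivially on $(G,\mu)$, and (b) $g$ acts trivially on the boundary of $(G/C_\alpha,\mu)$. For (b), I apply the Reduction Lemma to the quotient extension $1\to H/C_\alpha\to G/C_\alpha\to K\to 1$ with $A=C_{\alpha+1}/C_\alpha$: by the very definition of the canonical central series this $A$ is normal in $G/C_\alpha$ and central in $H/C_\alpha$, the associated semidirect product is precisely $M^\alpha$, and the image of $g$ lies in $A$ because $\alpha$ is the level of $g$. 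Hence (b) is equivalent to condition (2). For (a), each $h\in\Norm_G(g)\cap C_\alpha$ has level $\beta(h)<\alpha$ (for limit $\alpha$ this uses $C_\alpha=\bigcup_{\beta<\alpha}C_\beta$), and since $\Norm_G(g)$ is a normal subgroup of $G$ containing $h$ one has $\Norm_G(h)\subseteq\Norm_G(g)$. Therefore condition (1) of the statement for $g$ specialises to both hypotheses (1) and (2) of the inductive statement for each such $h$ at its level $\beta(h)$, and the inductive hypothesis then gives that $h$ acts trivially on $(G,\mu)$; this is exactly (a). The converse direction uses the same inductive hypothesis read backwards: triviality of $g$ on $(G,\mu)$ forces triviality of every $h\in\Norm_G(g)\cap C_\alpha$ on $(G,\mu)$, and the inductive hypothesis applied to each $h$ at level $\beta(h)$ yields in particular $h$ acting trivially on $(M^{\beta(h)},\mu^{\beta(h)})$, which is condition (1).

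The main technical obstacle is that Lemma~\ref{lem:mainforreduction} requires non-degeneracy of both measures, whereas only irreducibility is hypothesised here. I resolve this by passing, when needed, to affine combinations $\sum_i a_i\mu^{*i}$ and $\sum_i a_i(\mu^\alpha)^{*i}$: these do not alter the Poisson boundary or the action of any element on it, the projections to $K$ continue to agree, and the resulting measures are non-degenerate. This is the same device used at the end of the proof of Theorem~\ref{thm:compcrit}. One also has to check that at each inductive step the quotient and image measures on $G/C_\alpha$ inherit finite entropy and non-degeneracy, but both properties automatically pass to quotients, so no additional work is required.
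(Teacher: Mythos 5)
Your core argument is the same as the paper's: apply Lemma \ref{lem:obsforreduction} with the relevant term of the canonical central series to split the question into the lower-level elements of $\Norm_G(g)$ and the action of $g$ on the quotient $G/C_\alpha$, and then apply Lemma \ref{lem:mainforreduction} to the extension $1\to H/C_\alpha\to G/C_\alpha\to K\to 1$ with $A=C_{\alpha+1}/C_\alpha$ to identify the quotient action with the action on $M^\alpha$. The paper does exactly this in two sentences; what it leaves implicit is the passage from ``some $h\in\Norm_G(g)\cap C_\alpha$ acts non-trivially on $(G,\mu)$'' to the failure of condition (1), which genuinely requires the recursive use of the statement at lower levels. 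Your transfinite induction (using $\Norm_G(h)\subseteq\Norm_G(g)$ and the normality of the kernel of the boundary action) fills in precisely this step correctly, so on this count your write-up is a faithful, more complete rendering of the paper's proof.

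The one step that does not work is your patch for the irreducible-versus-non-degenerate mismatch. You claim that replacing $\mu$ by $\sum_i a_i\mu^{*i}$ produces a non-degenerate measure; it does not. The support of such an affine combination is the semigroup generated by $\supp\mu$, so it equals $G$ only when $\mu$ was already non-degenerate: for instance $\mu=\delta_1$ on $\mathbb{Z}$ is irreducible, yet every $\sum_i a_i\mu^{*i}$ is supported on the positive integers. The device used at the end of the proof of Theorem \ref{thm:compcrit} starts from a measure that is already non-degenerate and only upgrades it to full support; it never upgrades irreducibility to non-degeneracy, and Remark \ref{rem:nondegenerateimportant} shows the comparison criterion (on which Lemma \ref{lem:mainforreduction} rests) genuinely fails for merely irreducible measures. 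To be fair, this tension is inherited from the paper, whose statement says ``irreducible'' while its proof invokes Lemma \ref{lem:mainforreduction} without comment; but your explicit claim that the convolution-power trick resolves it is false as stated, so either strengthen the hypothesis of the lemma to non-degenerate (which is how it is used later in the paper) or supply a different argument for the irreducible case rather than this patch.
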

In particular, if the boundary of the projection to $K$ is trivial, then the
Poisson boundary of $(G,\mu)$ is trivial if and only if that of $(M^\alpha, \mu^\alpha)$ is trivial for all $\alpha$.

\begin{proof}
First apply Lemma \ref{lem:obsforreduction} for $G=G$, $g=g$ $M=C_{\alpha-1}$ where we suppose that $g \in C_\alpha$, and $\alpha$ is minimal with this property). Thus we see that $g$ acts non-trivially if and only if condition (1) fails or $g$ acts non-trivially on $G/C_{\alpha-1}$. Now applying Lemma \ref{lem:mainforreduction} to $G_2=G/C_{\alpha-1}$, $H_0=N/C_{\alpha-1}$ and $\A'=C_\alpha/C_{\alpha-1}$ implies that g acting non-trivially on $G/C_{\alpha-1}$ is the same as acting non-trivially on $M^{\alpha}$.
\end{proof}

Note that in the particular case  when $G$
is hypercentral-by-abelian, we have reduced the question of non-triviality of the action  of elements of $N$
on the boundary $(G,\mu)$ to the question about random walks on corresponding metabelian components.
And in a general case ($g$ is not necessarily in $N$) the reduction can be done using  Lemma \ref{lem:obsforreduction}.

As we have already mentioned, if $N$ is nilpotent, there are finitely many such components.

\begin{rem}
 In the case when $N$ is nilpotent,
 we can analogously define lower abelian-by-$K$
 components (metabelian if $K$ is abelian). And analogously, the non-triviality of the action of an element on the boundary of $(G,\mu)$ can be reduced to the same question about random walks on lower components.
\end{rem}

\begin{rem}\label{rem:notfinitelygeneratedcomponents}
Metabelian components $M_i$ (as well as upper metabelian components $M^i$)
are not necessarily finitely generated.
Consider for example $N$
to be  a free two-step nilpotent group on a countable set $n_i$, $i \in \mathbb{Z}$
and $A=\mathbb{Z}$ acts by shift of the index on the set $n_i$. 
Let $G$ be  the corresponding  extension. Then $G$ is finitely generated (and $N$ is finitely generated as a normal subgroup in $G$). But $[N,N]$
is freely generated by $[n_i, n_j]$, $i>j$.
And the action by $A$ preserves the difference $j-i$, so all $[n_i, n_0]$, $i\in \mathbb{Z}$ lie on distinct orbits and  are linearly independent in $G$. 
 So that the extension of $[N,N]$ by $A$ is not a finitely generated subgroup.
\end{rem}

\begin{rem}
As we mentioned, if $G$ is finitely generated, $M_i$ (and $M^i$) are not necessarily finitely generated. Thus, even if we study finitely supported measures on (nilpotent-by-abelian) 
$G$, to use the lemma above, we need to consider infinitely generated measures on $M_i$ (or $M^i$). There are other known situations where for the understanding of a Liouville property of a group $G$ one makes a reduction to some not finitely generated groups (and not finitely supported measures). As for example the argument of Kaimanovich and Vershik, that uses exit measure to some abelian group (e.g. $A \wr \mathbb{Z}$ or more generally $A \wr B$, $B$ is virtually $\mathbb{Z}^2$ or $\mathbb{Z}$. We also mention that Furstenberg-Lyons-Sullivan approximations \cites{lyonssullivan, kaimanovichFLS}, that reduce a question of the boundary on the covering, in particular for universal covers of compact manifolds, to a measure on the deck transformation group, have infinite support.
\end{rem}

In the following sections
we make
further reduction from components to blocks (in particular from metabelian 
components to metabelian blocks). We will later discuss sufficient conditions on $G$ (e.g. finitely generated linear group) which will guarantee that the reduction can be made for finitely many blocks

\subsection{Reduction to torsion-by-$K$ and torsion-free-by-$K$ groups}

Having a  group $G$
and a short exact sequence
$$
1 \to H \to G \to K \to 1,
$$
where $H$ is abelian,
we define {\it torsion-free by  $K$ abelian component} (in case when $K$ is abelian, these will be torsion-free-by-abelian component) $L$ of $G$ (with respect to this short exact sequence) to be $G/T$, where $T$ is the torsion subgroup of $H$ (this group is clearly an extension of $H/T$ by $K$).

We also define 
{\it torsion-by-$K$  component} $U$ of $G$ (with respect to this exact sequence) as follows: $U$ is a semi-direct product of $T$ by $K$.

\begin{lem}[Reduction to torsion-free-by -$K$ components and torsion-by-$K$ components. I: torsion case]
\label{lem:reductionI}
Consider a short exact sequence
$$
1 \to H \to G \to K \to 1,
$$ where $H$ is abelian and $T$ is the torsion subgroup of $H$. 
Let $\mu$ be a non-degenerate probability measure on $G$, and let $\mu_2$ be a non-degenerate associated  measure on the
torsion-by-$K$ component.
An element $g\in T$ acts non-trivially on the boundary of $(G,\mu)$ if and only if it acts non-trivially on the boundary of
the random walk defined by $\mu_2$.
\end{lem}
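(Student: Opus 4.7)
The plan is to deduce this lemma as a direct application of the Reduction Lemma (Lemma \ref{lem:mainforreduction}), with the torsion subgroup $T$ playing the role of the central abelian subgroup $A$ in that lemma. The setup is essentially already tailored to this: we have $1 \to H \to G \to K \to 1$ with $H$ abelian, and we are singling out the characteristic subgroup $T \le H$ consisting of torsion elements.

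First I would verify the hypotheses of Lemma \ref{lem:mainforreduction}. The subgroup $T$ is characteristic in $H$ (since being torsion is preserved by any automorphism), and $H$ is normal in $G$, so $T$ is normal in $G$. Because $H$ is abelian, $T$ lies in the center of $H$ (in fact $H$ itself is central in $H$). Taking the conjugation action of $K = G/H$ on $T$ — which is well-defined precisely because $T$ lies in the center of $H$ — the semidirect product $T \rtimes K$ produced by Lemma \ref{lem:mainforreduction} coincides by construction with the torsion-by-$K$ component $U$ of the lemma we are trying to prove. Thus an associated measure $\mu'$ on $U$ (a non-degenerate measure on $U$ with the same projection to $K$ as $\mu$) is exactly the kind of measure $\nu$ on $C$ required in Lemma \ref{lem:mainforreduction}.

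Applying Lemma \ref{lem:mainforreduction} now yields, for any $g \in T$, that $g$ acts non-trivially on the Poisson boundary of $(G,\mu)$ if and only if $g$ acts non-trivially on the Poisson boundary of $(U,\mu')$, which is the desired conclusion. The main (and only) obstacle is the implicit finite-entropy assumption: Lemma \ref{lem:mainforreduction} requires $\mu$ and $\mu'$ to have finite entropy, and so this lemma should be read under the same hypothesis (finite entropy is essential for the Comparison Criterion, as emphasized in Remark \ref{rem:finiteentropyimportant}). Beyond this bookkeeping, the proof is a one-line invocation of the Reduction Lemma, since all the real work — namely the Comparison Criterion for Abelian extensions together with the product reduction of Corollary \ref{cor:products} — has already been done in the preceding section.
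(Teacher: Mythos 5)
Your proof is correct and is essentially identical to the paper's: the paper proves this lemma by the one-line application of Lemma \ref{lem:mainforreduction} with $G=G$, $H=H$, $A=T$, $K=K$, exactly as you do, with the same verification that $T$ is normal in $G$ and central in the abelian group $H$, so that the semidirect product in that lemma is the torsion-by-$K$ component. Your observation that the finite-entropy hypothesis is implicitly needed (as in Lemma \ref{lem:mainforreduction}) is a fair bookkeeping remark and does not change the argument.
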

\begin{proof}
We apply Lemma \ref{lem:mainforreduction}
for $G=G$, $H=H$, $A=T$, $K=K$.
\end{proof}

\begin{lem}[Reduction to torsion-free-by-$K$ components and torsion-by-$K$ components. II: torsion-free  case]
\label{lem:reductionII}
$$
1 \to N \to G \to K \to 1,
$$
$T$ is a torsion subgroup of an abelian subgroup $N$,
$g \in N$.

Let $\Norm(g)$ be the normal subgroup generated by $g$ in $G$, and consider $C_g= \Norm(g) \cap T$.
$g$ acts non-trivially on the boundary of $(G,\mu)$
if and only if either $g$
acts non-trivially on the boundary of a torsion-free-by-$K$ component, or there is $c\in C_g$ that acts non-trivially on the boundary of a torsion-by-$K$ component.
\end{lem}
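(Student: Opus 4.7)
The plan is to combine Lemma \ref{lem:obsforreduction} (the normal-quotient dichotomy) with Lemma \ref{lem:reductionI} (the previously established torsion case). The key preliminary observation is that $T$, being the torsion subgroup of the Abelian normal subgroup $N$, is characteristic in $N$ and therefore normal in $G$. Moreover, $G/T$ is exactly the torsion-free-by-$K$ component, and $T$ embeds naturally into the torsion-by-$K$ component $U = T \rtimes K$, where the $K$-action on $T$ is the one induced from the $G$-action on $N$.

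First, I apply Lemma \ref{lem:obsforreduction} to $(G,\mu)$ with the normal subgroup $M := T$ and the given $g$. This yields that $g$ acts non-trivially on the Poisson boundary of $(G,\mu)$ if and only if at least one of the following holds:
\begin{enumerate}
\item some element of $T \cap \Norm_G(g) = C_g$ acts non-trivially on the boundary of $(G,\mu)$;
\item $g$ acts non-trivially on the boundary of $(G/T,\mu)$.
\end{enumerate}
Condition (2) is, by definition, exactly the statement that $g$ acts non-trivially on the torsion-free-by-$K$ component, so this half of the desired equivalence is immediate.

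It remains to translate condition (1) into the language of the torsion-by-$K$ component $U$. For this, I invoke Lemma \ref{lem:reductionI} applied to the original short exact sequence $1\to N\to G\to K\to 1$, with Abelian $N$ and its torsion subgroup $T$; let $\mu'$ be a non-degenerate associated measure on $U$ obtained from $\mu$ in the prescribed way. The lemma then says that for every $c\in T$, the element $c$ acts non-trivially on the boundary of $(G,\mu)$ if and only if it acts non-trivially on the boundary of $(U,\mu')$. Specializing to $c\in C_g\subseteq T$, condition (1) becomes: some $c\in C_g$ acts non-trivially on the boundary of $(U,\mu')$, i.e.\ on the torsion-by-$K$ component. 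Combining this with the reformulation of (2) gives the desired equivalence.

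The only potentially subtle point — and hence the one I would double-check carefully — is the standing hypothesis that $N$ (hence $T$) is normal in $G$, which needs to be extracted from the phrase ``torsion subgroup of an Abelian subgroup $N$'' together with the ambient short exact sequence. Once that is settled, the proof is a two-line concatenation of Lemmas \ref{lem:obsforreduction} and \ref{lem:reductionI}, with no new entropy or boundary arguments required. In particular, no appeal to the comparison criterion (Theorem \ref{thm:compcrit}) is needed beyond what has already been absorbed into Lemma \ref{lem:reductionI}.
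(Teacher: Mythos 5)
Your proof is correct and follows essentially the same route as the paper, whose entire proof is the one-line instruction to apply Lemma \ref{lem:obsforreduction} with $M=T$ (leaving the translation of condition (1) into a statement about the torsion-by-$K$ component, via the immediately preceding Lemma \ref{lem:reductionI}, implicit). Your version simply makes that implicit step, and the normality of $T$ in $G$, explicit.
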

\begin{proof}
We apply  Lemma \ref{lem:obsforreduction}
for $G=G$, $M=T$.
\end{proof}

Consider a torsion-by-$K$ component $M$,
$$
 1 \to T \to M \to K \to 1,
$$ 
where $T$ is an abelian torsion group.
For a prime number $p$, let $T_p$ be $p$-part of $T$, that is
$$
T_p=\{t \in T: \mbox{the order of $t$ is a power of $p$}  \}
$$
It is easy to see that $T_p$ do not intersect and that $T$ is a direct sum
of $T_p$:
$$
T=\bigoplus_{p}T_p
$$
(for more on the structure of torsion Abelian groups see e.g. \cite{fuchs}[Ch.10].)
Let $M_p$ be an extension of $T_p$ by $K$. We say that $M_p$ is
{\it $p$-torsion-by-$K$ component of $M$}.
Given an element $t\in T$, we write it in the form
$t= \bigoplus_{\mbox{$p$ is prime}} (t_p)$.
Observe that $T_p$ is a normal subgroup in $M^p$.

\begin{claim} Let $M$ be a torsion-by-$K$ group,  of the form
$$
 1 \to T \to M \to K \to 1,
$$ 
where $T$ is an abelian torsion group.
Then   $t\in T$ acts non-trivially on the boundary of a torsion-by-$K$ component
  if and only if 
$t= \bigoplus_p (t_p)$ and there exists a prime $p$ such that $t_p$ acts non-trivially on the boundary of $(M^p, \mu^p)$ (where $\mu^p$ is a measure with the same projection to $K$ as $\mu$).
\end{claim}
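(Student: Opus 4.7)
The plan is to combine Lemma \ref{lem:mainforreduction} (the Reduction Lemma), applied separately at each prime, with an elementary observation about the kernel of the $M$-action on the Poisson boundary.

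First I would check that for each prime $p$ the subgroup $T_p$ is characteristic in $T$ (being the set of elements of $p$-power order), hence normal in $M$; since $T$ is Abelian, $T_p$ automatically lies in the center of $T$. These are precisely the hypotheses of Lemma \ref{lem:mainforreduction} applied with $G=M$, $H=T$, $A=T_p$, and $C=M_p$. As in the previous reductions of this section, the associated measure $\mu^p$ can be chosen non-degenerate of finite entropy with projection to $K$ equal to that of $\mu$. The Reduction Lemma then gives, for any $t_p \in T_p$,
\[
t_p \text{ acts non-trivially on } (M_p, \mu^p) \iff t_p \text{ acts non-trivially on } (M, \mu).
\]

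Next I would show that $t = \sum_p t_p$ acts non-trivially on $(M,\mu)$ if and only if at least one summand $t_p$ does. Let $N_0$ denote the kernel of the $M$-action on the Poisson boundary of $(M,\mu)$, which is a normal subgroup of $M$. If each $t_p \in N_0$ then $t = \sum_p t_p \in N_0$ because $N_0$ is closed under the finite sum. Conversely, if $t \in N_0$ then every integer multiple of $t$ lies in $N_0$; and by the Chinese Remainder Theorem applied inside the finite cyclic subgroup $\langle t \rangle \cong \mathbb{Z}/|t|\mathbb{Z}$, each summand $t_p$ is realized as an integer multiple of $t$, whence $t_p \in N_0$. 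Chaining the two equivalences yields the claim.

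The argument is essentially formal; the one point meriting care is the application of Lemma \ref{lem:mainforreduction}, which requires $T_p$ to be normal in $M$ (not merely in $T$), but this follows at once from the fact that $T_p$ is characteristic inside the Abelian normal subgroup $T$.
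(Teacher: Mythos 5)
Your proof is correct and follows essentially the same route as the paper: the elementary observation that each $t_p$ is an integer multiple of $t$ (so $t$ acts non-trivially on the boundary of $(M,\mu)$ if and only if some $t_p$ does), combined with a single application of Lemma \ref{lem:mainforreduction} using that $T_p$ is characteristic in the Abelian normal subgroup $T$, hence normal in $M$ and central in $T$. The only cosmetic difference is that you instantiate the Reduction Lemma directly with $A=T_p$ (so that $C=M^p$), whereas the paper's text phrases the application in terms of the coprime part $\bigoplus_{q\neq p}T_q$; your instantiation is the more direct reading and gives the same conclusion.
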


\begin{proof}
Indeed, for each $t\in T$ we know that there exists a finite set of primes $\Omega_t$, such that 
$$
t= \bigoplus_{p \in \Omega_t} (t_p)
$$

Since each $t_p$ is a multiple of $t$, it is easy to see that $t$ acts non-trivially if and only if at least one $t_p$ acts non-trivially. Thus it suffices to check the claim when an element $t_p$ acts non-trivially.

Let $A$ be the set of elements in $T$ of order coprime to $p$ (that is, $A=
T=\bigoplus_{q}T_q$ for primes $q$ not equal to $p$) .  Applying lemma \ref{lem:mainforreduction} for $G=G$, $T=H$, $A=A$, and $K=K$ yields the desired result. 

\end{proof}

So we already made a reduction to torsion-free-by $K$ and $p$-torsion-by-$K$ components.

These ($p$-torsion)-by-$K$ components $M_p$ can be reduced to a  collection of extensions of
elementary abelian $p$-groups (
abelian groups with all elements of order $p$) by $K$.

\begin{defn}[(Elementary-$p$)-by-$K$ components.]
Consider a short exact sequence
$
1 \to T_p \to M^p \to K \to 1,
$
where $T_p$ consists of  elements of $M^p$ which are of order which is a power of $p$.
Consider a (finite or countable) family of groups $T^{j,p}$, for each $j$ the group $T^{j,p}$ is elementary $p$-group  defined as $W^{j+1,p}/W^{j,p}$, where
$W^{j,p}$ are elements of order
$p^j$. Put $M^{j,p}$ to be an extension  
of $T^{j,p}$ by $K$. 
We call these groups  {\it  elementary-$p$-by-$K$}
components of $M_p$.
\end{defn}

\begin{figure}[!htb] 
\centering
\includegraphics[scale=.95]{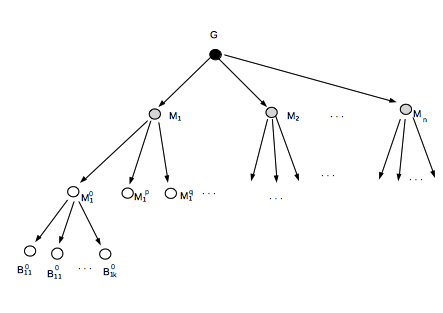}
\caption{Block decomposition of a f.g. nilpotent-by-abelian group. First line of the decomposition are Metabelian
components of $G$. Next line are primary Metabelian components (extensions of $p$ groups or torsion-free groups. Last line are blocks: a collection
of f.g. two-times-two upper triangular  matrices
}
\label{pic:blockdecomposition}
\end{figure}

\begin{lem}[Reduction of extension by $p$-groups to extensions by elementary abelian
$p$-groups] \label{lem:reductiontoElementary}
We consider a short exact sequence
$$
1 \to T_p \to M^p \to K \to 1,
$$
where $T_p$ is $p$-torsion. Let $\mu_p$
be a non-degenerate measure on $M_p$.
We claim  $g\in T_p$ acts non-trivially if and only if 
there exists $h\in \Norm(M^p) g$,
$h \in T^{j+1,p}$, $h \notin T^{j,p}$, and $h$ acts non-trivially on an associated random walk on some primary-by-$K$
component $M^{j,p}$.
\end{lem}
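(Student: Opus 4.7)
The plan is to peel off the characteristic filtration $W^{1,p} \subset W^{2,p} \subset \cdots$ of $T_p$ one step at a time, alternating Lemma \ref{lem:obsforreduction} (to cut off what happens below $W^{j,p}$) and the Reduction Lemma \ref{lem:mainforreduction} (to replace the quotient $M^p/W^{j,p}$ by its associated semi-direct product). Each $W^{j,p}$ consists of the elements of $T_p$ killed by $p^j$, so it is characteristic in $T_p$, hence normal in $M^p$. The successive quotient $T^{j,p}=W^{j+1,p}/W^{j,p}$ is an elementary Abelian $p$-group that is normal in $M^p/W^{j,p}$ and, since $T_p/W^{j,p}$ is Abelian, central in this subgroup. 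These are exactly the hypotheses needed to feed the short exact sequence $1\to T_p/W^{j,p}\to M^p/W^{j,p}\to K\to 1$ into Lemma \ref{lem:mainforreduction} with $A=T^{j,p}$, and the resulting semi-direct product is by definition $M^{j,p}$.

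For the "only if" direction I proceed by induction on the order of $g$. Choose $j$ so that $g\in W^{j+1,p}\setminus W^{j,p}$. Applying Lemma \ref{lem:obsforreduction} with $M=W^{j,p}$, either (a) some element of $\Norm_{M^p}(g)\cap W^{j,p}$ acts non-trivially on the boundary of $(M^p,\mu_p)$, or (b) $g$ acts non-trivially on the boundary of the projected random walk on $M^p/W^{j,p}$. In case (b), the image $\bar g$ lies in $T^{j,p}$, and Lemma \ref{lem:mainforreduction} (applied to $M^p/W^{j,p}$ together with any non-degenerate finite entropy $\mu^{j,p}$ having the same projection to $K$ as $\mu_p$) tells us $\bar g$ acts non-trivially on the boundary of $(M^{j,p},\mu^{j,p})$, so we may take $h=g$. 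In case (a), every element of $\Norm_{M^p}(g)\cap W^{j,p}$ has order strictly less than that of $g$, so the induction hypothesis applied to such an element produces the desired witness $h$; since $\Norm_{M^p}(h)\subseteq\Norm_{M^p}(g)$, the conclusion is preserved.

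For the "if" direction, suppose $h\in\Norm_{M^p}(g)$ with $h\in W^{j+1,p}\setminus W^{j,p}$ acts non-trivially on the boundary of $(M^{j,p},\mu^{j,p})$. The image $\bar h\in T^{j,p}$ is non-trivial, and Lemma \ref{lem:mainforreduction} (in the same setup as above) gives that $\bar h$ acts non-trivially on the boundary of $(M^p/W^{j,p},\mu_p)$, so $h$ itself acts non-trivially on the boundary of $(M^p,\mu_p)$. Since $h$ lies in $\Norm_{M^p}(g)$ and the normal closure of an element acting trivially on the boundary acts trivially, $g$ acts non-trivially as well.

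I expect the main technical point to be the bookkeeping in case (a) of the forward direction: one has to verify that the measures $\mu^{j,p}$ at each level can be coherently chosen as non-degenerate finite entropy measures with the same projection to $K$ as $\mu_p$ (so that Lemma \ref{lem:mainforreduction} applies), and that the induction on the order of $g$ indeed terminates with the witness $h$ lying in a single layer $W^{j+1,p}\setminus W^{j,p}$ of the filtration rather than spreading across several layers. Everything else is a direct combination of the two reduction lemmas already proved.
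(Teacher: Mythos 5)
Your proposal is correct and follows essentially the same route as the paper: induction on the order of $g$, applying Lemma \ref{lem:obsforreduction} with $M=W^{j,p}$ to split into the two cases and Lemma \ref{lem:mainforreduction} (with $A=T^{j,p}$ central in the Abelian kernel of the quotient) to pass to the component $M^{j,p}$, with the converse handled by lifting non-trivial action through quotient boundaries and the normal-closure argument. Your version of the "if" direction, phrased on $M^p/W^{j,p}$ and then lifted, is a slightly cleaner rendering of the same step, but it is not a different argument.
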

\begin{proof}
We proceed by induction on $n$ where $g$ has order $p^n$. In the case $n=0$, $g=e$ the claim is trivial.  
Suppose we have proven the statement for all elements $g$ of order $p^{n-1}$. Take an element $g$ of order $p^n$. Consider $U=\Norm_{M^p} (g) \cap T^{p^{n-1}}$. 
Applying Lemma \ref{lem:obsforreduction}
for $G=M^p$, $M=T^{p^{n-1}}$, we conclude that $g$ acts non-trivially on the boundary of $(M^p, \mu^p)$
if and only if either some element of $U$ acts non-trivially on the Poisson boundary of $(M^p, \mu^p)$ or $g$ acts non-trivially on the boundary of the induced random walk on $M^p/T^{p^{n-1}}$.

By induction step we now know that  $U$ acts non-trivially if and only if 
$g$ acts non-trivially on the boundary of one of the components.

Now suppose that $g$ acts non-trivially on the boundary of $M^p/T^{p^{n-1}}$.
This shows that we can assume that $n=1$ (because $g$ is of order $p$ in this quotient and since "extension of elementary $p$" components of the quotient are also components of our group).

We apply Lemma \ref{lem:mainforreduction} for
$G_2= M^p/T^{p^{n-1}}$,  $H=\tilde{T}^p$ and
$A=T^{p^n}/T^{p^{n-1}}$ and conclude that
$g$ acts non-trivially on $M^p/T^{p^{n-1}}$
if and only if it acts non-trivially on 
"extension of elementary $p$" components
of $G_2= M^p/T^{p^{n-1}}$.

Finally, observe that if $h \in \Norm_{M^p}(g) $ acts non-trivially on
the boundary of one of the elementary components $M^{j,p}$ (which is an extension of $W^{j+1, p}/W^{j,p}$ by $K$) of $M_p$, then we can apply
Lemma \ref{lem:mainforreduction}
for $G_2=M^p$, $H=\tilde{T^p}$ and $A=T^{p^j}/T^{p^{j-1}}$ and conclude that if $h$ acts non-trivially on the component $h$ then $h$ (and therefore $g$) acts non-trivially on the group. 

 \end{proof}

\begin{defn}\label{defn:singlegeneratedcomp}(Single generated component).
Consider a short exact sequence
$$
1 \to Q \to G \to K \to 1,
$$
where $Q$ is abelian.
Given an element $q\in Q$ define the single generated by $K$ component $G_q$ to be the semi-direct product defined by the short exact sequence 
$$
1 \to \Norm_G (q) \to G_q \to K \to 1
$$
\end{defn}
If $Q$ is either torsion-free abelian or
an elementary $p$ group,
then it is clear that $\Norm_G (q)$ in the definition above is either generated by a single torsion free element, or by a single
element of order $p$.

\begin{lem}[Reduction to single generator].
\label{lem:reductiontosingle}
Consider an extension
$$
1 \to Q \to G \to K \to 1.
$$
Take a finite entropy non-degenerate measure on $G$.
An element $q$ in the center of $Q$ acts non-trivially
on the boundary of $(G,\mu)$ if and only if it acts non-trivially on the boundary of $(G_q, \mu_q)$, where we chose some/all measures non-degenerate  measure $\mu_q$ on $G_q$ to have the same projection to $K$  as $\mu$.
We remind that the group $G_q$ is defined in Definition
\ref{defn:singlegeneratedcomp}.
\end{lem}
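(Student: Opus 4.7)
The plan is to derive this statement as a direct application of the Reduction Lemma \ref{lem:mainforreduction}. With the short exact sequence $1 \to Q \to G \to K \to 1$ in hand, set $H := Q$ and $A := \Norm_G(q)$. Since $q \in Q$ and $Q$ is normal and Abelian in $G$, every conjugate $gqg^{-1}$ lies in $Q$, so the normal closure $A = \Norm_G(q)$ is contained in $Q$. In particular $A \subseteq Q$ is Abelian, hence automatically central in $Q = H$, and $A$ is normal in $G$ by definition. Thus the hypotheses of Lemma \ref{lem:mainforreduction} are met.

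The semi-direct product $C$ produced in Lemma \ref{lem:mainforreduction} is $K \ltimes A$, with $K$ acting on $A$ by the conjugation action inherited from $G$. This coincides with the definition of $G_q$ in Definition \ref{defn:singlegeneratedcomp}, namely the split extension
\[
1 \to \Norm_G(q) \to G_q \to K \to 1.
\]
Given the non-degenerate finite entropy measure $\mu$ on $G$ and a non-degenerate finite entropy measure $\mu_q$ on $G_q$ with the same projection to $K$ as $\mu$, Lemma \ref{lem:mainforreduction} asserts that an element $a \in A$ acts non-trivially on the boundary of $(G_q,\mu_q)$ if and only if it acts non-trivially on the boundary of $(G,\mu)$. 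Applying this with $a = q$ (which lies in $A$ by construction) gives the desired equivalence.

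It remains to justify the parenthetical claim that the choice of $\mu_q$ among non-degenerate finite entropy measures on $G_q$ with the prescribed $K$-projection is immaterial. This is immediate from the Comparison Criterion (Theorem \ref{thm:compcrit}) applied to the two extensions $1 \to \Norm_G(q) \to G_q \to K \to 1$ and itself: any two such measures $\mu_q, \mu_q'$ share the same projection to $K$ and the same $K$-action on $\Norm_G(q)$, so an element of $\Norm_G(q)$ acts non-trivially on the boundary of $(G_q,\mu_q)$ iff it does on the boundary of $(G_q,\mu_q')$. To see that at least one such $\mu_q$ exists, take any finitely supported non-degenerate measure on $\Norm_G(q)$ and convolve it with a section of the $K$-projection of $\mu$; the resulting measure has finite entropy, is non-degenerate on $G_q$, and projects to the $K$-projection of $\mu$.

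There is no substantive obstacle here: the entire argument reduces to bookkeeping that matches the hypotheses of Lemma \ref{lem:mainforreduction} and identifies the abstract quotient $C$ with the concrete group $G_q$. The only delicate point already addressed upstream — that the comparison criterion requires finite entropy and non-degeneracy of the compared measures — is precisely why those assumptions are built into the statement of the current lemma.
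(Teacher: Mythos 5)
Your overall route is exactly the paper's: apply the Reduction Lemma \ref{lem:mainforreduction} with $H=Q$ and $A=\Norm_G(q)$, identify the resulting semi-direct product $C=K\ltimes A$ with $G_q$ from Definition \ref{defn:singlegeneratedcomp}, and conclude; your extra remarks on the independence of the choice of $\mu_q$ (via Theorem \ref{thm:compcrit}) are consistent with what the paper leaves implicit.

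There is, however, one genuine flaw in your verification of the hypotheses: you assume that $Q$ is Abelian ("Since $q\in Q$ and $Q$ is normal and Abelian in $G$ \dots hence automatically central in $Q=H$"). That assumption is not part of the lemma's statement — the hypothesis "$q$ in the center of $Q$" is only meaningful, and only needed, when $Q$ may fail to be Abelian, and the lemma is in fact applied later in exactly that generality (e.g.\ in the proof of Theorem \ref{thm:lineargroups}, where $Q'$ is the image of the unipotent group $UT_n$, a nilpotent and generally non-Abelian group, and $h$ is merely central in it by Claim \ref{claim:normalmatrix}). With your argument the case actually needed is not covered. The fix is short and is what the paper does: since $Q$ is normal in $G$, conjugation by any $g\in G$ restricts to an automorphism of $Q$, hence maps the center of $Q$ to itself; therefore every conjugate of $q$ lies in $Z(Q)$, so $A=\Norm_G(q)\le Z(Q)$ is Abelian, central in $H=Q$, and normal in $G$, and Lemma \ref{lem:mainforreduction} applies verbatim. (A minor additional caveat: your existence argument for $\mu_q$ via a finitely supported non-degenerate measure on $\Norm_G(q)$ needs adjusting when $\Norm_G(q)$ is not finitely generated, but a countably supported finite-entropy non-degenerate measure always exists, so this is cosmetic.)
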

\begin{proof}
This follows from Lemma \ref{lem:mainforreduction} applied to
$G=G$, $H=Q$ and $A= \Norm_G (q)$.
Observe that every conjugate of $q$ in $G$ is also in the center, so $A$ is also in the center of $Q$, and thus $A$ satisfies the assumption of the lemma
that $A$ is central in $Q$.
\end{proof}

\subsection{Reduction to blocks}

Combining the reduction lemmas (reduction to abelian-by-$K$ components (Lemma\ref{lem:reductionabelianbyH}), reduction to primary-by-$K$ and torsion-free metabelian components \ref{lem:reductionI} and \ref{lem:reductionII}, reduction to (elementary $p$)-by-$K$ \ref{lem:reductiontoElementary}  and reduction to single) \ref{lem:reductiontosingle}, we obtain

\begin{thm}\label{thm:generalextensions}
Let $1\to N\to G\to K\to 1$ be a short exact sequence where the group $N$ is hypercentral. Let $\mu$ be a non-degenerate finite entropy measure on $G$.
Then there exists a countable family of groups
$C_\alpha$, $1\to A\to C_{\alpha}\to K\to 1$,
where $A$ is abelian,  normally generated by a single element, and  either torsion-free or an elementary $p$-group such that
the following holds. For each $C_\alpha$ fix a finite entropy non-degenerate measure $\mu_\alpha$ with the same projection to $K$ as $\mu$.
\begin{enumerate}
\item $(G,\mu)$ has non-trivial boundary if any only if there exists $\alpha$ such that  $(C_\alpha,\mu_\alpha)$ has non-trivially boundary.


\item
For each $g \in G$ we can associate  $g_\alpha \in C_\alpha$ (not depending on $\mu$)
such that 
$g\in G$ acts non-trivially on the boundary 
$(G,\mu)$ if and only either $g$ acts non-trivially on the projected random walk to $K$ or
if there exists
$\alpha$ such that $g_\alpha$ acts non-trivially on the boundary of $(C_\alpha,\mu_\alpha)$.
\end{enumerate}
\end{thm}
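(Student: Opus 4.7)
\emph{Plan.} The theorem is obtained by composing, in sequence, the four reduction lemmas proved above in this section.

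First, I would apply Lemma \ref{lem:reductionabelianbyH} to the given short exact sequence $1 \to N \to G \to K \to 1$, using the canonical (upper) central series $\{C_\alpha\}$ of the hypercentral group $N$. This produces a countable family of Abelian-by-$K$ components $M^\alpha$ (countable because $G$, and hence $N$, is countable, so the transfinite central series terminates at a countable ordinal), together with associated non-degenerate finite-entropy measures $\mu^\alpha$ sharing the projection of $\mu$ to $K$. For $g \in N$, the lemma reduces the non-triviality of the action of $g$ on the boundary of $(G,\mu)$ to the non-triviality of the actions of certain elements in $\Norm_G(g) \cap C_\beta$ on the $(M^\beta, \mu^\beta)$. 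For $g \notin N$, I would first invoke Lemma \ref{lem:obsforreduction} with $M = N$ to split off the action on the projected random walk on $K$ --- this accounts for the ``or $g$ acts non-trivially on the projected random walk to $K$'' clause in part (2).

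Second, to each $M^\alpha$ I would apply Lemmas \ref{lem:reductionI} and \ref{lem:reductionII} to split the analysis into a torsion-by-$K$ component and a torsion-free-by-$K$ component, with associated measures inherited from $\mu^\alpha$. Third, for each torsion-by-$K$ component I would decompose the Abelian torsion kernel as $\bigoplus_p T_p$ and, via the claim preceding Lemma \ref{lem:reductiontoElementary}, reduce to the $p$-primary-by-$K$ components $M^p$; then Lemma \ref{lem:reductiontoElementary} further reduces each $M^p$ to a countable family of (elementary-$p$)-by-$K$ components. Fourth, I would apply Lemma \ref{lem:reductiontosingle} to each of the resulting (torsion-free-by-$K$) and (elementary-$p$-by-$K$) components to replace the (possibly large) Abelian kernel by a normal subgroup generated by a single element. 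The resulting family $\{C_\alpha\}$ has exactly the form prescribed by the theorem, and remains countable since $G$ is countable and each reduction multiplies the index set by at most a countable amount.

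For part (2), at each of the four reduction steps the relevant lemma prescribes, given an element $g$, a finite or countable collection of ``avatars'' of $g$ in the smaller components --- each obtained by intersecting $\Norm_G(g)$ with a piece of a canonical filtration and passing to the relevant subquotient. Since these operations are purely group-theoretic, the composite assignment $g \mapsto \{g_\alpha\}$ is independent of $\mu$, as required. Part (1) then follows from applying part (2) to all elements of $N$, together with the observation that when the projection of $\mu$ to $K$ has non-trivial boundary the boundary of every $C_\alpha$ is non-trivial as well (since each $C_\alpha$ surjects onto $K$ with the appropriate projected measure), while if the $K$-boundary is trivial then non-triviality of the boundary of $(G,\mu)$ is equivalent to the non-triviality of the action of some element of $N$.

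No single step is conceptually new --- the four reduction lemmas already contain all the substantive content. The main obstacle is bookkeeping: verifying that the ``if and only if'' clauses of all four lemmas compose correctly across the full chain (so that no direction of the equivalence is lost at any intermediate stage), maintaining countability of the index set through the four stages, and organizing the transfinite iteration in Step 1 along the upper central series. Once this bookkeeping is carried out carefully, both claims (1) and (2) fall out of the composition.
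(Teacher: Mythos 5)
Your proposal matches the paper's own argument: the paper derives Theorem \ref{thm:generalextensions} precisely by composing Lemma \ref{lem:reductionabelianbyH}, Lemmas \ref{lem:reductionI} and \ref{lem:reductionII}, the $p$-primary decomposition claim with Lemma \ref{lem:reductiontoElementary}, and Lemma \ref{lem:reductiontosingle}, with Lemma \ref{lem:obsforreduction} handling elements outside $N$ and the $K$-clause. Your bookkeeping (countability of the index set, $\mu$-independence of $g\mapsto g_\alpha$, deducing (1) from (2)) is exactly what the paper leaves implicit, so the approach is essentially identical and correct.
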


\begin{rem}  If we follow the reduction procedure explained in this section, we can describe elements $g_\alpha$. In the next section, in the case of linear groups, we explain  more explicitly how blocks $C_\alpha$ and $g_\alpha$ can be chosen.
\end{rem}

Now we assume that $K$ is finitely generated and  abelian and that $N$ is nilpotent.
It is well-known that the exit measure to
a finite index subgroup
has the same Poisson boundary(see \cite{furstenberg1970}, Lemma 4.2). If the group is finitely generated, then,
taking the exit measure on an appropriate subgroup, we can assume that $K=\mathbb{Z}^d$. 

In this situation Theorem \ref{thm:generalextensions} 
tells us that we get groups $M$
$$
1 \to N \to M \to K =\mathbb{Z}^d\to 1,
$$
$N$ is generated by a single element, 
either of order $p$ or torsion-free, as a normal subgroup of the abelian subgroup $K$.
In this case $M$ is finitely generated.
This group $M$ is either a torsion-free quotient of $\mathbb{Z} \wr \mathbb{Z}^d$
or a quotient  of $ \mathbb{Z}/p\mathbb{Z}
\wr \mathbb{Z}^d$.

\begin{rem}  As we have already mentioned, the groups above
can be represented by upper-triangular matrices  over a field of characteristic zero in the (finitely generated) torsion-free metabelian case and over a field of characteristic $p$ in ($p$-torsion)-by-abelian case \cite{remeslennikov69}.
\end{rem}

\begin{cor}\label{thm:nilpotentbyAbelian} \label{thm:abelianbynilpotent}
Assume that  $G$ is a finitely generated  nilpotent-by-abelian group. 
There exists a
 collection of groups $B_{\alpha}$ with a fixed surjection to $A$ such that the following holds.
If $\mu$ is a probability measure on $M$ and $\mu_\alpha$ are some probability measures  on $B_\alpha$
with the same projection to $A$ as $\mu$, then the boundary of $(G,\mu)$ is non-trivial if and only
if there exists $\alpha$ such that the boundary of $(B_\alpha, \mu_\alpha)$ is non-trivial.
This claim is for some/equivalently all choices 
of measures $\mu_\alpha$ on $B_\alpha$
(with the same projection to $A$ as $\mu$).
\end{cor}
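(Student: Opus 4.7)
The plan is to obtain this corollary as a specialization of Theorem \ref{thm:generalextensions} to the case where $N$ is nilpotent and $K = A$ is Abelian. Since every nilpotent group is in particular hypercentral, the hypothesis of that theorem is satisfied for the short exact sequence $1 \to N \to G \to A \to 1$, so it produces a countable family of extensions of $A$ by a single-generator Abelian normal subgroup (either torsion-free or an elementary $p$-group). Setting $B_\alpha$ to be these extensions and taking the canonical surjection to $A$ as the distinguished one, part (1) of Theorem \ref{thm:generalextensions} is precisely the non-triviality biconditional of our statement, once one fixes a single compatible choice of finite entropy non-degenerate measures $\mu_\alpha$ on each $B_\alpha$.

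The remaining content is the ``some/equivalently all'' clause. For this, fix $\alpha$ and consider two finite entropy non-degenerate measures $\mu_\alpha$ and $\mu_\alpha'$ on $B_\alpha$ with the same projection to $A$ as $\mu$. Both sit in the same short exact sequence $1 \to F \to B_\alpha \to A \to 1$, with $F$ the Abelian kernel and with the same induced action of $A$ on $F$, so Corollary \ref{cor:compcrit} applied with $G = G' = B_\alpha$ shows that the Poisson boundary of $(B_\alpha,\mu_\alpha)$ is non-trivial if and only if that of $(B_\alpha,\mu_\alpha')$ is non-trivial. Hence the existence of an $\alpha$ for which the associated random walk has non-trivial boundary is independent of the compatible choice of $\mu_\alpha$, which is the desired equivalence. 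If one also wishes the blocks to be finitely generated (compare Remark \ref{rem:notfinitelygeneratedcomponents}), one may first replace $G$ by a finite index subgroup whose Abelian quotient is $\mathbb{Z}^d$, which is possible because $A$ is finitely generated Abelian, and appeal to Furstenberg's exit-measure argument (\cite{furstenberg1970}, Lemma 4.2) to preserve the Poisson boundary; the $B_\alpha$ then become quotients of $\mathbb{Z}^d\wr\mathbb{Z}$ or of $\mathbb{Z}^d\wr\mathbb{Z}/p\mathbb{Z}$, hence finitely generated.

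The main obstacle is essentially bookkeeping: one must verify that non-degeneracy and finite entropy are preserved at each intermediate stage of the reduction packaged into Theorem \ref{thm:generalextensions} (Abelian-by-$K$ components via Lemma \ref{lem:reductionabelianbyH}, the torsion versus torsion-free split via Lemmas \ref{lem:reductionI} and \ref{lem:reductionII}, $p$-primary pieces, elementary $p$-extensions via Lemma \ref{lem:reductiontoElementary}, and single-generator extensions via Lemma \ref{lem:reductiontosingle}), so that the associated measures may be chosen at each stage to satisfy the assumptions of the comparison criterion. Once this is in place, no new idea beyond assembling those reduction lemmas with Corollary \ref{cor:compcrit} is required.
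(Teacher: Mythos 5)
Your proposal is correct and is essentially the paper's own derivation: the corollary is obtained there precisely as a specialization of Theorem \ref{thm:generalextensions} to the nilpotent-by-Abelian case, with the preceding exit-measure/finite-index reduction to $K=\mathbb{Z}^d$ making the blocks finitely generated quotients of $\mathbb{Z}^d\wr\mathbb{Z}$ or $\mathbb{Z}^d\wr\mathbb{Z}/p\mathbb{Z}$. The ``some/equivalently all'' clause is implicit in the arbitrariness of the choice of $\mu_\alpha$ in that theorem, and your explicit two-way application of Corollary \ref{cor:compcrit} (both measures non-degenerate, finite entropy, same projection to $A$) is exactly how that independence is justified.
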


\begin{rem} This collection in the corollary is not necessarily finite. This can be seen already for (torsion two step nilpotent)-by-$\mathbb{Z}^d$ groups (even for $d=1$).

\end{rem}

\section{Linear 
groups} \label{sec:moreonreductiontoblocks}

In the previous section, we established a reduction from nilpotent-by-abelian groups to metabelian ones.
In the case of linear upper-triangular groups,  
the collection of blocks can be chosen to be finite, and the reduction can be made in a more explicit way. This is the main result of this section.

As we have mentioned in the introduction, by Malcev's theorem any solvable linear group contains a finite index subgroup which is isomorphic to a group consisting of upper-triangular matrices.
And that a linear group is amenable if and only if this group is virtually solvable (as  follows from Tits alternative). 


We consider a partial order $U$ on $\{i,j: 1 \le i < j \le n \}$, defined as

For pairs $(i,j)$ where $1 \le i < j\le n$ we consider the following partial order $U$: $(i,j) \leq_U (i',j')$ if $i \leq i'$ and $j \geq j'$. See  Picture
\ref{pic:partialorderU}.

\begin{figure}[!htb] 
\centering
\includegraphics[scale=.65]{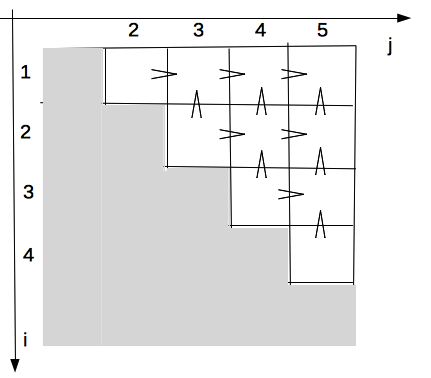}
\caption{Partial order $U$ on pairs $(i,j): 1\le i < j \le n$.
$(i,j) \leq_U (i',j')$ if $i \leq i'$ and $j \geq j'$; e.g. $(1,5)\leq_U (2,4)$
}
\label{pic:partialorderU}
\end{figure}

In the introduction, we gave a definition of basic blocks and valid basic blocks. Now we can formulate the same definition using the terminology of partial orders.
In the definition below we consider the partial order $U$. This point of view is useful having in mind  the third claim of Theorem 6.4, 
where various partial orders are considered.

\begin{defn} \label{def:blocks}[Basic blocks]
Consider a group of  $n\times n$ upper triangular matrices.
Consider matrices of the form
$$
G_{i,j}=
\left( \begin{array}{ccc}
g_{i,i} & 0  \\
0 &  g_{j,j}
 \end{array} \right), 
$$
defined when  in $G$ there is a matrix 
with 
the same diagonal
entries ($g_{i,i}$ and $g_{j,j}$).
If there are no unipotent elements in 
$G$ with non-zero entry at $(i,j)$ and with zero entries in all positions  $(i',j)'$ such that $(i',j)' \ne (i,j)$ and
$(i', j')\leq_U (i,j)$, then we say that $(i,j)$-block of $G$ is trivial (a group consisting of an identity element).
Otherwise, we consider the group generated
by $G_{i,j}$
and by the  matrix $\theta$
%
%

and we call this group the $(i,j)$-basic block of $G$. We denote this subgroup by  $B_{i,j}$.
\end{defn}


\begin{rem}
A close  version of the definition above could also be used: we could also say that a block $\tilde{B}_{i,j}$ is a subgroup of a linear group generated by
$$
\tilde{G}_{i,j}=
\left( \begin{array}{ccc}

g_{i,i} g_{j,j}^{-1} & 0  \\

0 &  1
 \end{array} \right)
$$
and $\theta$.
Observe that $\tilde{B}_{i,j}$ is a quotient of $B_{i,j}$ over a central subgroup (of matrices having a constant on the diagonal), so that
the Poisson boundary of a non-degenerate random walk on
$B_{i,j}$ has a non-trivial boundary if and only if it happens for its projection to $\tilde{B}_{i,j}$.
\end{rem}

Given a a partial (or complete)
order  $T$ on pairs $(i,j)$, $i<j$
a group of upper triangular matrices $G$ has an $(i,j)$-block which is valid with respect to the partial order $T$ if there exists an upper uni-triangular element in $G$, with matrix $m_{x,y}$ with $m_{i,j} 
\ne 0$ and zero in all $i',j' <_T (i,j)$.

Note that any non-trivial block of $G$
is valid with respect to $U$.

Given a probability measure $\mu$ on $G$, on any valid block $B_{i,j}$ we consider any nondegenerate finite entropy measure
$\mu_{i,j}$ with the same projection to the two diagonal entries $i,i$ and $j,j$. As we have mentioned in the introduction, we call any such measure an {\it associated}  measure on this block.

\begin{rem}\label{rem:blockscandepend}
A family of basic blocks of a given group $G$ can depend on the matrix realization of $G$. Assume that $G$ is a subgroup of two-by-two matrices for, and then realize it as a subgroup of $4\times 4$ matrices, of the following form
$$
\left(\begin{array}{cccc}
* & * & 0 & 0\\
0 & * & 0 & 0\\
0 & 0 & * & *\\
0 & 0 & 0 & *\\
 \end{array}\right)
$$ 
Here the two-by-two matrix groups in the upper left corner is isomorphic to a quotient of $G$. This can happen for example if $G= \mathbb{Z}/p \mathbb{Z} \wr \mathbb{Z}^d$ and its quotient is
$\ \mathbb{Z}/p \mathbb{Z} \wr \mathbb{Z}^j
$, for some $1\le j\le d$.
Observe that there exist two valid blocks of $G$ (defined for $i=1$, $j=2$ and for $i=3$, $j=4$), and that they are equal to
$G= \mathbb{Z}/p \mathbb{Z} \wr \mathbb{Z}^d $ and 
$\mathbb{Z}/p \mathbb{Z} \wr \mathbb{Z}^j $.
\end{rem}

\begin{thm} \label{thm:lineargroups}

\begin{enumerate}

\item The Poisson boundary of $(G,\mu)$
is non-trivial if and only if there exist $i,j: 1 \le i<j \le n$ such that for  every associated measures $\mu_{i,j}$ on the $(i,j)$-block $B_{i,j}$ (equivalently: for some associated measure on this block)
the Poisson boundary of $(B_{i,j}, \mu_{i,j)}$ is non-trivial.

\item Moreover, we can characterise the elements that act trivially on the Poisson boundary.
An element $g\in G$ acts non-trivially on the
boundary $(G,\mu)$ if (and only if) there exist
$i,j: 1 \le i<j \le n$ and
$h \in \Norm_G (g) \cap UT_n$ such that
the matrix corresponding to $h$ has
$m_{i,j} \ne 0$ and $m_{i',j'}=0$
for any $(i',j')>_U(i,j)$ and any
associated measure on the block $B_{i,j}$ has a nontrivial boundary.

\item Moreover, fix a partial order $T$ on $\{i,j: 1 \le i <j \le n  \}$, which extends the partial order $U$.
An element $g\in G$ acts non-trivially on the
boundary $(G,\mu)$ if (and only if) there exist
$i,j: 1 \le i<j \le n$
$h \in \Norm_G (g) \cap UT_n$ such that
the matrix corresponding to $h$ has
$m_{i,j} \ne 1$ and $m_{i',j'}=0$
for any $(i',j')>_T (i,j)$ and the associated measure on the block $B_{i,j}$ has a nontrivial boundary. 
\end{enumerate}
\end{thm}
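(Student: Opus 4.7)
The plan is to apply the general reduction from nilpotent-by-Abelian groups established in Section~\ref{section:nilpotentbyabelian} to the upper-triangular linear group $G$, and then concretely identify the resulting ``single-generator'' components with the basic blocks $B_{i,j}$. First, I would write the short exact sequence $1\to N\to G\to A\to 1$, where $N = G\cap UT(n)$ is nilpotent and $A$ is the image of $G$ in the diagonal (hence Abelian). This places us in the setting of Corollary~\ref{thm:nilpotentbyAbelian}. A direct computation with elementary matrix units, namely $[I+\lambda E_{i,k},\,I+\mu E_{k,j}] = I+\lambda\mu E_{i,j}$, shows that the upper central series of $UT(n)$ (and of its subgroup $N$) is governed by the partial order $U$: the center is supported on position $(1,n)$ (the minimum of $U$), and more generally $C_{\alpha+1}/C_\alpha$ is built from positions one level up in $U$.

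The second step is to apply in sequence Lemma~\ref{lem:reductionabelianbyH} (reduction to Abelian-by-$A$ components along the central series), Lemmas~\ref{lem:reductionI}--\ref{lem:reductionII} (torsion vs.\ torsion-free split of the Abelian layer), Lemma~\ref{lem:reductiontoElementary} (reduction of torsion to elementary $p$-components in positive characteristic) and Lemma~\ref{lem:reductiontosingle} (reduction to components generated by a single element as a normal subgroup). This produces, for each position $(i,j)$, a semi-direct product $C_{i,j}$ of a cyclic $A$-module with $A$, where $A$ acts through the character $g\mapsto g_{i,i}/g_{j,j}$. In characteristic zero $N$ is torsion-free, so only the torsion-free case of Lemma~\ref{lem:reductionII} occurs; in characteristic $p$ the commutator subgroup is $p$-torsion, and the elementary-$p$ case of Lemma~\ref{lem:reductiontoElementary} produces components whose commutator is of order $p$. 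In both cases the resulting $C_{i,j}$ maps onto (and surjects from) the subgroup of $B_{i,j}$ generated by the $G_{i,j}$'s and $\delta$, because the $\delta$-generator corresponds to the cyclic module generator and the conjugation action $G_{i,j}\delta G_{i,j}^{-1}$ reproduces the $A$-action by $g_{i,i}/g_{j,j}$. With this identification in hand, Corollary~\ref{cor:compcrit} allows us to replace any associated measure on $C_{i,j}$ by any non-degenerate associated measure on $B_{i,j}$ (projection to $A$ is preserved and the fiber is Abelian), which yields claim~(1).

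For claim~(2) the same reduction must be tracked element-by-element. At each step of the descent, Lemma~\ref{lem:obsforreduction} guarantees that if $g$ acts non-trivially then some $h\in\Norm_G(g)\cap N$ projects non-trivially to the current quotient $C_\alpha/C_{\alpha-1}$, and one can always choose $h$ so that its projection is supported at a single position $(i,j)$. Taking $(i,j)$ to be at the first $U$-level where this non-triviality is detected yields an element $h \in \Norm_G(g)\cap UT_n$ with $m_{i,j}\ne 0$ and $m_{i',j'}=0$ for $(i',j')>_U(i,j)$; conversely, given such an $h$ and a block $B_{i,j}$ with non-trivial boundary, Lemma~\ref{lem:mainforreduction} combined with Corollary~\ref{cor:compcrit} shows that $h$ acts non-trivially on $(G,\mu)$, and hence so does $g$. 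For claim~(3) I would observe that the reduction of Section~\ref{section:nilpotentbyabelian} does not rely on the specific upper central series: Lemma~\ref{lem:reductionabelianbyH} applies to any $A$-invariant filtration of $N$ whose successive quotients are Abelian. Since the commutator relation above implies $[N_{(i,j)},N_{(i',j')}]$ lies strictly higher in $U$ than either factor, any refinement $T$ of $U$ to a partial order yields such a filtration, and the argument of claim~(2) adapts verbatim.

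The main obstacle I expect will be the clean matching of the abstract single-generator components $C_{i,j}$ produced by Section~\ref{section:nilpotentbyabelian} with the concrete $2\times 2$ matrix groups $B_{i,j}$: one must verify, taking some care about the commutator subgroup structure (torsion-free versus elementary $p$) and the character through which $A$ acts, that Corollary~\ref{cor:compcrit} can translate non-degenerate associated measures in both directions without introducing or losing information about the Poisson boundary, and that the rather delicate tracking of the element $h$ through the iterated application of Lemma~\ref{lem:obsforreduction} really produces a representative with the desired block-triangular support pattern rather than only something in the normal closure with worse support.
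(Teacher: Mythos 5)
Your overall strategy (run the Section~\ref{section:nilpotentbyabelian} reduction and then match the resulting single-generator components with the blocks $B_{i,j}$ via the comparison criterion) is close in spirit to the paper, but the step you yourself flag as the ``main obstacle'' is a genuine gap, and it is exactly the point where the paper's proof does something different. If you reduce along the upper central series of $N=G\cap UT(n)$, the successive quotients $C_{\alpha+1}/C_\alpha$ are in general \emph{not} governed position-by-position by the order $U$: for a subgroup $N$ of $UT(n)$ the central series need not be the one of the full unitriangular group, and in any case each layer is a $k[A]$-module on which the diagonal acts simultaneously through several characters $g\mapsto g_{i,i}/g_{j,j}$ attached to different positions $(i,j)$. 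Consequently the single-generator component $C_q$ produced by Lemma~\ref{lem:reductiontosingle} is the semi-direct product of $A$ with a cyclic module that mixes several characters, and it is not isomorphic (even up to center) to any one block $B_{i,j}$; Corollary~\ref{cor:compcrit} cannot bridge this, since the conjugation actions genuinely differ. Likewise, your assertion that ``one can always choose $h\in\Norm_G(g)\cap UT_n$ so that its projection is supported at a single position'' is unjustified: the normal closure of $g$ need not contain such an element, and ``the first $U$-level where non-triviality is detected'' is not well defined for the partial order $U$ when the central-series layers mix positions.

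The paper resolves precisely this localization problem by a device absent from your sketch: for a \emph{complete} order $T$ extending $U$ it introduces the subgroups $N^T_{i,j}$ of unitriangular matrices vanishing at all positions $\ge_T(i,j)$, proves (Claim~\ref{claim:normalmatrix}) that they are normal and that any $h$ vanishing above $(i,j)$ becomes central modulo $N^T_{i,j}$, and then chooses, among the elements of $\Norm_G(g)\cap UT_n$ acting non-trivially, one whose $T$-maximal support position $(i,j)$ is $T$-minimal. Minimality forces $\Norm_G(g)\cap N^T_{i,j}$ to act trivially, so Lemma~\ref{lem:obsforreduction} passes the non-trivial action to $G/N^T_{i,j}$, where the image of $h$ is supported at the single position $(i,j)$ and is central in the unitriangular part; only then does Lemma~\ref{lem:reductiontosingle} apply, and the resulting group is identified with $B_{i,j}$ after quotienting a central diagonal subgroup. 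The case of the partial order $U$ (and of general $T$) is deduced afterwards from the complete-order case. Your remark in claim~(3) that the reduction works for any $A$-invariant filtration refining $U$ points in the right direction—the $N^T_{i,j}$ are exactly such a filtration—but to make your argument work you would have to replace the upper central series by this position-separating filtration and supply the minimality argument; as written, claims~(1) and~(2) of your proposal do not go through.
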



\begin{rem}\label{rem:orderYoung}
Take a complete order $T$, which 
respects the partial order $U$.
Recall that such complete order 
corresponds to Young Tableaux.
See picture for an example of a Young tableau corresponding to the horizontal order. Take note that our picture is a flipped version (with respect to the vertical axis) of the standard convention for Young
Tableaux.
\end{rem}

\begin{figure}[!htb] 
\centering
\includegraphics[scale=.65]{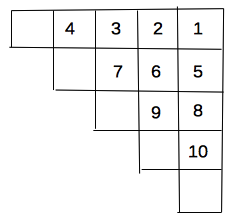}
\caption{Young tableau
}
\label{pic:youngtableau}
\end{figure}

Let $T$ be a partial  order
on pairs $(i,j)$, $1 \le i <j \le n$.
Observe that $T$ defines a  preorder on elements of $UT_n$, saying that $u \le_T w$ if for every non-zero coordinate $(i,j)$ of $u$ there exists $(i',j')$ such that $(i',j')\geq_T (i,j)$ and the $(i',j')$ coordinate of $w$ is non-zero. 

Denote by  
$N^T_{i,j}$ the set of $UT_n$ consisting of all upper-unitriangular elements which are $0$ in all coordinates $i',j'$ where $(i',j')\ge_T (i,j)$ with respect to $T$. By $\theta_{i,j}$ we denote the matrix with $1$ at the entry $(i,j)$ and zero elsewhere.
For the convenience of the reader, we formulate and explain some elementary properties of $N^T_{i,j}$.

\begin{claim}
 \label{claim:normalmatrix}
 Let $T$ be a partial order 
 order which extends  $U$.
\begin{enumerate} 
 
\item $N^T_{i,j}$ is a normal subgroup of the upper triangular matrices $\rm{Tr}_n$.
 
\item Moreover, if we conjugate $I+\theta_{i,j}$ by $u\in UT_n$ we get $I+\theta_{i,j}-k$ where $k \in N^T_{i,j}$.

\item Moreover let $h$ be an element in $U_n$ which is $0$ for all elements $(k,l)>_T (i,j)$. Then the image of $h$ (under the quotient map) is central in $U_n/N^T{i,j}$.

\end{enumerate}
\end{claim}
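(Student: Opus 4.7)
The plan is to prove all three items by repeated use of a single combinatorial principle about strictly upper triangular matrices: the $(a, b)$-entry of a product $xy$ of strictly upper triangular matrices is $\sum_{a < m < b} x_{a, m} y_{m, b}$, and for each index $m$ in that range, both $(a, m)$ and $(m, b)$ are strictly above $(a, b)$ in the $U$-order. Since $T$ extends $U$, whenever $(a, b) \geq_T (i, j)$ we also have $(a, m), (m, b) >_T (i, j)$. This observation, applied in different ways, will drive all three parts.

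For (1), I will first verify that $N^T_{i, j}$ is closed under products: given $h, h' \in N^T_{i, j}$ and $(k, l) \geq_T (i, j)$, the expansion of $(hh')_{k, l}$ has the $h_{k, l}$ and $h'_{k, l}$ terms vanish by hypothesis, and each middle term $h_{k, m} h'_{m, l}$ vanishes because both factors sit at positions $\geq_T (i, j)$; closure under inversion is analogous. For normality in $\mathrm{Tr}_n$, I will factor any triangular matrix as (diagonal)$\cdot$(uni-triangular); diagonals act by rescaling single entries, preserving $N^T_{i, j}$. For conjugation by $u = I + C \in UT_n$, I expand $uhu^{-1} = I + uHu^{-1}$ (with $H = h - I$) as a sum of products $C^{a_1} H C^{a_2}$, and observe that each nonzero entry of such a product occupies a position $(k', l') \leq_U (p, q)$ for some $(p, q)$ in the support of $H$. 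If such $(k', l')$ were in $\{\geq_T (i, j)\}$, then $(p, q) \geq_T (k', l') \geq_T (i, j)$ would contradict $h \in N^T_{i, j}$.

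For (2), the same support-shrinking analysis applied to $\delta_{i, j}$ itself yields $u \delta_{i, j} u^{-1} = \delta_{i, j} + K$ with $K$ supported at positions strictly $<_U (i, j)$, hence outside $\{\geq_T (i, j)\}$. A quick direct check shows that $K$ vanishes in row $j$ (nonzero $K_{a, i}$ would require $(a, i) \leq_U (i, j)$, forcing $i \geq j$, which never holds), so $K \delta_{i, j} = 0$ and therefore $u(I + \delta_{i, j})u^{-1} = (I + K)(I + \delta_{i, j})$ with $I + K \in N^T_{i, j}$. This is the asserted decomposition modulo $N^T_{i, j}$.

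For (3), I will show that for every $u \in U_n$, the products $uh$ and $hu$ lie in the same coset of $N^T_{i, j}$. The key identity is
\begin{equation*}
(uh)_{a, b} - (hu)_{a, b} = \sum_{a < m < b} \bigl( u_{a, m} h_{m, b} - h_{a, m} u_{m, b} \bigr),
\end{equation*}
and for $(a, b) \geq_T (i, j)$ the hypothesis that $h$ vanishes on $\{>_T (i, j)\}$ combined with $(a, m), (m, b) >_T (i, j)$ kills every summand. Thus $uh$ and $hu$ agree at every entry $\geq_T (i, j)$. The proof is then completed by the following small lemma: if $M, M' \in U_n$ agree at all such positions, then $M^{-1} M' \in N^T_{i, j}$. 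Writing $\Delta = M' - M$, one has $\Delta_{a, b} = 0$ on $\{\geq_T (i, j)\}$, so at any such $(a, b)$ one computes $(M^{-1} M')_{a, b} = (M^{-1} \Delta)_{a, b} = \sum_{a \leq c \leq b} (M^{-1})_{a, c} \Delta_{c, b}$; the $c = a$ term vanishes directly, and for $a < c \leq b$ we have $(c, b) \geq_U (a, b) \geq_T (i, j)$, forcing $\Delta_{c, b} = 0$. Applied to $M = uh$ and $M' = hu$, this gives $(uh)^{-1}(hu) \in N^T_{i, j}$, i.e., the image of $h$ commutes with the image of $u$ in $U_n/N^T_{i, j}$. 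The only real obstacle throughout is the bookkeeping: one must consistently exploit the asymmetry that $\geq_U$ implies $\geq_T$ and apply it in the correct direction each time.
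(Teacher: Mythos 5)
Your proof is correct and takes essentially the same route as the paper's: both arguments rest on the single observation that multiplying strictly upper triangular matrices pushes nonzero entries strictly up in the $U$-order (the paper packages this via the "$T$-maximal entry" function $t$ and its rules for products and sums), both obtain normality in $\mathrm{Tr}_n$ by factoring a triangular matrix as diagonal times unitriangular, and your closing lemma for (3) — two unitriangular matrices agreeing on $\{(a,b)\geq_T(i,j)\}$ have ratio in $N^T_{i,j}$ — is exactly the paper's auxiliary claim that $t(A-C)<_T(i,j)$ forces $(1+A)(1+C)^{-1}\in N^T_{i,j}$, applied to the same commutator $AB-BA$. The differences are cosmetic: your entrywise, support-based phrasing works verbatim for partial orders $T$ (the paper's $t$ is literally defined only when $T$ is total), and your multiplicative form of (2), obtained from the extra check that column $i$ of $K$ vanishes (which you momentarily misname "row $j$", without affecting the computation), is a slightly sharper restatement of the paper's additive one.
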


\begin{proof}

1) Consider upper-triangular matrices $A$ with $0$ on the main diagonal. We recall such matrices form a nilpotent group. Consider a map $t$ that maps a nilpotent uppertriangular matrix $A$ to the pair $(k,l)$, $1 \le k,l \le n$, where $(k,l)$ is the maximal  element, with respect to the order $T$,  with $a_{k,l} \ne 0$. We also use the convention $t(0)<_T (k,l)$ for any $k,l$, $1\le k, l \le n$.
Note that given two non-zero nilpotent  upper triangular  matrices $A$ and $B$
$$
t(AB)<_T t(A),t(B). (*)
$$
Also note that
$$
t(A+B)\leq_T Max_T(t(A),t(B).  (+)
$$

First we prove that $N^T_{i,j}$ is a subgroup. Note that by the definition $N^T_{i,j}$ is the subset consisting of elements of the form $I+A$ where $A$ is a nilpotent upper triangular matrix such that $t(A)<_T(i,j)$.

Also observe that $g^{-1}$
belongs to $N^T_{i,j}$ if $g\in N^T_{i,j}$. Indeed $(1+A)^{-1}=1-A+A^2-A^3\dots=1+B$ Since $B$ is a sum of powers of $A$ and by (*) we know that $t(A^n)<_T t(A)$, thus we conclude by (+) that 
$t((1+A)^{-1}) \leq_T t(A)$.
So $N^T_{i,j}$ is a subgroup.

Now we want to prove that the subgroup is normal in the upper-triangular matrices. First note that given an upper triangular matrix $A$ and an invertible diagonal $D$, we have
that $DAD^-1$ has zero or non-zero entries at the same positions as $A$, and hence
$t(DAD^-1)=t(A)$. So it suffices to prove that $N^T_{i,j}$ is preserved by conjugation by uni-upper triangular matrices. Let us conjugate $1+A$ by $1+B$. Let $(1+B)^{-1}=1+C$. Then $(1+B)(1+A)(1+C)=(1+B+C+BC)+(A+BA+AC+BAC)=(1+B)(1+C)+(A+BA+AC+BAC)=1+A+BA+AC+BAC$ where the last equality holds because $1+B$ and $1+C $ are inverses. Applying (+) and (*) to the terms of  $1+A+BA+AC+BAC$, we see that this element belongs to the subgroup, and the normality follows. 

2) Let $(1+A)$ and $(1+B)$ be inverse uni-upper triangular matrices.  Then $(1+A)\theta_{i,j}(1+B)=\theta_{i,j}+A\theta_{i,j}+\theta_{i,j}B+A\theta_{i,j}B$. By (*) $t(A\theta_{i,j}+\theta_{i,j}B+A\theta_{i,j}B)<_T t(\theta_{i,j})=(i,j)$, which implies the claim.  
 
3) First we prove the following: Let $A$ and $C$ be two nilpotent upper triangular matrices such that $t(A-C)<_T{i,j}$ then $(1+A)(1+C)^{-1}\in N^T_{i,j}$. To see this, rewrite it as $(1+C+(A-C))(1+C)^{-1}$. This is equal to $1+(A-C)(1+C)^{-1}$. Since $t(A-C)<_T (i,j)$, we can use * and conclude that 
$$
t((A-C)(1+C)^{-1})<_T (i,j),
$$
which proves the above-mentioned claim.  

Now let $h=I+A$.  Take $I+B \in U_n$ we want to show that
$(I+A)$ and $I+B$ commute modulo $N^T_{i,j}$.
By the above claim it suffices to prove that $t((I+A)(I+B)-(I+B)(I+A))<_T (i,j)$. However, since $(I+A)(I+B)-(I+B)(I+A)=AB-BA$, so (*) and (+) immediately imply that $AB-BA<_T (i,j)$.

\end{proof}

Now we prove the theorem.
\begin{proof}

(2) follows from (3) taking $T=U$.
And (2) implies (1) since the boundary is trivial if and only if all elements act trivially.

So to prove the theorem, we have to prove Claim (3).
First, assume that $T$ is complete.
Claim \ref{claim:normalmatrix}
states that $N^T_{i,j} \cap G$ is a normal subgroup of $G$, for any total order $T$,
compatible with $U$. 

Suppose that $g$ acts non-trivially on the boundary of $G$. Consider elements  $h$  in
\newline
$\Norm_G (g) \cap UT_n$ that act non-trivially on the boundary of $(G,\mu)$.
For any element $h$ in $\Norm_G (g) \cap U_n$, take the minimal
$(i,j)$ with respect to $T$ such that the entries of $h$ are zero in all $(i', j') >_T (i,j)$. We use notation $i(h)=i$, $j(h)=j$.
Among $i(h),j(h)$ such that
$h$ acts non-trivially on the boundary, we take the minimal one
which we denote by $(i,j)$. We consider one of such $h$, with
 $i(h) =i$, $j(h)=j$. Consider the block $B_{i,j}$. We want to  show that the boundary of an associated measure on  this block is nontrivial. 
 
 By our assumption that $(i,j)$ is minimal with the above-mentioned property, we know that the intersection  $\Norm_G (g) \cap N^T_{i,j}$ and thus $\Norm_G (h) \cap N^T_{i,j}$  acts trivially on the Poisson boundary. Thus, by Lemma \ref{lem:obsforreduction}
 applied to $G=G$ and $M=N^T_{i,j}$,
 we see that $h$ acts non-trivially on the Poisson boundary of $G/N^T_{i,j}$. 
Using the last part of the Claim \ref{claim:normalmatrix}, we see that $h$ is in the center of $Q'=G/N^T_{i,j}\cap U_n/N^T_{i,j}$.  
 
 Apply Lemma \ref{lem:reductiontosingle} to the short exact sequence $1\to
 Q' \to G_2=G/N^T_{i,j}\to D \to 1$ and the image $q$ of $h$ in $Q'$.  
($D$ is the diagonal group).
 Denote by $(G_2)_{q}$ the group obtained in Lemma \ref{lem:reductiontosingle}. Note that the subgroup  $D_{i,j}$ of the diagonal group which is $1$ on coordinates $i,i$ and $j,j$ acts trivially on $(G_2)_q$.  
Observe that $D_{i,j}$ is a central subgroup of $(G_2)_q$. We take the quotient of $(G_2)_q$ over this subgroup. Observe that  the map from
this quotient that sends $q$ to $\theta$ in $B_{i,j}$ and sends $D/D_{i,j}$ to the diagonal elements  $G_{i,j}$
in the definition of the block (see Definition \ref{def:blocks})
induces an isomorphism.

Since central extensions have the same boundary for non-degenerate measures,  we can  conclude that the boundary of (any) associated measure on the 
block $B_{i,j}$ is non-trivial.

Thus we have proved the last claim of the theorem under the assumption that the order $T$ is complete.

Until now we have assumed that $T$ is a complete order.
Now consider $T=U$. 
Choose a complete order $T'$ which respects $T=U$.
Observe that a valid block with respect to a $T'$ is also valid with respect to $U$.
By the claim applied to $T'$ we know that  if the boundary of our random walk on $G$ is non-trivial, there exists a valid block with respect to $T'$, thus a valid block with respect to $U$,
such that any associated measure on this block has non-trivial boundary.

Finally, take any partial order $T$. If we have a valid block with respect to $T$ which has non-trivial boundary, then since it is valid with respect to $U$ also, we know that the boundary of our random walk is non-trivial. 
Now consider a total order $T'$ which respects $T$. If the boundary of the random walk on $G$ is non-trivial, then there exists a valid block with respect to $T'$ (and thus valid with respect to $T$) which has a non-trivial boundary.

\end{proof}

\section{General facts about 
modules, Krull dimension of rings and Krull (Tushev) dimension of groups} \label{sec:section7}

We recall that since the 1954 paper of Hall \cite{hall}, many results about finitely generated metabelian groups $G$ are proven by considering the commutator group $[G,G]$ as a $\mathbb{Z}[H]$ module,
 for $H=G/[G,G]$.
The properties of finitely generated metabelian groups are thus closely related to that of finitely generated modules over Laurent polynomial rings in finitely many variables.
In this section, we formulate basic properties of such modules and related notions.


Given a ring $R$ and a group $G$, let $R[G]$  denote the group ring.  The elements of this ring can be considered  as finitely supported  functions from $G$ to $R$.
If $G=\mathbb{Z}^d$, then
the elements of the group ring can be identified with Laurent polynomials;
elements with support $(\mathbb{Z}^{+})^d$ 
can thus be considered as polynomials in $d$ variables, and
elements of $(\mathbb{Z}^{+})^d$ as monomials.


Given a finitely generated metabelian group $G$ and a short exact sequence 
$$
1 \to M \to G \to \HAKquotient \to 1,
$$
with $\HAKquotient$ and $M$ abelian, we  consider $M$ as a module over $\mathbb{Z}[\HAKquotient]$.

A strictly increasing sequence of proper prime ideals $\rho_0 \subseteq \rho_1 \subseteq \dots \subseteq \rho_n$
is said to be a chain of  proper prime ideals, and its length is $n$.
We recall that for a commutative Noetherian ring $R$ its {\it Krull dimension} is the supremum of the lengths
of chains of prime ideals in the ring.  If $M$ is an $R$-module, its dimension can be defined
as the dimension of the ring $R/({\rm Ann}_R M)$. Here ${\rm Ann}_R M$ denotes the annihilator of $M$
in $R$, that is, the ideal of $R$ consisting of elements   whose  
product  with every  element of the module is zero.

We mention that the Krull dimension can also be defined for non-commutative rings  \cite{RentschlerGabriel}, see also \cite{McConnellRobson}.
This definition  has inspired a definition of the Krull dimension of groups, defined and studied by Tushev in \cite{tushev}.
In the case of metabelian groups, this dimension is known to be equal to 
 the dimension of $[G,G]$
considered as  a $ \mathbb{Z} [G/[G,G]]$-module, see \cite{jacoboni}, Proposition 2.30.
Moreover, this proposition states that for any short exact sequence
$$
1\to M \to G \to \HAKquotient\to 1
$$
where the  subgroup $M$ and the quotient group $\HAKquotient$ are abelian groups, the Krull dimension of $G$ is equal to the dimension of $M$ considered as a $\mathbb{Z} [\HAKquotient]$-module. So if we take  as a definition of
the dimension of 
 $G$ the dimension of the corresponding module $M$, as above, the definition does not depend on the choice of the short exact sequence.

 If the group $M$ in our short exact sequence
 is a $p$-torsion group we can tensor this group by $\mathbb{Z}/p\mathbb{Z}$ and consider this group as a module over $\mathbb{Z}/p\mathbb{Z}[\HAKquotient]$. If $M$ is torsion-free, then, by taking its tensor product with $\mathbb{Q}$, we can consider it as a $\mathbb{Q}[A\HAKquotient]$ module.
If $M$ is torsion-free, 
 we can speak of
 $\dim_{{\mathbb Q}[\HAKquotient]}(M \otimes \mathbb{Q})$
 (or for short   $\dim_{\mathbb{Q}}(M)$)
and
about $\dim_{{\Z/ p \Z} [\HAKquotient]} M $ (or for short   $\dim_{\Z / p \Z} M $)  for $M$ where the elements of $M$ are of exponent $p$.
We consider the  Krull dimension of these modules considered as modules over $\mathbb{Q}[\HAKquotient]$, or respectively over $\Z/p \Z [\HAKquotient]$ modules. This dimension is equal to the 
Krull
dimension of $G$ for ($p$-torsion)-by-abelian
metabelian groups. This dimension   is equal to the Krull
dimension decreased by one for torsion-free  metabelian groups. This is summarized in 
Claim \ref{claim:knowndimension} below.

\begin{claim}\label{claim:knowndimension}
Let $G$ be a finitely generated metabelian group.
Assume that $M$ and $\HAKquotient$ are abelian, such that there is a short exact  sequence
$$
1\to M \to G \to \HAKquotient \to 1
$$

\begin{enumerate}
\item
Assume that $M$ is an elementary abelian $p$-group (any element of $M$ is of exponent $p$).
 The dimension of $M$, considered as a
$\mathbb{Z} [\HAKquotient]$ module, is the same as dimension of $M$ considered as a
$\mathbb{Z}/p\mathbb{Z}[\HAKquotient]$-module:
$$
\Tushev(G)=\dim_{\mathbb{Z} [\HAKquotient]M} = \dim_{\Z/p\Z[\HAKquotient]} M 
$$

\item 

Assume that $G$ is torsion-free. Then 
the dimension of $M$ considered as  a $\mathbb{Z} [\HAKquotient]$ module is the dimension of $M \otimes_{\Z} \mathbb{Q}$ considered as
$\mathbb{Q}[A]$-module increased by $1$:
$$
\Tushev(G)=\dim_{\mathbb{Z} [\HAKquotient]M} = \dim_{\mathbb{Q}[\HAKquotient]} M \otimes_{\Z} \mathbb{Q}+1
$$

\end{enumerate}
\end{claim} 
\begin{proof}
 Let $R= \mathbb{Z}[\HAKquotient]$
and $R'= \mathbb{Z}/p\mathbb{Z}[\HAKquotient]$.  
  We need to prove that the dimensions of 
$R/({\rm Ann}_R M)$ and $R'/({\rm Ann}_R' M)$ are equal.
Observe that these quotient rings are isomorphic.
Indeed, take the natural map from $R$ to $R'$ and its composition with the map from $R'$ to $R'/({\rm Ann}_R' M)$. Observe that
this composition map induces  an isomorphism of  $R/({\rm Ann}_R M)$ and $R'/({\rm Ann}_R' M)$, and hence the dimensions of these rings are equal; we have therefore proven the first claim.

Now we prove the second claim. Consider  a torsion-free Abelian group $M$ which is a module over the polynomial ring 
on $L$ variables $\Z [\Z^L]$.
 Tensor $M$ by $\mathbb{Q}$. It is clear that we obtain a   $\mathbb{Q}[\Z^L]$ module.
The corresponding Krull dimensions differ by $1$:
$$
\dim_{\Z [\Z^L]}  M  = 1+ \dim_{\Q [\Z^L]}  M \otimes_{\Z} \mathbb{Q}.
$$
Indeed, the ideals of  $\Z [\Z^L]$ over the annihilator of   $M$
 are in one-to-one correspondence
  with the ideals of  correspondence with ideals  $\Q [\Z^L]$
over the annihilator 
of $M \otimes_{\Z} \mathbb{Q} $ 
not containing non-zero constant polynomials.

\end{proof}

\begin{rem}
The definition of Krull dimension of groups due to Tushev that we use in this paper and this section should not be confused with
the definition in   Myasnikov and Romanovskiy \cite{MyasnikovRomanovskiy}, where the authors define the notion of Krull dimension in their work on algebraic geometry  of groups project. In their definition 
the dimension of $\mathbb{Z}^d$ is equal to $d$ (see Lemma 3.5 of their paper).
 Note that the dimension of $\mathbb{Z}^d$ in the definition of Tushev is equal to $1$, for any $d\ge 1$.
\end{rem}






An element of an upper-triangular two-by-two matrix is a nontrivial unipotent if it has 1 on the diagonal and not zero in the upper right corner.

\begin{lem}\label{exa:twobytworank}
Let $G$ be a group of two-by-two upper-triangular matrices over a field $k$
 with at least one non-trivial unipotent element.
 Given a matrix $g$

\[
g = \left( \begin{array}{ccc}
a & c \\
0  & b 
 \end{array} \right),
 \] 
let $\phi(g)=b/a$. If the minimal field ${\mathbf K}$ including all $\phi(g)$, $g \in G$ has transcendence degree  $d$,  then the
$\Tushev(G)=d$ if the characteristic of $k$ is positive and 
 $\Tushev(G) = d+1$ otherwise. 
 
 \end{lem}

\begin{proof}
Consider the following homomorphism 

\[
g = \left( \begin{array}{ccc}
a & c \\
0  & b 
 \end{array} \right)     \to   \left( \begin{array}{ccc}
a & 0 \\
0  & b 
 \end{array} \right)
 \] 

Its image is a multiplicative subgroup  of  $k^2$. 
We denote the diagonal group by $D$.
We consider the short exact sequence induced by this homomorphism.
$$
1 \to M \to G \to D \to 1
$$
We know that the dimension of $M$ as a $\mathbb{Z}[D]$ module is the Krull dimension 
of the group $G$.
rem:dimsubew of  Claim \ref{claim:knowndimension} it is sufficient to show that $\dim_{\Z/p \Z} M = d$ if $k$ has positive characteristic  and $\dim_{\mathbb{Q}} M \otimes \mathbb{Q} = d$ if $k$ has characteristic $0$.
Consider the restriction  our  map $\phi$ (defined on two-by-two upper-triangular matrices)   to $D$
$$
\phi_D : D \to k.
$$

Note that the kernel of $\phi_D$ 
 is equal to the annihilator  of $M$ in $\mathbb{Z}[D]$.
Inside $k$ we consider a ring generated by $\phi(g)$, $g\in G$. Since it is a subring of a field, it is an integral domain.
We denote it by ${\mathbf A}$. 
It is clear that  the  fraction field of ${\mathbf A}$ is equal to  ${\mathbf K}$ and the prime subfield of $\mathbf K$ is $\mathbb{P}$. It is clear that $\mathbb{P}=\mathbb{Q}$ if $k$ has characteristic $0$ and 
$\mathbb{P}=\Z /p \mathbb{Z}$ otherwise.  
By Theorem A in \cite{eisenbud} it follows that the Krull dimension of $\mathbf{A}$ (in the case where $\mathbb{P}$ is finite) or $\mathbf{A}\otimes \mathbb{Q}$ (in the case where the prime field is $\mathbb{Q}$) is equal to the transcendence degree of $K$. 

\end{proof}

For example,  for the wreath products $\Z/ p \Z \wr \Z^d$, $\Z \wr \Z^d$, for the free metabelian group
$Met_d$ and for the free $p$-metabelian group $Met_d(\Z/pZ)$, their $\dim_{\Z/p \Z}$ and $\dim_{\mathbb Q}$ dimensions are equal to $d$.
However, while the Krull dimension of $\Z/ p \Z \wr \Z^d$ and  $Met_d(\Z/p \Z)$ is also equal to $d$, the Krull dimension of  $\Z \wr \Z^d$ and  $Met_d$ is equal to $d+1$. 
Lemma \ref{exa:twobytworank} also explains the dimension of some other two-by-two matrix groups, including
Baumslag groups,  described in Section 3.1. We will discuss some further examples in Subsection
\ref{subsection:further}.




\begin{notation} \label{def:rank}
Let $k$ be a field and $\HAKquotient$ be a finitely generated abelian group.
Given a finitely generated $k[\HAKquotient]$ module $M$ and a subgroup $\A'$ of $\HAKquotient$, observe that we can consider $M$ as a $k[\A']$ module.
We say that $M$ is finitely $\A'$ generated if $M$ is finitely generated 
as a $k[\A'] $ module. Let $d$ be the minimal number such that there exists $\A'= \mathbb{Z}^d +C$, where $C$ is a finite abelian group and  where $M$ is finitely generated as an $\A'$ module. We denote this number 
$d_{k[\HAKquotient]}(M)$. If the choice of  $k$ and $\HAKquotient$ is clear, we will omit $k[\HAKquotient]$ in the notation and write $d(M)$
\end{notation}

 Let $d(M)=d$, described in Notation \ref{def:rank}. We know that  there exists a subgroup $\A'= \mathbb{Z}^d +C$, $C$ is a finite abelian group, $\A'$ is a subgroup of $A$,  such that $M$ is finitely generated as an $\A'$ module.  Observe that in this case there exists $A^{''}=\mathbb{Z}^d$, which as a subgroup of $A$, such that $M$ is finitely generated as $A^{''}$ module.

\begin{prop} \label{prop:ourdefinition}
Let $k$ be a field, $A$ be a finitely generated abelian group and $M$ be a $k[A]$-module.
The number $d_{k[\HAKquotient]}$ defined in Notation  \ref{def:rank} satisfies
$$
d_{k[\HAKquotient]} M = \dim_{k[\HAKquotient]} M.
$$
\end{prop}

%

\begin{proof} 


We start with the following lemma

\begin{lem}\label{newclaim:freesubmodule}
Let $M$ be a module over $k[\HAKquotient]$ such that $d(M)$ is equal to  $d$. Let $B$ be a subgroup of $\HAKquotient$, isomorphic to $\mathbb{Z}^d$ such that $M$ is finitely generated as a $k[B]$ module. Then $M$, considered as a $k[B]$ module,  has a nontrivial free submodule.    
\end{lem}

\begin{proof}

Let $W$ be a finite subset of $\mathbb{Z}^d$.  Observe  that there exists a  basis $b_1,\dots, b_d$ of $\mathbb{Z}^d$ such that elements of $W$ have pairwise  distinct first coordinates. 
Indeed, consider the set $W'$ consisting of $w^{(1)} -w^{(2)}$, for $w^{(1)}, w^{(2)} \in W$, $w^{(1)} \ne w^{(2)}$.
Consider a linear form $\mathbb{Q}^d \to \mathbb{Q}$, which does not vanish on any $w'\in W'$.
Multiplying this map by an integer constant, we get a map $\mathbb{Z}^d \to \mathbb{Z}$,
which is obtained by taking a scalar product with some vector $b_1 \in \mathbb{Z}^d$, such that the common
divisor of its coordinates is one and
such that the scalar product does not vanish on elements $w'$, 
$w' \in W'$. It is clear that there exist $b_2$, \dots, $b_d \subset \Z^d$ such that
$b_1$, $b_2$, \dots, $b_d$ form a basis of $\mathbb{Q}^d$.



Consider a  subgroup
$B$
of $A$, isomorphic to $\mathbb{Z}^d$, over which $M$ is finitely generated.
Consider $M$ as a $k[B]$ module. We want to show that  $M$  has a non-trivial free   $k[B]$  submodule.

 Suppose, for the sake of contradiction, that there is no nontrivial free submodule. Let $U = u_1, u_2, ..., u_d$ be a finite generating set of $M$, considered
as a $k[B]$-module. Note that  each $u_k$ has a non-zero annihilator in $k[B]$ (otherwise the module generated by $u_k$ would be free). Let us
choose an element in the annihilator for each $u_i$ and denote these elements  $r_1$,$r_2$, \dots, $r_n$. Then $q= r_1r_2...r_n$ annihilates
$U$ and hence $M$. Observe that the element $q$ is non-trivial because the group ring $k[\Z^d]$ is an
integral domain.

Assume that for some $\alpha_x$  the element $q \ne 0$ is written as a finite sum
$$
q = \sum_{x\in \Omega} \alpha_x x
$$
As mentioned in the beginning of the proof
 we can find a basis of $W$ so that elements in $\Omega$ have pairwise distinct
first coordinates. Consider the subgroup $C$ of $W$, consisting of elements with $0$ in the first
coordinate, it is clear this subgroup is isomorphic to $\Z^{d-1}$. It is easy to see (by applying polynomial long division to $q$), that
$M$ is finitely generated as a $k[C]$ -module.
And this contradicts the assumption that $d$ is
the minimal number such that there exists $\A'= \mathbb{Z}^d +C$, where $C$ is a finite abelian group and  where $M$ is finitely generated as an $\A'$ module.

We have proved that $M$, considered as a  $k[Z^d]$ module, contains a free submodule. 

\end{proof}

Now we need to prove the proposition.
We consider $A$ and $M$ as in the assumption of the proposition. Take $B$
as in the assumption of Lemma \ref{newclaim:freesubmodule}. 
$B$ is a subgroup of $\HAKquotient$, and hence
$k[B]$ is  subring of $k[\HAKquotient]$.
Consider the composition of this map with the map $k[\HAKquotient] \to k[\HAKquotient]/Ann(M)$. Since $M$, considered $k[B]$ module, admits a free submodule, we conclude that  the composition map is injective.
$$
k[B] \to k[\HAKquotient]/Ann (M)
$$
Since $M$ is finitely generated as a $k[B]$ module, $k[\HAKquotient]/Ann(M)$ is finitely generated as a $k[B]$ module. Thus by Axiom D3 in Chapter 8 \cite{eisenbud} applied to $S=k[A]/Ann(M)$ and $R=k[B]$, we know  that the Krull dimensions of these rings are equal. 
By Theorem A in \cite{eisenbud} the Krull dimension of $k[B]$,
and hence of $k[A]/Ann(M)$,
is equal to  $d$.
This concludes the proof of the proposition.


\end{proof}

\begin{cor}
Consider a finitely generated metabelian group $G$,
abelian groups $\HAKquotient$ and $M$ and a  short exact sequence
$$
1 \to M \to G \to \HAKquotient \to 1
$$
Assume that $M$ is either torsion-free or all its elements are of exponent $p$.  Consider the field
$k$ which is equal to $\mathbb{Q}$ in the first case and $\Z/p\Z$ in the second case. We have
$$
d_{k[\HAKquotient]} M = \Tushev(G)
$$
if $M$ is $p$-torsion and 
$$
d_{k[\HAKquotient]} M = \Tushev(G)+1
$$
if $M$ is torsion-free.
\end{cor}

\begin{proof}

Follows by combining Claim \ref{claim:knowndimension} and  Proposition \ref{prop:ourdefinition}
\end{proof}


In the lemma below, we will consider a generating set $S$ of $\Z^m$ and the ball of
radius $r$ in this generating set which we denote $B_{\Z^m,S}(r)$. To simplify the notation
we write
$$
B_r = B_{\Z^m,S}(r).
$$
In the proof of this lemma we will also consider a subgroup of $\Z^m$  isomorphic to $\Z^d$,  with a generating set $T$ and the ball of radius $r$ in this generating set which we
denote $B_{\Z^d,T}$.
To simplify this notation, we will write  $B'(r)=B_{\Z^d,T}$.

\begin{lem}[The dimension of obtainable vector spaces]\label{prop:mainsection6}
We consider a short exact sequence 
$$
1 \to M \to G \to \mathbb{Z}^m \to 1,
$$
$G$ is  finitely generated and $M$ is a vector space over a field $k$ (which is embedded as an abelian subgroup in $G$).
Put 
$$
d= \dim_{k [\mathbb{Z}^m]} M
$$

Let $U$ be a finite generating set of $M$.
Let $
V_{(B_r,U)}
$ be the
vector space over $k$ spanned by the products $z u$ for $z\in B_r$, $u\in U$.
Then
there exist positive numbers $C_1$ and $C_2$ such that 
all $r\ge 1$
it holds
$$
     C_1 r^d \le  \dim(V_{(B_r,U)}) \le C_2 r^d.
$$
     
\end{lem}

\begin{proof}

First observe that it suffices to prove the statement of the lemma for some choice of generating set $U$ for $M$ considered as a $k[\mathbb{Z}^m]$-module.
Indeed, the dimension as a vector space of
$V_{(B_r,U)}$
for other choices of the generators of the module $M$
will only differ by multiplication by a positive constant.

By Proposition \ref{prop:ourdefinition}
 we know  that there exists a subgroup $B=\mathbb{Z}^d \subset \mathbb{Z}^m$ such that the module $M$ is finitely generated as a $k[B]$ module. We denote such a generating set as $U$.
Choose a sufficiently large $q$ such that the following holds.
For each of the $m$ generators
$e_1$, $e_2$, \dots, $e_m$ of $\mathbb{Z}^m$ and each of the elements of $u\in U$
$$
e_iu \in V_{(B'_q,U)}
$$

Then it is easy to see that any element of the form $bu$ where $b\in B_r, u\in U$ is in $V_{B'_{rq}}$.
This implies the upper bound in the claim of our proposition. 


The lower bound in the claim of the lemma  follows from 
Lemma \ref{newclaim:freesubmodule}
since the lower bound holds true for a free module and since  $\dim(V_{(B_r,U)} \leq \dim(W_{(B_r,U)}$  if $W$ contains $V$ and the finite generating set $T$ for $W$ contains $U$.  

\end{proof}

\subsection{Further examples}\label{subsection:further}

We have mentioned  examples of evaluation of dimension of some metabelian groups
after Lemma \ref{exa:twobytworank}. Now we explain some more examples. 
The  claims and the lemma of  this subsection are not  used in the proofs of our main results. But they  can be used to explain examples mentioned in the beginning of the paper (which illustrate the application of our theorems).

\begin{claim} \label{rem:modulesubmodule}
Let $M$ be a finitely generated module over $k[\mathbb{Z}^r]$ and $M'$ be a submodule.
Then the dimension $d'$ of $M'$ satisfies $d'\le d$ where $d$ is the dimension of $M$.
\end{claim}

\begin{proof}
Follows from the fact that the Krull dimension of rings is monotone under quotients.
\end{proof}

\begin{claim}\label{rem:rankforchangeofring}
Let $M$ be $k[\mathbb{Z}^n]$ module. Take $m<n$,
we have $\mathbb{Z}^m \subset \mathbb{Z}^n$,
and consider $M$ a $\mathbb{Z}^m$-module.
Assume that $M$, considered as a $k[\mathbb{Z}^m]$-module, has dimension $d$. Then its dimension as $k[\mathbb{Z}^n]$-module is also $d$.
\end{claim}
\begin{proof}
The upper bound is immediate from the definition of the dimension. The lower bound follows from Lemma \ref{prop:mainsection6}
about the dimension of obtainable vector spaces.
\end{proof}

The following lemma is in particular useful for evaluating dimension for quotients of wreath products.

\begin{lem} \label{lem:rankdrops}[Dimension drops]
\begin{enumerate}
\item
Let $k$ be a field.  Let $M$ be a proper quotient of the  free module $k[\mathbb{Z}^d]$. Then
its Krull dimension  $\dim_{k[\Z^d]} M$ is
 strictly less than $d$. 
\item
If $I$ is a nontrivial principle ideal then $M/I$ has dimension precisely  $ d-1$.
\end{enumerate}
\end{lem}
\begin{proof}

Given a Laurent polynomial $p'$ 
 we consider  its  monomials   $c_1^{j_1}  \dots c_d^{j_d}$, put  $M=\max{j_1}$, and $m=\min{j_1}$,
where the maximum and the minimum are taken over the monomials with non-zero coefficients.
We say that a Laurent  polynomial $p'$ satisfies *-condition
if  $p'$ has a single monomial with $j_1=M$ (with non-zero coefficient) and a single monomial where $j_1=m$ (with non-zero coefficient).

By our assumption of the lemma there is at least one  non-zero element $r\in k[\mathbb{Z}^d]$ that belongs to the annihilator of  $M$. Thus, there exists a polynomial $p$
 in $d$ variables $b_1$, \dots, $b_d$ that also belongs to this annihilator.

Arguing as in the beginning of the proof of Lemma \ref{newclaim:freesubmodule},
we can choose a basis $c_1,c_2, c_d$ of $\mathbb{Z}^d$ such that the following holds. Any polynomial in $b_1, \dots, b_d$ can be rewritten 
as a  polynomial $p'$ in $c_1, \dots, c_d$. 
$$
p(b_1, \dots, b_d) = p'(c_1, \dots, c_d)
$$
such that $p'$ satisfies *-condition.


Remove $c_1$ from the basis. We claim that our quotient module is a finite dimensional module over  $c_2,\dots,c_d$, so the dimension of this module is at most $d-1$. To see that it is finite dimensional over $c_1, \dots, c_d$, we use a version of  polynomial long division.
Namely,  if $u$ and $v$ are Laurent polynomials in $c_1, \dots, c_d$, and $v$ satisfies $*$-condition
with values $\min$ and $\max$, then there exists a Laurent polynomial $w$ with $c_1$ coefficients bounded between $\max$ and $\min$ in monomials of the corresponding polynomial
and a Laurent polynomial $t$ so that $u=vt+w$.
Since the coefficient of $w$ in $c_1$ is bounded, we conclude that the module is finite dimensional over $k[\mathbb{Z}^{d-1}]$ (where $\mathbb{Z}^{d-1}$ corresponds to the group generated by $c_2, \dots, c_d$.)

Now we prove the second claim of the lemma. Let $p$ be a generator of the ideal.
Let $x_1$,$x_2$, \dots, $x_d$ be free generators of $\mathbb{Z}^d$.
The ideal $I\neq k[\mathbb{Z}^d]$, $p$ has at least two non-trivial monomials. Thus there is a coordinate $c_i$ such that the monomials of $p$ have at least two distinct powers of $c_i$.
Renaming the generators, we can assume that it happens for $c_1$.
Observe that a product of $p$ and a non-zero Laurent polynomial $q$ has at least two distinct powers of $c_1$ (a largest and a smallest).  Therefore, every Laurent polynomial in $c_2, \dots, c_d$ is not a multiple of $p$ and thus there is a free submodule of dimension $d-1$. Note that the rank of a free module is $d$; the dimension is not increasing by taking a submodule. 
Thus, taking into account Claim
 \ref{rem:rankforchangeofring} we conclude that the dimension is $\ge d-1$.
\end{proof}

\section{The cautiousness criterion for triviality of the boundary}\label{sec:cautious}
In this section  we will prove the cautiousness criterion
for triviality of the Poisson boundary.
It will imply, in particular, that (under sufficient moment conditions) random walks on a linear group in characteristic $p$ of dimension $d \le 2$ or on a linear group in characteristic $0$ of dimension $d \le 1$ have trivial boundary.

We  start by explaining the idea of our cautiousness criterion. Consider
the basic and well-known example of   $
\mathbb{Z}/2\mathbb{Z} \wr \mathbb{Z}^2$. There are several known arguments to explain why simple random walks on this group have trivial boundary.
The original argument of Kaimanovich and Vershik \cite{kaimanovichvershik}
uses the fact that the commutator subgroup is recurrent, considers the exit measures to this subgroup and then concludes by observing that this exit measure has trivial boundary, since the subgroup is abelian.

We note that this argument cannot be applied to many of the groups we are interested in. For example, in the two-dimensional Baumslag group, the quotient over the commutator is $\mathbb{Z}^4$, and this subgroup is transient.

Another way to prove the triviality of the boundary for the wreath products is to observe that the entropy function is closely related to the drift of the random walk on $\mathbb{Z}^2$, and use that this drift is sublinear for recurrent random walks.
Such estimates in general require a detailed understanding
of the typical elements  
visited at time $t$, and  we do not know in what generality a detailed description of such elements  can be obtained for general group extensions.

Now we explain one more way to see that simple random walks on the two-dimensional Lamplighter group have trivial boundary.
Observe that for any $\varepsilon>0$  the random walk on the projection to $\mathbb{Z}^2$
stays in the ball of radius $\varepsilon \sqrt{t}$ for all
  time  instants between $1$ and $t$, with positive probability  
not depending on $t$.  See Figure \ref{pic:cautiousz2}.
 \begin{figure}[!htb] 
\centering
\includegraphics[scale=.65]{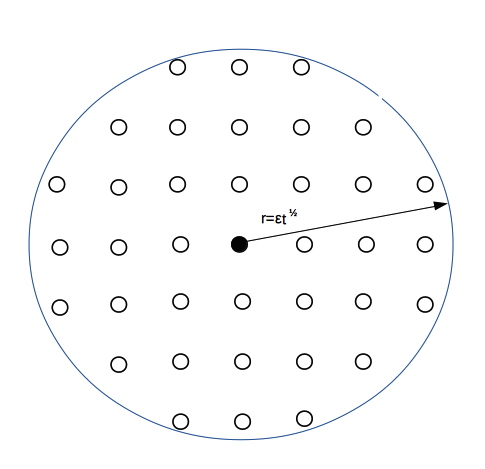}
\caption{With positive probability the random walk on $\mathbb{Z}^2$ stays until time $t$ in the ball of radius $r=\varepsilon \sqrt{t}$.
}
\label{pic:cautiousz2}
\end{figure} 
(Here this property holds for all $t$. When it holds for a subsequence of time instants,   the property is called cautiousness in \cite{erschlerozawa}, see also Definition \ref{def:cautious} for a more general version of this definition).
Observe that with positive probability the position of the random walk at time $t$ belongs to a set of cardinality at most $t^2 \exp(C \varepsilon^2 t)$. By the Shannon -- McMillan -- Breiman type theorem, one concludes that the asymptotic entropy of the random walk is zero.

In this section we  will
show that this latter argument can be generalized to many group extensions. 

\begin{defn}[$f$-cautious random walk]\label{def:cautious}
Let $f:\N \to \R$ be an increasing function.
A random walk $X_n$ is  $f$-cautious along some subsequence if 
$$
\limsup_{n} \mathbb{P} [\max_{1 \le m \le n}(|X_m|/f(n))<\varepsilon]
$$
is positive for every $\varepsilon>0$.
\end{defn}

In other words, the definition says that for each $\varepsilon$ there exists a $p_{\varepsilon}>0$ 
and a subsequence $n_i$ such that the following holds:
The probability that during the time interval between $1$ and $n_i$ the random walk stays inside the ball of radius $\varepsilon f(n_i)$
is at least $p_\varepsilon$.

\begin{rem}[$f$-cautious random walks with condition for all $n$.]
Let $f:\N \to \R$ be an increasing function.
A random walk is $f$-cautious if for all $\varepsilon >0$
there exists $p_{\varepsilon}>0$ such that 
for all sufficiently large $n$
 trajectories $X_n$ satisfy the following. 
$$ 
 \mathbb{P} [\forall m: 1 \le m \le n ,|X_m|/f(n)<\varepsilon] \ge p_{\varepsilon}
$$ 
\end{rem}

\begin{rem}\label{rem:abeliancautious} The Central Limit Theorem shows that symmetric
finite second moment random walks on $\mathbb{Z}^d$ (and more generally any finitely generated abelian group)
are $f(n)$ -cautious for $f(n)=\sqrt{n}$.
\end{rem}

\begin{rem}
Let $G$ be a group and $\mu$ be a finite first moment measure. In this situation, the drift  of the random walk is zero if and only if the r.w. is $f(n)$ -cautious for $f(n)=n$.
\end{rem}

\begin{rem} If $f(n)=\sqrt{n}$, our notion of $f(n)$-cautious 
is the same as cautiousness defined in \cite{erschlerozawa}.
\end{rem}

\begin{rem}
We mention that there exists $\sqrt{n}$-cautious random walks where
it is important that the condition holds
along a subsequence. Lacunary hyperbolic examples with this property are studied in Theorem 5.1 of \cite{erschlerzheng}.
\end{rem}

\begin{defn}\label{def:span}[Span function for actions of $G$.]
Suppose that a group $G$ acts by automorphisms on a group $H$. Let $S$ be a finite subset of $G$, and $T$ be a finite subset of $H$. Denote by $T_r$ the set of all elements $h$ in $H$ for which there exists some $g\in G$ of word length at most $r$ (with respect to $S$) and some $t\in T$ such that $g(t)=h$. Let $T_{r,n}$ be the set of all elements which are a product of at most $n$ elements in $T_r$. Define $\Span_{S,T,G \acts H}(r,n)$ to be the cardinality of $T_{r,n}$.  We also call the set $T^{S,T, G \acts H }_{r,n}=T_{r,n}$ obtainable.
\end{defn}

\begin{rem}
The asymptotics of $\Span_{S,T,G \acts H}(r,n)$ 
do not depend on the choice of $S$
and $T$ when $S$ is a generating set for $G$, and the union of $g(t)$ for all $g\in G$ and $t\in T$ is a generating set for $H$. 
\end{rem}

We state below a criterion for the Liouville property for group extensions using cautiousness  of the projected random walk. 
 
\begin{thm}[Cautiousness criterion for extensions]
\label{prop:cautious}
Let $f:\mathbb{N} \to \mathbb{R}$  be an increasing function.
Let $1\to \HNMsubgroup \to G\to \HAKquotient\to 1$ be a short exact sequence,  where $\HNMsubgroup$ is abelian. 
Let $\mu$ be a non-degenerate finite entropy  probability measure on $G$ whose projection
$\mu_K$ to $\HAKquotient$ is $f$-cautious and Liouville. 
Fix some  finite generating set  $S\subset \HAKquotient$ and a  finite set   $T\subset \HNMsubgroup$. Assume that  for every $\varepsilon>0$ there exists a $\theta>0$, $C_\varepsilon>0$ such that
$$
\Span_{S,T,G \acts \HNMsubgroup}(\delta f(n),n) \le C_\varepsilon (1+\varepsilon)^n  (*)
$$ for all $n$.

\begin{enumerate}
    \item 

If the set $T$ generates $\HNMsubgroup$, then the random walk $(G,\mu)$ is also Liouville. 

\item

More generally, take  $h\in H$ and consider $T=T_h=\{ h \}$.
Then if the assumption $(*)$ holds for $T_h$, then
$h$ acts trivially on the boundary $(T,\mu)$.
\end{enumerate}
\end{thm}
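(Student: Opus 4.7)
The plan is to reduce~(2) to a tractable setting via the earlier lemmas, then combine the cautiousness event with a Shannon--McMillan--Breiman argument. I would first apply Lemma~\ref{lem:mainforreduction} with $A' = \Norm_G(h) \subset A$ (which lies in the center of the abelian subgroup $A$) to reduce the claim to showing that $h$ acts trivially on $(C,\nu)$, where $C = K \ltimes \Norm(h)$ and $\nu$ is any non-degenerate finite entropy measure on $C$ whose projection to $K$ agrees with $\mu_K$. Next, applying Theorem~\ref{thm:compcrit} on $C$, I replace $\nu$ by a more convenient measure $\nu'$ on $C$ whose $\Norm(h)$-coordinates are supported in the finite symmetric set $\{0,h,-h\}$; non-degeneracy of $\nu'$ follows from that of $\mu_K=(\nu')_K$ together with the fact that positive semigroup combinations of $K$-translates of $\pm h$ exhaust $\Norm(h)$.

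Since $C/\Norm_C(h)=K$ and $(K,\mu_K)$ is Liouville, Lemma~\ref{lem:strictinequality} shows that $h$ acts trivially on $(C,\nu')$ if and only if the asymptotic entropy $h(C,\nu')$ vanishes, so it suffices to prove $(C,\nu')$ is Liouville. Let $E$ be the cautiousness event that the $K$-trajectory of the walk stays inside the $S$-ball of radius $\varepsilon f(n)$ throughout $1 \le m \le n$; by hypothesis $\mathbb{P}(E) \ge p_\varepsilon > 0$ along a subsequence. Writing increments as $g_i = k_i\alpha_i$ with $\alpha_i \in \{0,\pm h\}$, the semidirect product formula gives
\[
X_n \;=\; (k_1\cdots k_n)\cdot \sum_{i=1}^{n}\gamma_i^{-1}(\alpha_i), \qquad \gamma_i \;=\; k_{i+1}\cdots k_n.
\]
On $E$ every $|\gamma_i|_S \le 2\varepsilon f(n)$, so each $\gamma_i^{-1}(\alpha_i)$ belongs to $\{0\}\cup T^{S,T,K\acts A}_{2\varepsilon f(n)} \cup (-T^{S,T,K\acts A}_{2\varepsilon f(n)})$, and hence the $\Norm(h)$-coordinate of $X_n$ lies in the difference set $T^{S,T,K\acts A}_{2\varepsilon f(n),n} - T^{S,T,K\acts A}_{2\varepsilon f(n),n}$, whose cardinality is at most $C'_\varepsilon(1+\varepsilon)^{2n}$ by the span hypothesis. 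Combined with the $K$-coordinate confined to the $\varepsilon f(n)$-ball, I obtain $X_n \in F_n$ with
\[
\frac{\log|F_n|}{n} \;\le\; \frac{\log v_K(\varepsilon f(n))}{n} + 2\log(1+\varepsilon) + o(1),
\]
where $v_K$ is the growth function of $K$.

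To finish, I would apply the Shannon--McMillan--Breiman theorem on $(C,\nu')$: if $h(C,\nu')>0$, then with probability tending to $1$ one has $\nu'^{*n}(X_n)\le e^{-(h(C,\nu')-\eta)n}$, whence intersecting with $E$ and summing over $F_n$ yields $h(C,\nu')-\eta \le \log|F_n|/n$. Taking $n\to\infty$, then $\eta\to 0$, and finally $\varepsilon\to 0$ forces $h(C,\nu')=0$ (using that $v_K$ grows at most exponentially in the radius, and cautiousness is nontrivial only in the regime $f(n)=O(n)$). Part~(1) follows from~(2) by running the same argument directly on $(G,\mu)$ with a comparison measure whose $A$-parts lie in $T$, available because $T$ generates $A$ and so the descent to $C$ is unnecessary. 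The main obstacle is orchestrating the two reductions so that the comparison measure has $\Norm(h)$-parts confined to the small set appearing in the span hypothesis while remaining non-degenerate on the ambient group; this is precisely why the preliminary descent to $C = K\ltimes\Norm(h)$ is needed, as there is no non-degenerate measure on $G$ with $A$-parts confined to $\Norm(h)$ when $\Norm(h)\subsetneq A$.
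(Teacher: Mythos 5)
Your argument is correct and follows essentially the same route as the paper: reduce to the semidirect product $K \ltimes \Norm_G(h)$ via the comparison/reduction machinery (Lemma \ref{lem:mainforreduction}, i.e.\ Lemma \ref{lem:reductiontosingle}), pass to a measure whose $A$-parts lie in a fixed finite set, confine $X_n$ on the cautiousness event to a set built from the obtainable sets and a $K$-ball, and conclude sublinear entropy via the Shannon--McMillan--Breiman argument together with Lemma \ref{lem:strictinequality}. The only cosmetic differences are that your second appeal to Theorem \ref{thm:compcrit} is redundant, since Lemma \ref{lem:mainforreduction} already applies to every non-degenerate finite-entropy measure on $C$ with the prescribed projection (so you may take $\nu=\nu'$ directly), and that the paper deduces part (1) from part (2) via Lemma \ref{lem:obsforreduction} instead of rerunning the counting argument on $G$.
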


\begin{rem}\label{rem:twomimpliesone}
The second claim of the theorem is indeed more general than the first one. Indeed,  in view of Lemma \ref{lem:obsforreduction} if the boundary of $(G,\mu)$ is non-trivial, then there exists $h\in H$ acting non-trivially on the boundary. The second claim also applies in the case where $H$ is not normally finitely generated. 
\end{rem}

\begin{rem}
We have mentioned that for $f(n)=n$ cautiousness implies the Liouville property, so that if $f(n)\leq n$ we can omit mentioning the Liouville property in the assumption of the criterion. But the criterion makes sense also for  $f(n)>n$
(in particular this applies for some non-symmetric finite first moment random walks with positive drift). 
We mention also that for a symmetric finite first moment measure $\nu$ the Liouville property of   $(G,\nu)$  is equivalent to the fact that  $(G, \nu)$ has $0$ drift, see \cite{karlssonledrappier}. 
\end{rem}

The assumption of the theorem above
applies in particular for $\HAKquotient=\mathbb{Z}^d$ and
$\HNMsubgroup=(\mathbb{Z}/2\mathbb{Z})^{\mathbb{Z}^d}$.

In this case, we can take $\phi(n) = C n^d$ and $f(n)=\sqrt{n}$. We have $\phi(f(n)) \sim n^{d/2}$. So we control the span, assuming that $d=1$ and the first moment is finite  or $d=2$ and the second moment is finite,
and then we recover the well-known fact that in these examples the Poisson boundary on the wreath product is trivial.

Before we prove the theorem, we first explain 
a straightforward observation about the normal form of (the product of increments in)  a semi-direct product.
In the claim and its proof below we
use the same notation for subgroup $\HNMsubgroup$, quotient group $\HAKquotient$ and their image  for the canonical embedding to the  semi-direct product $\HNMsubgroup
\rtimes  \HAKquotient$.

\begin{claim}
\label{lem:Normalform}
Let $G$ be a finitely
generated semi-direct product  $\HAKquotient\rtimes \HNMsubgroup$.
Consider a probability measure on $G$, with the support $U$. Let $V$ be the subset of $\HNMsubgroup$,
consisting of $h \in H$ such that there exists $u\in U$, and $k\in \HAKquotient$ such that $u=hk$.
Let $X_n$ be a trajectory of the random walk on $G$ and 
$Y_n$ be this trajectory's quotient  to $\HAKquotient$.
Let $l_{\HAKquotient}(Y_m)<C$ for
$1\leq m\leq n$.  Then $X_n$ can be written in the form $h'k'$ where $h'\in \HNMsubgroup$,
$k'\in \HAKquotient$ 
and where $h'$ is a product of at most $n$ elements of the form $kvk^{-1}$, where $k\in K:l_{\HAKquotient}(k)<C$, and $v\in V$. 
\end{claim}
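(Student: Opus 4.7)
The plan is a direct computation using the (unique) decomposition of elements of $G = K \rtimes H$ as a product of an element of $H$ (the normal factor) followed by an element of $K$. Write the increments of the walk as $u_1, \dots, u_n \in U$, so that $X_n = u_1 \cdots u_n$, and for each $i$ use the decomposition $u_i = v_i k_i$ with $v_i \in H$ and $k_i \in K$; by the definition of $V$ every $v_i$ lies in $V$.

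The main step is to push every $H$-factor to the left. Since $H$ is normal in $G$, for any $k \in K$ and $v \in H$ I have $kv = (kvk^{-1})\, k$ with $kvk^{-1} \in H$. Applying this iteratively to $X_n = v_1 k_1 v_2 k_2 \cdots v_n k_n$, a straightforward induction on $n$ yields
\[
X_n = v_1 \cdot (k_1 v_2 k_1^{-1}) \cdot (k_1 k_2 v_3 k_2^{-1}k_1^{-1}) \cdots \bigl((k_1\cdots k_{n-1}) v_n (k_1\cdots k_{n-1})^{-1}\bigr) \cdot k_1 k_2 \cdots k_n.
\]
Because the projection $G \to K$ kills $H$, the projection $Y_m$ of $X_m$ to $K$ is exactly $k_1 k_2 \cdots k_m$. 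Setting $Y_0 := e$, the identity above can be rewritten as
\[
X_n = \Bigl( \prod_{j=1}^{n} Y_{j-1}\, v_j\, Y_{j-1}^{-1} \Bigr) \cdot Y_n.
\]

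Taking $h' := \prod_{j=1}^n Y_{j-1} v_j Y_{j-1}^{-1}$ and $k' := Y_n$ then gives the required form: $h'$ is a product of $n$ elements of the shape $k v k^{-1}$ with $v \in V$ and $k = Y_{j-1}$, and the hypothesis $l_K(Y_m) < C$ for $1 \le m \le n$ (together with the trivial bound $l_K(Y_0) = 0 < C$) ensures that each conjugating $k$ satisfies $l_K(k) < C$. I do not anticipate any genuine obstacle; the whole content is the semi-direct-product normal-form computation, and the only point to be careful about is tracking which of $K$ or $H$ is normal in the paper's convention (here $H$).
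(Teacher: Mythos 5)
Your proof is correct and is essentially the paper's argument: the paper proves the same identity by induction on the number of steps, conjugating each new $H$-factor by the current $K$-projection $Y_{m-1}$, whereas you simply unroll that induction into the closed-form product $X_n = \bigl(\prod_{j=1}^{n} Y_{j-1} v_j Y_{j-1}^{-1}\bigr) Y_n$. Both rest on the same normal-form computation in the semidirect product with $H$ normal, and your bookkeeping of the conjugators $Y_0,\dots,Y_{n-1}$ and the bound $l_K(Y_m)<C$ matches the paper's.
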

\begin{proof}
We prove the statement by induction on $m$. The base $m=1$ is immediate. Now suppose the statement  is true for $X_{m-1}$. Consider  $X_m$. By induction we know that  $X_{m-1}$ can be written in the form $h_{m-1}k_{m-1}$. Here $l_{\HAKquotient}(k_{n-1})<C$ and $h_{n-1}$ can be written as the product of at most $n-1$ elements of the form $kuk^{-1}$,  $k\in \HAKquotient:|k|<C$,  $u\in U$. We have $h_{m-1}k_{m-1}hk=h_{m-1}(k_{m-1}h{k_{m-1}}^{-1})k_{m-1}g$. Since $l_K (k_{m-1})<C$  it follows that $X_m$ can be written in the form $hk$ where $h$ is a product of at most $n$ elements of the form $kuk^{-1}$ where $k\in K:|k|<C$, and $u\in U$.
\end{proof}

\begin{lem} [Cautiousness criterion for semi-direct products]\label{lem:cautiousnesssemidirect}
Assume that $G$ is a semi-direct product corresponding to the short exact sequence, 
$$ 
1 \to \HNMsubgroup \to G \to \HAKquotient\to 1,
$$
where $\HAKquotient$ is finitely generated.
Let $\mu$ be a finite entropy probability measure on $G$, $\mu_K$
its projection to $\HAKquotient$. We assume that $\mu_{\HAKquotient}$  is $f$-cautious and Liouville.
Let $S$ be a finite generating set for $\HAKquotient$.
Assume that the set  $T$ consisting of  $k\in \HAKquotient$ such that there exists at least one $h\in H$ where $hk$ is in the support, is finite.
Assume also that  for every $\varepsilon>0$ there exists  $\delta>0$, 
$C_\varepsilon>0$ such that
$$
\Span_{S,T,G \acts \HNMsubgroup}(\delta f(n),n) \le C_\varepsilon (1+\varepsilon)^n  (*)
$$ for all $n$.
Then the random walk on $G$ is Liouville. 
\end{lem}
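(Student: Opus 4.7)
The plan is to invoke the Entropy criterion: it is enough to show $h(G,\mu) = 0$, and because $h(\mu) = \lim_n H(\mu^{*n})/n$, it suffices to bound the entropy along the subsequence $n_j \to \infty$ supplied by the $f$-cautiousness of $\mu_K$. Given $\varepsilon > 0$, let $\delta > 0$ be the constant from the span hypothesis and let $p > 0$ and $n_j \to \infty$ satisfy $\mathbb{P}(A_{n_j}) \ge p$, where
\[
A_n := \{ l_S(Y_m) \le \delta f(n) \text{ for every } 1 \le m \le n \}
\]
and $Y_m$ is the projection of $X_m$ to $K$. Decompose each increment as $Z_i = h_i k_i$ with $h_i \in H$, $k_i \in K$, and push the $K$-parts to the right using $kh = (khk^{-1}) k$. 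Claim \ref{lem:Normalform} then gives $X_n = h'_n Y_n$ with $h'_n = \tilde h_1 \cdots \tilde h_n$ and $\tilde h_i = Y_{i-1} h_i Y_{i-1}^{-1}$. On $A_n$ the conjugating element $Y_{i-1}$ has length at most $\delta f(n)$, so each $\tilde h_i$ belongs to the obtainable set $T_{\delta f(n)}$ (with $T$ the finite set of $H$-parts of the support of $\mu$), and hence $h'_n \in T_{\delta f(n), n}$, of cardinality at most $C_\varepsilon (1+\varepsilon)^n$.

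Next, use that $(K,\mu_K)$ is Liouville: by the Entropy criterion $h(K,\mu_K) = 0$, so Shannon--McMillan--Breiman applied to the $K$-walk produces, for any $\eta > 0$ and $n$ large, a ``typical'' set
\[
S_n := \{ y \in K : \mu_K^{*n}(y) \ge e^{-n\eta} \}
\]
with $\mathbb{P}(Y_n \in S_n) \ge 1 - \eta$ and $|S_n| \le e^{n\eta}$. Thus on $A_n \cap \{Y_n \in S_n\}$, of probability at least $p - \eta$, we have $X_n \in E_n := T_{\delta f(n),n} \cdot S_n$, with $|E_n| \le C_\varepsilon (1+\varepsilon)^n e^{n\eta}$. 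Now apply Shannon--McMillan--Breiman to the $G$-walk: with probability at least $1 - \eta$, $\mu^{*n}(X_n) \le e^{-n(h-\eta)}$, where $h := h(G,\mu)$. Intersecting the two events yields
\[
p - 2\eta \le \mathbb{P}\bigl(X_n \in E_n,\ \mu^{*n}(X_n) \le e^{-n(h-\eta)}\bigr) \le |E_n| \cdot e^{-n(h-\eta)} \le C_\varepsilon (1+\varepsilon)^n e^{n\eta} e^{-n(h-\eta)}.
\]
Taking logarithms, dividing by $n$, and passing to the limit along $n_j$, then sending $\eta \to 0$ and finally $\varepsilon \to 0$, gives $h \le 0$, so $h = 0$, and triviality of the boundary follows from the Entropy criterion.

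The main obstacle is controlling the factor contributed by the $K$-projection $Y_n$ in $X_n = h'_n Y_n$: the naive upper bound $|B_K(\delta f(n))|$ may be exponential in $n$ when $K$ has exponential growth and swamp the subexponential span bound. The Liouville hypothesis on $\mu_K$ is precisely what lets us replace this ball by the SMB-typical set $S_n$ of size at most $e^{n\eta}$, which is absorbed into the $\varepsilon$-slack. A secondary technical point concerns the identification of the finite subset of $H$ used in the span: if the $H$-parts of the support are a priori infinite, a truncation argument using the finite entropy of $\mu$ reduces to the case of finitely many $H$-parts at an arbitrarily small additional entropy cost, which is harmless for the same reason.
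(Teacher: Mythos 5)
Your proof is correct, and its skeleton is the same as the paper's: cautiousness of $\mu_K$ gives positive-probability confinement of the projected trajectory to a ball of radius $\delta f(n)$ along a subsequence, the normal form of Claim \ref{lem:Normalform} then places the $H$-part of $X_n$ in the obtainable set $T_{\delta f(n),n}$ bounded by the span hypothesis, and the Shannon--McMillan--Breiman theorem converts confinement to a set of size $e^{o(n)+n\log(1+\varepsilon)}$ with probability bounded below into $h(G,\mu)=0$, hence triviality by the entropy criterion. The one genuinely different component is your treatment of the $K$-coordinate of $X_n=h_n'Y_n$. The paper simply encloses $Y_n$ in the ball $B_K(\varepsilon f(n))$ and asserts that $T_{(\varepsilon f(n),n)}B_K(\varepsilon f(n))$ has size $\exp(\varepsilon n+o(n))$; this is immediate in the intended applications (where $K$ is virtually abelian and $f(n)\le n$), but as written it tacitly needs the ball to be subexponential in $n$, and the Liouville hypothesis on $\mu_K$ never visibly enters the proof of the lemma. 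You instead invoke SMB for the Liouville projection $(K,\mu_K)$ to replace the ball by the typical set $S_n$ of size at most $e^{n\eta}$, which is precisely where the Liouville assumption in the statement does its work; this buys an argument that also covers $K$ of exponential growth and $f(n)>n$, a regime the paper explicitly says the criterion is meant to include, at the cost of one extra SMB application and the small bookkeeping with $\eta$. Your quantified version of the final step (the inequality $p-2\eta\le|E_n|e^{-n(h-\eta)}$) is a careful writing-out of what the paper compresses into one sentence.

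Two small remarks. First, your closing comment about truncating $\mu$ when the $H$-parts of the support are infinite is unnecessary here, since the lemma assumes the set $T$ of such parts is finite; note also that the paper handles the general case at the level of Theorem \ref{prop:cautious} not by truncation but by the reduction to single-generated components (Lemma \ref{lem:reductiontosingle}), and a naive truncation of $\mu$ changes the walk, so that route would need more care than your sentence suggests. Second, SMB for the $K$-walk needs $H(\mu_K)<\infty$; this follows from $H(\mu_K)\le H(\mu)<\infty$ and is worth stating explicitly.
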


\begin{proof}

We want to prove  for all $\varepsilon>0$ that the entropy of $\mu^{*n}$ is at most $\varepsilon  n+o(n)$  which will imply that the entropy is $o(n)$ and thus the  boundary is trivial. 

Observe that by the cautiousness of $(\HAKquotient,\mu_{\HAKquotient})$   
there exists a subsequence  $n_i$ such that the following holds. For each $n_i$ with probability bounded below by some $p_\varepsilon>0$, the
projected random walk $Y_{n_i}$ has word length  less than $\varepsilon f(n_i)$ for $0<n\leq n_i$. 

Thus, by Claim \ref{lem:Normalform} with positive probability bounded below by $p_\varepsilon$, at times $n_i$ our random walk on $G$ is contained in $(T_{(\varepsilon f(n),n)} B_K (\varepsilon f(n))$. Here $T_{(\varepsilon f(n),n)}$ is
the obtainable set $T_{r,n}$ for $r=f(n)$
from the definition of the Span function
for $\HAKquotient\acts H$, $S$,$T$. We recall
$B_K(r)$ denotes the ball of radius $r$ in the group $\HAKquotient$ and we omit in this notation the generating set (if we have fixed some generating set).

By assumption of the lemma, this set is of size at most  $\exp (\varepsilon n+o(n))$ and thus by convergence of the entropy with probability $1$ to its limiting value ( Shannon-McMillan-Breiman type theorem,  see \cite{kaimanovichvershik}, Thm $2.1$ or \cite{derriennic}), 
we know that the entropy function of our random walk is 
$\le  \varepsilon  n+o(n)$ for all $\varepsilon$ and thus is sublinear, which implies the triviality of the boundary.
\end{proof}

We now prove the 
theorem.

\begin{proof}

As we have mentioned in the Remark \ref{rem:twomimpliesone},
it is enough to prove  the second claim of the Theorem. We have   $h\in H$ and 
we assume that the assumption (*) of the theorem  holds for the set $T_h=\{h\}$. That is, that for a generating set $S$ of $\HAKquotient$ and for every $\varepsilon>0$ there exists  $\delta$,  $C_\varepsilon$ such that
$\Span_{S,h,G\acts \HNMsubgroup}(\delta f(n),n)\leq 
C_\varepsilon(1+\varepsilon)^n$ for all $n$. We want to prove that 
$h$ acts trivially on the boundary $(G,\mu)$.

Apply Lemma \ref{lem:reductiontosingle} to $G$, our short exact sequence and $h$.
Consider the obtained group  $G_h$.
By the above-mentioned Lemma, it is sufficient to prove that $h$ acts trivially on the boundary of $G_h$, for some measure on $G_h$ with the same projection to $K$.
Observe that we can choose the measure on $G_h$, with the same projection to $\HAKquotient$ as $\mu$, in such a way that it satisfies the assumption of  Lemma \ref{lem:cautiousnesssemidirect}, in particular  about the finiteness of the set $T$.
This completes the proof of the theorem.
\end{proof}

\subsection{Applications to the triviality of the boundary for metabelian groups}

In the proof of the corollary below, we will need upper bounds for the size of obtainable sets. In the case of extensions of torsion groups, obtainable sets are subsets of finite dimension vector subspaces, and their dimension was estimated in the previous section,
Lemma \ref{prop:mainsection6}(this we will use in the proof of the  first claim of the corollary). In the case of torsion-free abelian groups, the obtainable sets are subsets of a vector space over $Q$, and so to estimate the cardinality we need not only the dimension of an ambient vector space, but also a bound on  coefficients (for elements in this set) in a fixed basis.
This is done in the following Lemma.



\begin{lem}[The cardinality of obtainable sets] \label{lem:obtainableset}
Let $\HAKquotient$ be an abelian group and $\HNMsubgroup$ be a torsion-free abelian group.  $G$ is such that we have an exact sequence
$$
1\to \HNMsubgroup \to G \to \HAKquotient\to 1,
$$
and we assume that the Krull dimension of $G$ is $d$.
Let $S$ be a finite subset of $G$ and $T$ be a finite subset of $\HNMsubgroup$.
Consider the obtainable set $T_{r,n}$ defined in 
Definition \ref{def:span}.
There exists a constant $C>0$ such that
its cardinality  $\Span_{S,T,\HAKquotient \acts \HNMsubgroup}(r,n)$    satisfies
$$
\Span_{S,T,\HAKquotient \acts \HNMsubgroup}(r,n)  \le ((n+2)C^r)^{Cr^d}.
$$

\end{lem}


\begin{proof}
From Claim 7.2 and Proposition 7.5, we know that 
there is a subgroup $K'$ isomorphic to $\mathbb{Z}^d$ of $K$ and a finite set $T'$ of elements of $\HNMsubgroup\otimes \mathbb{Q}$ 
(we can assume that they belong to $H$) such that the set of elements of the form $k(t')$ where $k\in \HAKquotient, t'\in T'$ spans $A\otimes \mathbb{Q}$. Thus every element in $T$ is the span (over $\mathbb{Q}$) of finitely many conjugates of $k(t)$. 

We now will show that there exists $C$,$D$ and $m$ so that every element in $\bar{k}(t)$ where $\bar{k}\in B_r(\HAKquotient)$ and $t\in T$ can be written as a sum of elements of the form $qk(t')$ where $k\in B_{mr}$, $t\in T'$ and $q$ is a rational number with denominator $D^{r+1}$ and numerator with absolute value at most $C^{r+1}$.
Since a sum of $n$ elements of this form can be written in the same way (replacing the maximal numerator absolute value with $nD^{r+1}$) the lemma will  follow.

To see the existence of such $C,D,m$ note that  there exists a finite subset $P\subset K$ 
such that for all $s\in S$ and $t'\in T'$  it holds $s(t')=\sum_{p\in P,t\in T'}
q_{p,t}p(t)$, where $q_{k,t}$ are rational numbers.
Let $D$ be a common denominator of all the $p{k,t}$ for all $s\in S$ and let $m$ be large enough so that $P\subset B_m(\HAKquotient)$. Let $C'$ be the maximal numerator of $q_{k,t}$ and $C$ be $C'|P||T'|D$. Apply an element $s\in S$ to a rational linear combination of $k(t')$ with common denominator $D^{r+1}$ and maximal numerators $C^{r+1}$ , where $k\in B_{rm}, t'\in T'$.
We obtain  a rational linear combination of $k(t')$ where $k\in B_{(r+1)m}$ have common denominator $D^{r+2}$ and numerators bounded above by $C^{r+2}$
(since we are summing up at most $|P|T'|$ different rationals, multiplying the denominator by $D$ and each numerator by at most $C'$).  This yields the desired result. 

\end{proof}

\begin{cor}\label{cor:rank2rank1}
Let $G$ be a metabelian group of Krull 
dimension at most $\le 2$.


Then for any centered finite second moment
measure $\mu$ on $G$ the Poisson boundary of the random walk $(G,\mu)$ is trivial.
Moreover, if the dimension $\le 1$, then any centered first moment measure $\mu$ on $G$
has trivial Poisson boundary.

\end{cor}

\begin{proof}

Assume that we have a short exact sequence
$$
1\to \HNMsubgroup \to G \to \HAKquotient \to 1
$$
where $\HNMsubgroup$ and $\HAKquotient$ are abelian. And we first deal with the case when $\HNMsubgroup$ is either $p$-torsion, where $p$ is prime, or it is torsion-free.

Observe that the Central Limit Theorem implies that any centered
finite second moment random walk on $\mathbb{Z}^d$ and, more generally, on any f.g. abelian group,  is $\sqrt{n}$-cautious
(see Remark \ref{rem:abeliancautious}).
We assume that the defining measure $\mu$ has finite second moment.
Apply Theorem \ref{prop:cautious}
for $f(n)=\sqrt{n}$. 
First, we consider the case when the subgroup $\HNMsubgroup$ is $p$-torsion.
Observe that
the obtainable set $T_{r,n}$ for $r=\delta f(n)$
is contained in a vector space,  which in view of Lemma \ref{prop:mainsection6}
has dimension at most $C (\delta(f(n))^2)=
C \delta^2 n$.
Thus, the cardinality of these obtainable sets is at most $\exp(\ln p C \delta^2 n)$. 

Now we consider the case when the subgroup $\HNMsubgroup$ is torsion-free.
Here we also assume that $\mu$ has finite second moment.
By Lemma \ref{lem:obtainableset}
we know that 
$$
\Span_{S,T, \HAKquotient \acts \HNMsubgroup}(r,n)  \le ((n+2)C^r)^{Cr}.
$$
We put $r= \delta \sqrt{n}$, 
 observe that $\Span_{S,T, \HAKquotient \acts \HNMsubgroup}(\delta \sqrt{n},n) \le (n+2)^{C \delta \sqrt{n}}C^{C \delta^2 n}$   and conclude that for every $\varepsilon>0$ there exists some $\delta$ such that
 $$
 \Span_{S,T,\HAKquotient\acts A}(\delta \sqrt{n},n)<C_\varepsilon (1+\varepsilon)^n.
 $$
Therefore, the assumption of Theorem \ref{prop:cautious} is satisfied, and we conclude that our random walk has trivial boundary.

Now we assume that the Krull dimenison is one and  the defining measure $\mu$ is centered and has 
 finite first moment. 
We apply Theorem \ref{prop:cautious} for $f(n)=n$.  
By the Law of Large Numbers, any finite first moment centered measure on $\mathbb{Z}^d$ (and more generally, on any finitely generated abelian group) has zero drift, and 
$l(X_n)/n \to 0$ almost surely. Thus we have $f(n)$-cautiousness for $f(n)=n$.
 In the $p$-torsion case we observe the following.
 We use  Lemma \ref{prop:mainsection6} (the lemma about the dimension of obtainable vector spaces). 
 The fact that the dimension of our extension $\le 1$ implies, in view of this Lemma,  that for all $\delta>0$
$\Span(\delta n, n) 
\le c^{\delta n}$ for some $c>0$ and all $n$. 

In the torsion-free case we observe the following.
It holds $
\Span_{S,T,\HAKquotient \acts \HNMsubgroup}(\delta n,n)  \le ((n+2)C^{{\delta n}^{C}}$,
and again we can conclude that for every $\varepsilon>0$ there exists some $\delta$ such that $\Span_{S,T, \HAKquotient \acts \HNMsubgroup}(\delta \sqrt{n},n)<C_\varepsilon (1+\varepsilon)^n$.  Which implies that the bounds of Proposition $\ref{prop:cautious}$ are satisfied proving the second  part of the claim. 

First we deal with the dimension 

And then we consider the general case. We have
$$
1\to \HNMsubgroup \to G \to \HAKquotient \to 1
$$
and $\HNMsubgroup$ can have  torsion elements of various order  and elements of infinite order.
Let ${\rm Ker}$ be elements acting trivially on the Poisson boundary of our random walk on $G$.
Consider the quotient $G/{\rm Ker}$. It is sufficient to show that this group has trivial boundary. If the abelian normal subgroup of this group does not have torsion, then we have proven the claim already. For the sake of contradiction, suppose that the boundary is non-trivial. 
Using Lemma 5.15 we observe that there is at least one prime $p$ and an element $q$ of order $p$ such that the extension of the subgroup of $\HNMsubgroup$ generated by $q$ by $\HAKquotient$ has a non-trivial boundary. 
Note that the Krull dimension of this extension is at most two, since the Krull dimension is monotone for taking quotients, taking subgroups and does not change for replacing extensions by those with the same action on the Abelian subgroup \ref{claim:changingextensionKrull}. Thus we get a contradiction with the claim that we have already  proven for $p$-torsion by abelian extensions.

\end{proof}

\begin{claim} \label{claim:changingextensionKrull}
Consider finitely generated groups $G$ and $G_2$ and abelian groups $\HNMsubgroup$ and $\HAKquotient$.
Assume that we have short exact sequences
$$
1 \to \HNMsubgroup \to G \to \HAKquotient \to 1
$$
and 
$$
1 \to \HNMsubgroup \to G_2 \to \HAKquotient \to 1
$$
and the actions of $\HAKquotient$ on $\HNMsubgroup$
are the same. Then the Krull dimension of $G$ is equal to the Krull dimension of $G_2$.
\end{claim}
\begin{proof}
Follows from the definition of the Krull dimension, since the corresponding modules are isomorphic.
\end{proof}

As we have already explained in Section \ref{section:nilpotentbyabelian}, the general metabelian case can be reduced to ($p$-torsion)-by-abelian case and (torsion-free)-by-abelian case.  Claim 1) of Corollary \ref{cor:rank2rank1} gives a sufficient condition for boundary triviality for ($p$-torsion)-by-abelian group. For second
moment centered random walks, we will see in Section \ref{sec:nontrivial}
that this condition is not only sufficient but necessary. Namely, in  the (torsion)-by-abelian case we will show that dimension $\ge 3$ implies non-triviality of the boundary for any finite entropy measure.



\begin{rem}
It is clear that in claim 1) of the corollary the condition of finite second moment is essential.
Indeed, we recall again the well-known example of $ \mathbb{Z}/p\mathbb{Z} \wr \mathbb{Z}^2$. We know that finite first moment random walks can have transient projection to $\mathbb{Z}^2$, and that some of them (for example, those whose support belongs to the union of the base group and the lamp) have non-trivial boundary.
\end{rem}

 In Claim 2) of the corollary, the assumption of a finite second moment is also important, as we explain in the following example.

\begin{rem} The Lamplighter-Baumslag-Solitar groups on $1$ variable are torsion-free metabelian groups of the Krull dimension $2$, but (in contrast with $\mathbb{Z}\wr \mathbb{Z}$) a finite first moment random walk can have a non-trivial boundary, as we will see in Example \ref{exa:m1m2mxmy}. In these examples, finite second moment measures have trivial boundary.
\end{rem}


\begin{rem}[$2$ -dimensional (torsion-free) case with non-trivial boundary].

Lamplighter-Baumslag-Solitar group on $2$ variables will be shown to have a non-trivial boundary for any non-degenerate  finite entropy measure, as we will see in Example \ref{exa:m1m2mxmy}. And a similar, more general family of examples will be discussed in Proposition \ref{prop:Galpha}. 
\end{rem}

The following corollary explains the implication of corollary \ref{cor:rank2rank1} to more general linear groups.
\begin{cor} \label{cor:variablesfield}
Let $F$ be either a function field over  at most $2$ variables of positive characteristic or $F$ is a function field over at most $1$ variable in characteristic $0$.
Then any virtually solvable group, linear over $F$ is Liouville: any finitely supported
(and more generally any finite second moment) centered measure on this group has a trivial
boundary.
\end{cor}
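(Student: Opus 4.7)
The plan is to combine the structure of virtually solvable linear groups with the block reduction from Theorem \ref{thm:lineargroups} and the cautiousness criterion from Corollary \ref{cor:rank2rank1}. By Tits' alternative, a virtually solvable linear group is amenable, and by the Malcev--Kolchin theorem any finitely generated virtually solvable subgroup of $GL_n(F)$ has a finite-index subgroup $G'$ consisting of upper-triangular matrices. The Poisson boundary of $(G,\mu)$ coincides with that of $(G',\mu_{G'})$, where $\mu_{G'}$ is the hitting measure on $G'$; and passing to an exit measure on a finite-index subgroup preserves finite entropy, finite second moment, and (up to a standard reinterpretation on the abelianization of $G'$) centeredness. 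Hence one may replace $G$ by $G'$ and $\mu$ by $\mu_{G'}$ from the outset.

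Next I would apply Theorem \ref{thm:lineargroups}: the boundary of $(G',\mu_{G'})$ is non-trivial if and only if, for some (equivalently every) associated measure $\mu_{i,j}$ on some basic block $B_{i,j}$, the boundary of $(B_{i,j},\mu_{i,j})$ is non-trivial. Since the projection of $\mu_{G'}$ to the diagonal subgroup $D_{i,j}$ is an abelian projection of a centered finite second moment measure, it is itself centered with finite second moment. I therefore choose the associated measure $\mu_{i,j}$ on $B_{i,j}$ to be centered, finite entropy, and of finite second moment; concretely one can lift the diagonal projection with trivial unipotent coordinate and add a small symmetric mass on $\delta^{\pm 1}$ to ensure non-degeneracy.

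The crux is then a dimension computation for each block. Each $B_{i,j}$ lies in $GL_2(F)$, and its commutator subgroup consists of unipotent matrices, which are $p$-torsion in characteristic $p$ and torsion-free in characteristic $0$; so $B_{i,j}$ is $p$-torsion-by-abelian or torsion-free metabelian accordingly. By Lemma \ref{exa:twobytworank}, the dimension of $B_{i,j}$ equals the transcendence degree of the minimal subfield of $F$ generated by the ratios $\phi(g)=b/a$ of diagonal entries of elements of $B_{i,j}$, and this is bounded above by the transcendence degree of $F$. Hence in the characteristic $p$ case with $\mathrm{tr.deg}(F)\le 2$, each $B_{i,j}$ is $p$-torsion-by-abelian of dimension at most $2$, so Claim (1) of Corollary \ref{cor:rank2rank1} gives triviality of the boundary for $(B_{i,j},\mu_{i,j})$. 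In the characteristic $0$ case with $\mathrm{tr.deg}(F)\le 1$, each $B_{i,j}$ is torsion-free metabelian of dimension at most $1$, and Claim (2) of Corollary \ref{cor:rank2rank1} again gives triviality. Combining with Theorem \ref{thm:lineargroups}, the boundary of $(G',\mu_{G'})$, and therefore of $(G,\mu)$, is trivial.

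The main obstacle is bookkeeping around the moment and centering hypotheses: one must verify that passing to the exit measure on a finite-index subgroup, and then to an associated measure on each $2\times 2$ block, preserves the precise form of "centered finite second moment" demanded by Corollary \ref{cor:rank2rank1}. The substantive mathematical content is carried entirely by the block reduction (Theorem \ref{thm:lineargroups}), the transcendence-degree bound on block dimension (Lemma \ref{exa:twobytworank}), and the cautiousness criterion (Corollary \ref{cor:rank2rank1}); once these are in hand, the deduction is essentially immediate.
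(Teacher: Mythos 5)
Your proposal is correct and follows essentially the same route as the paper: reduce to the $2\times 2$ basic blocks via Theorem \ref{thm:lineargroups}, bound the block dimension by the transcendence degree of $F$ (the paper cites Lemma \ref{lem:fieldsrank}, which underlies your use of Lemma \ref{exa:twobytworank}), and conclude by Corollary \ref{cor:rank2rank1}. The extra bookkeeping you add (passing to the upper-triangular finite-index subgroup and checking the centered/second-moment conditions on associated measures) is left implicit in the paper but does not change the argument.
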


\begin{proof}
By Theorem \ref{thm:lineargroups}, it suffices to prove for $2
\times 2$ basic blocks that the corresponding random walks are Liouville. Observe that these blocks have dimension at most $2$
if the characteristic is positive, and at most $1$ if the characteristic $0$. Indeed, this follows from 
Lemma \ref{exa:twobytworank}.
By Corollary \ref{cor:rank2rank1} the corresponding random walks have a trivial Poisson boundary, proving the corollary.
\end{proof}

\begin{exa}\label{ex:Baumslag}[Baumslag groups
$B_{d,p}$ revisited] 
\newline
We recall that 
$B_{d,p}\in GL_2((\Z/p\Z)(X_1,\dots, X_D)$
is the group generated by the matrix $\theta $ and  $2d$ matrices of the form 
$$
%
%
\left( \begin{array}{ccc}
1 & 0  \\
0 & X_i\\
 \end{array} \right), 
\medspace \medspace
 \left( \begin{array}{ccc}
1 & 0  \\
0 & X_i+1\\
 \end{array} \right).
$$

Here $1 \le i \le d$.
If we consider only the generators of the first two forms, then what we have is the wreath product  $ {\mathbb{Z}/p\Z} \wr 
\mathbb{Z}^d $
If $d=1$ the Baumslag group is a quotient of the $2$ dimensional lamplighter. For $d=3$ the non-triviality of the boundary is proven in \cite{erschlerliouv}. For $d=2$ this group is a particular case of  $2$ -dimensional extension of a torsion group 
and so the boundary is trivial for  any centered random walk with a finite second moment (by Corollary \ref{cor:rank2rank1}).
\end{exa}

\begin{exa} \label{exa:g23x}
Consider the subgroup $G_{2,3,x}$ of $Gl_2(\Q(X))$ generated by 
$$
\left( \begin{array}{ccc}

1 & 1  \\

0 & 1
 \end{array} \right), 
\medspace \medspace
\left( \begin{array}{ccc}

1 & 0  \\

0 & 2\\
 \end{array} \right), 
\medspace \medspace
\left( \begin{array}{ccc}

1 & 0  \\

0 & 3\\
 \end{array} \right),
\medspace \medspace
\left( \begin{array}{ccc}
1 & 0  \\

0 & X\\
 \end{array} \right).
$$

The dimension is $1$, so any finite second moment centered random walk on this group has a trivial boundary.
\end{exa}

To see why the dimension is $1$ note that the 1-dimensional lamplighter is a subgroup of our group, and the corresponding module over $\mathbb{Q}$ is the same for our group and for the lamplighter.







\section{Non-triviality of the boundary criteron}
\label{sec:nontrivial}

In this section we consider 
a  group $\HAKquotient$ acting on an abelian group $\HNMsubgroup$.
A particular 
case of the statement we will prove is that  if
$\HNMsubgroup$ is a torsion group,  
of dimension $\ge 3$ or in the general case when the dimension of $H$ is $\ge 4$,  
then the Poisson boundary is non-trivial for any finite entropy non-degenerate 
measure $\mu$ on $G$.
Observe that for torsion abelian ($\HNMsubgroup$) by a virtually abelian group ($\HAKquotient$) and simple
random walks (and for all non-degenerate finite second moment random walks) we therefore have  a complete classification (if the dimension is $\le 2$ then the boundary is trivial and if the dimension is $\ge 3$ the boundary is non-trivial).

\subsection{A criterion for the non-triviality of the boundary}

When we have an abelian group $\HNMsubgroup$, we say that a set of elements of $\HNMsubgroup$ is linearly
 independent over $k=\mathbb{Z}/p\mathbb{Z}$  or over $k=\mathbb{Q}$
if their images in $\HNMsubgroup \otimes k$ are linearly independent.

\begin{thm}\label{prop:rank3}
Suppose we have an extension $ 1\to \HNMsubgroup \to G\to \HAKquotient\to 1$.
$\HNMsubgroup$ is abelian. 
Let $\A'=\mathbb{Z}^3$ be
a central subgroup of $\HAKquotient$.
Assume that for some $b\in \HNMsubgroup$
the following holds : the elements of $\A' b \in \HNMsubgroup$
are 
linearly independent, either over $k=\mathbb{Z}/p \mathbb{Z}$
or over $k=\mathbb{Q}$.
Then for any non-degenerate finite entropy measure $\mu$ on  $G$, the Poisson boundary is non-trivial, and
moreover $b$ acts non-trivially on the boundary.
\end{thm}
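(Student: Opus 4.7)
The plan is to combine the Comparison Criterion (Theorem \ref{thm:compcrit}) with the $\Delta$-restriction entropy criterion for non-trivial action (Lemma \ref{lem:easydirection}), and to close the argument by an auxiliary combinatorial input (Lemma \ref{lem:mainlemma}) on uniformly strongly transient walks on $\mathbb{Z}^3$. First, by Theorem \ref{thm:compcrit} applied to the original extension and to the split extension $1\to B\to G^*:=K\ltimes B\to K\to 1$ (both carrying the same action of $K$ on $B$), it suffices to construct a finite-entropy probability measure $\mu^*$ on $G^*$ with $\pi_K(\mu^*)=\pi_K(\mu)$ for which $b$ acts non-trivially on the Poisson boundary of $(G^*,\mu^*)$. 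I construct $\mu^*$ by fixing any $k_0\in\supp(\mu_K)$ (where $\mu_K:=\pi_K(\mu)$), splitting the mass $\mu_K(k_0)$ between $(k_0,0)$ and $(k_0,b)$ with weights $\alpha$ and $1-\alpha$, and setting $\mu^*(k,0)=\mu_K(k)$ for $k\neq k_0$. The set $\Delta:=\{(k_0,0),(k_0,b)\}\subset\supp(\mu^*)$ consists of two elements sharing the same $K$-projection; moreover $(k_0,0)^{-1}(k_0,b)=(e,k_0^{-1}\cdot b)$ lies in $\Norm_{G^*}(b)\subset B$ (since the normal closure of $b$ in $G^*$ is the subgroup of $B$ generated by the $K$-orbit of $b$), so both elements project to a common class in $G^*/\Norm(b)$. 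The hypotheses of Lemma \ref{lem:easydirection} are therefore met, and the task reduces to proving $H_\Delta(n)=\Omega(n)$.

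The conditioning in the definition of $H_\Delta(n)$ reveals every non-$\Delta$ increment, which together with the shared $K$-projection of the $\Delta$-elements determines the full $K$-trajectory $g_0,g_1,\dots,g_n$ and the set $I\subset[n]$ of ``undetermined'' time steps. The remaining randomness is a family of i.i.d.\ Bernoulli variables $(\epsilon_i)_{i\in I}$, and in the normal form for $G^*=K\ltimes B$ the $B$-component of $X_n$ is
\[
c+\sum_{i\in I}\epsilon_i\,(g_{i-1}\cdot b),
\]
for a conditionally determined constant $c\in B$. To exploit the central subgroup $A=\mathbb{Z}^3$, fix a set-theoretic section $s:K/A\to K$, write $g_{i-1}=a_{i-1}\cdot s([g_{i-1}])$ with $a_{i-1}\in A$, choose a coset $[k^*]\in K/A$, set $I^*:=\{i\in I:[g_{i-1}]=[k^*]\}$, and condition additionally on $\{\epsilon_i:i\notin I^*\}$. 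By centrality of $A$ and the fact that $s([k^*])$ acts on $B$ by an automorphism, the residual sum has entropy equal to that of $\sum_{i\in I^*}\epsilon_i\,(a_{i-1}\cdot b)$; and because $\{a\cdot b:a\in A\}$ is $k$-linearly independent (with $k=\mathbb{F}_p$ or $\mathbb{Q}$), its image in $B\otimes k$ is exactly the ``lamp configuration'' $f:A\to k$ defined by $f(a)=\sum_{i\in I^*,\,a_{i-1}=a}\epsilon_i$. Independence of the $\epsilon_i$'s across distinct $A$-buckets then yields
\[
H_\Delta(n)\;\ge\;H(\epsilon)\cdot\mathbb{E}[N(n)],
\]
where $N(n)$ counts the points $a\in A$ visited exactly once by the subsequence $(a_{i-1})_{i\in I^*}$.

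The principal remaining step, and the main obstacle, is to establish $\mathbb{E}[N(n)]=\Omega(n)$; this is the content of Lemma \ref{lem:mainlemma}. The estimate is delicate: non-degeneracy of $\mu$ on $G$ forces non-degeneracy of $\mu_K$ on $K$, which in turn produces a genuine $\mathbb{Z}^3$-valued walk via the $A$-component of the $K$-walk at its visits to the coset $[k^*]$, but as flagged in Remark \ref{rem:notsimplytrans} ordinary transience of this effective walk is not enough to guarantee linearly many singly-visited sites. The property that must be verified is uniform strong transience in the sense of Definition \ref{def:uniformlysr}, which is precisely what Lemma \ref{lem:mainlemma} supplies. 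Once $H_\Delta(n)=\Omega(n)$ is in hand, Lemma \ref{lem:easydirection} gives that $b$ acts non-trivially on the Poisson boundary of $(G^*,\mu^*)$, and Theorem \ref{thm:compcrit} transports this conclusion back to $(G,\mu)$, completing the proof.
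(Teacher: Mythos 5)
Your overall skeleton (reduce via Theorem \ref{thm:compcrit} to a split extension with a lamplighter-like measure, then show the $\Delta$-restriction entropy of $\Delta=\{(k_0,0),(k_0,b)\}$ is linear and invoke Lemma \ref{lem:easydirection}) is the same as the paper's, and the reductions you perform are sound. But there is a genuine gap exactly where you place "the principal remaining step''. First, your bucketing by a single fixed coset $[k^*]\in K/A$ cannot give a linear bound in general: the times $I^*$ are visits of the induced walk on $K/A$ to a fixed point, and since an irreducible random walk on an infinite group is never positively recurrent, $|I^*|=o(n)$ (indeed $O(1)$ in the transient case) whenever $A=\mathbb{Z}^3$ has infinite index in $K$ --- which is the typical situation in the theorem (e.g. $K=\mathbb{Z}^d$, $d>3$, as occurs for the blocks of dimension $\ge 3$). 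So the inequality $H_\Delta(n)\ge H(\epsilon)\,\mathbb{E}[N(n)]$ can only produce a linear lower bound when $[K:A]<\infty$, and your construction does not reduce to that case. Second, you defer the count $\mathbb{E}[N(n)]=\Omega(n)$ to Lemma \ref{lem:mainlemma}, but that lemma is not a counting statement about singly-visited sites of your effective sequence, nor does it "supply'' uniform strong transience: it \emph{assumes} uniform strong transience of the normalized restriction $\mu_A$ (which is provided separately by claim (1) of Theorem \ref{thm: admisrecurrent} in the appendix), and its conclusion is essentially the theorem you are asked to prove. Citing it here makes the argument circular, since its proof is precisely the technical heart of this section.

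What your outline is missing, and what the paper does instead, is the mechanism that lets one use \emph{all} $\Delta$-times (a positive fraction of all times) rather than only those in one coset of $A$: the conjugators then range over all of $K$, so linear independence of the single orbit $\mathbb{Z}^3 b$ no longer suffices. The paper handles this by quotienting out the small elements (Lemma \ref{lem:small_elements}), constructing a $k[A]$-invariant total preorder $w$ with properties (1)--(3) (Lemma \ref{lem:order123}), using property (1) to get an admissible equivalence relation on $A\times\mathbb{N}$, and then applying the generalized-range Lemma \ref{lem:linearrange} under uniform strong transience (Definition \ref{def:uniformlysr}) to produce linearly many pairwise $w$-inequivalent, hence linearly independent, contributions; property (3) converts inequivalence into independence. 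Without this (or an equivalent substitute), your reduction stalls at the entropy count, so the proposal does not yet constitute a proof.
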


Since $\HAKquotient$ acts on $\HNMsubgroup$, its subgroup $\A'$ also acts on $\HNMsubgroup$.
In the formulation of the theorem above, $\A' b$ denotes the orbit of $b$ for the action of $\A'$ on $\HNMsubgroup$.


Before we start the proof of the theorem, we formulate some auxiliary statements.
The lemma below is formulated for any field $k$; we will apply it for $k$ being a field of $p$ elements or $k=
\mathbb{Q}$.

Let $\HNMsubgroup$ be a vector space over a field $k$. Assume that
$1 \to \HNMsubgroup \to G \to \HAKquotient \to 1$ is a short exact sequence of groups, and let $\A'$ be a subgroup of $\HAKquotient$.
We consider the action of $\A'$ by conjugation on $\HNMsubgroup$.
We say that an element $b$ in $\HNMsubgroup$ is {\it small with respect to $\A'$} (and $\HAKquotient$), if  
the elements of the orbit $\A' b$ are not linearly independent over $k$. 
Observe that $b$ being small is equivalent to the annihilator of $b$ in $k[\A']$ being non-trivial.

If the short exact sequence is clear from the context, we will say that $G$ admits
small elements with respect to $\A'$ if there exists an element of $\HNMsubgroup$, not equal to $e_{\HNMsubgroup}$, which is small with respect to $\A'$.

\begin{lem} [Reduction to $\HNMsubgroup$ without small elements]\label{lem:small_elements}

Let $\A'=\mathbb{Z}^d$.
$\HNMsubgroup$ is a vector space over a field $k$. Assume that
$1 \to \HNMsubgroup \to G \to \HAKquotient \to 1$ is a short exact sequence of groups, let  $\A'$ be a  normal subgroup in $\HAKquotient$.

\begin{enumerate}
\item
The small elements of $\HNMsubgroup$ with respect to $\A'$ form a normal subgroup in $G$, which we denote by $\HNMsubgroup_{\rm small}$. 
\item
The quotient of $G/{\HNMsubgroup_{\rm small}}$ does not admit small elements
\end{enumerate}
\end{lem}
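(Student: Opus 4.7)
The plan is to view $B$ as a $k[A]$-module through the conjugation action of $A$ on $B$ (well-defined because $B \triangleleft G$ and $B$ is abelian, so the $G$-action on $B$ by conjugation factors through $K$). Under this viewpoint, $B_{\rm small}$ is precisely the $k[A]$-torsion submodule, i.e.\ the set of $b \in B$ with non-zero annihilator in $k[A]$. The essential algebraic fact is that $k[A] = k[\mathbb{Z}^d]$ is a Laurent polynomial ring over $k$, hence an integral domain.

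For Part (1), closure of $B_{\rm small}$ under addition and inverses follows from the standard torsion argument over a domain: if $r_1 \cdot b_1 = 0$ and $r_2 \cdot b_2 = 0$ for non-zero $r_1, r_2 \in k[A]$, then $(r_1 r_2) \cdot (b_1 + b_2) = 0$ with $r_1 r_2 \ne 0$, and $(-b)$ has the same annihilator as $b$. The main content is normality in $G$. Since $A \triangleleft K$, conjugation by any $g \in K$ restricts to an automorphism $\phi_g(a) = gag^{-1}$ of $A$, which extends to a ring automorphism of $k[A]$. By associativity of the $K$-action on $B$,
$$a \cdot (g \cdot b) \;=\; (ag) \cdot b \;=\; \bigl(g \, \phi_{g^{-1}}(a)\bigr) \cdot b \;=\; g \cdot \bigl(\phi_{g^{-1}}(a) \cdot b\bigr),$$
which extends $k$-linearly to $r \cdot (g \cdot b) = g \cdot (\phi_{g^{-1}}(r) \cdot b)$ for every $r \in k[A]$. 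Hence if $r \cdot b = 0$, then $\phi_g(r) \cdot (g \cdot b) = 0$; since $\phi_g$ is a ring automorphism, $\phi_g(r) \ne 0$ whenever $r \ne 0$, giving $g \cdot b \in B_{\rm small}$.

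For Part (2), I would argue by contradiction. Suppose some non-zero $\bar b \in B/B_{\rm small}$ admits a non-zero annihilator $r \in k[A]$. Lifting $\bar b$ to $b \in B$, the relation $r \cdot \bar b = 0$ means $r \cdot b \in B_{\rm small}$, so there exists a non-zero $r' \in k[A]$ with $(r' r) \cdot b = r' \cdot (r \cdot b) = 0$. Since $k[A]$ is an integral domain, $r' r \ne 0$, forcing $b \in B_{\rm small}$ and $\bar b = 0$, a contradiction.

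The only step that is not purely formal is verifying the semi-linear identity in Part (1), which is a one-line manipulation using $A \triangleleft K$; everything else reduces to standard module theory over the integral domain $k[A]$, and I do not foresee any substantive obstacle.
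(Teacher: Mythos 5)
Your proof is correct and follows essentially the same route as the paper: both treat $B$ as a $k[A]$-module, use that $k[A]=k[\mathbb{Z}^d]$ is an integral domain to get closure under sums and the quotient statement, and obtain normality by conjugating annihilators via the automorphism of $k[A]$ induced by $A\triangleleft K$. Your explicit semi-linear identity is just a more detailed writing of the paper's one-line observation that $frf^{-1}$ annihilates $fbf^{-1}$.
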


\begin{proof}

1) Observe that $\HNMsubgroup$ is a $k[\A']$-module.
Below we use additive notation for the group multiplication in our abelian group $\HNMsubgroup$. 
We first prove that the small elements form a group. Consider $b, c \in \HNMsubgroup$ with $b$ and $c$ both small with respect to $\A'$. 

Observe that 
the annihilator in $k[\A']$ of $b\in \HNMsubgroup$ is equal to the annihilator of $-b$. Thus the set of small elements is closed under taking inverses. 

Now we prove that the sum of small elements is small.
Observe that the
annihilator of $b+c$ contains the intersection of the  annihilator of $b$ with the annihilator of $c$. By the smallness of $b$ and $c$ we know that annihilators of $b$ and $c$ are non-zero.
Since $k[\A']$ is an integral domain, we see that this intersection is also a non-zero ideal,
and thus $b+c$ is small.
Suppose $r$ is an element of the annihilator of $b$. Then $frf^{-1}$, for $f\in \HAKquotient$,
(which belongs to  $k[\A']$ since $\A'$ is normal in $\HAKquotient$)
is in the annihilator of $fbf^{-1}$. 
Hence a conjugate of a small element is small and thus the subgroup is normal.

2) We now prove that if an element $b\in \HNMsubgroup$ admits a non-zero element $r\in k[\A']$ such that $rb$ is small in $G$, then $b$ itself must be small in $G$. (And this will imply the second claim of the lemma).
If $rb$ is small, then it is annihilated by some non-trivial element $s\in k[\A']$. Thus $srb=0$ and so $sr$ is an annihilator of $b$. Since $s,r$ are non-trivial and $k[\A']$ is an integral domain, $sr$ is non-trivial and so $b$ is small.
\end{proof}

Given a total preorder $w$, we say that elements $b_1$ and $b_2$ are equivalent if
\newline
 $b_1 \le_{w} b_2$ and $b_2 \le_w b_1$.

\begin{lem}[If no small elements, then there is an order with three properties] \label{lem:order123} Let $k$ be a field. Consider a short exact sequence of groups
$$
1 \to \HNMsubgroup \to G \to \HAKquotient \to 1.
$$
$\A'=\mathbb{Z}^d$ is a normal subgroup of $\HAKquotient$.
 We assume that $\HNMsubgroup$ does not have non-zero small elements with respect to $\A'$.
There exists a total preorder $w$ on $\HNMsubgroup$ satisfying
\begin{enumerate}
\item
Any $\A'$ orbit of $\HNMsubgroup$ does not contain equivalent elements with respect to this preorder.
\item
The total preorder $w$  is $k [\A']$ invariant.
\item 
If for some subset of $\HNMsubgroup$ its elements are pairwise not equivalent with respect to $w$, then they are linearly
independent over $k$.
\end{enumerate}
\end{lem}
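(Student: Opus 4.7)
Since $B$ has no non-zero small elements with respect to $A$, every non-zero $b \in B$ has trivial annihilator in $k[A]$; equivalently, $B$ is a torsion-free module over the integral domain $k[A] = k[\mathbb{Z}^d]$, so the natural map $B \to V := B \otimes_{k[A]} k(A)$ is injective, where $k(A)$ denotes the fraction field of $k[A]$. My plan is to build a preorder on $V$ and restrict it to $B$.

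First I would fix a total order $<_A$ on $A = \mathbb{Z}^d$ compatible with addition (e.g.\ lexicographic). This yields a leading-monomial function $\mathrm{lead}\colon k[A]\setminus\{0\} \to A$, and hence a valuation $v\colon k(A)^\times \to A$ defined by $v(p/q) = \mathrm{lead}(p) - \mathrm{lead}(q)$. The properties I need are that $v$ is a group homomorphism, that $v(f_1+f_2) \le_A \max(v(f_1), v(f_2))$ with equality whenever $v(f_1) \ne v(f_2)$, and the consequence that finitely many non-zero elements of $k(A)$ with pairwise distinct $v$-values are automatically $k$-linearly independent (since clearing denominators reduces this to the observation that a non-zero sum of polynomials with pairwise distinct leading monomials has a leading term that cannot cancel).

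Next, pick a $k(A)$-basis $\{e_j\}_{j \in J}$ of $V$ and well-order the index set $J$. For non-zero $b \in B$, write the (finitely supported) expansion $b = \sum_j f_j(b) e_j$ with $f_j(b) \in k(A)$, set $\pi(b) = \min\{j : f_j(b) \ne 0\}$, and let $\ell(b) = v(f_{\pi(b)}(b)) \in A$. I declare $b_1 <_w b_2$ iff either $\pi(b_1) > \pi(b_2)$, or $\pi(b_1) = \pi(b_2)$ and $\ell(b_1) <_A \ell(b_2)$; extend to $B$ by placing $0$ strictly below every non-zero element. Two non-zero elements are then equivalent precisely when their $(\pi, \ell)$-data coincide.

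Verification of the three required properties is then routine. For (1), if $a \in A$ is non-trivial, then $\pi(ab) = \pi(b)$ and $\ell(ab) = a + \ell(b) \ne \ell(b)$, so distinct elements of an $A$-orbit have distinct $\ell$. For (2), multiplication by any non-zero $p \in k[A]$ sends $(\pi(b), \ell(b))$ to $(\pi(b), v(p) + \ell(b))$, a uniform shift that preserves $<_w$, while scalar multiplication by $c \in k^\times$ leaves $(\pi, \ell)$ unchanged. For (3), if $b_1, \ldots, b_n$ are pairwise non-equivalent and $\sum c_i b_i = 0$ with $c_i \in k$, let $j_0 = \min_i \pi(b_i)$ and $I_0 = \{i : \pi(b_i) = j_0\}$; reading off the $e_{j_0}$-coefficient gives $\sum_{i \in I_0} c_i f_{j_0}(b_i) = 0$ in $k(A)$, where the $f_{j_0}(b_i)$ have pairwise distinct $v$-values, hence are $k$-linearly independent by the key consequence above, forcing $c_i = 0$ for $i \in I_0$; induction on $n$ concludes. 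The only genuine content is the leading-monomial cancellation fact for $v$ used in property (3); everything else is bookkeeping around the valuation and the chosen basis of $V$.
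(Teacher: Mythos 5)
Your proof is correct and follows essentially the same strategy as the paper's: compare two elements first by the leading basis vector appearing in their expansion and then by the leading monomial (your valuation $v$) of the corresponding coefficient, relying on a translation-invariant total order on $\mathbb{Z}^d$ and on the fact that distinct leading monomials cannot cancel in a $k$-linear relation. The only difference is one of packaging: by passing to $V=B\otimes_{k[A]}k(A)$ at the outset you obtain for free what the paper does by hand, namely extending the preorder from the submodule $B_S$ generated by a maximal $k[A]$-independent set to all of $B$ by clearing denominators.
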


\begin{proof}
Fix a left invariant order $w$ on $\A'$. This induces a left invariant preorder on $k[\A']$ where we say $r\ge r'$ if the non-zero  monomial of the largest degree
with respect to $\A'$ of $r$ has larger degree than the corresponding monomial for $r'$.
We slightly abuse the notation  and also call this order $w$. We also will use the same notation $w$ for the orders on $\HNMsubgroup$ and $\HNMsubgroup_S$ which will be defined below.

Note again that $\HNMsubgroup$ is a $k[\A']$ module. 
Let $S$ be a maximal $k[\A']$ linearly independent subset of $\HNMsubgroup$. Choose an arbitrary total order
on $S$. Let $\HNMsubgroup_S$ be the $k [\A']$ module generated by $S$.  Consider two elements $x=\sum_{i} r^x_i s^x_i $,
$y = \sum_{i} r^y_i s^y_i$, where $r_i^x$, $r_i^y$ are non-zero elements in $k [\A']$, and where $s_i^x, s_i^y \in S$. 
In $\HNMsubgroup_S$ we say $x\leq_w y$ if 
either the largest element  of $s_i^x$ with non-zero coefficient is strictly larger than any element of
$s_i^y$ with non-zero coefficient; or the largest elements of
$s_i^x$ and $s_i^y$ are equal, and the following holds.
 Let $s_j$ be this largest element of $S$.  We require that $r_j^x \ge r_j^y$ with respect to the order on $k[\A']$. 
We have defined $w$ on $\HNMsubgroup_S$, now we want to extend $w$ to $\HNMsubgroup$.
To do this, we first check that $w$ defined on $\HNMsubgroup_S$ has 1), 2), 3) of 
the lemma (for this we only need, as in the definition of $w$ on $\HNMsubgroup_S$, that $A$ is orderable).

To prove 1) for $w$ on $\HNMsubgroup_S$ observe, that if 
$x=\sum_i r^x_i s^x_i$, then
$hx= h \sum_i r^x_i s^x_i= \sum_i hr^x_i s^x_i $, hence these two elements $x$
and $hx$ have the same maximal element $s_j$ with at least one non-zero coefficient  in this decomposition, and the maximal coefficients of this $s_j$ are not equal.

To see 2) observe the following. Let
$x=\sum_{i} r^x_i s^x_i$ and $y=\sum_{i} r^y_i s^y_i$. Assume that $x \ge_w y $. Then either the largest element with non-zero coefficient of $x$ is strictly larger than that of $y$, in which case, as we have just mentioned, these largest elements do not change by the action of $h$, and so $hx \ge _w hy$.
Or, $x$ and $y$ have the same largest elements $s_j^x= s_j^y$, and we have $r^x_j
\ge_w r_j^y$, then $h r^x_j
\ge_w h r_j^y$ (since $w$
is a left invariant order on $k[\A'])$. Conversely, if $hx > hy$ then either $hx$ must have a strictly larger element with non-zero coefficient than $hy$, in which case the same is true of $x$ and $y$, or $hx$ and $hy$ have the same largest element, in which case $h r^x_j\ge_w h r_j^y$, which implies $r^x_j\ge_w r_j^y$.  

Finally, 3) holds for $w$ on $\HNMsubgroup_S$:
let $T$ be a set of elements in $\HNMsubgroup_S$ which have a $k$-linear relation, with non-zero coefficients for each $t\in T$. It suffices to show that there must exist  $t_1 , t_2 \in T$ which are $w$-equivalent. 
Indeed, since there is a linear relation for elements of $T$,
note that at least two elements of $t_1, t_2 \in T$ have the same maximal  elements $s=s^{t_1}_i = s^{t_2}_i$. Also note that among those with this maximal element equal to $s$, there must be two ($t_1$ and $t_2$) with equivalent $r^{t}_i$ in $k[A]$, $t=t_1$ and $t_2$.

We now extend the preorder from $\HNMsubgroup_S$ to $\HNMsubgroup$.
Given two elements $b_1,b_2 \in \HNMsubgroup$ we say that $b_1\leq_w b_2$ if there exists some non-zero $r\in k[\A']$ such that $rb_1,rb_2\in \HNMsubgroup_S$ and $rb_1<_w rb_2$. 
First note that by the maximality of $S$, for every $b\in \HNMsubgroup$ there exists some $r$ such that $rb\in \HNMsubgroup_S$. 

We now argue that this induced relation is defined for every pair $b_1,b_2$. Let $r_1,r_2$ be such that $r_1b_1,r_2b_2\in \HNMsubgroup_S$ then $r_1r_2b_1,r_1r_2b_2$ are also in $H_S$ and, since $k[\A']$ is an integral domain, $r_1r_2$ is nonzero.  From the totalness of the preorder on $\HNMsubgroup_S$, it follows that 
$r_1r_2b_1,r_1r_2b_2$ are comparable. Hence, the relation is total on $\HNMsubgroup$.

We now explain that this relation is indeed a preorder.  Note that if $b_1<_w b_2<_w b_3$, then there exist $r_1,r_2$ so that $r_1b_1,r_1b_2\in \HNMsubgroup_S$ and $r_2b_2,r_2b_3$ are in $\HNMsubgroup_S$
In this case the elements 
$r_1r_2b_1$ ,$r_1r_2b_2$, $r_1r_2b_3$ are all in $\HNMsubgroup_S$ and by the multiplication invariance of the order on $\HNMsubgroup_S$ it holds $r_1r_2b_1<_w r_1r_2b_2<r_1r_2b_3$ and thus $b_1<_w b_3$. Therefore, the induced relation is a preorder.

We now note that the restriction of this new preorder to $\HNMsubgroup_S$ induces the original order $w$ on $\HNMsubgroup_S$. This follows from property 2) of the order on $\HNMsubgroup_S$. First note that if $b_1<b_2$ in the original pre-order it is obviously still so in the new preorder. 
Also observe that if for $b_1,b_2\in \HNMsubgroup_S$ there exist $r$ such that $rb_1<_{w}rb_2$, then by property 2 above we have $b_1<_{w}b_2$ and thus, again by property 2, for all $r'$ we have $r'b_1<_{w} r'b_2$.


Now let us show that since the order on $\HNMsubgroup_S$ has 1), 2), 3), then the order on $\HNMsubgroup$ also has these properties.
First we observe that 2) holds:
2) If $b_1, b_2 \in \HNMsubgroup$ and $b_1 \le_w b_2$, then  $rb_1 \le_w rb_2$. Indeed,  by definition $b_1 \le_w b_2$  means that there exists $r_1$ such that $r_1b_1,r_1b_2\in \HNMsubgroup_S$ and $r_1b_1<_w r_1b_2$. Then by the property 2) on the order on $\HNMsubgroup_S$ we have that $r_1rb_1\le_w r_1rb_2$. If $b_1\nleq b_2$ then, by definition, no non-zero multiple of them is. 

Now we check properties 1) and 3).
1): if we have two equivalent elements of an $\A'=\mathbb{Z}^d$ orbit, 
then multiplying them from the left we get (by property 2) two equivalent elements in $\HNMsubgroup_S$ which are also in an $\A'$ orbit. 

3) 
If we have a subset of elements of $\HNMsubgroup$ which are pairwise not equivalent with respect to $w$, then they are linearly
independent over $k$. Otherwise, we multiply them on the left and get a linear dependence in $\HNMsubgroup_S$. 
Observe that by the construction of the extended order, if $rb_1,rb_2$ are not equivalent,  for a non-zero element $r$  in $k[\A']$,  then $b_1,b_2$ are not equivalent. Hence, such linear dependence in $\HNMsubgroup_S$ would be a contradiction.


\end{proof}

\begin{defn}\label{def:recurrentsequences} [Recurrent/transient sequences]

Given a group $\Gamma$, a measure $\mu$ on $\Gamma$. Fix a  sequence $\gamma_1, \gamma_2 \dots \gamma_n \dots$ and  consider the following event. Let $X_1$, $X_2$, \dots , $X_n$, \dots be a  trajectory of the random walk $(\Gamma, \mu)$. We consider the probability that 
there  exist infinitely many  $i$ such that $\gamma_i=X_i$. If this happens with probability $1$,  we say that the sequence $(\gamma_i, i)$ is a recurrent sequence for $(\Gamma, \mu)$. If all sequences are transient for the random walk $(\Gamma,\mu)$, we say the random walk $(\Gamma, \mu)$ is strongly transient. 
\end{defn}

\begin{defn}\label{def:uniformlysr}[Uniform strong transience] 
We say that a random walk 
on $\Gamma$ is {\it uniformly strongly transient}, 
if 
for any  sequence $\gamma_i \in \Gamma$, $i \in \mathbb{N}$,
the expected number of {\it prescribed} hits ($X_i=\gamma_i$) is finite.

\end{defn}

\begin{rem}
If a random walk is uniformly strongly transient, then there exists a positive constant $C$ (not depending on the sequence $\gamma_i$)
such that this expectation is bounded by $C$.
\end{rem}

Indeed, this expectation is the sum of expectations $E_i$ to have a prescribed hit at time $i$. For each
$i$ consider the supremum of $\mu^{* i}(\gamma)$. 
(This is the expectation to have a prescribed hit at time $i$ if $\gamma_i= \gamma$). Observe that this supremum is indeed a maximum, and choose $\gamma_i$ to be an element that realizes this maximum.
If the sum is not summable, then for this sequence $\gamma_i$ the expectation would not be finite.

\begin{lem}[Main lemma] \label{lem:mainlemma}
Let $G$ be a semi-direct product, corresponding to a short exact sequence
$1\to \HNMsubgroup \to G\to \HAKquotient\to 1$. Let $\A'$ be a torsion-free abelian group which is central in $\HAKquotient$. Assume that $\HNMsubgroup$ is a vector space over a field $k$.
Let $b\in \HNMsubgroup$ be a non $\A'$-small element.
Suppose $\mu$ is a finite entropy
measure on $G$.
Consider the normalized restriction (of its projection to $\HAKquotient$)
to $A$ :
$
\mu_{\A'}= \frac{\mu_{\HAKquotient}|\A'}{\mu_K(\A')}$.
We  assume that 
the random walks $(A, \mu_{\A'})$ are uniformly strongly transient. Then  $b$ acts non-trivially on the Poisson boundary. 
\end{lem}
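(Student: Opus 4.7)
My plan is to apply Lemma \ref{lem:easydirection} by constructing a finite $\Delta \subset \supp(\mu)$ whose elements all lie in a single $G/\Norm(b)$-coset and whose $\Delta$-restriction entropy is linear in $n$. The natural candidate for $\Delta$ consists of two elements differing by $b$, which need not lie in $\supp(\mu)$; I would first use the Comparison Criterion (Theorem \ref{thm:compcrit}) together with Corollary \ref{cor:changingchargesandgroups} to reduce to a convenient auxiliary measure $\mu'$ on $G$ with $\mu'_K = \mu_K$ and $\supp(\mu') \subseteq s(K) \cup s(K) b$ for a fixed section $s:K\to G$ of the split extension; concretely, $\mu'(s(k)) = (1-p)\mu_K(k)$ and $\mu'(s(k)b) = p\,\mu_K(k)$. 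Since $\mu'_A = \mu_A$, uniform strong transience is preserved, and the comparison criterion (applied with $\mu$ as the non-degenerate target) lets us conclude non-trivial action on $(G,\mu)$ from non-trivial action on $(G,\mu')$. If $\mu$ is not a priori non-degenerate, one works inside the subgroup $\langle\supp(\mu)\rangle$ instead.

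For this $\mu'$, fix $a_0 \in A$ with $\mu_A(a_0) > 0$ and set $\Delta := \{s(a_0),\, s(a_0)b\}$; both elements belong to $\supp(\mu')$ and differ by $b \in \Norm(b)$. Applying Claims \ref{claim:firstobvious}--\ref{claim:secondobvious} to commute all $B$-factors past the $K$-factors, the $n$-step position can be written in additive $B$-notation as
\[
X_n \;=\; U_n \cdot \Bigl(R_n + \sum_{i\in I_\Delta} \epsilon_i\,(v_i\cdot b)\Bigr),
\]
where $U_n = \pi(X_n)$, $R_n \in B$ collects the non-$\Delta$ contribution, $I_\Delta$ is the random set of $\Delta$-increment times, the $\epsilon_i \in \{0,1\}$ are iid Bernoulli$(p)$, $v_i\in K$ is the product of $K$-projections of increments strictly after time $i$, and $v\cdot b := v^{-1}bv$ denotes the conjugation action. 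Conditioning on everything except the $\epsilon_i$'s, $H_\Delta(X_n)$ equals the expected Bernoulli entropy of $\sum_{i\in I_\Delta}\epsilon_i(v_i \cdot b)$, which is at least $h(p)\dim_k \Span_k\{v_i\cdot b : i\in I_\Delta\}$.

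To lower-bound this dimension linearly in $n$, decompose $v_i = a_i \overline{v}_i$ with $a_i\in A$ and $\overline{v}_i$ a fixed lift of the $K/A$-coset of $v_i$. Centrality of $A$ gives $v_i\cdot b = a_i\cdot(\overline{v}_i\cdot b)$; invertibility of the $\overline{v}_i$-action on $B$ together with non-$A$-smallness imply that within each fiber $\{i : \overline{v}_i = \overline{v}\}$ the vectors $\{a_i\cdot(\overline{v}\cdot b)\}$ are $k$-linearly independent whenever the $a_i$'s differ; since the $k[A]$-submodule generated by $b$ is free, contributions from distinct $K/A$-fibers lie in $k$-independent subspaces. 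Thus the dimension of the span is at least the number of distinct pairs $(a_i,\overline{v}_i)$ for $i\in I_\Delta$. The main estimate is that this number is $\Theta(n)$: $|I_\Delta| = \Theta(n)$ by the law of large numbers, and the subsequence $(a_i)_{i\in I_\Delta}$ is (after absorbing the conditioned contribution of non-$\Delta$ $A$-increments) essentially a time-change of the walk $(A,\mu_A)$; uniform strong transience then bounds the expected multiplicity of each $a\in A$ along this subsequence by an $O(1)$ constant, and a Cauchy--Schwarz argument delivers $c\,|I_\Delta|$ distinct pairs for some $c > 0$.

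The principal obstacle is this last counting step. The subsequence of $A$-positions along $\Delta$-times is not literally an $(A,\mu_A)$-walk but a time-changed, stopped process into which the non-$\Delta$ $A$-contributions have been folded, so the uniform version of strong transience (rather than mere pointwise transience) is essential for the uniform $O(1)$ multiplicity bound, in line with Remark \ref{rem:notsimplytrans}. A secondary technicality, when the central extension $1\to A\to K\to K/A\to 1$ does not split, is to absorb the defining cocycle into the bookkeeping for the pairs $(a_i,\overline{v}_i)$ without disturbing the count.
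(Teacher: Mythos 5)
Your overall strategy (exhibit a two-element $\Delta$ in a single coset of $\Norm(b)$ and show the $\Delta$-restriction entropy is linear, then invoke Lemma \ref{lem:easydirection}) is the same as the paper's, and your reduction of $H_\Delta$ to the entropy of $\sum_{i\in I_\Delta}\epsilon_i (v_i\cdot b)$ and the within-fiber independence claim are fine: since $A$ is central, $r\cdot(\bar v\cdot b)=\bar v\cdot(r\cdot b)$ for $r\in k[A]$, so non-$A$-smallness of $b$ does make $\{a\cdot(\bar v\cdot b):a\in A\}$ independent for each fixed $\bar v$. The genuine gap is the cross-fiber step: the assertion that "contributions from distinct $K/A$-fibers lie in $k$-independent subspaces" because $k[A]b$ is free does not follow and is false in general. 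Non-smallness only controls the $A$-orbit of $b$; it says nothing about conjugates $\bar v\cdot b$ for $\bar v\notin A$. Indeed, centrality of $A$ forces every element of $K$ to act $k[A]$-linearly on $B$, so e.g.\ if $B=k[A]b$ and some $\bar v\notin A$ acts by multiplication by a unit (a monomial $c\,a_0$), then the pairs $(a,\bar v)$ and $(aa_0,e)$ give proportional vectors; more generally $\bar v\cdot b$ can be an arbitrary element of the module and produce linear relations mixing fibers (the trivial action of $K/A$ on $B$ already collapses all fibers onto one). Hence "$\dim\Span \ge$ number of distinct pairs $(a_i,\bar v_i)$" is not established, and this is exactly the difficulty the paper's argument is built to avoid: Lemma \ref{lem:order123} produces a total preorder $w$ on $B$ that is invariant, separates points of every $A$-orbit, and has the property that pairwise $w$-inequivalent elements are linearly independent; one then counts $w$-equivalence classes (not pairs) among the conjugates $Z_k h\cdot(gbg^{-1})$ via an admissible equivalence relation on $A\times\mathbb{N}$ and the generalized range Lemma \ref{lem:linearrange}, where admissibility only needs injectivity within each time slice (which non-smallness does give) and no cross-fiber independence is ever claimed.

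Two further points. The counting you defer as the "principal obstacle" (turning uniform strong transience into a linear number of new values along the $\Delta$-subsequence, with the non-$\Delta$ $A$-contributions and the cocycle folded in) is not a technicality: it is precisely the content of Lemma \ref{lem:linearrange}, proved by a time-reversal argument for the time-extended (sequence-level) walk, and it is also where uniform strong transience, as opposed to transience, enters (cf.\ Remark \ref{rem:notsimplytrans}); your proposal does not supply this argument. Finally, your preliminary reduction to the measure $\mu'$ supported on $s(K)\cup s(K)b$ via Theorem \ref{thm:compcrit} needs the target measure $\mu$ to be non-degenerate, which Lemma \ref{lem:mainlemma} does not assume (and "work inside $\langle\supp\mu\rangle$" does not obviously preserve the hypotheses, nor need it contain $b$); the paper instead works with $\mu$ itself, using only harmless modifications (convex combination with $\delta_e$, convolution powers, the $S^+S^-$ remark to locate $g,gb$ in $\supp\mu$, and the quotient by the normal subgroup of small elements from Lemma \ref{lem:small_elements}).
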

\begin{proof} 
First observe that we can assume that
$b \in S^{+}S^{-}$, where $S$ is the sub-semigroup generated by the support of $\mu$, and $S^-= \{s^{-1}, s\in S\}$.
Indeed, for any measure violating this assumption, on any group, the element $b$ acts non-trivially on the boundary (see e.g. \cite{erschlerkaimanovich2019}, Remark $5.4$; for more general statements see \cite{kaimanovich92}, Lemma $2.9$).
We can therefore assume that there exists $g \in G$, such that $g \in \supp \mu^{*k}$ and $gb \in \supp \mu^{*l}$ for some $k, l\ge 1$.

Observe also, that taking, if necessary, a convex combination with the atomic measure supported on $e$ (this change does not change the Poisson boundary), we can assume that $e \in \supp \mu$.
We conclude in this case that there exists $g\in G$ and $k\ge 1$ such that
$g, gb \in \supp \mu^{*k}$. Replace, if necessary, $\mu$ by $\mu^{*k}$ and assume that $g, gb \in \supp \mu$.

Now observe that we  can assume that $B\HNMsubgroup$  does not have $\A'$-small elements (if it is not the case, we know by Lemma \ref{lem:small_elements} that small elements form a normal subgroup, and that if we quotient $G$ over this subgroup the image of not $A$-small element $b$ remains not $\A'$-small. And this quotient does not admit $\A'$-small element.
Observe that then if $b$ acts non-trivially on the Poisson on the quotient group, then it acts non-trivially on the Poisson boundary
of $(G,\mu)$.

So now we assume that $\HNMsubgroup$ does not have  $\A'$-small elements.
Then on 
 $\HNMsubgroup$ there is an order with 1), 2), 3), which we denote $w$ (by Lemma \ref{lem:order123}).

Put $\Delta = \{g, gb\}$. We want to prove that $\Delta$-restriction entropy (see Definition \ref{def:deltarestrictionentropy})
is positive.

Consider a random walk $X_1$, \dots, $X_n$ on $G$
and its projection $A_1$, \dots, $A_n$ to $\HAKquotient$.
We fix all increments that are not in $A$ (this leaves a positive proportion of elements, since $g,gb \in A$ and these elements belong to the support of $\mu$).
We fix then also all increments $\ne g, gb$.
Consider corresponding times instants $t_1$, \dots, $t_k$ where multiplication by elements of $\Delta$ occurs.
Let $Z_i$ be the product of all increments not in $\A'$ until time $t_i$ and $W_i$ be the product of all increments in $H$ until time $t_i$.
Consider an equivalence relation on $\A' \times \mathbb{N}$, saying that $(h,k)$ is equivalent to 
$(f,l)$ if $Z_k  h g b g^{-1} h^{-1} Z_{k^{-1}}$ is $w$-equivalent to
$Z_l  f g b g^{-1} f^{-1} Z_{l^{-1}}$. Observe that property 1) of the order $w$ implies that the equivalence relation satisfies the condition $(x, k)$ is not equivalent to  $(y,k)$ whenever $x\ne y$, $x,y \in A$.
(this condition will be called "admissible" in the Appendix).
By the Law of Large Numbers, we know that the number of increments $T_n$ by elements of $\A'$
until time instant $n$ is approximately ${\rm Const} n$.


By Lemma  \ref{lem:linearrange} proven in the Appendix, we know that the range function with respect to the equivalence relation $E$ is linear in $T_n$ (since this relation is admissible), and hence in $n$. 

We conclude that with positive probability there are at least $p n$ linearly independent elements of $\HNMsubgroup$, corresponding to possible increments $gb$ (rather than $g$) in the places where multiplication by these two elements occurs.
This shows that $\Delta$-restriction entropy is linear. 
By Lemma \ref{lem:easydirection} we conclude that the action of $b$ on the boundary is non-trivial.
\end{proof}

\begin{cor} \label{cor: Z3central}
Take an extension as in Lemma \ref{lem:mainlemma}
and assume that the central subgroup of $A$ is $\mathbb{Z}^3$. Then the claim of this lemma holds for any non-degenerate finite entropy measure on $G$.

\end{cor}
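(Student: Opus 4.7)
My plan is to show that the hypothesis of Lemma~\ref{lem:mainlemma}---uniform strong transience of the induced random walk on $A$---follows automatically when $A = \mathbb{Z}^3$ and $\mu$ is a non-degenerate finite entropy measure on $G$; Lemma~\ref{lem:mainlemma} then applies directly.

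First I would reduce $\mu$ to a convenient form. Since $\mu$ is non-degenerate on $G$, its projection $\mu_K$ is non-degenerate on $K$. I would replace $\mu$ by a sufficiently high convolution power $\mu^{*N}$, which affects neither the Poisson boundary nor the action of $b$ on it. Non-degeneracy of $\mu_K$ lets me choose $N$ with $\mu_K^{*N}(A)>0$ and with atoms $g,gb\in \supp(\mu^{*N})$ projecting to the same element of $A$ (using that $b\in B$ projects trivially to $K$, so $g$ and $gb$ have identical $K$-image). After this step, $\mu_A := \mu_K^{*N}|_A / \mu_K^{*N}(A)$ is a well-defined non-degenerate finite entropy probability measure on $A\cong\mathbb{Z}^3$.

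Next I would check uniform strong transience of $(A,\mu_A)$. Since $\mu_A$ is non-degenerate on $\mathbb{Z}^3$, there is a finite set $S\subset\mathbb{Z}^3$ generating $\mathbb{Z}^3$ with $\alpha:=\mu_A(S)>0$; I decompose $\mu_A=\alpha\nu+(1-\alpha)\rho$ with $\nu$ the normalised restriction of $\mu_A$ to $S$. By the local central limit theorem applied to the bounded non-degenerate walk $(A,\nu)$ on $\mathbb{Z}^3$, one has $\sup_x \nu^{*k}(x)=O(k^{-3/2})$. The binomial expansion
\[
\mu_A^{*n} \;=\; \sum_{k=0}^n \binom{n}{k}\,\alpha^k(1-\alpha)^{n-k}\,\nu^{*k}*\rho^{*(n-k)},
\]
together with the pointwise bound $\sup_x(\nu^{*k}*\rho^{*(n-k)})(x)\leq \sup_x\nu^{*k}(x)$ (valid because $\rho^{*(n-k)}$ is a probability measure) and Chernoff concentration of $\mathrm{Bin}(n,\alpha)$ around $\alpha n$, gives $\sup_x \mu_A^{*n}(x)=O(n^{-3/2})$. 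Summing over $n$ delivers uniform strong transience of $(A,\mu_A)$, so Lemma~\ref{lem:mainlemma} yields non-triviality of $b$'s action on the Poisson boundary of $(G,\mu)$.

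The hard part will be the second stage: extracting a bounded-support sub-measure $\nu$ that is itself uniformly strongly transient, and then showing that the heavy-tailed remainder $\rho$ cannot spoil this property. The point is that a positive fraction of increments are drawn from $\nu$ by the law of large numbers, so the $\nu$-walk's dispersion is inherited by the full walk while $\rho$-steps only smear the distribution further; making this rigorous without any moment control on $\rho$ relies precisely on the Chernoff tail estimate above. Once the transience input is in hand, everything else is a direct invocation of Lemma~\ref{lem:mainlemma}.
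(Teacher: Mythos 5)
Your overall plan is the same as the paper's: the only thing to verify is that the induced measure $\mu_A$ on $A\cong\mathbb{Z}^3$ is uniformly strongly transient, after which Lemma~\ref{lem:mainlemma} applies verbatim. Where you differ is in how you obtain this input. The paper simply quotes claim (1) of Theorem~\ref{thm: admisrecurrent} from the Appendix, whose proof goes through cubic growth, the Coulhon--Saloff-Coste inequality and a Dirichlet-form comparison to handle non-symmetric measures; you instead re-prove the needed uniform bound $\sup_x\mu_A^{*n}(x)=O(n^{-3/2})$ directly, by splitting off a finitely supported non-degenerate piece $\nu$, using the local CLT for $\nu$, the inequality $\sup_x\bigl(\nu^{*k}*\rho^{*(n-k)}\bigr)(x)\le\sup_x\nu^{*k}(x)$ (valid since $A$ is abelian and $\rho^{*(n-k)}$ is a probability measure) and a Chernoff bound, then concluding via Remark~\ref{rem:unif}. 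For $\mathbb{Z}^3$ this is a sound and more elementary substitute for the appendix theorem, though less general.

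The genuine weak point is your reduction step. Choosing $N$ with $\mu_K^{*N}(A)>0$ does \emph{not} make $\mu_A=\mu_K^{*N}|_A/\mu_K^{*N}(A)$ non-degenerate on $\mathbb{Z}^3$: the part of $\supp\mu_K^{*N}$ lying in $A$ may generate only a proper, possibly lower-rank, subsemigroup, and genuine three-dimensionality of $\supp\mu_A$ is exactly what your LCLT bound requires. This is not pedantry: the remark following Theorem~\ref{thm: admisrecurrent} exhibits the measure equidistributed on $e_1,e_2,e_3$ in $\mathbb{Z}^3$, whose support generates $\mathbb{Z}^3$ as a group but lies in a coset of a rank-two subgroup; there $\sup_x\nu^{*n}(x)\asymp n^{-1}$ and uniform strong transience fails. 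So ``generating $\mathbb{Z}^3$'' must mean as a semigroup, and you must argue that this can actually be arranged, which your appeal to $\mu_K^{*N}(A)>0$ does not do. The standard repair, used in the paper in the proof of Theorem~\ref{thm:compcrit}, is to replace $\mu$ by an affine combination $\sum_i a_i\mu^{*i}$ of its convolution powers: it has finite entropy, full support $G$ (so the restriction of its projection to $A$ has support all of $A$), and its Poisson boundary, hence the action of $b$ on it, coincides with that of $\mu$. (Relatedly, your assertion that passing to a single power $\mu^{*N}$ ``affects neither the Poisson boundary nor the action of $b$'' should be justified via Lemma~\ref{lem:strictinequality}: asymptotic entropies of $G$ and $G/\Norm_G(b)$ both scale by $N$, so triviality of the action is preserved, even though the boundary of $\mu^{*N}$ itself need not equal that of $\mu$.) With that replacement made, the rest of your argument goes through.
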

\begin{proof}
By Claim $1$ of Theorem \ref{thm: admisrecurrent} (proven in the Appendix) we know that any non-degenerate random walk on $\mathbb{Z}^3$ is uniformly strongly transient. Hence, replacing if necessary the measure by its convolution power (so that its restriction to  $A$ generates $A$),  we can apply Lemma \ref{lem:mainlemma}.
\end{proof}

Now we are ready to prove the theorem of this section.

\begin{proof}
By the assumptions of the theorem we have a short exact sequence
$ 1\to \HNMsubgroup\to G\to \HAKquotient\to 1$,
where $\HNMsubgroup$ is abelian. 
$\A'=\mathbb{Z}^3$ is
a central subgroup of $\HAKquotient$.
By assumption of the Theorem we also know that 
$b\in \HNMsubgroup$
 is such that
the elements of $\A' b $
are linearly independent, either over $k=\mathbb{Z}/p\mathbb{Z}$  or over $k=\mathbb{Q}$.  By definitions of smallness this says that 
that $b$ is not small.
We need to prove that $b$ acts non-trivially on the Poisson boundary of $(G,\mu)$ for any finite entropy non-degenerate measure $\mu$.

Replacing if necessary $G$ by a semi-direct product $G_2$ with the same action of $\HAKquotient$ on $\HNMsubgroup$, since $\mu$ is non-degenerate, 
we can use Theorem \ref{thm:compcrit} and see that it is sufficient to prove our theorem
for semi-direct products.
Observe that if $b$ is small for $G$, then it is small for $G_2$ (since the definition depends only on the action of $\HAKquotient$ on $\HNMsubgroup$). We use Lemma \ref{lem:reductiontosingle}
for $Q=\HNMsubgroup$, $G=G$, $K=\HAKquotient$, $q=b$ and conclude that it is sufficient to assume
that $\HNMsubgroup$ is generated as a normal subgroup by $b$ in our group
$G=G_q$.
First assume that  $b$ is $p$-torsion, $p$ is prime. Then $\HNMsubgroup$ is a vector space over $\mathbb{Z}/p \mathbb{Z}$.
Then $G$, $b$ and $\mu$ satisfy the assumption of Lemma \ref{lem:mainlemma} and Corollary \ref{cor: Z3central}. Thus by the latter corollary, we conclude that $b$
acts non-trivially on the boundary.

Now assume as in the previous case that $\A' b$ are linearly independent over $\mathbb{Z}/p\mathbb{Z}$. Consider $B'=\HNMsubgroup/p \HNMsubgroup$ and a short exact sequence
$$
1 \to B' \to G_2 \to \HAKquotient \to 1,
$$
where $G_2= G/p \HNMsubgroup$. 
If the image of $b$ acts non-trivially on the boundary of the projected r.w. to $G_2$, then $b$ acts non-trivially on the boundary of $G$. The image of $b$ is of order $p$, so by the previous case we know that the action of $b'$ (and hence of $b$) on the boundary is non-trivially.

Now we assume that the elements $\A' b$ are linearly independent over $\mathbb{Q}$.
Let $T_{\HNMsubgroup}$ be the torsion subgroup of $\HNMsubgroup$.
Consider 
$$
1 \to \HNMsubgroup/T_{\HNMsubgroup} \to G/T_{\HNMsubgroup} \to \HAKquotient \to 1
$$
Let $b'$ be the image of $b$ in $T_{\HNMsubgroup}$.
Observe that  $\A' b'$ are linearly independent over $\mathbb{Q}$, so without loss of generality we can assume that $T_{\HNMsubgroup}= \{e\}$.
Consider $B_2 = B \otimes \mathbb{Q}$, $\HNMsubgroup$ is a subgroup of $B_2$, $b \in B \in B'$.
There exists a semi-direct product $G_2$, 
$$
1 \to B_2 \to G_2 \to K \to 1 
$$
where the action of $K $ on $B_2$, restricted to $\HNMsubgroup$ is the action we had for our previous short exact sequence.
Consider a non-degenerate measure $\mu_2$ on $G_2$, with the same projection to $K$ as $\mu$. Applying Lemma \ref{lem:mainlemma} for $G_2$, $\mu_2$ and $b$, we conclude that $b$ acts non-trivially on the boundary of $(G_2, \mu_2)$.
Applying the reduction to single lemma (Lemma \ref{lem:reductiontosingle}) twice: once to $G$ and also to $G_2$, we observe that the reduction groups $G_q = (G_2)_q$ ($q=b$).
Thus we conclude that $b$ acts non-trivially on the boundary of $(G,\mu)$.

\end{proof}

\begin{rem} \label{rem:notsimplytrans}
The assumption of (uniform) strong transience (and not simply transience) is essential in Lemma 
\ref{lem:mainlemma}. Under the assumption of transience
there are examples where the action on the boundary is trivial.
\end{rem}

\begin{proof}
Indeed, take $G= \left( ( \mathbb{Z}/2\mathbb{Z} )  \wr \mathbb{Z} \right)  \times \mathbb{Z}$.
We denote by $r$ the generator of the
cyclic factor above.
We denote by $z$ the generator of the infinite cyclic subgroup of the lamplighter.
The subgroup $B$ is $\mathbb{Z}/2\mathbb{Z}^{\mathbb{Z}}$. 
Let $K$ be the subgroup generated by $z,r$
Let $A$ be the subgroup generated by $z$. 
Consider the measure  on $\mathbb{Z}^2$ with a charge $1/3$ on $z$,  $(1/6)$ on $z^{-1}$,  $1/3$ on $zr$ and $1/6$ on $z^{-1}r^{-1}$ (or any other finitely supported measure on $\mathbb{Z}^2$, which has a symmetric projection on $\mathbb{Z}=\mathbb{Z}^2/<r>$ and such that its restriction on $A= <z>$ has a non-zero mean). 
It is clear that the Poisson boundary of
(the central extension)
$(  \mathbb{Z}/2\mathbb{Z}  \wr \mathbb{Z} ) \times \mathbb{Z},\mu)$
is equal to the boundary of the projection of the random walk to 
$\mathbb{Z}/2\mathbb{Z} \wr \mathbb{Z}$, and any element of the configuration acts trivially on the boundary.
But $(A,\mu_A)$ is transient. 

\end{proof}

Consider a short exact sequence
$$
1\to \HNMsubgroup\to M\to \HAKquotient\to 1,
$$
$\HNMsubgroup$ and $\HAKquotient$ are abelian, $b\in B$.
Consider $\HNMsubgroup$ as a $\HAKquotient [\HNMsubgroup]$ module.
Consider its submodule  generated by $b$. We can speak about the dimension of $b$ defined as the dimension of this submodule
(considered as $\HAKquotient[B_2]$-module, where  $B_2$ is a subgroup of $\HNMsubgroup$ generated by $b$).

\begin{cor} \label{cor:metablockrank3}
\begin{enumerate}
\item
Consider a metabelian basic block $M$ of a nilpotent-by-abelian linear group a
with  short exact sequence 
$$
1\to \HNMsubgroup\to M\to \HAKquotient\to 1
$$
By our convention  such basic blocks are either torsion-free or ($p$-torsion)-by-abelian.
If  $b\in \HNMsubgroup$ has dimension $\ge 4$ or if $b$ has dimension $\ge 3$ and $\HNMsubgroup$ is $p$-torsion, 
then $b$ acts non-trivially on the Poisson boundary. 


In particular, if $M$ has dimension  $\ge 4$,
 or if $M$ has dimension $\ge 3$ and $\HNMsubgroup$ is a $p$-torsion group,
then the Poisson boundary of any non-degenerate finite entropy measure is non-trivial.

\item 
Moreover, let $G$ be an upper triangular linear group with a valid block of dimension $\ge 4$, or of dimension $\ge 3$ in case when $H$ is $p$-torsion. Then the Poisson boundary of any non-degenerate finite entropy measure is non-trivial. 
\end{enumerate}
\end{cor}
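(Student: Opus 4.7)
The plan is to translate the dimension-$\geq 3$ hypothesis on $b$ into the existence of an element in the normal closure of $b$ whose orbit under some central $\mathbb{Z}^3 \subset K$ is $k$-linearly independent, and then invoke Theorem \ref{prop:rank3}. Take $k = \mathbb{F}_p$ in the $p$-torsion case and $k = \mathbb{Q}$ in the torsion-free case, so that $B$ (respectively $B \otimes \mathbb{Q}$) becomes a $k[K]$-module via the conjugation action; since the block is metabelian, $K$ is abelian and every subgroup of $K$ is central in $K$. By Definition \ref{def:rankelement}, the hypothesis on $b$ says that the submodule $k[K] \cdot b$ has dimension $\geq 3$, so by Definition \ref{def:rank} there is a subgroup $\mathbb{Z}^d \subset K$ (with $d \geq 3$) over which $k[K] \cdot b$ is finitely generated. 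Lemma \ref{prop: freesubmodule} then provides a nontrivial free $k[\mathbb{Z}^d]$-submodule of $k[K] \cdot b$; a generator $b'$ of such a submodule has $k$-linearly independent $\mathbb{Z}^d$-orbit, hence $k$-linearly independent $\mathbb{Z}^3$-orbit for any $\mathbb{Z}^3 \subset \mathbb{Z}^d$.

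In the torsion-free case we clear denominators, replacing $b'$ by a suitable nonzero integer multiple, so as to land inside $\mathbb{Z}[K] \cdot b \subset B$ while preserving $\mathbb{Q}$-linear independence of the $\mathbb{Z}^3$-orbit. Call the resulting element $b'' \in B$. Since $B$ is abelian and $K$ acts on $B$ by conjugation, the normal closure $\langle b \rangle^M$ coincides with $\mathbb{Z}[K] \cdot b$, so $b'' \in \langle b \rangle^M$. Applying Theorem \ref{prop:rank3} to $M$, the central subgroup $A = \mathbb{Z}^3 \subset K$ and the element $b''$, we conclude that $b''$ acts non-trivially on the Poisson boundary of $(M, \mu_M)$ for every non-degenerate finite entropy measure $\mu_M$. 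The kernel of the action of $M$ on its Poisson boundary is a normal subgroup of $M$, so if $b$ acted trivially then $\langle b \rangle^M$ --- and hence $b''$ --- would also act trivially, contradicting what we just established. Therefore $b$ acts non-trivially.

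For the ``in particular'' clause of Claim 1: if $M$ has dimension $\geq 3$, then $B$ itself has dimension $\geq 3$ as a $k[K]$-module, so Lemma \ref{prop: freesubmodule} produces a free $k[\mathbb{Z}^d]$-submodule of $B$ whose generator is an element of $B$ of dimension $\geq 3$; applying the main argument to this element yields non-triviality of the Poisson boundary. Claim 2 follows immediately by combining Claim 1 with Theorem \ref{thm:lineargroups}(1): a valid block $B_{i,j}$ of dimension $\geq 3$ has non-trivial Poisson boundary for every associated non-degenerate finite entropy measure, so the Poisson boundary of $(G, \mu)$ is non-trivial for every such $\mu$ on $G$.

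The main subtle step is producing $b''$ genuinely inside $B$ (and inside the normal closure $\langle b \rangle^M$) in the torsion-free case, where the free-submodule generator a priori lives only in $B \otimes \mathbb{Q}$; once this bookkeeping is handled, the argument is essentially a mechanical combination of the module-theoretic Lemma \ref{prop: freesubmodule} with the already-established non-triviality criterion Theorem \ref{prop:rank3} and the reduction to blocks in Theorem \ref{thm:lineargroups}.
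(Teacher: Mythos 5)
Your proof is correct and follows essentially the same route as the paper's: extract a free $k[\mathbb{Z}^3]$-submodule via Lemma \ref{prop: freesubmodule}, apply Theorem \ref{prop:rank3}, and invoke Theorem \ref{thm:lineargroups} for the second claim. The denominator-clearing and normal-closure bookkeeping you spell out is precisely what the paper's terse argument leaves implicit, and you handle it correctly.
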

\begin{proof}




By Lemma \ref{newclaim:freesubmodule}
we know that if $M$ or $b$ has dimension 
$\ge 3$, then there exists $b'$ and a subgroup $A=\mathbb{Z}^3$ of $K$ such that the module generated by $b'$ is freely generated as a $k[A]$ module . The first statement follows therefore
from Theorem \ref{prop:rank3}. The second statement then follows from \ref{thm:lineargroups}.

\end{proof}



\subsection{Examples with non-trivial boundary and further applications to linear  and metabelian groups}
First we recall that in Example \ref{exa:twobytworank} we gave a sufficient condition for two-by-two upper triangular matrices to have dimension at least $3$ , and thus it follows from Corollary \ref{cor:metablockrank3}   that the non-degenerate finite entropy random walks on groups in Example \ref{exa:twobytworank}
have non-trivial Poisson boundary.

Below we discuss examples of dimension $3$ torsion-free metabelian groups  and measures of non-trivial boundary, which on some of such groups can be chosen finitely supported.

\begin{exa}[Lamplighter Baumslag-Solitar groups $d=1$ and $d=2$.]\label{exa:m1m2mxmy}
Recall our notation for matrices
\[ \theta= \left( \begin{array}{ccc}
1 & 1 \\
0  & 1
 \end{array} \right), \medspace \medspace 
 M_2= \left( \begin{array}{ccc}
1 & 0 \\
0  & 2
 \end{array} \right), 
 \] 
 
 \[
 M_x= \left( \begin{array}{ccc}
1 & 0 \\
0  & X
 \end{array} \right), 
 \medspace \medspace 
 M_y= \left( \begin{array}{ccc}
1 & 0 \\
0  & Y
 \end{array} \right) 
 \]
The group $G_{x,2}$ (d=1) generated by $M_x$, $M_2$ and $\theta$ has non-trivial boundary for some finite first moment symmetric random walks.

The group $G_{x,y,2}$ (d=2), generated by $M_x$, $M_y$, $M_2$ and $\theta$
has non-trivial boundary for any finite entropy non-degenerate measure.
\end{exa}
\begin{proof}
Possibly by taking a convolutional power we may assume that $\mu$ contains $\theta$ and the identity $e$ in its support. Note that we have a canonical homomorphism  from $G_{x,2}$
to $\mathbb{Z}^2$ and from $G_{x,y,2}$
to $\mathbb{Z}^3$ given by mapping $M_x, M_y, M_2$ to distinct generators and sending $\theta$ to the identity.
Put $\Delta= \{e, \theta\}$.
Observe that $\Delta$-restriction entropy \ref{def:deltarestrictionentropy} grows linearly.

Indeed, to see this for $d=1$ observe 
$$
\sum_{j,k} \varepsilon_{j,k} 2^k x^j \ne 0,
$$
and that for $d=2$ observe that
$$
\sum_{i,j,k} \varepsilon_{i,j,k} 2^k x^i y^j \ne 0,
$$
if $\varepsilon_{j,k},\varepsilon_{i,j,k}$ take value $0$, $1$ and $-1$ and at least one of the coefficients is $\ne 0$. Here the sum is taken over integers $i$, $j$
 and $k$.
 
For our application for the group with $d=1$ we use  
the existence of finite first moment  symmetric random walks with transient projection to $\mathbb{Z}^2$.
Indeed, it is well-known that such measures exist
and can be chosen to have finite $2-\varepsilon$
moment (for example, we can choose them of the form $\nu \times \nu$, where $\nu$ is a symmetric measure in the domain of the attraction of a Stable Law, and apply the Local Limit theorem for $\nu$, see 
e.g. \cite{christoph}.

For our application for $d=2$ we remind that
the projected random walk on $\mathbb{Z}^3$ is transient
for any non-degenerate random walk (see e.g. \cite{spitzer}).  

Hence
the  number of visited points in $\Z^2$ (correspondingly in $\mathbb{Z}^3$)
grows linearly (in the number of steps, in other words the range function grows linearly,  see e.g.  Theorem $1.4.1$ Spitzer \cite{spitzer}). 
If we condition on all $\Delta$ increments except for the final multiplication at $X_i$ with a projection to given points in
$\mathbb{Z}^2$ or correspondingly  $\mathbb{Z}^3$,
we see that the $\Delta$-restriction entropy is linear and thus the boundary is nontrivial by \ref{lem:easydirection}.

Also observe that the dimension of our groups is equal to $d+1$.
Indeed, tensor by $\mathbb{Q}$ and observe that the group becomes an extension of an abelian group by $\mathbb{Z}$,  correspondingly by  $\mathbb{Z}^2$ (generated by the images of $M_x$; and by the images of $M_x$ and $M_y$).  In view of monotonicity of the dimension for taking  subgroups,
the lower bound follows from the existence of $1$-dimensional and  $2$-dimensional lamplighters in the groups.


\end{proof}.

More generally, we consider
\[
M_\alpha= \left( \begin{array}{ccc}
1 & 0 \\
0  & \alpha 
 \end{array} \right) 
 \] 
and the group $G_\alpha$, generated by $M_1$, $M_\alpha$, $M_x$, $M_y$.

Claim $2$ and $3$ of  Proposition \ref{prop:Galpha} below classify $G_\alpha$ in terms of triviality/non-triviality of the boundary,  in particular providing further examples of $3$-dimensional torsion-free groups with non-trivial boundary. This happens whenever $\alpha$ is algebraic and not a root of unity.

\begin{prop}\label{prop:Galpha}
\begin{enumerate}

\item If $\alpha$ is algebraic, the group $G_\alpha$ has dimension $2$. If not, the group has dimension $3$.

\item 

If $\alpha$ is a root of unity, then $G_\alpha$
has a finite index subgroup which is a quotient of $\Z^l \wr \Z^2$.
(In this case  by \cite{kaimanovichvershik} we know the boundary of any centered second moment random walk is trivial).

\item  If $\alpha$ is not the root of unity, then
any finite entropy irreducible random walk on
$M_\alpha$ has non-trivial boundary.

\end{enumerate}
\end{prop}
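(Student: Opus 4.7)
For Claim 1 I would identify $G_\alpha=B\rtimes K$, where $K$ is the multiplicative subgroup generated by $\alpha,X,Y$ in $\mathbb{Q}(\alpha,X,Y)^{\times}$ and $B$ is the $\mathbb{Z}[K]$-submodule of the additive group of $\mathbb{Q}(\alpha,X,Y)$ generated by $\delta\leftrightarrow 1$. Because $X,Y$ are multiplicatively independent transcendentals and $\alpha$ has infinite multiplicative order (i.e.\ is not a root of unity), $K\cong\mathbb{Z}^3$. After tensoring by $\mathbb{Q}$, $B\otimes\mathbb{Q}=\mathbb{Q}[K]/I$, where $I$ is the annihilator of $\delta$: this ideal is trivial when $\alpha$ is transcendental and is the principal ideal generated by the minimal polynomial of $\alpha$ (regarded as a polynomial in the variable $t_\alpha$ of $\mathbb{Q}[K]$) when $\alpha$ is algebraic. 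Lemma \ref{lem:rankdrops} then yields dimension $3$ in the transcendental case and $3-1=2$ in the algebraic case; the matching lower bound in the algebraic case is witnessed by the free submodule $\mathbb{Q}[t_x^{\pm1},t_y^{\pm1}]\cdot\delta\subset B\otimes\mathbb{Q}$.

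For Claim 2, with $n$ the order of $\alpha$, the identity $M_\alpha^n=I$ in $GL_2$ makes the assignment $M_\alpha\mapsto 1$ and $M_x,M_y,\delta\mapsto 0$ extend to a well-defined surjection $\phi\colon G_\alpha\to\mathbb{Z}/n$ with kernel $H$ of index $n$. I would verify that $H$ is generated by $M_x,M_y$ together with the conjugates $\delta_k=M_\alpha^{-k}\delta M_\alpha^{k}$ for $0\le k<n$; that the commutator subgroup of $H$ is the direct sum $\bigoplus_{(a,b)\in\mathbb{Z}^2}X^aY^b\cdot\mathbb{Z}[\alpha]$ (the direct-sum structure following from $\mathbb{Q}(\alpha)$-linear independence of distinct $X^aY^b$); and that $\langle M_x,M_y\rangle\cong\mathbb{Z}^2$ acts by shifting the $(a,b)$ index while preserving the $\mathbb{Z}[\alpha]\cong\mathbb{Z}^{\phi(n)}$ summand. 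This identifies $H$ with $\mathbb{Z}^2\wr\mathbb{Z}^{\phi(n)}$, proving Claim 2 with $\ell=\phi(n)$.

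For Claim 3, in the transcendental case Claim 1 gives dimension $3$, so Corollary \ref{cor:metablockrank3} applies directly to $G_\alpha$ as its own basic block. The substantive case is $\alpha$ algebraic and not a root of unity, where $G_\alpha$ has dimension $2$ and none of the paper's dimension-$3$ theorems apply; I would generalize the $\Delta$-restriction entropy argument of Example \ref{exa:m1m2mxmy}. After replacing $\mu$ by a convex combination that ensures $e,\delta\in\supp\mu$ (which preserves the Poisson boundary), take $\Delta=\{e,\delta\}$ and condition on all non-$\Delta$ increments; the $\Delta$-times $t_1<\dots<t_L$ form a positive-density subsequence ($L=\Omega(n)$), and the $B$-component of the resulting position at time $n$ is $\sum_m\varepsilon_m X^{i_m}Y^{j_m}\alpha^{k_m}\cdot\delta$ for $\varepsilon\in\{0,1\}^L$, where $(i_m,j_m,k_m)\in\mathbb{Z}^3$ is the abelianization position at time $t_m$. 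By $\mathbb{Q}(\alpha)$-linear independence of distinct Laurent monomials $X^iY^j$, the number of distinct values of this map factors across the $(i,j)$-fibers, so a linear lower bound on $\Delta$-restriction entropy reduces to an exponential lower bound on the number of distinct fiber sums $\sum_{k\in K_{i,j}}\varepsilon_k\alpha^k$.

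This arithmetic lower bound is the main obstacle: for $\alpha=2$ it is uniqueness of binary expansion, but for general algebraic $\alpha$ it fails at face value (e.g.\ the golden ratio satisfies $\phi^2-\phi-1=0$, which yields a short $\{-1,0,1\}$-annihilator). To overcome it I would use that $\alpha$ not a root of unity admits a place $v$ of $\mathbb{Q}(\alpha)$ with $|\alpha|_v>1$: by Kronecker's theorem in the algebraic integer case, and from a finite place dividing the norm of $\alpha$ otherwise. On a subset $K'\subset K_{i,j}$ whose successive gaps exceed a threshold $G=G(\alpha)$, the map $\varepsilon\mapsto\sum_{k\in K'}\varepsilon_k\alpha^k$ becomes injective in the $v$-adic completion (by a greedy/leading-term argument), giving $2^{|K'|}$ distinct values. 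A probabilistic estimate of CLT type on the projected random walk, applied to the $\Delta$-times and their $k$-coordinates, shows that with probability bounded away from zero a positive-density subset of $\Delta$-times satisfies this gap condition, yielding the required exponential lower bound. Combined with Lemma \ref{lem:easydirection}, this gives linear $\Delta$-restriction entropy and hence non-triviality of the Poisson boundary.
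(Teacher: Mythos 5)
Your overall route coincides with the paper's for all three claims, with one genuinely different implementation in (3). For (1) the paper simply quotes Lemma \ref{exa:twobytworank} (transcendence degree of the field generated by the diagonal ratios), while you redo the computation module-theoretically via Lemma \ref{lem:rankdrops}; this is equivalent, though note that your identification $K\cong\mathbb{Z}^3$ tacitly assumes $\alpha$ is not a root of unity, whereas claim (1) covers all algebraic $\alpha$ (the root-of-unity case needs the same computation with $K\cong\mathbb{Z}^2\times\mathbb{Z}/d$). Claim (2) is proved exactly as in the paper. For (3) both you and the paper run the $\Delta$-restriction entropy argument of Example \ref{exa:m1m2mxmy} (Lemma \ref{lem:easydirection} plus linear range of the transient projected walk on $\mathbb{Z}^3$), but the arithmetic injectivity is handled differently: the paper replaces $\alpha$ by a Galois conjugate $\alpha'$ with $|\alpha'|\neq 1$ (using $G_\alpha\cong G_{\alpha'}$ and $G_\alpha\cong G_{\alpha^{-1}}$), passes to a power with $|\alpha^m|>2$ so that all $\{0,\pm 1\}$-combinations of its powers are nonzero with no side condition, and then only counts visits to the finite-index sublattice of $\mathbb{Z}^3$ where the $\alpha$-exponent is divisible by $m$ (these visits still grow linearly by transience); you instead work at a place $v$ with $|\alpha|_v>1$ and impose a gap condition on the exponents. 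Your variant is in fact more careful on one point: for a non-integral algebraic number all of whose archimedean conjugates lie on the unit circle (e.g. $(3+4i)/5$) the paper's "some Galois conjugate has absolute value $\neq 1$" literally fails and a finite place is needed, exactly as you allow. On the other hand your "CLT-type" step is vaguer than necessary and, as stated, not quite what you need: the clean way is deterministic — from the linearly many distinct visited points extract, inside each $(i,j)$-fiber, a $G(\alpha)$-separated subset of the $k$-values of proportional size (or use the paper's congruence sublattice), and free only the last $\Delta$-increment at each selected point so that the coefficients stay in $\{0,1\}$ (your formula allows repeated visits, which would push coefficients outside $\{0,\pm1\}$ after differencing).

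The genuine gap is the reduction at the start of (3). The proposition is stated for irreducible (adapted) finite-entropy measures, and your opening move "replace $\mu$ by a convex combination so that $e,\delta\in\supp\mu$" is only available when $\delta$ lies in the semigroup generated by $\supp\mu$, i.e. essentially for non-degenerate measures: the boundary-preserving replacements used here are convex combinations of convolution powers, whose support is contained in that semigroup. The paper closes this case by the $S^+S^-$ dichotomy: if $(G_\alpha,\mu)$ fails the $S^+S^-$ property the boundary is non-trivial outright (\cite{erschlerkaimanovich2019}, Remark 5.4), and otherwise one finds two elements $g_1,g_2$ in the support of a common convolution power with the prescribed ratio and runs the same argument with $\Delta=\{g_1,g_2\}$. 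Without some such step your argument proves (3) only for non-degenerate measures; the fix is available but must be added.
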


\begin{proof}
1) Follows from Lemma \ref{exa:twobytworank}



2) Now assume that $\alpha $ is a root of unity of degree $d$, $\alpha^d=1$. Consider the projection to $\Z \times \Z \times \Z/d\Z$. Consider the finite index subgroup of $\Z \times \Z \times \Z/d\Z$
with the third coordinate equal to $0$. Its preimage is a finite index subgroup in $G_\alpha$. Observe that the numbers $\alpha^s$, where $1\le s <d$ are integers relatively prime with $d$, are linearly independent (and other powers $\alpha^s$ are expressed as their linear combination).
We deduce from this that this finite index subgroup is isomorphic to $\Z^{\phi(d)} \wr \Z^2$, where $\phi(d)$ is Euler's totient function.

3) Observe that we can assume that $\alpha$ is algebraic
since
otherwise
$G_{\alpha}$ has dimension 3. 
Since $\alpha$ is not a root of unity, there is some Galois conjugation $\alpha'$ of $\alpha$ which has absolute value not equal to $1$. Observe that $G_\alpha$ is isomorphic to $G_{\alpha'}$. Note also that $G_{\alpha^{-1}}$ is isomorphic to $G_\alpha$. So we can assume that the absolute value of $\alpha>1$.
Take $m$: $|\alpha^m|>2$.
We recall that for any group
if $(G,\mu)$ does not have $S^+S^{-}$ property the Poisson boundary of $(G,\mu)$ is non-trivial, see e.g. \cite{erschlerkaimanovich2019}, Remark $5.4$.
Hence, we can assume that $(G,\mu)$ satisfy $S^+S^-$ property.
First observe that if $M$ is non-degenerate
, there is $k$
such that $M_{\alpha^m}$, $e$ are in the support of $\mu^{*k}$. We put $\Delta=\{e, M_1 \}$ and similarly to Example \ref{exa:m1m2mxmy} observe that
$$
\sum_{i,j, k} \epsilon_{i,j,k} (\alpha^m)^k x^i y^j \ne 0,
$$  
if $\epsilon_{i,j,k}$ takes value $0$, $1$ and $-1$ and at least one of the coefficients is $\ne 0$.
Observe that points whose projection to $Z^3$ are of the form $(mk, \cdot , \cdot)$ (for $k\in \mathbb{Z}$)  form a finite index subgroup (a lattice)
in $\mathbb{Z}^3$. Observe that then the number of points after $n$ steps visited in this lattice grows linearly. (Indeed, with positive probability this number of points is at least ${\rm Const} R_n $, where $R_n$ is the number of distinct points in $\mathbb{Z}^3$, visited up to time instant $n$. And $R_n$ is linear in $n$ since the random walk on $\mathbb{Z}^3$
is transient (for this basic fact see e.g. Theorem 1.4.1 in \cite{spitzer}).
Then we condition 
 on all $\Delta$ increments except for the final multiplication at $X_i$ with a projection to given points of our lattice,
we can conclude that $\Delta$-restriction entropy is linear.

This implies that $\Delta$-restriction entropy is linear, and therefore the entropy function grows linearly, and the boundary is non-trivial.

In general, without loss of generality, we can assume that $e \in \supp \mu$.
Since $\mu$ satisfies $S^+S^-$ property, there is $r$ such that $g_1, g_2 \in \supp \mu^{*r}$, and $g_1 = M_{\alpha^m} g_2$.
We can put $\Delta =\{g_1, g_2\}$ and show that
$\Delta$ -restriction entropy grows linearly.

\end{proof}



\begin{rem}
In Corollary \ref{cor:variablesfield} we have seen 
that for a virtually solvable group, linear over 
a function field over at most $1$ variable, any finite second moment centered measure defines a random walk with trivial
boundary.
The Example \ref{exa:m1m2mxmy} shows that in the case of characteristic  $0$ the assumption of the corollary that the transcendence degree  $\le 1$ cannot be replaced by $\le 2$. 
\end{rem}

\begin{rem}
However, observe that some groups, for example $\mathbb{Z} \wr \mathbb{Z}^2$,
have trivial boundary for any centered second moment random walk \cite{kaimanovichvershik}.  But this group had Krull dimension $3$ and thus cannot be embedded into the linear group of transcendence degree $1$ in view of Lemma \ref{exa:twobytworank}.


\end{rem}
As we have mentioned in the introduction, we conjecture that this is essentially the only  $2$-dimensional example of a linear basic block with trivial boundary.

\subsection{More on groups of characteristic $p$}

Now we are ready to prove the Corollary $D$ formulated in the introduction:

\begin{cor} \label{cor:1234charp}
Let $G$ be an amenable subgroup of $GL(n,k)$, $k$ is a field of characteristic $p$.
The following properties are equivalent
\begin{enumerate}
\item
$G$ is commensurable to a group with a basic block $B_{i,j}$ which contains
$ \mathbb{Z}/p\mathbb{Z}\wr \mathbb{Z}^3$ as a subgroup.

\item The dimension of some block of 
the group is $\ge 3$.

\item
There exist a simple
random walk on $G$ with non-trivial Poisson boundary.
\item
All non-degenerate random walks 
of finite entropy on $G$ have non-trivial Poisson boundary.

\end{enumerate}
\end{cor}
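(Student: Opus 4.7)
The plan is to establish the cycle $(1) \Leftrightarrow (2) \Rightarrow (4) \Rightarrow (3) \Rightarrow (2)$. As a preliminary, by the Malcev--Kolchin theorem I would replace $G$ by a finite-index upper-triangular subgroup $G'$ over which the basic blocks $B_{i,j}$ are defined; since passage to a finite-index subgroup preserves boundary triviality (via the exit measure, as invoked before Theorem~\ref{thm:abelianbynilpotent}), properties (3) and (4) transfer faithfully, while (1) and (2) are formulated in terms of the blocks of $G'$ from the outset. In positive characteristic the commutator subgroup of $G'$ is $p$-torsion, so each $B_{i,j}$ is a ($p$-torsion)-by-Abelian group.

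The equivalence $(1) \Leftrightarrow (2)$ is a direct application of Lemma~\ref{lem:wpsubgroup} to each basic block with $d = 3$: in the $p$-torsion setting, the dimension of an Abelian-by-Abelian extension is at least $3$ if and only if it contains $\mathbb{Z}^3 \wr \mathbb{Z}/p\mathbb{Z}$ as a subgroup. The implication $(2) \Rightarrow (4)$ is exactly Corollary~\ref{cor:metablockrank3}(2): a valid block of dimension $\ge 3$ forces every non-degenerate finite entropy random walk on $G$ to have non-trivial Poisson boundary. The implication $(4) \Rightarrow (3)$ is immediate, since a simple random walk is non-degenerate and finitely supported (hence of finite entropy).

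For $(3) \Rightarrow (2)$ I would argue by contrapositive. Assume every basic block of $G'$ has dimension at most $2$, and let $\mu$ be a simple random walk on $G'$. By Theorem~\ref{thm:lineargroups}(1) it suffices, for each $(i,j)$, to exhibit a single associated measure $\mu_{i,j}$ on $B_{i,j}$ with trivial boundary. I would construct $\mu_{i,j}$ by taking the projection $\pi$ of $\mu$ to the two diagonal entries $(i,i)$ and $(j,j)$ (which is symmetric, finitely supported, and whose support generates $G_{i,j}$ by the very definition of the block), lifting $\pi$ to $G_{i,j} \subset B_{i,j}$, and adding symmetric weight on $\delta$ and $\delta^{-1}$. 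The resulting $\mu_{i,j}$ is a symmetric, finitely supported, non-degenerate measure on $B_{i,j}$ whose diagonal projection is $\pi$; in particular it is centered with finite second moment. Since $B_{i,j}$ is ($p$-torsion)-by-Abelian of dimension $\le 2$, Corollary~\ref{cor:rank2rank1}(1) then yields triviality of the boundary of $(B_{i,j}, \mu_{i,j})$, completing the contrapositive.

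The main obstacle is not really internal to the proof of this corollary, which is essentially a careful synthesis of Corollary~\ref{cor:metablockrank3} (non-triviality in dimension $\ge 3$), Corollary~\ref{cor:rank2rank1}(1) (triviality in dimension $\le 2$ in characteristic $p$), and the reduction-to-blocks Theorem~\ref{thm:lineargroups}. What makes the dichotomy \emph{complete} at the dimension-$2$/dimension-$3$ threshold in characteristic $p$ --- unlike characteristic $0$, where dimension-$2$ blocks such as the Lamplighter--Baumslag--Solitar group exhibit non-trivial boundary --- is precisely the absence of any non-Liouville dimension-$2$ examples in positive characteristic, a feature built into Corollary~\ref{cor:rank2rank1}(1).
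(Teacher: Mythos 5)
Your proposal is correct and follows essentially the same route as the paper: the equivalence of (2), (3), (4) is obtained by combining Theorem~\ref{thm:lineargroups} with Corollary~\ref{cor:rank2rank1}(1) (triviality in dimension $\le 2$ for centered finite second moment walks) and Corollary~\ref{cor:metablockrank3} (non-triviality in dimension $\ge 3$), and $(1)\Leftrightarrow(2)$ rests on the free-submodule and monotonicity facts that the paper cites directly and you invoke in the packaged form of Lemma~\ref{lem:wpsubgroup}. The only (trivially fixable) slip is in your explicit associated measure: adding mass at $\delta^{\pm1}$ changes the diagonal projection unless that mass is taken from the identity fibre (e.g.\ after replacing $\mu$ by a lazy version), so as written $\mu_{i,j}$ need not have projection exactly $\pi$ and hence need not be ``associated''; the paper avoids this by simply quoting the ``some/any associated measure'' clause together with Corollary~\ref{cor:rank2rank1}(1) for the exit measure.
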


\begin{proof}
Passing to a finite index subgroup, we can assume that $G$ is upper-triangular, as we will discuss below.
First let us show that (2), (3) and (4) are equivalent. We know

(4) obviously implies (3).
Suppose we have (3). Passing to a finite index subgroup and considering exit measure to this subgroup, we observe that this subgroup of upper-triangular matrices admits a non-degenerate symmetric measure, with exponential moments and with non-trivial Poisson boundary. We will only use that the second moment is finite.
By Theorem \ref{thm:lineargroups}, we know that in this case the random walk on at least one block has non-trivial boundary. By the first part of Lemma \ref{cor:rank2rank1} we know that if the dimension is smaller than $3$, then any symmetric finite second moment measure has a trivial Poisson boundary. Thus we conclude that the dimension is  $\ge 3$, in other words (2) holds.
Finally, assume (2).  Using again Theorem \ref{thm:lineargroups} and the definition of the dimension, we know that it is enough to consider the case when the group is a metabelian block (of dimension $\ge 3$).
By Corollary \ref{cor:metablockrank3} we know in this case that any non-degenerate finite entropy measure has non-trivial boundary.
Thus (iv) holds.

It remains to show that (1) (wreath products in the blocks condition) is equivalent to (2) (there exists a block of dimension $\ge 3)$.
First observe that (1) implies that the dimension is $\ge 3$.  We know that the dimension of $ \mathbb{Z}/p \mathbb{Z} \wr \mathbb{Z}^d$ is $d$.
Hence by  Remark \ref{rem:modulesubmodule} about monotonicity 
the dimension of our metabelian block has rank $\ge 3$.

Now observe that Lemma \ref{newclaim:freesubmodule}
implies that if the dimension of a metabelian block is $\ge 3$, then it has $ {\mathbb{Z}/p\mathbb{Z}}
\wr \mathbb{Z}^3$ as a subgroup.  
\end{proof}
 
\begin{rem} \label{rem:withoutwr} Consider a free metabelian group $\rm{Met}_d$ on $d$ generators, and let $\rm{Met}_d(p)$ the free $p$-metabelian group. 
The group $M_3(p)$ does not contain $ \mathbb{Z}/p \mathbb{Z} \wr \mathbb{Z}^3$ as a subgroup. In fact, any abelian subgroup of $M_3$ or $M_3(4)$ belongs to its commutator group, in particular, $M_3(p)$ does not contain $\mathbb{Z}^2$ as a subgroup.
For the background about subgroups of metabelian and free metabelian  groups see \cite{baumslagetal}
and references therein.
Observe also that a proper quotient of $M_3(p)$
can not contain a three dimensional wreath product, since by Lemma \ref{lem:rankdrops} the dimension of a proper quotient is strictly less than $3$.
Thus while  by our results a non-Liouville metabelian group, which is linear in characteristics $p$, admits blocks that contain three dimensional wreath products as a subgroup, the group itself does need to admit such wreath products as a
subgroup or as a quotient.
\end{rem}

\appendix 

\section{Transience and uniform strong transience}

We consider a group $\Gamma$, a measure $\mu$ on $\Gamma$ with trajectories
of the random walk $(\Gamma, \mu)$ denoted by  $X_1$, $X_2$, \dots , $X_n$, \dots.  

Given a sequence $\gamma_i$ of elements of $\Gamma$, we say that it is is a recurrent sequence for $(\Gamma, \mu)$ if with probability one
there  exist infinitely many $i$ such that $X_i=\gamma_i$. If $\gamma_i$ is not recurrent we call it transient. 
If all sequences are transient for the random walk $(\Gamma,\mu)$ we say the random walk $(\Gamma, \mu)$ is strongly transient (as we defined in Definition \ref{def:recurrentsequences}).

We give a note on the terminology: one should not confuse the fact that the sequence $(\gamma_i, i)$ is recurrent with the fact that the set $\{\gamma_i, i \in \mathbb{N} \}$
is recurrent. The recurrence of the sequence corresponds to the fact that the set $\{\gamma_i, i  \})$ is recurrent for the time extended random walk $(X_i,i)$.

We recall that a random walk $(\Gamma,\mu)$ is  uniformly strongly transient (see Definition
\ref{def:uniformlysr}), 
if 
for any  sequence $\gamma_i \in \Gamma$, $i \in \mathbb{N}$, the
expected number of $i$ such that ($X_i=\gamma_i$) is finite.

\begin{rem}\label{rem:unif}
Consider $\delta_i \ge 0$ such that $\sum_{i=1}^\infty \delta_i <\infty$. Assume that
$\mu^{*i}(g) \le \delta_i$ for all $g$. Then
the random walk $(G,\mu)$ is uniformly strongly
transient.
\end{rem}
\begin{proof}
Fix a sequence $\gamma_i \in G$.
We need to prove that the expected number of prescribed hits $X_i=\gamma_i$ is finite. This expectation is the sum of the expectation that a prescribed hit happens at the time instant $i$. This sum is equal to
$$
\sum_i P[X_i=\gamma_i] =\sum_i \mu^{*i}(\gamma_i)\le
\sum_i \delta_i <\infty
$$
\end{proof}

We recall a well-known corollary of the Cauchy Schwarz inequality
\begin{rem} \label{rem:CauchySch}
For a symmetric measure $\mu$ it holds
$\mu^{*2n}(g) \le \mu^{*2n}(e)$
\end{rem}

\begin{lem}\label{rem:oldCauchySch}
If $\nu$ is a symmetric  non-degenerate measure such that the random walk $(\Gamma, \nu)$ is transient then the random walk  $(\Gamma, \nu)$ is uniformly strongly transient. 
\end{lem}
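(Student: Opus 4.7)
The plan is to reduce the lemma to Remark \ref{rem:unif} by producing a summable sequence $\delta_i$ with $\nu^{*i}(g)\le\delta_i$ for every $g\in\Gamma$. The key ingredient is that symmetry lets us convert a pointwise estimate into the return probability at the identity, which is controlled by transience.

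First I would upgrade the qualitative transience hypothesis to the quantitative statement $\sum_{n\ge 0}\nu^{*n}(e)<\infty$. For a non-degenerate (in particular, irreducible) random walk on a countable group this is standard: transience is equivalent to the finiteness of the Green function at the identity, via the strong Markov property applied to successive returns to $e$.

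Next I would establish the uniform bound. Using symmetry of $\nu$, one has $\nu^{*n}(h)=\nu^{*n}(h^{-1})$ for every $n$ and $h$, so $\sum_h\nu^{*n}(h)^2=(\nu^{*n}*\nu^{*n})(e)=\nu^{*2n}(e)$. Applying Cauchy--Schwarz to $\nu^{*(m+n)}(g)=\sum_h\nu^{*m}(h)\nu^{*n}(h^{-1}g)$ therefore yields
\[
\nu^{*(m+n)}(g)\le\sqrt{\nu^{*2m}(e)\,\nu^{*2n}(e)}
\]
for all $g\in\Gamma$. Specialising to $m=n$ recovers Remark \ref{rem:CauchySch}: $\nu^{*2n}(g)\le\nu^{*2n}(e)$. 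For odd indices, specialising to $m=n$, $n+1$ gives $\nu^{*(2n+1)}(g)\le\sqrt{\nu^{*2n}(e)\,\nu^{*(2n+2)}(e)}$.

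Finally I would set $\delta_i:=\sup_{g\in\Gamma}\nu^{*i}(g)$. By the two estimates above,
\[
\sum_{i\ge 0}\delta_i\;\le\;\sum_{n\ge 0}\nu^{*2n}(e)\;+\;\sum_{n\ge 0}\tfrac12\bigl(\nu^{*2n}(e)+\nu^{*(2n+2)}(e)\bigr)\;\le\;2\sum_{n\ge 0}\nu^{*n}(e),
\]
where the middle inequality is AM--GM applied to $\sqrt{\nu^{*2n}(e)\nu^{*(2n+2)}(e)}$. The right-hand side is finite by the first step, so $(\delta_i)$ is summable, and Remark \ref{rem:unif} gives uniform strong transience.

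There is no real obstacle here: the only subtle point is making sure we use symmetry correctly to pass from $\sum_h\nu^{*n}(h)^2$ to $\nu^{*2n}(e)$ (without symmetry one only gets $(\nu^{*n}*\check\nu^{*n})(e)$), and that transience, which a priori is an almost-sure statement about trajectories, really is equivalent to $\sum_n\nu^{*n}(e)<\infty$ in our setting.
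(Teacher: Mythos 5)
Your proof is correct and follows essentially the same route as the paper: reduce to Remark \ref{rem:unif} by producing a summable uniform bound on $\nu^{*i}(g)$, using the symmetry/Cauchy--Schwarz estimate for even powers and the finiteness of $\sum_n\nu^{*n}(e)$ coming from transience. The only (immaterial) difference is at odd times, where the paper simply bounds $\nu^{*(2n+1)}(y)=\sum_z\nu^{*2n}(z)\nu(z^{-1}y)\le\nu^{*2n}(e)$ rather than using Cauchy--Schwarz and AM--GM.
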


\begin{proof}
Consider  $\mu = \nu^{*2}$. 
The random walk $(\Gamma,\mu)$ is recurrent if and only if for some/all sequences $\gamma_i$ the sequence $\gamma_i$ is recurrent. Indeed, we recalled in the previous remark
that for any symmetric $\nu$ and any $y$ $\nu^{*2n}(y) \le \nu^{*2n}(e)$. Thus we also have that
$$
\nu^{2n+1}(y) =\sum_z(\nu^{*2n}(z)\mu(z^{-1}y)) \le \nu^{2n}(e).
$$
In view of Remark \ref{rem:unif} we conclude that $\nu$ is  uniformly strongly transient.
\end{proof}

\begin{exa}\label{exe:drift}[A transient walk can admit recurrent sequences]
Take a random walk on $\mathbb{Z}$, supported on $-1$ and $1$,
$\mu(1) =\alpha > \mu(-1)=\beta$. The random walk in  transient. Consider $\gamma_n = [(\alpha-\beta) n]$. It is clear that infinitely many times $X_n\ge \gamma_n$, infinitely many times $X_n \le \gamma_n$, and hence since each jump of our random walk has absolute value one, we can not have for some $n$ $X_n < \gamma_n$ and then for $n+1$
$X_{n+1} > \gamma_{n+1}$.
Hence for 
infinitely many times $X_n=\gamma_n$
\end{exa}


\begin{rem}\label{rem:unifinverse}
It is clear that $(\Gamma, \mu)$ is uniformly strongly transient if and only if $(\Gamma, \mu^{-1})$ 
is uniformly strongly transient. Indeed, the probability that $X_i \ne \gamma_i$ for the random walk 
$(G,\mu)$ is equal to the probability that $X_i \ne \gamma'_i= \gamma_i^{-1}$ for the random walk $(G,\mu^{-1})$.
\end{rem}

Uniformly strongly transience and the lemma below were used in an essential way for our criterion of boundary non-triviality (in particular the application about blocks of dimension $\ge 3$). 


\begin{defn}\label{def:admissible}[Admissible equivalence relations].
Given an equivalence relations on $X  \times \mathbb{N}$, we say that this relation is admissible, if for any $x \ne  y$ and any $i \in \mathbb{N}$ it holds $(x,i)$
is not equivalent to $(y,i)$.
\end{defn}

\begin{defn}\label{def:range}
[Generalized range with respect to an equivalence relation $E$]
Let $X$ be a space equipped with a Markov kernel, and let $E$ be an equivalence relation on $X$. Given an $n$ step trajectory of the random walk $X_1$, \dots, $X_n$, we say that the generalized range of $R_E(X_1, \dots, X_n)$
with respect of $E$ is the number of equivalence classes that $(X_1,1)$, \dots, $(X_n,n)$ intersect.
We define the range function to be the expectation of the range:
$$
R_E(n) =E [R_E(X_1), \dots, X_n)]
$$
\end{defn}

\begin{rem}
Let the equivalence relation $E$ is defined by $(x,i) \sim (y,j)$ if and only if $x=y$. Then the range with respect to $E$ is the same as the range of the random walk, that is, the number of distinct points (in $X$) visited up to the time instant $n$.
\end{rem}

For a random walk on a group, the transience of a random walk is equivalent to
linearity of the range function (see Theorem $1.4.1$ Spitzer \cite{spitzer} for the case of $\mathbb{Z}^d$, and a similar argument works for an arbitrary not necessarily commutative group, see e.g. Lemma $1$ in \cite{dyubina}; for a more general case when instead of random walks on groups we consider the walks on Schreier graphs, there is an analogous statement where we have to consider inverted orbits of random walks, see Lemma 3.1 \cite{bartholdierschler}). In a similar way, linearity of the generalized range function with respect to an equivalence relation is related to uniform strong transience.

\begin{lem}\label{lem:linearrange} [Generalised range for equivalence relations] Let $G$ be a group and $\mu$ be a probability measure on $G$.
Consider an admissible equivalence relation $E$  on $G \times \mathbb{N}$. 
Assume that $\mu$ is uniformly strongly transient. Then the range function satisfies $R_E(n) \ge q n$, for $q>0$ and all $n$.
\end{lem}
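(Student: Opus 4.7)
The plan is to express the range as a sum of ``last-visit'' indicators and bound each such indicator from below using uniform strong transience. Concretely, I would let $W_i$ be the indicator of the event that no $j>i$ satisfies $(X_j,j)\sim_E(X_i,i)$; since each $E$-equivalence class visited by the trajectory has a unique last-visit time in $\{1,\dots,n\}$, this gives $R_E(X_1,\dots,X_n)=\sum_{i=1}^n W_i$, so it will suffice to bound $\sum_i P(W_i=1)$ from below by a multiple of $n$. The admissibility hypothesis is precisely what will let me convert the event $\{(X_j,j)\sim_E(X_i,i)\}$ into a single-point hitting event in $G$: given $X_i$ and $j\neq i$, there is at most one $y\in G$ with $(y,j)$ in the $E$-class of $(X_i,i)$, so the event is either impossible or of the form $\{X_j=\gamma_{i,j}\}$ for a deterministic element $\gamma_{i,j}$.

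Setting $\delta_n:=\sup_{g\in G}\mu^{*n}(g)$, uniform strong transience (applied to a worst-case sequence) will yield $C:=\sum_{n\ge 1}\delta_n<\infty$. Conditioning on $X_i$ and using the Markov property,
\[
P(W_i=0\mid X_i)\le \sum_{k\ge 1}\mu^{*k}(X_i^{-1}\gamma_{i,i+k})\le \sum_{k\ge 1}\delta_k=C,
\]
a bound independent of $X_i$. If $C<1$ the lemma follows immediately with $q=1-C$.

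If $C\ge 1$, the decisive trick will be time sparsification. I would consider the sub-sampled walk $\tilde X_m:=X_{km}$, driven by $\mu^{*k}$, and define a derived equivalence relation $E'$ on $G\times\mathbb{N}$ by $(x,m)\sim_{E'}(y,m')\iff(x,km)\sim_E(y,km')$; admissibility of $E'$ is immediate, since $(x,m)\sim_{E'}(y,m)$ with $x\neq y$ would contradict admissibility of $E$ at time $km$. The analogous constant for $\mu^{*k}$ is $C_k=\sum_{m\ge 1}\delta_{km}\le \sum_{n\ge k}\delta_n$, which tends to $0$ as $k\to\infty$. Pick $k$ so that $C_k<1/2$. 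Applying the first step to $(\tilde X_m,E',\mu^{*k})$ will yield $E[R_{E'}(\tilde X_1,\dots,\tilde X_M)]\ge M/2$ for $M=\lfloor n/k\rfloor$; and since distinct $E'$-classes at sub-sampled times are by construction distinct $E$-classes at the original times, we have $R_E(X_1,\dots,X_n)\ge R_{E'}(\tilde X_1,\dots,\tilde X_M)$ pointwise. Combined with the trivial bound $R_E\ge 1$ for $1\le n<k$, this gives a uniform linear lower bound $R_E(n)\ge qn$.

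The hard part will be the case $C\ge 1$ handled by sub-sampling, which is also where the full strength of \emph{uniform} strong transience (rather than mere transience of $\mu$) is essential: one must control the tail $\sum_{n\ge k}\delta_n$ and force it to $0$, which is exactly the content beyond plain transience and consistent with Remark \ref{rem:notsimplytrans}.
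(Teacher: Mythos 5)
Your argument is correct, and it follows the same underlying strategy as the paper's proof (write the range as a sum of ``fresh visit'' indicators and use uniform strong transience, via the single-point reduction furnished by admissibility, to bound each indicator probability below), but it differs in two ways worth recording. First, you decompose by \emph{last} visits and look forward along the trajectory, so the conditional probability of a future equivalent visit is controlled directly by $\sup_g\mu^{*k}(g)$; the paper instead counts \emph{first} visits (the indicator that $X_n$ is equivalent to none of the earlier positions) and therefore has to reverse time and invoke uniform strong transience of the reflected measure $\mu^{-1}$ (Remark \ref{rem:unifinverse}). Second, and more substantively, your sub-sampling step is a genuine addition: after extracting $C=\sum_n\sup_g\mu^{*n}(g)<\infty$ from uniform strong transience (the converse of Remark \ref{rem:unif}, as in the remark following Definition \ref{def:uniformlysr}), the union bound only gives $\mathbb{P}(W_i=0)\le C$, which is vacuous when $C\ge 1$; passing to the walk driven by $\mu^{*k}$ with the induced admissible relation $E'$, where the constant $\sum_m\delta_{km}\le\sum_{n\ge k}\delta_n$ is made smaller than $1$, turns this into a uniform positive lower bound and then a linear bound for the original range since distinct $E'$-classes at sub-sampled times are distinct $E$-classes. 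The paper's proof asserts the uniform positive lower bound $p>0$ in one sentence without addressing this case, so your version makes that step explicit. One cosmetic point: if $W_i$ is defined with $j$ ranging over \emph{all} later times (as your sum over $k\ge 1$ suggests), then $\sum_{i\le n}W_i$ is a lower bound for $R_E(X_1,\dots,X_n)$ rather than equal to it; this is exactly the inequality you use, so nothing is affected.
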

\begin{proof}
We know that $\mu$, and hence also $\mu^{-1}$ are uniformly strongly transient.
Consider the probability that $X_n$ is equivalent to none of $X_i$, for $0 \le i \le n-1$.  Consider the $n$ step trajectory of the random walk,
started from $X_n$ and  inverting the time (putting $X_0'=X_n$, $X_1'=X_{n-1}$, $X_{n-1}'=X_1$, $X'_n= X_0 =e$).  Observe that the  probability we consider is an expectation that this inverted r.w. trajectory never returns to an element equivalent to its origin element $X'_0=X_n$ up to the time instant $n$. Here we consider an "inverted" equivalence relation $E'$ $(x,k) \sim_{E'} (y,m)$ if $(x, n-k) \sim_E' (y, n-m)$. This auxiliary equivalence relation depends on $n$, and if we move the base-point of the inverted random walk to $e$, then it also depends on $g_n=X_n$.

However, the uniform strong transience allows us to bound from below all considered probabilities (and hence their expectation), implying that the probability to visit at time $n$ an element, not equivalent to the previous ones, is $\ge p >0$.
Finally, observe that the range function with respect to $E$ is equal to the sum of random variables $\chi_i$, $\chi_i$ takes  value $1$ if $i$-th element is not equivalent to the previous ones, and $0$ otherwise. Hence we obtain a linear lower bound for the range functions, and this concludes the proof of the lemma.
\end{proof}

By a result of Varopoulos \cite{varopolos325} any non-degenerate random walk on an infinite group which is not a finite extension of $\mathbb{Z}$ or $\mathbb{Z}^2$ is transient. See e.g. Chapter I, Section 3B, Lemma 3.12 in \cite{woess} for a proof based on Nash-Williams criterion and
Chapter III, Section 14 \cite{woess} for a more general argument based on   Coulhon Saloff-Coste isoperimetric inequality. A similar argument is used in 1) of the theorem below to affirm the uniform strong transience.

A result of Dudley \cite{dudley} states that a countable abelian group admits a non-degenerate transient random walk if and only if this group does not contain $\mathbb{Z}^3$ as a subgroup. 
(Spitzer refers  to this result as the most interesting general results concerning random walks on groups (!), see final remark of section $8$ in  \cite{spitzer}). 
The theorem below generalises his result to not necessarily abelian groups.

\begin{thm}\label{thm: admisrecurrent}

\begin{enumerate} Let $G$ be a countable group.
\item If $G$ admits an infinite finitely generated subgroup which is not virtually $\mathbb{Z}$ or $\mathbb{Z}^2$, then any 
non-degenerate
random walk on $G$ is uniformly strongly transient. 
In particular, given an admissible equivalence relation $E$, the range of $(G,\mu)$ with respect to $E$ is $\ge pn$, for some $p>0$ and all $n$.

\item If all finitely generated subgroups of
$G$ have at most quadratic growth,
then $G$ admits a non-degenerate recurrent measure. This measure can be chosen to be symmetric and with $\supp \mu= G$.

    
\end{enumerate}
\end{thm}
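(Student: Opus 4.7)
The plan is to treat the two parts separately, each by invoking isoperimetric / heat-kernel technology adapted to the subgroup hypothesis. Throughout I read ``adapted'' in the statement as ``non-degenerate'': for truly adapted but degenerate measures the conclusion fails (an asymmetric walk on $\mathbb{Z}^d$ supported on $\{e_1,\dots,e_d\}$ generalises Example \ref{exe:drift} and admits recurrent sequences $\gamma_n \approx (n/d,\dots,n/d)$), and non-degeneracy is what the applications in Lemma \ref{lem:mainlemma} and Corollary \ref{cor: Z3central} actually need.

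For part (1), first symmetrise by setting $\nu := \check\mu \ast \mu$, which is symmetric non-degenerate with $\nu(e) = \|\mu\|_2^2 > 0$. The hypothesised subgroup $H \le G$ is finitely generated, infinite, and not virtually $\mathbb{Z}$ or $\mathbb{Z}^2$; by Gromov's polynomial-growth theorem together with the Bass--Guivarc'h formula, $H$ has volume growth at least $n^3$. F\o{}lner / isoperimetric profiles being monotone under subgroup inclusion (Pittet--Saloff-Coste), $G$ inherits $\Phi_G(v) \ge c v^{2/3}$, and the Coulhon--Saloff-Coste heat-kernel bound for the symmetric walk $\nu$ gives $\nu^{\ast n}(e) \le C n^{-3/2}$; thus $\nu$ is transient and Remark \ref{rem:oldCauchySch} upgrades this to uniform strong transience of $\nu$. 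To transfer to $\mu$ itself, apply Cauchy--Schwarz: $\sup_g \mu^{\ast 2n}(g) \le \|\mu^{\ast n}\|_2^2 = \|R_\mu^n \delta_e\|_{\ell^2(G)}^2$; a Nash-type inequality for the self-adjoint operator $R_\mu R_\mu^{*}$, whose convolution kernel is the symmetric measure $\mu\ast\check\mu$ and which inherits the same F\o{}lner bound, yields $\|\mu^{\ast n}\|_2^2 \le C n^{-3/2}$. Summability of $\sup_g \mu^{\ast n}(g)$ together with Remark \ref{rem:unif} gives uniform strong transience of $\mu$, and the linear lower bound on the generalised range is then immediate from Lemma \ref{lem:linearrange}.

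For part (2), enumerate $G = \bigcup_n F_n$ as an increasing union of finitely generated subgroups (possible since $G$ is countable). By hypothesis each $F_n$ has polynomial growth of degree at most $2$, so by Gromov's theorem $F_n$ is virtually $\mathbb{Z}^{k_n}$ with $k_n \le 2$. Choose on each $F_n$ a symmetric full-support measure $\mu_n$ recurrent on $F_n$ (as a convex combination of a finitely-supported symmetric recurrent walk on $F_n$ with any symmetric measure of full support on $F_n$ with sufficiently rapidly decaying tails). Set $\mu := \sum_n a_n \mu_n$ with strictly positive weights summing to $1$; then $\mu$ is symmetric with $\supp \mu = G$. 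To show $\sum_n \mu^{\ast n}(e) = \infty$, lower-bound $\mu^{\ast n}(e)$ by the return probabilities of the truncated mixture $\tilde\mu_{n_0} := (\sum_{k \le n_0} a_k \mu_k) / (\sum_{k \le n_0} a_k)$, which is supported on $F_{n_0}$ and hence recurrent (as $F_{n_0}$ is virtually abelian of rank at most $2$); choosing the weights to decay slowly, e.g.\ $a_n = C/(n \log^2 n)$, makes this lower bound diverge.

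The main obstacle is the transfer step in part (1): Nash / Coulhon--Saloff-Coste estimates apply cleanest to symmetric walks with finite support, and extending to an arbitrary non-degenerate $\mu$ requires a Dirichlet-form comparison against an auxiliary symmetric finitely-supported $\mu_0$ (which exists because $\mu$ is non-degenerate). For part (2), the corresponding delicacy is choosing weights $a_n$ slowly enough that the global mixture $\mu$ inherits recurrence from each truncation $\tilde\mu_{n_0}$ while still summing to a probability measure.
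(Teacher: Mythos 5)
Your reading of ``adapted'' as ``non-degenerate'' is reasonable (it is what the paper's own remark and its proof in fact require), but your part (1) has a genuine gap at the symmetrisation step. For a non-degenerate $\mu$ the measure $\nu=\check\mu\ast\mu$ need not be adapted, and its support can generate a subgroup of only quadratic growth even when $G$ satisfies the hypothesis and $\mu$ is finitely supported. Take $G=\mathbb{Z}^2\wr\mathbb{Z}/2\mathbb{Z}$ (it contains $\mathbb{Z}\wr\mathbb{Z}/2\mathbb{Z}$, so the hypothesis holds) and let $\mu$ be uniform on the five elements $\delta_0t_v$, $v\in\{0,\pm e_1,\pm e_2\}$, where $t_v$ is the translation by $v$ and $\delta_0$ the lamp at the origin. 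This $\mu$ is non-degenerate: $\delta_0\delta_0=e$ and $\delta_0\cdot\delta_0t_v=t_v$ lie in the semigroup generated by $\supp\mu$, hence so do all translations, all lamps $t_w\delta_0t_{-w}=\delta_w$, and therefore all of $G$. Yet $(\delta_0t_u)^{-1}(\delta_0t_v)=t_{v-u}$, so $\check\mu\ast\mu$ is carried by the translation copy of $\mathbb{Z}^2$; it is recurrent, $\nu^{\ast n}(e)\asymp 1/n$, and the claimed bound $\nu^{\ast n}(e)\le Cn^{-3/2}$ fails. Likewise $(\delta_0t_v)(\delta_0t_u)^{-1}=\delta_0\delta_{v-u}t_{v-u}$, so the kernel of $R_\mu R_\mu^{*}$ is carried by another copy of $\mathbb{Z}^2$. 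Hence the Dirichlet forms of both symmetrisations satisfy only a two-dimensional Nash inequality; your iteration then yields only $\|\mu^{\ast n}\|_2^2\lesssim 1/n$, which is not summable, and uniform strong transience does not follow (the theorem is presumably still true for this $\mu$; it is the method that cannot see it). This is exactly why the paper does not symmetrise by convolution: it uses non-degeneracy to minorise a convolution power of $\mu$ by a multiple of a symmetric finitely supported measure whose support generates a large-growth subgroup, and transfers the $n^{-3/2}$ bound to the non-symmetrised kernel by the Baldi--Lohou\'e--Peyri\`ere/Varopoulos comparison of quadratic forms; a comparison of that type, not a Nash inequality for $\check\mu\ast\mu$, must carry the transfer step.

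Part (2) follows the paper's strategy (increasing union $G=\bigcup F_n$, mixture $\mu=\sum a_n\mu_n$, comparison with truncations), but the two quantitative points are not right as stated. ``Supported on $F_{n_0}$ and hence recurrent'' is not a valid inference: recurrence is a property of the measure, not of its support, and recurrence of each $\mu_k$ does not by itself give recurrence of the truncated mixture; what the paper actually uses is that each truncation is symmetric with finite second moment on a virtually-$\mathbb{Z}$ or $\mathbb{Z}^2$ group, so its return probabilities are bounded below by $C_{n_0}/t$ (Pittet--Saloff-Coste). More seriously, $\mu^{\ast t}(e)$ is not bounded below by the return probability of the truncation: one only gets $\mu^{\ast t}(e)\ge(1-d_{n_0})\,\tilde\mu_{n_0}^{\ast t}(e)$, where $d_{n_0}$ controls the probability that some increment from the tail $\sum_{k>n_0}a_k\mu_k$ occurs before time $t$; so the tail mass must be made small compared with the horizon $N_{n_0}$ over which $\sum_{t\le N_{n_0}}C_{n_0}/t$ becomes large. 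Since the constants $C_{n_0}$ are not controlled a priori, these horizons can grow arbitrarily fast and the weights must be chosen recursively, after $N_{n_0}$, with $N_n\sum_{k>n}a_k\le 1/2$, exactly as in the paper; a decay fixed in advance such as $a_n=C/(n\log^2 n)$ --- and ``slow'' decay in general --- does not suffice. With these two repairs your sketch becomes the paper's proof.
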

\begin{rem} The assumption in the first claim of the theorem
 that the measure is non-degenerate
cannot be replaced
by the condition of being adapted (nor by the $S^+S^-$
property). Let $e_1$, $e_2$, \dots,  $e_d$
be standard generators of $\mathbb{Z}^d$, consider the equidistribution measure $\mu$ on this set.
Random walks $(\mathbb{Z}^d,\mu)$ are essentially $d-1$ dimensional, the $n$ step positions of the random walk belong to $d-1$ subspace of $\mathbb{Z}^d$ (depending on $n$).
Put  $\gamma_n=(n/d, n/d, n/d)$. Observe that the  probability to hit $\gamma_n$ is
equal to $1$ for $d=1$  and also equivalent to $\sim 1/n^{(d-1)/2}$ for all $d\ge 1$. Thus for $d=3$ the expected number of hitting $\gamma_n$ at time instant $n$ is
$\sim \sum_{i=1}^\infty  1/i = \infty$, and the sequence $(i,\gamma_i)$ is recurrent.
\end{rem}

\begin{proof}
1)
To prove the first claim of the theorem, observe that Gromov's theorem on polynomial growth  \cite{gromov} implies that any group satisfying the assumption of this claim has at least cubic growth: its growth function
$v_{G,S}(n) \ge A n^3$, for some positive constant
$A$, depending on the generating set $S$ and all $n$ (since any nilpotent group has growth $\sim n^d$, and the only groups of (sub)-quadratic growth are finite extensions of $\mathbb{Z}$, $\mathbb{Z}^2$ or finite ones, see e.g. Corollary $3.18$ in
\cite{woess}; for an exposition of basic facts about growth of groups see also
\cite{mannhowgroups}
). Consider a simple random walk on $G$, we denote its symmetric finitely supported measure by $\nu$, and we assume that $\nu= \nu_0^{*2}$. Recall that by Coulhon Saloff-Coste inequality
the F{\o}lner function of $G$ satisfies
$Fol_{G,S}(n) \ge v_{G,S}(C n) \ge \A' n^3$,
where $v_{G,S}(n)$ is the growth function of $G$ with respect to $S$.
Therefore, the return probability of the random walk $(G,\nu)$ satisfies $\nu^{*n}(e) \le C' \frac{1}{n^{3/2}}$. Indeed, this (well-known) fact follows from the inequality between isoperimetry and return probability, see e.g. Proposition 14.1 and Theorem 14.3 and for this particular claim Corollary 14.5 in Chapter III \cite{woess}.

Now recall  an argument that goes back to Baldi et al \cite{baldietal}. By
$(2.1)$  in Section $2$ of   \cite{varopoulos85} by Varopoulos the following holds. Take a symmetric random walk
defined by a measure $\mu_1$ and a not-necessarily symmetric non-degenerate  random walk defined by $\mu_2$. Then there exists $\alpha>0$ such that   
for any function $f \in l^2(G)$
$$
\langle P_{\mu_2} f, f \rangle \le \alpha \langle P_{\mu_1} f, f \rangle. 
$$
Indeed, since $\mu_2$ defines a non-degenerate
random walk,
we 
replacing if necessarily  $\mu_2$ by a convolution power, we can assume that
the 
Markov kernels satisfy $P_{\mu_2} \ge K P_{\mu_1}$. Then the inequality above is a particular case of the corresponding inequality ($(2.1)$ in \cite{varopoulos85}) for Markov kernels.
Applying this inequality to $f$ defined by $f(e)= f(g)=1$ and $f(x)=0$ otherwise, we conclude
that $P_{\mu}^{*n}(e,e) + P_{\mu}^{*n}(e,g)
\le C(P_{\nu}^{*n}(e,e) + P_{\nu}^{*n}(e,g)),
$
for a positive constant $C$ not depending on $g$. Since $\nu= \nu_{0}^{*2}$ is symmetric, we know that $P_{\mu}^{*n}(e,g) \le  P_{\nu}^{*n}(e,e)$ (see Remark \ref{rem:CauchySch}), and hence
$$
\mu^{*n}(g)=P_{\mu}^{*n}(e,g) \le 2 C P_{\nu}^{*n}(e,e)\le
\frac{C''}{n^{3/2}} 
$$
for a positive $C''$ not depending on $g$.
Thus in view of Remark \ref{rem:unif} the random walk is strongly transient.

Now since we have proved the uniform strong transience, the claim about the range function follows from Lemma \ref{lem:linearrange}.

2) Choose finite generated subgroups $G_i \subset G$
such that  $G_i \subset G_{i+1}$ and $\cup_{i \in \mathbb{N}} G_i =G$.
Choose a measure $\mu_i$ on $G_i$.
We will construct the measure $\mu$ as a convex combination of $\mu_i$. More precisely, we will choose in a recursive way 
a sequence $a_1=1$, $a_i>0$. On each step we construct $a_1$, \dots, $a_n$
and a number $b_n$ such that the numbers $a_i, i >n$, constructed in the sequel will satisfy $a_{n+1}+a_{n+2}+a_{n+3} + \dots \le b_n$ (and hence $\frac{ a_{n+1}+a_{n+2}+a_{n+3}+ \dots} {a_{1}+a_{2}+a_{3}+\dots} \le b_n$). 
We will check the claim for the measure
$$
\mu = \sum_{i=1}^{\infty} \frac{a_i}{a_1+a_2 +a_3 + \dots} \mu_i.
$$

First choose  all  measures $\mu_n$ to be finitely supported. 
Indeed, if the group of growth $\sim n^d$, then 
 return probability of a simple  random walk satisfies $\sim \frac{C}{n^{d/2}}$, for all even $n$, (see e.g. Theorem 15.8 in \cite{woess}); Here
(in the proof of 2)  we use only the case for groups of at most quadratic growth, which, taking into account the polynomial growth theorem,  can also be deduced from the local limit theorem for the random walks on $\mathbb{Z}^2$ and $\mathbb{Z}$).
Indeed $G_i$ is either finite, or virtually cyclic, or contains $\mathbb{Z}^2$ as a finite index subgroup. 

Observe that the probability that one of the terms corresponding to $\mu_i$, $i>n$, occurs at least once among the first $N_n$ increments of the random walk $(G,\mu)$, is at most $d_n=\frac{N_n b_n}{\sum_{i=1}^\infty a_i} \le N_n b_n$.
We  choose a sequence $N_n$ which increases quickly enough and the sequence $b_i$ decreasing  quickly enough (in terms of $N_n$)
so that $N_n b_n \le 1/2$.
(If we assume moreover that 
$\sum N_n b_n <\infty$, then by Borel-Cantelli lemma we know the following. The event: for some $n$, there is at least one increment corresponding to $\mu_i$, $i>n$, among first $n$
increments of the random walk $(G,\mu)$ occurs only for finitely many $n$. We show in this case that return probability for $\mu$ is estimated by these probabilities for its truncations).

Having chosen $a_1$, $a_2$ \dots $a_n$, consider the probability measure $\bar{\mu}_n$ which is the normalised sum $a_1 \mu_1 + a_2 \mu_2 + \dots + a_n \mu_n$.
Consider $C_n$ such that the return probability of the random walk $\bar{\mu}_n$ satisfies ${\bar{\mu}_n}^{t} \ge \frac{C_n}{n}$, for all even $t \ge 0$.
We choose $C_n$ in such a way that this sequence is non-increasing $C_1 \ge C_2 \ge C_3 \dots$.

Observe that
$$
\mu^{*t}(e) \ge (1-d_n) \mu_n^{*t}(e) \ge \frac{1}{2}\mu_n^{*t}(e) \ge \frac{C_n}{2n} 
$$
for any $t \le N_n$,
since with probability $\ge 1-d_n$ no increments of the $\mu_i$, $i>n$ occur during first $N_n$ steps of the r.w.
Having fixed $a_1$, $a_2$, \dots $a_n$ and $C_n$, we choose $N_n$ large enough so that  
$$
\sum_{t=1}^{N_n} \frac{C_n}{2n} \ge n,
$$
here $t$ in the sum above is assumed to be even.
We conclude therefore that for all $n\ge 1$
$$
\sum_{t=1}^{\infty} \mu^{*t}(e) \ge n.
$$
Therefore, this sum is infinite, and
thus the random walk $(G,\mu)$ is recurrent.
This proves the main claim of 2). 

Now
if we want to assure that the measure on $G$ has full support, we choose $\mu_i$ such that $\supp \mu_i=G_i$ (rather than having finite support).
We assume that  $\mu_i$ is symmetric and has finite second moment. In this
case the truncated measure $\bar{\mu}_i$ are also symmetric measure of finite second moment.
Then we still have $\bar{\mu}_i^{*n}(i) \ge C_i/n$, for some $C_i>0$, and for all even $n$ (see Corollary $1.5$ in \cite{pittetsaloffstability}). Thus we can  proceed as as in the argument explained above.

\end{proof}

\end{document}